\theoremstyle{plain} 
\newtheorem{theorem}{Theorem}[section]
\newtheorem{lemma}[theorem]{Lemma}
\newtheorem{corollary}[theorem]{Corollary}
\theoremstyle{definition}
\newtheorem{definition}[theorem]{Definition}
\newtheorem{example}[theorem]{Example}
\newtheorem{remark}[theorem]{Remark}
\begin{document}

\title{On a Generalization of Heyting Algebras II}

\author[1]{Amirhossein Akbar Tabatabai}
\author[2]{Majid Alizadeh}
\author[2]{Masoud Memarzadeh}
\affil[1]{Institute of Mathematics, Czech Academy of Sciences}
\affil[2]{School of Mathematics, Statistics and Computer Science, College of Science, University of Tehran}

\date{ }

\maketitle

\begin{abstract}  
A $\nabla$-algebra is a natural generalization of a Heyting algebra, unifying several algebraic structures, including bounded lattices, Heyting algebras, temporal Heyting algebras, and the algebraic representation of dynamic topological systems. In the prequel to this paper \cite{OGHI}, we explored the algebraic properties of various varieties of $\nabla$-algebras, their subdirectly-irreducible and simple elements, their closure under Dedekind-MacNeille completion, and their Kripke-style representation. 

In this sequel, we first introduce $\nabla$-spaces as a common generalization of Priestley and Esakia spaces, through which we develop a duality theory for certain categories of $\nabla$-algebras. Then, we reframe these dualities in terms of spectral spaces and provide an algebraic characterization of natural families of dynamic topological systems over Priestley, Esakia, and spectral spaces. Additionally, we present a ring-theoretic representation for some families of $\nabla$-algebras. Finally, we introduce several logical systems to capture different varieties of $\nabla$-algebras, offering their algebraic, Kripke, topological, and ring-theoretic semantics, and establish a deductive interpolation theorem for some of these systems.\\

\noindent \textbf{Keywords:} Heyting algebras, dynamic topological systems, Priestley duality, Esakia duality, Spectral duality, Ring-theoretic representations, deductive interpolation
\end{abstract}

\section{Introduction}

In many logical frameworks, a binary operation known as \emph{implication} serves to internalize the provability order between propositions.\footnote{For an extensive explanation of what is meant by internalization, see \cite{ImSpace,OGHI}.} Examples of such implications include classical, intuitionistic, and substructural implications, philosophically motivated conditionals \cite{NuteCross}, weak sub-intuitionistic implications \cite{Vi2,visser1981propositional,Ru}, as well as mathematically motivated implications in provability logic \cite{visser1981propositional} and preservability logic \cite{Iem1,Iem2,LitViss}.

In \cite{ImSpace}, the first author introduced an abstract notion of implication that unifies all these instances. This abstract framework was intended to isolate the common behavior of various types of implications, facilitating the study of the additional structure that internalization provides. It also aimed to establish general theorems about implications, particularly regarding negative results. For example, while it is well-known that the locale of open sets in a topological space forms a Heyting algebra, the inverse images of continuous functions do not necessarily preserve the Heyting implication. One might seek a geometric family of implications—one that is stable under inverse images of all continuous functions. However, as demonstrated in \cite{geometric}, no such non-trivial family exists.

One of the well-behaved implications is the Heyting implication which owes its well-behaved behavior to the full adjunction it enjoys. However, due to the lack of residuation, not all implications are mathematically well-behaved, as explained in the introduction of \cite{OGHI}. To introduce a family of well-behaved implications enjoying a sort of residuation, \cite{ImSpace,OGHI} introduced a generalization of a Heyting algebra called a $\nabla$-algebra. Formally, a $\nabla$-algebra is a tuple $(\mathsf{A}, \nabla, \to)$, where $\mathsf{A}$ is a bounded lattice, $\nabla$ and $\to$ are unary and binary operations on $\mathsf{A}$ such that $\nabla c \wedge a \leq b$ iff $c \leq a \to b$, for any $a, b, c \in \mathsf{A}$. We called a $\nabla$-algebra distributive (resp. Heyting) if $\mathsf{A}$ is distributive (resp. Heyting) and it is called normal if $\nabla$ commutes with all finite meets. $\nabla$-algebras are the common generalization of bounded lattices ($\nabla a=0$ and $a \to b=1$) and Heyting algebras ($\nabla a=a$ and $a \to b$ as the Heyting implication). The significance of $\nabla$-algebras was extensively discussed in the prequel to this paper \cite{OGHI}, so we will not repeat that discussion here. However, it is worth briefly mentioning the main points.

First, in \cite{ImSpace,OGHI}, it is shown that $\nabla$-algebras are powerful enough to represent all abstract implications. Therefore, $\nabla$-algebras provide a sufficiently general family of well-behaved implications combining the generality of the abstract implications and the well-behaved nature of Heyting implications. Thanks to this representation theorem, the study of $\nabla$-algebras leads to the study of all possible implications in an indirect manner. Second, defining $\Box a=1 \to a$ in any $\nabla$-algebra, it is easy to see that if $\mathsf{A}$ is a Heyting algebra with the Heyting implication $\supset$, the implication $\to$ is definable by $a \to b =\Box(a \supset b)$. This observation shows that a Heyting $\nabla$-algebra is essentially a Heyting algebra equipped with an adjunction $\nabla \dashv \Box$. One can read $\nabla$ as the existential past modality, i.e., the modality ``it is true at some point in the past" and $\Box$ as the universal future modality, i.e., the modality ``it is always true in the future". Therefore, Heyting $\nabla$-algebras are algebras suitable for a very basic intuitionistic temporal logic \cite{Ewald,de2017constructive}. For more, see the introduction of \cite{OGHI}. Third, recall that a dynamic topological system is a pair $(X, f)$ of a topological space $X$ as the state space of the system and the continuous map $f: X \to X$ as its dynamism \cite{Akin}. To any dynamic topological system $(X, f)$, one can assign the Heyting $\nabla$-algebra $(\mathcal{O}(X), f^{-1}, \to_f)$, where $U \to_f V=f_*(int(U^c \cup V))$ and $f_*: \mathcal{O}(X) \to \mathcal{O}(X)$ is the right adjoint of $f^{-1}: \mathcal{O}(X) \to \mathcal{O}(X)$. The $\nabla$-algebra is normal as $f^{-1}$ preserves all finite intersections. Having this example in mind, one can interpret normal Heyting $\nabla$-algebras as the algebraic and elementary formalization of dynamic topological systems, where the underlying Heyting algebra encodes the locale of the opens in an elementary manner while the adjunction $\nabla \dashv \Box$ captures the adjunction $f^{-1} \dashv f_*$ encoding the continuous dynamism. The approach is similar to \cite{artemov1997modal}, where a classical $\mathsf{S4}$-style modal algebra is used instead of a Heyting algebra to encode the opens of the space. There are also many studies on dynamic topological systems that utilize Heyting algebras as the base structure \cite{Fer}. For further details, refer to the introduction of \cite{OGHI}.

In the first part of this series of two papers \cite{OGHI}, we explored the algebraic properties of $\nabla$-algebras. We introduced several families of $\nabla$-algebras and demonstrated that they form a variety closed under Dedekind-MacNeille completion in many cases. For some of these varieties, we also characterized subdirectly-irreducible and simple elements. Additionally, for distributive $\nabla$-algebras, we provided a Kripke-style representation theorem, which we used to show that certain varieties of normal distributive $\nabla$-algebras possess the amalgamation property.

In this paper, as the second part of the series, we begin with a brief review of the results of \cite{OGHI} in Section \ref{Recall}. Then, in Section \ref{PriEsaDuality}, we focus on duality theory for distributive $\nabla$-algebras. We introduce a common generalization of Priestley and Esakia spaces, termed a $\nabla$-space, and demonstrate that the category of $\nabla$-spaces (with their appropriate maps) is dually equivalent to the category of distributive $\nabla$-algebras. This result unifies and generalizes the Priestley and Esakia duality theorems into a single theorem. We also prove a similar result for certain subcategories of distributive $\nabla$-algebras. 
In particular, to capture the dynamic nature of normal distributive $\nabla$-algebras, we introduce a special family of dynamic Priestley spaces called generalized Esakia spaces. These are pairs $(X, \leq, f)$, where $(X, \leq)$ is a Priestley space and $f: (X, \leq) \to (X, \leq)$ is a Priestley map satisfying an Esakia-style condition: for any clopen $U \subseteq X$, the subset $\downarrow \! f[U]$ is clopen. We show that the category of normal distributive $\nabla$-algebras is dually equivalent to the category of generalized Esakia spaces. This result also leads to an interesting characterization of the category of Heyting $\nabla$-algebras, where $\nabla$ is a bijection, as the dual category of reversible dynamic Esakia spaces—pairs $(X, \leq, f)$ where $(X, \leq)$ is an Esakia space and $f: (X, \leq) \to (X, \leq)$ is an Esakia isomorphism.
Then, in Section \ref{SectionSpectral}, we turn to spectral spaces and provide a similar duality theory for the spectral version of a $\nabla$-space and a generalized Esakia spaces called a $\nabla$-spectral space and generalized H-spectral spaces, respectively. It is known that any Heyting algebra can be represented by the Heyting algebra of the radical ideals of a commutative unital ring. In Section \ref{RingTheoretic}, we use our spectral duality to extend this result and offer a ring-theoretic representation for normal distributive $\nabla$-algebras. This representation theorem provides an order-theoretic look into a slightly modified version of dynamic rings, i.e., rings with a homomorphism over them. Finally, in Section \ref{Logics}, we introduce several logical systems for $\nabla$-algebras. Using the representation theorems presented in \cite{OGHI} and in Secion \ref{RingTheoretic}, we establish algebraic, Kripke, topological and ring-theoretic semantics for these logics. We also prove deductive interpolation theorem for some of these systems. \\

\noindent \textbf{Acknowledgments}

\noindent We wish to thank Nick Bezhanishvili and George Metcalfe for their helpful suggestions. The first author also gratefully acknowledges the support of the FWF project P 33548, the Czech Academy of Sciences (RVO 67985840) and the GA\v{C}R grant 23-04825S. 

\section{A Quick Look into the Prequel} \label{Recall}
In this section, we recall several notions, constructions, and theorems presented in \cite{OGHI}. We assume familiarity with the standard concepts from order theory, universal algebra, and category theory. For the first two, we refer the reader to the comprehensive preliminaries section in \cite{OGHI}, while for the latter, see \cite{Bor1}. Additionally, we will draw on basic concepts from logic and topology, introducing the less familiar concepts as needed.

Let us start with $\nabla$-algebras and some of their interesting families. 
\begin{definition} \cite{OGHI}
Let $\mathsf{A}=(A, \wedge, \vee, 0, 1)$ be a bounded lattice with the lattice order $\leq$. A tuple $\mathcal{A}=(\mathsf{A}, \nabla, \to)$ is called a \emph{$\nabla$-algebra} if $\nabla c \wedge a \leq b$ is equivalent to $c \leq a \to b$, for any $a, b, c \in A$. In any $\nabla$-algebra, $\Box a$ is defined as $1 \to a$. Moreover:
\begin{description}
\item[$(D)$]
If $\mathsf{A}$ is distributive, then the $\nabla$-algebra is called \emph{distributive}.
\item[$(H)$]
If $\mathsf{A}$ is a Heyting algebra, then the $\nabla$-algebra is called \emph{Heyting}. Notice that being Heyting 
does not mean that $\to$ is the Heyting implication itself. It only dictates that the Heyting implication exists over $\mathsf{A}$.
Moreover, notice that the Heyting implication is not included in the signature of a Heyting $\nabla$-algebra. If we add it to the signature to have a structure in the form $(\mathsf{A}, \nabla, \to, \supset)$, where $\supset$ is the Heyting implication over $\mathsf{A}$, we call the structure an ``\emph{explicitly Heyting $\nabla$-algebra}". This difference in the signature is important when we investigate the algebraic or categorical properties of $\nabla$-algebras.
\item[$(N)$]
If $\nabla$ commutes with all finite meets, i.e., $\nabla 1=1$ and $\nabla (a \wedge b)=\nabla a \wedge \nabla b$, for any $a, b \in A$, then the $\nabla$-algebra is called \emph{normal}.
\item[$(R)$]
If $a \leq \nabla a$, for any $a \in A$, the $\nabla$-algebra is called \emph{right}.
\item[$(L)$]
If $\nabla a \leq a$, for any $a \in A$, the $\nabla$-algebra is called \emph{left}.
\item[$(Fa)$]
If $\nabla$ is surjective (equivalently $\Box$ is injective or $\nabla \Box a=a$, for any $a \in A$ \cite{OGHI}), the $\nabla$-algebra is called \emph{faithful}.
\item[$(Fu)$]
If $\Box$ is surjective (equivalently $\nabla$ is injective or $\Box\nabla a=a$, for any $a \in A$ \cite{OGHI}), the $\nabla$-algebra is called \emph{full}.
\end{description}
For any $C \subseteq \{D, H, N, R, L, Fa, Fu\}$, by $\mathcal{V}(C)$, we mean the class of all $\nabla$-algebras with the properties described in the set $C$. For instance, $\mathcal{V}(\{N, D\})$ is the class of all normal distributive $\nabla$-algebras. Sometimes, for simplicity, we omit the brackets. For instance, we write $\mathcal{V}(N, D)$ for $\mathcal{V}(\{N, D\})$ and if $X \in \{D, H, N, R, L, Fa, Fu\}$, we write $\mathcal{V}(X, C)$ for $\mathcal{V}(\{X\} \cup C)$. By $\mathcal{V}_H(C)$, we mean the class of all explicitly Heyting $\nabla$-algebras satisfying the conditions in $C$. For two $\nabla$-algebras $\mathcal{A}=(\mathsf{A}, \nabla_{\mathcal{A}}, \to_{\mathcal{A}})$ and $\mathcal{B}=(\mathsf{B}, \nabla_{\mathcal{B}}, \to_{\mathcal{B}})$, by a \emph{$\nabla$-algebra morphism}, we mean a bounded lattice morphism $f: \mathsf{A} \to \mathsf{B}$ that also preserves $\nabla$ and $\to$, meaning that $f(\nabla_{\mathcal{A}} a)=\nabla_{\mathcal{B}} f(a)$ and $f(a \to_{\mathcal{A}} b)=f(a) \to_{\mathcal{B}} f(b)$, for any $a, b \in A$. If both $\nabla$-algebras are explicitly Heyting and $f$ also preserves the Heyting implication, it is called a \emph{Heyting $\nabla$-algebra morphism}. A $\nabla$-algebra morphism is called an \emph{embedding} if it is injective. For any $C \subseteq \{D, H, N, R, L, Fa, Fu\}$, the class $\mathcal{V}(C)$ of $\nabla$-algebras together with the $\nabla$-algebra morphisms forms a category, denoted by $\mathbf{Alg}_{\nabla}(C)$. Similarly, the class  $\mathcal{V}_H(C)$ of explicitly Heyting $\nabla$-algebras with Heyting morphisms forms a category, denoted by $\mathbf{Alg}^H_{\nabla}(C)$.
\end{definition}

We use capital Greek letters $A$, $B$, ... for sets, the sanserif font as in $\mathsf{A}$, $\mathsf{B}$, ... to denote bounded lattices and calligraphic letters $\mathcal{A}$, $\mathcal{B}$, ... for $\nabla$-algebras. For brevity, when we are working with a $\nabla$-algebra $\mathcal{A}$, we write $\mathsf{A}$ for its underlying bounded lattice and $A$ for its underlying set, without any further explanation.

\begin{example}\cite{OGHI}
Let $\mathsf{A}$ be a bounded lattice. The tuple $(\mathsf{A}, \nabla, \to)$ is a left $\nabla$-algebra, where $\nabla(a)=0$ and $a \to b=1$, for any $a, b \in A$. For another example,
let $\mathsf{A}$ be a Heyting algebra. Then, the tuple $(\mathsf{A}, \nabla, \supset)$ is a normal distributive faithful and full $\nabla$-algebra, where $\nabla(a)=a$, for any $a \in A$ and $\supset$ is the Heyting implication. 
\end{example}

Recall the following notions from topology. A topological space is called $T_0$, if for any two different points $x, y \in X$, there is an open set which contains one of these points and not the other. It is called $T_D$, if for any $x \in X$, there is an open $U$ such that $x \in U$ and $U-\{x\}$ is open. A continuous map is called a \emph{topological embedding} if $f$ induces a homeomorphism between $X$ and $f[X]$.

\begin{theorem}\label{InjSurjforContinuous} \cite{StoneSpaces}
Let $X$ and $Y$ be two topological spaces and $f: X \to Y$ be a continuous map. Then, 
\begin{itemize}
\item[$\bullet$]
If $f$ is surjective, then $f^{-1}: \mathcal{O}(Y) \to \mathcal{O}(X)$ is one-to-one. The converse is true, if $Y$ is $T_D$.
\item[$\bullet$]
If $f$ is a topological embedding, then $f^{-1}: \mathcal{O}(Y) \to \mathcal{O}(X)$ is surjective. The converse is also true, if $X$ is $T_0$.
\end{itemize}
\end{theorem}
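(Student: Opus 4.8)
The plan is to treat the two bullet points independently, and within each to prove the unconditional implication by a direct diagram chase and then the converse by contraposition using the relevant separation axiom. Throughout I will use the elementary identity $f[f^{-1}[U]] = U \cap f[X]$, valid for any map $f$ and any $U \subseteq Y$, together with the observation that $f^{-1}[\{y\}] = \emptyset$ whenever $y \notin f[X]$.

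For the first bullet, suppose $f$ is surjective and $f^{-1}[U] = f^{-1}[V]$ for opens $U, V$ of $Y$. If $y \in U$, surjectivity gives $x \in X$ with $f(x) = y$, so $x \in f^{-1}[U] = f^{-1}[V]$ and hence $y = f(x) \in V$; by symmetry $U = V$, so $f^{-1}$ is injective. For the converse, assume $Y$ is $T_D$ and $f$ is not surjective, and fix $y \in Y \setminus f[X]$. By $T_D$-ness there is an open $U \ni y$ with $U \setminus \{y\}$ open; these two opens of $Y$ are distinct (they differ at $y$), yet $f^{-1}[U] = f^{-1}[U \setminus \{y\}]$ since $f^{-1}[\{y\}] = \emptyset$. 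This contradicts the injectivity of $f^{-1}$.

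For the second bullet, suppose $f$ is a topological embedding, i.e.\ the map $f \colon X \to f[X]$ with corestricted codomain is a homeomorphism onto the subspace $f[X]$. Given $W \in \mathcal{O}(X)$, its image $f[W]$ is open in $f[X]$, so $f[W] = U \cap f[X]$ for some $U \in \mathcal{O}(Y)$; then $f^{-1}[U] = f^{-1}[U \cap f[X]] = f^{-1}[f[W]] = W$, the last equality by injectivity of $f$. Hence $f^{-1}$ is surjective. For the converse, assume $X$ is $T_0$ and $f^{-1}$ is surjective. To see $f$ is injective, take $x_1 \neq x_2$, pick by $T_0$-ness an open $W$ of $X$ containing exactly one of them, lift it to $U \in \mathcal{O}(Y)$ with $f^{-1}[U] = W$, and observe that $U$ then contains exactly one of $f(x_1), f(x_2)$, so $f(x_1) \neq f(x_2)$. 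To see $f$ is open onto its image, for $W \in \mathcal{O}(X)$ choose $U \in \mathcal{O}(Y)$ with $f^{-1}[U] = W$, so $f[W] = f[f^{-1}[U]] = U \cap f[X]$ is open in $f[X]$. A continuous, injective, relatively open map is exactly a topological embedding, which completes the argument.

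There is no serious obstacle here; the proof is a sequence of routine set-theoretic computations. The only points needing a moment's attention are the correct use of the $T_D$ and $T_0$ axioms in the two converse directions, and recalling that a continuous injective map that is open onto its image is precisely a topological embedding — this last fact is what makes the converse in the second bullet go through once the injectivity and relative openness of $f$ have been extracted from the surjectivity of $f^{-1}$.
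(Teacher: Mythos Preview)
Your proof is correct and entirely standard. Note, however, that the paper does not actually prove this theorem: it is stated with a citation to \cite{StoneSpaces} and used as a black box throughout, so there is no ``paper's own proof'' to compare against. Your argument is precisely the routine verification one would expect for this folklore result.
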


\begin{example}\label{DTS}
\emph{(Dynamic Topological Systems \cite{OGHI})}
Let $X$ be a topological space and $\pi: X \to X$ be a continuous function. Then, the pair $(X, \pi)$ is called a \emph{dynamic topological system}. If $\pi$ is a topological embedding (resp. surjection), the dynamic topological system is called \emph{faithful} (resp. \emph{full}).  For any dynamic topological system $(X, \pi)$, define $\to_\pi$ over $\mathcal{O}(X)$ by $U \to_\pi V=\pi_* (int(U^c \cup V))$, where $int$ is the interior operation, $U^c$ is the complement of $U$ and $\pi_*: \mathcal{O}(X) \to \mathcal{O}(X)$ is the right adjoint of $\pi^{-1}$. Then, the structure $\mathcal{T}(X, f)=(\mathcal{O}(X), \pi^{-1}, \to_\pi)$ is a normal Heyting $\nabla$-algebra. Moreover, using Theorem \ref{InjSurjforContinuous}, we can observe that if $(X, \pi)$ is full, i.e., $\pi$ is surjective, then the $\nabla$-algebra $\mathcal{T}(X, f)$ is full, as the surjectivity of $\pi$ implies the injectivity of $\nabla=\pi^{-1}$. The converse also holds if $X$ is $T_D$. In addition, if $(X, \pi)$ is faithful, i.e., $\pi$ is a topological embedding, then the $\nabla$-algebra $\mathcal{T}(X, f)$ is faithful, as being a topological embedding implies the surjectivity of $\nabla=\pi^{-1}$. The converse also holds if $X$ is $T_0$.  
\end{example}

Following \cite{OGHI}, for any $\nabla$-algebra $\mathcal{A}$, define the operations $\nabla$ and $\to$ on its lattice of normal ideals, $\mathcal{N}(\mathsf{A})$, by
$\nabla N = \bigvee_{n \in N} (\nabla n]$ and  $M \to N = \{x \in A \mid \forall m \in M, \nabla x \wedge m \in N \}$. Moreover, consider the map $j: \mathsf{A} \to \mathcal{N}(\mathsf{A})$ defined by $j(a)=\downarrow \! a$. The following theorem shows that the tuple $\mathcal{N}(\mathcal{A})=(\mathcal{N}(\mathsf{A}), \nabla, \to)$ is a $\nabla$-algebra and the lattice embedding $j$ is a $\nabla$-algebra morphism.

\begin{theorem}\label{DedekindCompletionThm} \cite{OGHI}
Let $C \subseteq \{H, N, R, L, Fa, Fu\}$. Then, the classes $\mathcal{V}(C)$ and $\mathcal{V}_H(C)$ are varieties. Moreover, $\mathcal{N}(\mathcal{A}) \in \mathcal{V}(C)$ and the canonical embedding $j: \mathcal{A} \to \mathcal{N}(\mathcal{A})$ is a $\nabla$-algebra embedding, for any $ \mathcal{A} \in \mathcal{V}(C)$. If $H \in C$, the embedding is also a Heyting $\nabla$-algebra morphism. In this sense, $\mathcal{V}(C)$ is closed under the Dedekind-MacNeille completion.
\end{theorem}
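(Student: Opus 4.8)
I would separate the statement into its variety part and its completion part. For the former, the plan is to give each class a finite equational axiomatization. The only non-obvious axioms are the core $\nabla$-algebra ones: the biconditional $\nabla c\wedge a\leq b\iff c\leq a\to b$ says precisely that $\nabla(-)\wedge a\dashv a\to(-)$ for every $a$, and over bounded lattices such an adjunction is equivalently given by $\nabla$ monotone, $a\to(-)$ monotone, $(-)\to b$ antitone, the counit $\nabla(a\to b)\wedge a\leq b$, and the unit $c\leq a\to(\nabla c\wedge a)$ --- each of these is a lattice inequality, hence an equation (e.g.\ $\nabla x\vee\nabla(x\vee y)=\nabla(x\vee y)$ and $\nabla(a\to b)\wedge a\wedge b=\nabla(a\to b)\wedge a$). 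The conditions $N$, $R$, $L$ are equations outright, and $Fa$, $Fu$ amount to $\nabla\Box a=a$ and $\Box\nabla a=a$ with $\Box a:=1\to a$, as recalled in the Definition; for $\mathcal{V}_H(C)$ one additionally imposes the usual Heyting identities on $\supset$, so it too is equational, and the Heyting classes are treated the same way once $\supset$ is part of the signature. This is the routine part.

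For the completion, recall that $\mathcal{N}(\mathsf{A})$ is the complete lattice of normal ideals (the ideals $I$ with $I=(I^{u})^{l}$) and that $j(a)=\downarrow\!a$ is a bounded-lattice embedding with join- and meet-dense image; in particular $N=\bigcap_{b\in N^{u}}\downarrow\!b$ for every normal ideal $N$. I would first check that $\mathcal{N}(\mathcal{A})=(\mathcal{N}(\mathsf{A}),\nabla_{\mathcal{N}},\to_{\mathcal{N}})$ is a $\nabla$-algebra. The crucial point is that $M\to_{\mathcal{N}}N$ is actually a normal ideal: since $N=\bigcap_{b\in N^{u}}\downarrow\!b$ and the inequality $\nabla x\wedge m\leq b$ is solved, inside $\mathcal{A}$ itself, by $x\leq m\to b$, one obtains
\[
M\to_{\mathcal{N}}N=\bigcap_{m\in M}\{x:\nabla x\wedge m\in N\}=\bigcap_{m\in M,\;b\in N^{u}}\downarrow\!(m\to b),
\]
an intersection of principal ideals, hence normal. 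For the biconditional $\nabla_{\mathcal{N}}(L)\cap M\subseteq N\iff L\subseteq M\to_{\mathcal{N}}N$: one direction is immediate since $\nabla x\wedge m\in\nabla_{\mathcal{N}}(L)\cap M$ for $x\in L$, $m\in M$; for the other, given $z\in\nabla_{\mathcal{N}}(L)\cap M$ and $b\in N^{u}$, each $x\in L$ has $\nabla x\wedge z\leq b$, so $z\to b$ bounds $L$, so $z\leq\nabla(z\to b)$ (since $\nabla(z\to b)$ is an upper bound of $\nabla[L]$ and $z$ lies below every such bound), and then $z\leq b$ by the counit. That $j$ is a $\nabla$-morphism is then read off from $\nabla_{\mathcal{N}}(\downarrow\!a)=\downarrow\!\nabla a$ and $(\downarrow\!a)\to_{\mathcal{N}}(\downarrow\!b)=\downarrow\!(a\to b)$ (the constraint defining $\to_{\mathcal{N}}$ being tightest at $m=a$); $j$ is injective, so this is an embedding, and for $H\in C$ a short computation shows $j$ preserves $\supset$ as well.

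The last task is to see that $\mathcal{N}(\mathcal{A})$ inherits each property in $C$. Here I would use the uniform formulas $\nabla_{\mathcal{N}}(P)=\bigcap_{b\in P^{u}}\downarrow\!\nabla b$ and $\Box_{\mathcal{N}}(P)=\nabla^{-1}(P)=\bigcap_{b\in P^{u}}\downarrow\!\Box b$, both of which follow from $\nabla\dashv\Box$ together with the unit and counit in $\mathcal{A}$. With these in hand, $R$ and $L$ are immediate; $Fa$ and $Fu$ follow by pushing the surjectivity of $\nabla$, respectively of $\Box$, through the formulas and the unit/counit; and $H$ follows by showing that, when $\mathsf{A}$ is Heyting, $\mathcal{N}(\mathsf{A})$ satisfies the frame law $M\cap\bigvee_{i}N_{i}=\bigvee_{i}(M\cap N_{i})$ --- once more the ``solve the inequality'' trick, now with the Heyting implication of $\mathsf{A}$ --- so that $\mathcal{N}(\mathsf{A})$ is a complete Heyting algebra. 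The step I anticipate as the main obstacle is the preservation of normality, i.e.\ the inclusion $\nabla_{\mathcal{N}}(M)\cap\nabla_{\mathcal{N}}(N)\subseteq\nabla_{\mathcal{N}}(M\cap N)$. When $\mathsf{A}$ is moreover Heyting this is easy, because $\mathcal{N}(\mathsf{A})$ is then a frame and the meet distributes over the generating joins; but in the general normal case $\mathcal{N}(\mathsf{A})$ need not be a frame, so the naive approach via meet-density fails and the argument must instead work directly with the description of $\nabla_{\mathcal{N}}$ above, and this is where I expect the real work to lie.
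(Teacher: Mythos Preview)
The paper does not contain its own proof of this statement: Theorem~\ref{DedekindCompletionThm} is recalled verbatim from the prequel \cite{OGHI} in the background section, with no argument given here. So there is nothing in the present paper to compare your attempt against.

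That said, your sketch is largely sound and follows the natural line. The equational axiomatization of the adjunction via unit, counit, and monotonicity is the standard move; the computation $M\to_{\mathcal{N}}N=\bigcap_{m\in M,\,b\in N^{u}}\downarrow(m\to b)$ correctly shows the implication lands in normal ideals; and your adjunction argument (bounding $\nabla[L]$ by $\nabla(z\to b)$ and applying the counit) is clean. The formulas $\nabla_{\mathcal{N}}(P)=\bigcap_{b\in P^{u}}\downarrow\nabla b$ and $\Box_{\mathcal{N}}(P)=\nabla^{-1}(P)$ are correct and make the $R$, $L$, $Fa$, $Fu$ cases routine. One point you gloss over: when $H\in C$ but you work in the signature \emph{without} $\supset$, it is not obvious that ``the underlying lattice admits a Heyting implication'' is equationally definable in $\{\wedge,\vee,0,1,\nabla,\to\}$; you should either address this or note that the claim is really about $\mathcal{V}_H(C)$ in that case. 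Your honest flagging of the normality step is appropriate: the inclusion $\nabla_{\mathcal{N}}(M)\cap\nabla_{\mathcal{N}}(N)\subseteq\nabla_{\mathcal{N}}(M\cap N)$ does not follow from the meet-density formula alone, since an arbitrary $b\in(M\cap N)^{u}$ need not dominate some $c\wedge d$ with $c\in M^{u}$, $d\in N^{u}$, and this is where the actual work in \cite{OGHI} lies.
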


The following provides a suitable family of Kripke frames that can be used to construct distributive $\nabla$-algebras. As we will see later in this section, this family is even powerful enough to represent all distributive $\nabla$-algebras.
\begin{definition}\label{DefKripke}
\emph{(Kripke Frames \cite{OGHI})}
Let $(W, \leq)$ be a poset. The tuple $(W, \leq, R)$ is called a \emph{Kripke frame} if $R$ is compatible with $\leq$, i.e., if $k'\leq k$, $(k, l) \in R$ and $l \leq l'$, then $(k', l') \in R$:
\[\small \begin{tikzcd}
	k && l \\
	\\
	{k'} && {l'}
	\arrow["\leq", from=3-1, to=1-1]
	\arrow["R", from=1-1, to=1-3]
	\arrow["\leq", from=1-3, to=3-3]
	\arrow["R"', dashed, from=3-1, to=3-3]
\end{tikzcd}\]
for any $k,l,k',l'\in W$. Moreover: 
\begin{description}
\item[$(N)$]
if there exists an order-preserving function $\pi : W \to W$, called  the \emph{normality witness}, such that $(x, y) \in R$ iff $x \leq \pi(y)$, then the Kripke frame is called \emph{normal},
\item[$(R)$]
if $R$ is reflexive or equivalently $\leq \, \subseteq R$, then the Kripke frame is called \emph{right},
\item[$(L)$]
if $R \; \subseteq \; \leq$, then the Kripke frame is called \emph{left},
\item[$(Fa)$]
if for any $x \in W$, there exists $y \in W$ such that $(y, x) \in R$ and for any $z \in W$ such that $(y, z) \in R$ we have $x \leq z$, then the Kripke frame is called \emph{faithful},
\[\small \begin{tikzcd}
	&&& z \\
	x \\
	&& y
	\arrow["R", from=3-3, to=2-1]
	\arrow["R"', from=3-3, to=1-4]
	\arrow["\leq", dashed, from=2-1, to=1-4]
\end{tikzcd}\]
\item[$(Fu)$]
if for any $x \in W$, there exists $y \in W$ such that $(x, y) \in R$ and for any $z \in W$ such that $(z, y) \in R$ we have $z \leq x$, then the Kripke frame is called \emph{full}.
\[\small \begin{tikzcd}
	& y \\
	&&& x \\
	z
	\arrow["R"', from=2-4, to=1-2]
	\arrow["R", from=3-1, to=1-2]
	\arrow["\leq"', dashed, from=3-1, to=2-4]
\end{tikzcd}\]
\end{description}
For any $C \subseteq \{N, R, L, Fa, Fu\}$, by $\mathbf{K}(C)$, we mean the class of all Kripke frames with the properties described in the set $C$. For instance, $\mathbf{K}(\{N, Fa\})$ is the class of all normal faithful Kripke frames. \\
If $\mathcal{K}=(W, \leq, R)$ and $\mathcal{K}'=(W', \leq', R')$ are two Kripke frames, then by a \emph{Kripke morphism} $f: \mathcal{K} \to \mathcal{K}'$, we mean an order-preserving function from $W$ to $W'$ such that:
\begin{itemize}
\item[$\bullet$]
For any $k, l \in W$, if $(k, l) \in R$ then $(f(k), f(l)) \in R'$,
\item[$\bullet$]
for any $k \in W$ and any $l' \in W'$ such that $(f(k), l') \in R'$, there exists $l \in W$ such that $(k, l) \in R$ and $f(l)=l'$,
\item[$\bullet$]
for any $k \in W$ and any $l' \in W'$ such that $(l', f(k)) \in R'$, there exists $l \in W$ such that $(l, k) \in R$ and $f(l) \geq' l'$.
\end{itemize}
If we also have the following condition:
\begin{itemize}
\item[$\bullet$]
for any $k \in W$ and any $l' \in W'$ such that $f(k) \leq' l'$, there exists $l \in W$ such that $k \leq l$ and $f(l)=l'$,
\end{itemize}
then the Kripke morphism $f$ is called a \emph{Heyting  Kripke morphism}. Kripke frames and Kripke morphisms form a category that we loosely denote by its class of objects $\mathbf{K}(C)$. If we use the Heyting Kripke morphisms, instead, then we denote the subcategory by $\mathbf{K}^H(C)$. 
\end{definition}
For normal Kripke frames, \cite{OGHI} provided an alternative presentation for the conditions in $\{R, L, Fa, Fu\}$ in terms of the normality witness:
\begin{lemma}\label{NormalForConditions}\label{NormalityForMorph}
Let $\mathcal{K}=(W, \leq, R)$ be a normal Kripke frame with the normality witness $\pi$. Then: 
\begin{description}
\item[$(i)$]
$(R)$ is equivalent to the condition that $ w \leq \pi(w)$, for any $w \in W$,
\item[$(ii)$]
$(L)$ is equivalent to the condition that $\pi(w) \leq w$, for any $w \in W$,
\item[$(iii)$]
$(Fa)$ is equivalent to the condition that $\pi$ is an order-embedding, i.e., if $\pi (u) \leq \pi(v)$ then $u \leq v$, for any $u, v \in W$,
\item[$(iv)$]
$(Fu)$ is equivalent to the surjectivity of $\pi$.
\end{description}
Moreover, if $\mathcal{K}=(W, \leq, R)$ and $\mathcal{K}'=(W', \leq', R')$ are two normal Kripke frames with the normality witnesses $\pi$ and $\pi'$, respectively, then for an order-preserving map $f: W \to W'$, the following are equivalent:
\begin{description}
\item[$(i)$]
$f$ is a Kripke morphism,
\item[$(ii)$]
$f \circ \pi=\pi' \circ f$ and $\pi'^{-1}(\uparrow \!\!f(k))=f[\pi^{-1}(\uparrow \!\! k)]$, for any $k \in W$.
\end{description}
\end{lemma}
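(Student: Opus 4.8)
The plan is to push everything through the normality witness, using repeatedly the defining equivalence $(x,y)\in R \iff x\leq \pi(y)$. Under this dictionary the four clauses $(R),(L),(Fa),(Fu)$ become direct translations of the stated conditions on $\pi$, and the three bullet conditions defining a Kripke morphism unfold exactly into the two assertions in $(ii)$.

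For the first part: $(i)$ is immediate, since $(w,w)\in R\iff w\leq\pi(w)$, so reflexivity of $R$ is literally the stated condition. For $(ii)$, $R\subseteq{\leq}$ unwinds to ``$x\leq\pi(y)$ implies $x\leq y$, for all $x,y$''; instantiating $x:=\pi(y)$ gives $\pi(y)\leq y$, and conversely $x\leq\pi(y)\leq y$ closes the loop. For $(iii)$, the faithfulness clause reads: for every $x$ there is $y$ with $y\leq\pi(x)$ such that $y\leq\pi(z)$ implies $x\leq z$; if $\pi$ is an order-embedding, $y:=\pi(x)$ works, and conversely, applying the clause at $x:=u$ and then using $\pi(u)\leq\pi(v)$ inside the implication yields $u\leq v$. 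For $(iv)$, the fullness clause reads: for every $x$ there is $y$ with $x\leq\pi(y)$ such that $z\leq\pi(y)$ implies $z\leq x$; if $\pi$ is surjective, any $y$ with $\pi(y)=x$ works, and conversely, instantiating the implication at $z:=\pi(y)$ forces $\pi(y)\leq x\leq\pi(y)$, so $x$ lies in the image of $\pi$.

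For the equivalence $(i)\Leftrightarrow(ii)$, I would first record that $\pi^{-1}(\uparrow k)=\{l\in W: (k,l)\in R\}$ and $\pi'^{-1}(\uparrow f(k))=\{l'\in W': (f(k),l')\in R'\}$, so the second assertion in $(ii)$ says precisely $\{l': (f(k),l')\in R'\}=f[\{l: (k,l)\in R\}]$; its ``$\supseteq$'' half is exactly the first bullet in the definition of a Kripke morphism (forward preservation of $R$), and its ``$\subseteq$'' half is exactly the second bullet. It thus remains to match the third bullet with $f\circ\pi=\pi'\circ f$. For $(i)\Rightarrow(ii)$: applying the first bullet to $(\pi(w),w)\in R$ (valid since $\pi(w)\leq\pi(w)$) gives $f(\pi(w))\leq'\pi'(f(w))$; for the reverse inequality apply the third bullet with $k:=w$ and $l':=\pi'(f(w))$ --- legitimate because $(\pi'(f(w)),f(w))\in R'$ --- obtaining some $l$ with $(l,w)\in R$, i.e.\ $l\leq\pi(w)$, and $l'\leq' f(l)$, whence $\pi'(f(w))=l'\leq' f(l)\leq' f(\pi(w))$ by monotonicity of $f$; hence $f\circ\pi=\pi'\circ f$. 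For $(ii)\Rightarrow(i)$: the first two bullets are the set identity just noted, and the third bullet follows by taking, for $k$ and $l'$ with $(l',f(k))\in R'$, i.e.\ $l'\leq'\pi'(f(k))=f(\pi(k))$, the witness $l:=\pi(k)$, which satisfies $(\pi(k),k)\in R$ and $f(\pi(k))\geq' l'$.

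I do not expect a genuine obstacle here; the only thing requiring attention is the bookkeeping --- matching each of the three morphism bullets with the correct piece of $(ii)$, and in particular recognizing that the third bullet (a ``downward'' condition on $R$) is exactly what delivers the non-trivial inequality $\pi'\circ f\leq' f\circ\pi$, the reverse inequality being free from the first bullet. Once the dictionary $(x,y)\in R\iff x\leq\pi(y)$ is in place, everything else is routine rewriting.
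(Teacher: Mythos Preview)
Your proof is correct in every detail. Note, however, that the paper does not actually prove this lemma: it is imported from the prequel \cite{OGHI} and stated without proof, so there is no argument here to compare yours against. Your approach---systematically translating each clause through the dictionary $(x,y)\in R\iff x\le\pi(y)$---is exactly the natural one, and your bookkeeping in the morphism part (matching the first two bullets to the two inclusions of the set identity $\pi'^{-1}(\uparrow f(k))=f[\pi^{-1}(\uparrow k)]$, and using the third bullet together with the first to pin down $f\circ\pi=\pi'\circ f$) is clean and complete.
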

Finally, we present the connection between the categories of distributive $\nabla$-algebras and Kripke frames \cite{OGHI}. First, any Kripke frame gives rise to a $\nabla$-algebra in a canonical way. Define the assignment $\mathfrak{U}$ on the Kripke frame $\mathcal{K}=(W, \leq, R)$ as the tuple $ \mathfrak{U}(\mathcal{K})=(U(W, \leq), \nabla_{\mathcal{K}}, \to_{\mathcal{K}})$, where $U(W, \leq)$ is the bounded distributive lattice of the upsets of $(W, \leq)$ and $\nabla_{\mathcal{K}}(U)=\{x \in X \mid \exists y \in U, (y, x) \in R \}$ and $U \to_{\mathcal{K}} V=\{x \in X \mid \forall y \in U, [(x, y) \in R \Rightarrow y \in V] \}$. Moreover, for any Kripke morphism $f: \mathcal{K} \to \mathcal{K}'$, define $\mathfrak{U}(f)=f^{-1}: \mathfrak{U}(\mathcal{K}') \to \mathfrak{U}(\mathcal{K})$.  
For the converse, let $\mathcal{A}=(\mathsf{A}, \nabla, \to)$ be a distributive $\nabla$-algebra. Define $\mathfrak{P}(\mathcal{A})=(\mathcal{F}_p(\mathsf{A}), \subseteq, R_{\mathcal{A}})$, where $\mathcal{F}_p(\mathsf{A})$ is the set of all prime filters of $\mathsf{A}$ and the relation $R_{\mathcal{A}}$ as the set of pairs
$(P, Q)$ such that [($a \to b \in P$ and $a \in Q$) implies $b \in Q$], for any $a, b \in A$. We have $(P, Q) \in R_{\mathcal{A}}$ iff $\nabla [P] \subseteq Q$ \cite{OGHI}.
Moreover, for any $\nabla$-algebra morphism $f: \mathcal{A} \to \mathcal{B}$ define $\mathfrak{P}(f)=f^{-1}: \mathfrak{P}(\mathcal{B}) \to \mathfrak{P}(\mathcal{A})$ and set $i_{\mathcal{A}}: \mathsf{A} \to U(\mathcal{F}_p(\mathsf{A}), \subseteq)$ as $i_{\mathcal{A}}(a)=\{P \in \mathcal{F}_p(\mathsf{A}) \mid a \in P\}$.
\begin{theorem}\label{KripkeEmbedding} 
\begin{description}
    \item[$(i)$] 
The assignment $\mathfrak{U}: \mathbf{K}^{op} \to \mathbf{Alg}_{\nabla}(H)$ is a functor and for any $C \subseteq \{N, R, L, Fa, Fu\}$, if $\mathcal{K} \in \mathbf{K}(C)$, then $\mathfrak{U}(\mathcal{K})$ lands in $\mathbf{Alg}_{\nabla}(C, H)$. Moreover, the restriction of the functor $\mathfrak{U}$ to $[\mathbf{K}^H(C)]^{op}$ lands in $\mathbf{Alg}^H_{\nabla}(C)$. 
   \item[$(ii)$] 
The assignment $\mathfrak{P}: \mathbf{Alg}_{\nabla}(D) \to \mathbf{K}^{op}$ is a functor and $i_{\mathcal{A}}: \mathcal{A} \to \mathfrak{U} (\mathfrak{P}(\mathcal{A}))$ is a $\nabla$-algebra embedding, natural in $\mathcal{A}$. Moreover, for any $C \subseteq \{N, R, L, Fa, Fu\}$, if $\mathcal{A} \in \mathbf{Alg}_{\nabla}(C, D)$, then $\mathfrak{P}(\mathcal{A})$ lands in $[\mathbf{K}(C)]^{op}$. It also maps $\mathbf{Alg}^H_{\nabla}(C)$ to $[\mathbf{K}^H(C)]^{op}$ where $i_{\mathcal{A}}$ becomes a Heyting $\nabla$-algebra morphism.
\end{description}
\end{theorem}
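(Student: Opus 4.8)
The plan is to prove the two parts in turn: part $(i)$, which manufactures an algebra from a frame, is essentially definition-chasing once the $\nabla$-algebra axiom is checked, whereas part $(ii)$ is the genuine Kripke representation theorem and rests on repeated use of the prime-filter (Birkhoff--Stone) separation lemma for distributive lattices.

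For part $(i)$ I would first check, using compatibility of $R$ with $\leq$ read in the right direction (so that $x \leq x'$ and $(x',y)\in R$ yield $(x,y)\in R$), that $\nabla_{\mathcal{K}}(U)$ and $U \to_{\mathcal{K}} V$ are again upsets; then $\mathfrak{U}(\mathcal{K})$ is a $\nabla$-algebra because $\nabla_{\mathcal{K}}(C)\cap A\subseteq B$ and $C\subseteq A\to_{\mathcal{K}} B$ unfold to literally the same condition on $R$, and it is Heyting because the lattice of upsets of any poset is a Heyting algebra. For morphisms, $\mathfrak{U}(f)=f^{-1}$ preserves $\cap,\cup,\emptyset,W$ for free; preservation of $\nabla_{\mathcal{K}}$ uses the first clause of a Kripke morphism together with the third (back) clause, preservation of $\to_{\mathcal{K}}$ uses the first and second clauses, and the extra clause of a Heyting Kripke morphism delivers preservation of $\supset$; functoriality of $(-)^{-1}$ is immediate. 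The implications $C\Rightarrow$ (the matching algebraic condition) are short: normality turns $\nabla_{\mathcal{K}}$ into $\pi^{-1}$ (an upset containing some $y\leq\pi(x)$ already contains $\pi(x)$), hence a bounded-lattice homomorphism; $(R)$ is reflexivity of $R$ and $(L)$ is $R\subseteq\,\leq$; and $(Fa),(Fu)$ are checked directly, since from the faithfulness frame condition one gets $U\subseteq\nabla_{\mathcal{K}}\Box_{\mathcal{K}}U$ (the converse being the counit of $\nabla_{\mathcal{K}}\dashv\Box_{\mathcal{K}}$, with $\Box_{\mathcal{K}}=W\to_{\mathcal{K}}(-)$) and dually $\Box_{\mathcal{K}}\nabla_{\mathcal{K}}U\subseteq U$ from the fullness condition.

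For part $(ii)$, $\mathfrak{P}(\mathcal{A})$ is a Kripke frame because, by the recalled identity $(P,Q)\in R_{\mathcal{A}}\Leftrightarrow\nabla[P]\subseteq Q$, compatibility of $R_{\mathcal{A}}$ is just monotonicity of $\nabla$ and of $\subseteq$; that $\mathfrak{P}(f)=f^{-1}$ is a Kripke morphism needs that preimages of prime filters under a bounded-lattice morphism are prime, that the first clause follows from $f$ preserving $\nabla$, and that the second and third clauses are obtained by prime-filter extension, separating $\langle\nabla[P]\cup f[Q']\rangle$ from the ideal generated by $f[A\setminus Q']$ (using that $f$ preserves $\to$ and $\wedge$, plus the $\nabla$-algebra adjunction), respectively separating $\langle f[Q']\rangle$ from $\{b:\nabla b\notin P\}$; functoriality of $\mathfrak{P}$ is then trivial. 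The heart of the matter is that $i_{\mathcal{A}}$ is a $\nabla$-embedding: injectivity and preservation of $\wedge,\vee,0,1$ are the classical Stone/Birkhoff facts for distributive lattices; for $\nabla$ one proves $\{P:\nabla a\in P\}=\{P:\exists Q,\ a\in Q\text{ and }\nabla[Q]\subseteq P\}$, the nontrivial inclusion extending $\uparrow a$ to a prime filter disjoint from $I_0=\{c:\nabla c\notin P\}$, and $I_0$ is an ideal precisely because $\nabla$, being a left adjoint, preserves finite joins; for $\to$ one proves $\{P:a\to b\in P\}=\{P:\forall Q\ (\nabla[P]\subseteq Q\text{ and }a\in Q\Rightarrow b\in Q)\}$, the nontrivial inclusion extending $\langle\nabla[P]\cup\{a\}\rangle$ to a prime filter disjoint from $\downarrow b$, where one first replaces a meet $\bigwedge_i\nabla p_i$ by the smaller $\nabla(\bigwedge_i p_i)$ by monotonicity and then invokes the adjunction $\nabla c\wedge a\leq b\Leftrightarrow c\leq a\to b$. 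Naturality of $i$ is the one-line computation $(f^{-1})^{-1}(i_{\mathcal{A}}(a))=i_{\mathcal{B}}(f(a))$. For the conditions: $(N)$ is witnessed by $\pi(Q)=\nabla^{-1}[Q]$, which is a prime filter since normality makes $\nabla$ a bounded-lattice homomorphism, and then $P\subseteq\pi(Q)\Leftrightarrow\nabla[P]\subseteq Q$; $(R)$ and $(L)$ are immediate from $a\leq\nabla a$, resp. $\nabla a\leq a$, with upward closure of prime filters; $(Fa)$ is obtained by extending $\uparrow\Box[P]$ to a prime filter $Q$ disjoint from $I_0$, where $\Box$ preserving meets and the faithfulness identity $\nabla\Box=\mathrm{id}$ force $\langle\nabla[Q]\rangle=P$, and $(Fu)$ by separating $\uparrow\nabla[P]$ from $\downarrow\nabla[A\setminus P]$, using that $\nabla$ preserves joins and the fullness identity $\Box\nabla=\mathrm{id}$; finally the Heyting refinement ($\mathfrak{P}(f)$ a Heyting Kripke morphism and $i_{\mathcal{A}}$ preserving $\supset$) is the classical Esakia-style back-and-forth argument.

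I expect the main obstacle to be the cluster of prime-filter separation arguments in part $(ii)$: one must verify in each case that the relevant upward or downward closures are genuine filters or ideals and are disjoint. The delicate points are $(a)$ that sets of the form $\{c:\nabla c\notin P\}$ and $\downarrow\nabla[A\setminus P]$ are ideals, which rests entirely on $\nabla$ preserving finite joins and sending $0$ to $0$; $(b)$ the collapse $\bigwedge_i\nabla p_i\geq\nabla(\bigwedge_i p_i)$ that brings the $\nabla$-algebra adjunction into play in the $\to$-preservation step; and $(c)$ the $(Fa)$ and $(Fu)$ cases, where the witnessing prime filter must be conjured from the identities $\nabla\Box=\mathrm{id}$ and $\Box\nabla=\mathrm{id}$ rather than from any order-theoretic monotonicity. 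Everything else reduces to unwinding definitions.
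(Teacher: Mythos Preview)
The paper does not actually prove this theorem: it is stated in Section~\ref{Recall} as a result recalled from the prequel \cite{OGHI}, with no proof given here. So there is no ``paper's own proof'' to compare against.

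That said, your outline is the standard route and is essentially correct. A few remarks on the points you flagged as delicate. For the second Kripke-morphism clause in part $(ii)$, your separation of $\langle \nabla[P]\cup f[Q']\rangle$ from the ideal generated by $f[A\setminus Q']$ works, but the disjointness really uses the \emph{implication} description of $R_{\mathcal{A}}$ (if $a\to b\in f^{-1}(P)$ and $a\in Q'$ then $b\in Q'$) together with $f$ preserving $\to$, exactly as you indicate; make that explicit. For $(Fa)$, your choice $Q\supseteq\uparrow\!\Box[P]$ disjoint from $I_0$ is right: properness of $\uparrow\!\Box[P]$ follows since $\Box p=0$ would give $p=\nabla\Box p=\nabla 0=0\in P$, impossible for a prime filter; and then $\nabla[Q]\supseteq\nabla\Box[P]=P$ gives the required minimality. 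For $(Fu)$, the disjointness of $\uparrow\!\nabla[P]$ and $\downarrow\!\nabla[A\setminus P]$ uses $\Box\nabla=\mathrm{id}$ in the form $\nabla p\leq \nabla a\Rightarrow p=\Box\nabla p\leq\Box\nabla a=a$, which is exactly the monotone consequence you need (mere injectivity of $\nabla$ would not suffice). Your identification of the key algebraic facts—$\nabla$ preserving finite joins and $0$ (so $I_0$ and $\downarrow\!\nabla[A\setminus P]$ are ideals), and the collapse $\bigwedge_i\nabla p_i\geq\nabla(\bigwedge_i p_i)$—is on target.
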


Finally, let us recall the amalgamation property proved in \cite{OGHI}:

\begin{theorem}\label{Amalgamation}(Amalgamation) Let $C \subseteq \{R, L, Fa\}$. Then, the varieties $\mathcal{V}(C, D, N)$ and $\mathcal{V}_H(C, N)$ have the amalgamation property.
\end{theorem}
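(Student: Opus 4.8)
The plan is to prove amalgamation by transporting the problem, via the Kripke machinery of Theorem \ref{KripkeEmbedding}, to a \emph{pullback} (fibered product) of Kripke frames, and then transporting back. Start with a span of embeddings $\mathcal{A} \xleftarrow{u} \mathcal{C} \xrightarrow{v} \mathcal{B}$ in $\mathcal{V}(C, D, N)$ (resp.\ $\mathcal{V}_H(C, N)$). First I would check that $\mathfrak{P}(u)=u^{-1}$ and $\mathfrak{P}(v)=v^{-1}$ are \emph{surjective} Kripke morphisms: this is the prime filter extension lemma — every prime filter $P$ of $\mathsf{C}$ is the preimage of a prime filter of $\mathsf{A}$ (resp.\ $\mathsf{B}$), since an embedding of bounded distributive lattices lets one separate $u[P]$ from $u[\mathsf{C}\setminus P]$ by a prime filter. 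Write $W_A=\mathfrak{P}(\mathcal{A})$, $W_B=\mathfrak{P}(\mathcal{B})$, $W_C=\mathfrak{P}(\mathcal{C})$, and $p=\mathfrak{P}(u)\colon W_A\to W_C$, $q=\mathfrak{P}(v)\colon W_B\to W_C$, with normality witnesses $\pi_A,\pi_B,\pi_C$ (these exist since $N\in C$); recall from Lemma \ref{NormalityForMorph} that $p\circ\pi_A=\pi_C\circ p$ and $\pi_C^{-1}(\uparrow\! p(a))=p[\pi_A^{-1}(\uparrow\! a)]$, and likewise for $q$.

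Next, define $W_D$ to be the set-theoretic pullback $\{(a,b)\in W_A\times W_B : p(a)=q(b)\}$ with the componentwise order and the componentwise map $\pi_D(a,b)=(\pi_A(a),\pi_B(b))$; this does land in $W_D$ because $p\pi_A(a)=\pi_C p(a)=\pi_C q(b)=q\pi_B(b)$. Let $R_D$ be the relation induced by $\pi_D$, so $(W_D,\leq,R_D)$ is automatically a normal Kripke frame. The projections $r\colon W_D\to W_A$ and $s\colon W_D\to W_B$ are order-preserving and commute with the witnesses; using the characterization of Kripke morphisms in Lemma \ref{NormalityForMorph}(ii), I would verify the ``back'' condition $\pi_A^{-1}(\uparrow\! r(x))=r[\pi_D^{-1}(\uparrow\! x)]$: given $a'$ with $\pi_A(a')\geq a$ (for $x=(a,b)$), one gets $\pi_C p(a')\geq p(a)=q(b)$, so $p(a')\in\pi_C^{-1}(\uparrow\! q(b))=q[\pi_B^{-1}(\uparrow\! b)]$, yielding $b'$ with $\pi_B(b')\geq b$ and $q(b')=p(a')$, i.e.\ $(a',b')\in W_D$ and $\pi_D(a',b')\geq x$. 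Surjectivity of $r$ (resp.\ $s$) follows from surjectivity of $q$ (resp.\ $p$). In the explicitly Heyting case, the extra Heyting-morphism clause for $r$ follows in the same manner from the fact that $q$ is a \emph{Heyting} Kripke morphism (Theorem \ref{KripkeEmbedding}(ii)), and symmetrically for $s$ from $p$.

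Then one checks that $W_D$ inherits the conditions in $C$: by Lemma \ref{NormalForConditions}, $(R)$, $(L)$, and $(Fa)$ amount to $w\leq\pi(w)$, $\pi(w)\leq w$, and $\pi$ being an order-embedding, and each of these is manifestly preserved by a componentwise witness on a sub-poset of a product. This is exactly why the theorem restricts to $C\subseteq\{R,L,Fa\}$: the condition $(Fu)$, i.e.\ surjectivity of $\pi$, is \emph{not} preserved by the pullback (a $\pi$-preimage of $(c,d)$ need not consist of a $p,q$-compatible pair), which is the real reason $Fu$ must be excluded. Finally, set $\mathcal{D}=\mathfrak{U}(W_D)$, which lies in $\mathcal{V}(C,D,N)$ — and, reading $W_D$ with its Heyting Kripke morphisms, in $\mathcal{V}_H(C,N)$ — by Theorem \ref{KripkeEmbedding}(i), and take the amalgamating maps to be $\mathfrak{U}(r)\circ i_{\mathcal{A}}\colon\mathcal{A}\to\mathcal{D}$ and $\mathfrak{U}(s)\circ i_{\mathcal{B}}\colon\mathcal{B}\to\mathcal{D}$. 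These are embeddings since $i_{\mathcal{A}},i_{\mathcal{B}}$ are embeddings (Theorem \ref{KripkeEmbedding}(ii)) and $\mathfrak{U}(r),\mathfrak{U}(s)$ are injective because $r,s$ are onto (and are Heyting morphisms in the Heyting case by the same theorem). Commutativity of the square, $\mathfrak{U}(r)\circ i_{\mathcal{A}}\circ u=\mathfrak{U}(s)\circ i_{\mathcal{B}}\circ v$, follows from naturality of $i$ together with $p\circ r=q\circ s$: both sides equal $\mathfrak{U}(p\circ r)\circ i_{\mathcal{C}}$.

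The main obstacle is checking that the two projections out of the pullback frame are Kripke morphisms — and Heyting Kripke morphisms in the explicit case — i.e.\ verifying the ``back'' and Heyting clauses of Definition \ref{DefKripke}; this is where the normality-witness reformulation of Lemma \ref{NormalityForMorph} does the real work and where the interplay between $p$, $q$, and the witnesses $\pi_A,\pi_B,\pi_C$ must be handled with care. Everything else is bookkeeping, apart from the single observation that isolates why $(Fu)$ cannot be included.
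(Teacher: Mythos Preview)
The paper does not contain a proof of this theorem: it is stated in Section~\ref{Recall} as a result recalled from the prequel \cite{OGHI}, and the introduction explicitly says the Kripke-style representation ``we used to show that certain varieties of normal distributive $\nabla$-algebras possess the amalgamation property'' is developed there. So there is no in-paper proof to compare against, only the indication that the argument goes through the Kripke representation.

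That said, your proposal is exactly the argument the machinery of Section~\ref{Recall} is set up to support, and it is correct. You dualize the span to a cospan of surjective (Heyting) Kripke morphisms via $\mathfrak{P}$ and Lemma~\ref{SurjToInj}, form the pullback of normal Kripke frames with the componentwise witness, verify via Lemma~\ref{NormalityForMorph}(ii) that the projections are (Heyting) Kripke morphisms and surjective, check that $R$, $L$, $Fa$ transfer to the pullback by Lemma~\ref{NormalForConditions}, and then push everything back through $\mathfrak{U}$ and the natural embedding $i$ from Theorem~\ref{KripkeEmbedding}. Your verification of the back condition for $r$ using the morphism identity for $q$, and of the Heyting clause in the explicitly Heyting case, is the heart of the matter and is handled correctly; the remark isolating why $Fu$ fails (surjectivity of the witness need not survive the fibered product) is also the right diagnosis. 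This is the standard and intended route, and almost certainly coincides with the prequel's proof.
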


\section{Priestley-Esakia Duality for Distributive $\nabla$-algebras}\label{PriEsaDuality}
In this section, we will first introduce a new spatial entity called a \emph{$\nabla$-space} to offer a common generalization of both Priestley and Esakia spaces. Then, leveraging the Kripke representation developed in \cite{OGHI} and recalled in Section \ref{Recall}, we will establish a duality between various categories of distributive $\nabla$-algebras and their corresponding $\nabla$-spaces. This result not only generalizes but also unifies Priestley duality for bounded distributive lattices and Esakia duality for Heyting algebras.

\subsection{Priestley and Esakia dualities}
In this subsection, we will briefly recall the notions and constructions underlying Priestley and Esakia dualities.
A pair $(X, \leq)$ of a topological space and a partial order is called a \textit{Priestley space} if $X$ is compact and for any $x, y \in X$, if $x \nleq y$, there exists a clopen upset $U$ such that $x \in U$ and $y \notin U$. A Priestley space is called an \textit{Esakia space} if the set $\downarrow \! U=\{x \in X \mid \exists y \in U \; x \leq y\}$ is clopen, for any clopen $U$. A continuous and order-preserving function $f: (X, \leq_X) \to (Y, \leq_Y)$ between two Priestley spaces is called a \textit{Priestley map}. In case that both $(X, \leq_X)$ and $(Y, \leq_Y)$ are Esakia spaces and $f[\uparrow \!x]=\uparrow \! f(x)$, for any $x \in X$, the Priestley map $f$ is called an \textit{Esakia map}. Priestley spaces and Priestley maps form a category, which we denote by $\mathbf{Pries}$. The same holds for Esakia spaces and Esakia maps. This category is denoted by $\mathbf{Esakia}$.

The following lemma summarizes some well-known properties of Priestley spaces that we will use later in this section. For further details, see \cite{Pri, Bezh}.
\begin{lemma}\label{PropPriestly} 
Every Priestley space $(X, \leq)$ has the following properties:
\begin{itemize}
\item[$\bullet$]
$X$ is a Hausdorff and zero-dimensional space. The latter means that $X$ has a basis consisting of clopen subsets. In fact, these clopen subsets can be chosen from the intersections of clopen upsets and clopen downsets of $X$. In other words, the family of clopen upsets and clopen downsets of $X$ forms a subbasis for $X$.
\item[$\bullet$]
For any closed subset $F \subseteq X$, both $\uparrow \! F$ and $\downarrow \! F$ are closed. More specifically, $\uparrow \! x$ is closed, for any $x \in X$.
\item[$\bullet$]
Each closed upset (downset) of $X$ is an intersection of clopen upsets (downsets) of $X$.
\item[$\bullet$]
Each open upset (downset) of $X$ is a union of clopen upsets (downsets) of $X$.
\item[$\bullet$]
For each pair of closed subsets $F$ and $G$ of $X$, if $\uparrow \! F \cap \downarrow \! G = \varnothing$, then there exists a clopen upset $U$ such that $F \subseteq U$ and $U \cap G = \varnothing$.
\end{itemize}
\end{lemma}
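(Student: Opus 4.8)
The plan is to derive every item from the single separation property in the definition of a Priestley space, together with the compactness of $X$ (and hence of each of its closed subsets). The recurring device is this: if a subset $S$ is an upset and $y \notin S$, then $x \nleq y$ for every $x \in S$, so the Priestley axiom supplies, for each such $x$, a clopen \emph{upset} $U_x$ with $x \in U_x$ and $y \notin U_x$; a compactness argument over $S$ (or over $X \setminus S$) then glues these into a single clopen upset. The dual device applies to downsets.

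First I would handle Hausdorffness and the subbasis claim. For Hausdorffness, given $x \neq y$ we may assume $x \nleq y$, take a clopen upset $U$ with $x \in U$, $y \notin U$, and note that $U$ and $X \setminus U$ separate them. For the subbasis claim, fix an open $V$ and $x \in V$; for each $y \in X \setminus V$ either $x \nleq y$, giving a clopen upset $C_y \ni x$ with $y \notin C_y$, or $y \nleq x$, giving a clopen upset $D \ni y$ with $x \notin D$, so that $C_y := X \setminus D$ is a clopen downset with $x \in C_y$, $y \notin C_y$. Since $X \setminus V$ is closed, hence compact, and is covered by $\{X \setminus C_y\}_{y \notin V}$, a finite subcover yields $x \in \bigcap_{i=1}^{n} C_{y_i} \subseteq V$, a finite intersection of clopen upsets and clopen downsets. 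As finite intersections of clopens are clopen, this also gives zero-dimensionality.

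Next come the closure statements and the ``intersection/union of clopens'' statements, all instances of the template above. For $\uparrow \! x$: if $x \nleq y$, a clopen upset $U \ni x$ with $y \notin U$ satisfies $\uparrow \! x \subseteq U$ and $y \in X \setminus U$, so $\uparrow \! x$ is closed. For a closed set $F$: if $y \notin \uparrow \! F$ then $x \nleq y$ for all $x \in F$, so covering $F$ (compact, being closed in $X$) by clopen upsets avoiding $y$ and taking a finite subcover produces a clopen upset $U \supseteq F$ with $y \notin U$; this shows $\uparrow \! F$ is closed and — taking $F$ to be an upset, so $F = \uparrow \! F$ — that every closed upset is the intersection of the clopen upsets containing it. For an open upset $V$: $X \setminus V$ is a closed downset, and for $x \in V$, $y \notin V$ one has $x \nleq y$; a finite subcover of $X \setminus V$ by complements of clopen upsets containing $x$ yields a clopen upset $U$ with $x \in U \subseteq V$, so $V$ is a union of clopen upsets. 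The assertions about $\downarrow$, closed downsets, and open downsets are dual.

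Finally, for the separation of closed sets: given closed $F, G$ with $\uparrow \! F \cap \downarrow \! G = \varnothing$, note that $\uparrow \! F$ is a closed upset, hence $\uparrow \! F = \bigcap_{i \in I} U_i$ for clopen upsets $U_i$ by the previous step, while $\downarrow \! G$ is closed. Then $\{U_i \cap \downarrow \! G\}_{i \in I}$ is a family of closed subsets of the compact space $X$ with empty intersection, so by the finite intersection property some finite subfamily indexed by $J$ already has empty intersection; $U := \bigcap_{i \in J} U_i$ is then a clopen upset with $F \subseteq \uparrow \! F \subseteq U$ and $U \cap G \subseteq U \cap \downarrow \! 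G = \varnothing$. No step is genuinely difficult; the only real care needed is to keep track of the direction of the order so that the Priestley axiom always yields a clopen \emph{upset} in the relevant configuration, and to run each compactness argument against the correct closed (hence compact) set.
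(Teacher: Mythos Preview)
Your proof is correct and follows the standard route; the paper itself does not prove this lemma but merely states it as well known, citing \cite{Pri, Bezh}. Each of your compactness arguments is set up correctly, and in particular the last item is handled cleanly via the finite intersection property applied to $\{U_i \cap \downarrow\! G\}_{i\in I}$.
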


The Priestley (resp. Esakia) duality defines two functors between the category of bounded distributive lattices (resp. Heyting algebras) and their corresponding maps, denoted by $\mathbf{DLat}$ (resp. $\mathbf{Heyting}$), and the dual of the category $\mathbf{Pries}$ (resp. $\mathbf{Esakia}$). We will recall these two functors. First, we have the functor $\mathfrak{S}_0: \mathbf{DLat} \to \mathbf{Pries}^{op}$ defined on objects by $\mathfrak{S}_0(\mathsf{A})=(\mathcal{F}_p(\mathsf{A}), \subseteq)$, where the topology on $\mathcal{F}_p(\mathsf{A})$ is defined by the basis of the opens in the form $\{P \in \mathcal{F}_p(\mathsf{A}) \mid a \in P \; \text{and} \; b \notin P \}$, for any $a, b \in A$ and defined on the morphism $f: \mathsf{A} \to \mathsf{B}$ by $\mathfrak{S}_0(f)=f^{-1}:(\mathcal{F}_p(\mathsf{B}), \subseteq) \to (\mathcal{F}_p(\mathsf{A}), \subseteq)$. The second functor is $\mathfrak{A}_0: \mathbf{Pries}^{op} \to \mathbf{DLat}$ defined on objects by $\mathfrak{A}_0(X, \leq)=CU(X, \leq)$, where $CU(X, \leq)$ is the lattice of all clopen upsets of $X$ with the inclusion as the order and defined on the morphism $f: (X, \leq_X) \to (Y, \leq_Y)$ by $\mathfrak{A}_0(f)=f^{-1}: CU(Y, \leq_{Y}) \to CU(X, \leq_X)$. These two functors also map the subcategories $\mathbf{Heyting}$ and $\mathbf{Esakia}^{op}$ to each other.

\begin{theorem}(Priestley-Esakia Duality (\cite{Pri, Esakia})\label{PriestleyDuality} 
The functors $\mathfrak{S}_0$ and $\mathfrak{A}_0$ and the following natural isomorphisms:
\begin{itemize}
\item[]
$\alpha : \mathsf{A} \to \mathfrak{A}_0\mathfrak{S}_0(\mathsf{A})$ defined by $\alpha(a)=\{P \in \mathcal{F}_p(\mathsf{A}) \mid a \in P\}$,
\item[]
$\beta: (X, \leq) \to \mathfrak{S}_0\mathfrak{A}_0(X, \leq)$ defined by $\beta(x)=\{U \in CU(X, \leq) \mid x \in U\} $.
\end{itemize}
establish an equivalence between the categories $\mathbf{DLat}$ and $\mathbf{Pries}^{op}$.  The same also holds for $\mathbf{Heyting}$ and $\mathbf{Esakia}^{op}$.
\end{theorem}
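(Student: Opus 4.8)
The statement to be proved is the Priestley--Esakia duality theorem (Theorem \ref{PriestleyDuality}), so the plan is really to recall the classical proof, since this is a known result being stated for context. I would organize it as follows.

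\medskip

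\noindent\textbf{Proof plan.} The plan is to verify, in turn, (1) that $\mathfrak{S}_0$ and $\mathfrak{A}_0$ are well-defined functors, (2) that $\alpha$ and $\beta$ are well-defined bijections, (3) that they are morphisms in the respective categories (i.e.\ order-isomorphisms that are also homeomorphisms), (4) that they are natural, and (5) that the same restricts to the Heyting/Esakia case. For step (1), the key points are that $\mathcal{F}_p(\mathsf{A})$ with the stated subbasis is a Priestley space---compactness follows from an ultrafilter/prime-filter extension argument (equivalently, via the prime filter theorem, $\mathcal{F}_p(\mathsf{A})$ is a closed subspace of $2^A$ and hence compact), and the Priestley separation axiom is exactly the prime filter theorem applied to $a$ and the ideal generated by elements $\nleq$---and that $f^{-1}$ preserves primeness of filters, continuity, and order. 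For the Esakia refinement one checks that when $\mathsf{A}$ is a Heyting algebra, $\downarrow\! U$ is clopen for clopen $U$ using that $\downarrow\! \alpha(a)^c = \alpha(\text{something})$; dually, an Esakia map $f$ induces a Heyting-algebra morphism because the condition $f[\uparrow\! x] = \uparrow\! f(x)$ is precisely what is needed for $f^{-1}$ to commute with relative pseudocomplement. For $\mathfrak{A}_0$, the clopen upsets of a Priestley space form a bounded distributive lattice under $\cup,\cap$, and they form a Heyting algebra when the space is Esakia, with $U \supset V = (\downarrow\!(U \setminus V))^c$ (this is where the Esakia condition is used: it guarantees this set is clopen).

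\medskip

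\noindent\textbf{Key steps for the isomorphisms.} For $\alpha$: injectivity is the prime filter theorem (if $a \neq b$, say $a \nleq b$, then some prime filter contains $a$ but not $b$); surjectivity onto $CU(\mathfrak{S}_0(\mathsf{A}))$ uses compactness---a clopen upset is a finite union of basic opens contained in it, and one pushes this through to show it equals $\alpha(a)$ for a suitable $a$; the fact that $\alpha$ is a lattice (and, in the Esakia case, Heyting) morphism is a direct computation on prime filters. For $\beta$: injectivity is the Priestley separation axiom; surjectivity---that every prime filter of $CU(X,\leq)$ is $\beta(x)$ for some $x$---follows from compactness, intersecting the closed sets $U$ ($U$ in the prime filter) and complements of $V$ ($V$ not in it) and checking the finite intersection property; $\beta$ is continuous and open onto its image, hence a homeomorphism, because it pulls back and pushes forward basic clopens correctly, and it is an order-isomorphism by the separation axiom. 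The last step is to check naturality of $\alpha$ and $\beta$, which amounts to the commuting square $f^{-1}\circ\alpha_{\mathsf B} = \alpha_{\mathsf A}\circ f$ (immediate from the definitions) and its dual.

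\medskip

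\noindent\textbf{Main obstacle.} The genuinely non-formal parts are the topological ones: establishing compactness of $\mathcal{F}_p(\mathsf{A})$ and, correspondingly, the surjectivity of $\alpha$ and $\beta$, all of which hinge on the prime filter (Boolean prime ideal) theorem together with compactness arguments of the ``finite subcover / finite intersection property'' type. The Esakia-specific claims---that $\downarrow\! U$ clopen is equivalent to the existence of relative pseudocomplements, and that the morphism condition $f[\uparrow\! x] = \uparrow\! f(x)$ corresponds to preservation of $\supset$---also require care but are standard. Since this theorem is classical and is included here only to set up the generalization to $\nabla$-spaces in the next subsection, I would in fact cite \cite{Pri, Esakia} (and e.g.\ \cite{Bezh}) for the full details rather than reproving it, and merely indicate the above skeleton.
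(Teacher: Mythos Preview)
Your proposal is correct, and in fact goes well beyond what the paper does: the paper gives no proof at all of this theorem, simply stating it with citations to \cite{Pri, Esakia} as a classical background result. Your final instinct---to cite the references rather than reprove it---matches the paper exactly, while your sketch of the standard argument is a welcome bonus.
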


Finally, let us provide a concrete characterization for the epic and regular monic maps in the category $\mathbf{Pries}$. For that purpose, first recall the following lemma. 
\begin{lemma}\label{SurjToInj} \cite{OGHI}
Let $\mathsf{A}$ and $\mathsf{B}$ be two bounded distributive lattices and $f: \mathsf{A} \to \mathsf{B}$ be an injective bounded lattice map. Then, the map $f^{-1}: \mathcal{F}_{p}(\mathsf{B)} \to \mathcal{F}_{p}(\mathsf{A)}$ is surjective. 
\end{lemma}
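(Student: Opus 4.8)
The plan is to obtain the surjectivity of $f^{-1}$ on prime filters from the Prime Filter Theorem for bounded distributive lattices. Fix a prime filter $P \in \mathcal{F}_p(\mathsf{A})$; we must produce $Q \in \mathcal{F}_p(\mathsf{B})$ with $f^{-1}(Q) = P$. The natural data on the $\mathsf{B}$-side are the filter $F := \; \uparrow \! f[P]$ generated by the $f$-image of $P$ and the ideal $I := \; \downarrow \! f[A \setminus P]$ generated by the $f$-image of the prime ideal $A \setminus P$. Since $P$ is a filter and $A \setminus P$ an ideal, and $f$ preserves finite meets and joins, $F$ is exactly the set of elements of $\mathsf{B}$ lying above some single $f(p)$ with $p \in P$, and $I$ is the set of elements lying below some single $f(q)$ with $q \in A \setminus P$; in particular both are nonempty (they contain $1 = f(1)$ and $0 = f(0)$ respectively).

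The key step is to show $F \cap I = \varnothing$. If not, there are $p \in P$ and $q \in A \setminus P$ with $f(p) \leq f(q)$ in $\mathsf{B}$. Then $f(p \wedge q) = f(p) \wedge f(q) = f(p)$, and here injectivity of $f$ enters: it forces $p \wedge q = p$, i.e.\ $p \leq q$, and hence $q \in P$ since $P$ is upward closed, contradicting $q \in A \setminus P$. This is the only point at which injectivity is used, and it is the heart of the argument; everything else is a routine unwinding of the definitions of generated filters and ideals.

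Once $F \cap I = \varnothing$, the Prime Filter Theorem yields a prime filter $Q$ of $\mathsf{B}$ with $F \subseteq Q$ and $Q \cap I = \varnothing$. From $f[P] \subseteq F \subseteq Q$ we get $P \subseteq f^{-1}(Q)$, and from $f[A \setminus P] \subseteq I$ together with $Q \cap I = \varnothing$ we get $f^{-1}(Q) \cap (A \setminus P) = \varnothing$, i.e.\ $f^{-1}(Q) \subseteq P$; hence $f^{-1}(Q) = P$. Since $f$ is a bounded lattice homomorphism and $Q$ is a prime filter, $f^{-1}(Q)$ is automatically a prime filter of $\mathsf{A}$, so $Q \in \mathcal{F}_p(\mathsf{B})$ is the desired preimage and $f^{-1}$ is surjective. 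The only genuine obstacle is the disjointness verification in the second paragraph; the rest follows formally.
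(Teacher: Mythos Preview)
Your argument is correct and is the standard Prime Filter Theorem proof of this fact. Note that the present paper does not actually prove this lemma: it is only stated here and attributed to the prequel \cite{OGHI}, so there is no in-paper proof to compare against; your approach is in any case the canonical one and is almost certainly what the cited reference does.
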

Then, we have the following characterization:
\begin{lemma}\label{PriestleyInjSurj}
Let $(X, \leq_X)$ and $(Y, \leq_Y)$ be Priestley spaces and the function $f: (X, \leq_X) \to (Y, \leq_Y)$ be a Priestley map. Then: 
\begin{description}
\item[$(i)$]
$f$ is surjective iff $f^{-1}: CU(Y, \leq_Y) \to CU(X, \leq_X)$ is one-to-one iff $f$ is an epic map in $\mathbf{Pries}$.
\item[$(ii)$]
$f$ is an order-embedding iff $f^{-1}: CU(Y, \leq_Y) \to CU(X, \leq_X)$ is surjective iff $f$ is a regular monic in $\mathbf{Pries}$.
\end{description}
\end{lemma}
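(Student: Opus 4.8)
The plan is to prove the two chains of equivalences in Lemma~\ref{PriestleyInjSurj} by establishing a cycle of implications, using the Priestley duality (Theorem~\ref{PriestleyDuality}) to translate between the topological and algebraic sides, and Lemma~\ref{SurjToInj} together with its dual-flavored companion for the order-embedding case. For part $(i)$, I would argue $f \text{ surjective} \Rightarrow f^{-1} \text{ injective} \Rightarrow f \text{ epic in } \mathbf{Pries} \Rightarrow f \text{ surjective}$. The first implication is immediate: if $f[X]=Y$ and $f^{-1}(U)=f^{-1}(V)$ for clopen upsets $U,V\subseteq Y$, then for $y\in Y$ pick $x$ with $f(x)=y$, and $y\in U \Leftrightarrow x\in f^{-1}(U) \Leftrightarrow x\in f^{-1}(V) \Leftrightarrow y\in V$, so $U=V$. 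The second implication uses that $\mathfrak{A}_0$ is (part of) an equivalence $\mathbf{Pries}^{op}\simeq\mathbf{DLat}$: $f^{-1}=\mathfrak{A}_0(f)$ injective in $\mathbf{DLat}$ means $\mathfrak{A}_0(f)$ is monic there (injective lattice maps are monic), hence $f$ is epic in $\mathbf{Pries}$ since equivalences reverse and preserve epis/monos across the $op$. The third, $f$ epic $\Rightarrow$ $f$ surjective, is the genuinely topological step: I would suppose $f$ is not surjective, so $f[X]\subsetneq Y$; since $X$ is compact and $f$ continuous, $f[X]$ is closed in $Y$, and picking $y_0\notin f[X]$, I use Hausdorffness/zero-dimensionality of the Priestley space $Y$ (Lemma~\ref{PropPriestly}) to separate $y_0$ from the closed set $f[X]$ by a clopen set, then build two distinct Priestley maps $Y\to Z$ agreeing on $f[X]$ — a standard choice being $Z=\{0<1\}$ (or a two-point Priestley space) and two maps that differ only on the value assigned near $y_0$ — contradicting that $f$ is epic. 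Care is needed to make the two competing maps order-preserving and continuous: using a clopen \emph{upset} separating (an upset- or downset-saturated neighborhood of) $y_0$ from $f[X]$, guaranteed by the last bullet of Lemma~\ref{PropPriestly}, lets one define the characteristic-function-style maps into $\{0<1\}$ cleanly.

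For part $(ii)$, I would run the analogous cycle $f \text{ order-embedding} \Rightarrow f^{-1}\text{ surjective} \Rightarrow f \text{ regular monic in }\mathbf{Pries} \Rightarrow f \text{ order-embedding}$. The implication $f$ order-embedding $\Rightarrow f^{-1}:CU(Y)\to CU(X)$ surjective is the key surjectivity fact: given a clopen upset $V\subseteq X$, I want a clopen upset $U\subseteq Y$ with $f^{-1}(U)=V$. Since $f$ is an order-embedding of Priestley spaces it is in particular a topological embedding onto its (closed, by compactness) image $f[X]$, so $V=f[X]\cap \tilde V$ for some clopen $\tilde V\subseteq Y$; then one must upward-close and re-clopen-ify this using Lemma~\ref{PropPriestly} — concretely, $V$ corresponds under the embedding to a clopen upset of the sub-Priestley-space $f[X]$, and one extends it to a clopen upset of $Y$ via the separation property (fifth bullet of Lemma~\ref{PropPriestly}) applied to the closed sets $V$ (viewed in $Y$) and $f[X]\setminus V$. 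Alternatively, and more cleanly, I would invoke Lemma~\ref{SurjToInj} in its dual guise: $f$ an order-embedding means $f$ is a regular monic, equivalently (translating through the equivalence) $\mathfrak{A}_0(f)=f^{-1}$ is a regular epi in $\mathbf{DLat}$, i.e. surjective, since regular epis of bounded distributive lattices are the surjections. The implication $f^{-1}$ surjective $\Rightarrow$ $f$ regular monic is pure category theory via the equivalence: $f^{-1}$ surjective is a regular epi in $\mathbf{DLat}$, and an equivalence of categories sends regular epis to regular monos across the $op$. Finally $f$ regular monic $\Rightarrow$ $f$ order-embedding: a regular monic in $\mathbf{Pries}$ is an equalizer, hence in particular injective and an order-embedding — here I would use that the forgetful functor $\mathbf{Pries}\to\mathbf{Poset}$ (or to $\mathbf{Set}$) creates the relevant limits, or argue directly that equalizers in $\mathbf{Pries}$ are computed on underlying ordered sets with the subspace topology and subspace order, so the inclusion of an equalizer is automatically an order-embedding, and then transport this along the iso exhibiting $f$ as such an equalizer.

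The main obstacle I expect is the topological step in $(i)$ — showing an epic Priestley map is surjective — and the corresponding extension-of-clopen-upsets step in $(ii)$; both hinge on using the separation properties of Priestley spaces correctly (especially the fifth bullet of Lemma~\ref{PropPriestly}, which separates disjoint ``$\uparrow$-closed'' and ``$\downarrow$-closed'' configurations by a clopen upset) and on knowing that $f[X]$ is closed, which follows from compactness of $X$ and Hausdorffness of $Y$. Everything else is either a one-line set-theoretic check or an application of the Priestley/Esakia equivalence (Theorem~\ref{PriestleyDuality}) combined with the elementary facts that in $\mathbf{DLat}$ monos/regular epis are exactly injections/surjections and that an equivalence of categories preserves and reflects epis, monos, and regular ones. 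I would structure the write-up as: (a) recall $f$ epic (resp. regular monic) in $\mathbf{Pries}$ $\Leftrightarrow$ $f^{-1}$ monic (resp. regular epic) in $\mathbf{DLat}$ $\Leftrightarrow$ $f^{-1}$ injective (resp. surjective), all via Theorem~\ref{PriestleyDuality}; (b) prove $f$ surjective $\Rightarrow f^{-1}$ injective and $f$ order-embedding $\Rightarrow f^{-1}$ surjective directly; (c) close the loops with the two topological lemmas above, citing Lemma~\ref{SurjToInj} where it shortcuts the argument.
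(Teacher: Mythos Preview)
Your proposal is essentially correct and close in spirit to the paper's proof, but with two points worth noting.

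First, for part $(i)$ you close the loop topologically (epic $\Rightarrow$ surjective by building two distinct Priestley maps into a two-point space that agree on $f[X]$), whereas the paper closes it algebraically: after passing through duality to a $\mathbf{DLat}$ map $\phi$, it invokes Lemma~\ref{SurjToInj} directly to get ``$\phi$ injective $\Rightarrow$ $\phi^{-1}$ surjective on prime filters'', which is exactly ``$f^{-1}$ injective on $CU$ $\Rightarrow$ $f$ surjective''. Your topological argument is fine (using the two-point space with \emph{trivial} order makes order-preservation automatic and avoids the care you anticipate), but the paper's route is shorter given that Lemma~\ref{SurjToInj} is already available.

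Second, in part $(ii)$ your ``alternative, cleaner'' argument for $f$ order-embedding $\Rightarrow f^{-1}$ surjective is circular: you write ``$f$ an order-embedding means $f$ is a regular monic'', but that implication is part of what you are proving in this very cycle. Drop that alternative and keep your direct argument, which is exactly what the paper does: with $U\subseteq X$ a clopen upset, one checks $\uparrow\! f[U]\cap\downarrow\! f[U^c]=\varnothing$ using the order-embedding property, then applies the fifth bullet of Lemma~\ref{PropPriestly} to obtain a clopen upset $V\subseteq Y$ with $f[U]\subseteq V$ and $V\cap f[U^c]=\varnothing$, whence $f^{-1}(V)=U$. The paper also proves the reverse implication $f^{-1}$ surjective $\Rightarrow$ $f$ order-embedding directly (rather than via ``regular monic $\Rightarrow$ equalizer $\Rightarrow$ order-embedding'' as you do), and for the equivalence with regular monics it spells out in $\mathbf{DLat}$ that surjections coincide with regular epis by an explicit kernel-pair/coequalizer construction; your appeal to ``regular epis in $\mathbf{DLat}$ are surjections'' and ``equalizers in $\mathbf{Pries}$ are computed on underlying posets'' is a legitimate shortcut, but be prepared to justify both facts if asked.
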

\begin{proof}
For $(i)$, first note that the surjectivity of $f$ implies that $f^{-1}$ is one-to-one on all the subsets of $Y$, including the clopen upsets. For the rest, using Priestley duality, Theorem \ref{PriestleyDuality}, w.l.o.g, we can assume that $(X, \leq_X)=(\mathcal{F}_p(\mathsf{A}), \subseteq)$, $(Y, \leq_Y)=(\mathcal{F}_p(\mathsf{B}), \subseteq)$
and $f=\phi^{-1}$, where $\phi: \mathsf{B} \to \mathsf{A}$ is a $\mathbf{DLat}$ morphism. Note that $f^{-1}$ is isomorphic to $\phi$. Then, it is enough to prove the dual statement, consisting of the following two claims: First, if $\phi$ is one-to-one, then $\phi^{-1}$ is surjective and second, $\phi$ is one-to-one iff $\phi$ is monic in $\mathbf{Pries}$. The first claim is proved in Lemma \ref{SurjToInj}. For the second claim, note that all one-to-one maps are clearly monic. The non-trivial converse is a well-known fact about all algebraic structures and uses the existence of the free structures. For the sake of completeness, we will explain the main points of the proof. Assume that $\phi: \mathsf{B} \to \mathsf{A}$ is monic and $\phi(a)=\phi(b)$, for some $a, b \in B$. Consider the bounded distributive lattice $\mathsf{C}=(\{\varnothing, \{1\}, \{0, 1\}\}, \cap, \cup, \varnothing, \{0, 1\})$ which is the free bounded distributive lattice generated by one element. Set $\psi, \theta : \mathsf{C} \to \mathsf{B}$ as the morphisms mapping $\{1\}$ to $a$ and $b$, respectively. Clearly, $\phi \circ \psi=\phi \circ \theta$. As $\phi$ is monic, we have $\psi=\theta$ which implies $a=b$.

For $(ii)$, we first prove the first equivalence. Assume that the function $f^{-1}: CU(Y, \leq_Y) \to CU(X, \leq_X)$ is surjective. Then, $f$ is clearly an order-embedding, because if $f(x) \leq_Y f(y)$ and $x \nleq_X y$, then there exists a clopen upset $U \subseteq X$ such that $x \in U$ and $y \notin U$. Since $f^{-1}$ is surjective, there exists a clopen upset $V \subseteq Y$ such that $f^{-1}(V)=U$. Therefore, $x \in f^{-1}(V)$ but $y \notin f^{-1}(V)$ which is equivalent to $f(x) \in V$ and $f(y) \notin V$. The last is impossible as $f(x) \leq_Y f(y)$  and $V$ is an upset.
For the converse, assume that $f$ is an order-embedding and $U$ is a clopen upset of $X$. We will provide a clopen upset $V$ of $Y$ such that $f^{-1}(V)=U$. First, note that as $U$ and $U^c$ are closed, they are also compact and hence $f[U]$ and $f[U^c]$ are compact and hence closed. We first claim that $\uparrow \!\! f[U] \cap \downarrow \!\! f[U^c]=\varnothing$. Because, if $y \in \; \uparrow \! f[U] \cap \downarrow \! f[U^c]$, there are $x \in U$ and $z \notin U$ such that $f(x) \leq_Y y \leq_Y f(z)$. Since $f$ is an order-embedding, we have $x \leq_X z$ and since $x \in U$ and $U$ is an upset, we reach $z \in U$ which is a contradiction. Hence, $\uparrow \! f[U] \cap \downarrow \! f[U^c]=\varnothing$. By Lemma \ref{PropPriestly}, there exists a clopen upset $V$ such that $f[U] \subseteq V$ and $f[U^c] \cap V=\varnothing$. The former implies $U \subseteq f^{-1}(V)$ and the latter proves $f^{-1}(V) \subseteq U$. Hence, $f^{-1}(V)=U$. 

For the second equivalence, again by Priestley duality, Theorem \ref{PriestleyDuality}, it is enough to prove that  $\phi$ is surjective iff it is regular epic in $\mathbf{DLat}$, for any $\mathbf{DLat}$ morphism $\phi: \mathsf{B} \to \mathsf{A}$. This is again a well-known fact for the algebraic structures. However, to keep the categorical preliminaries as low as possible, we highlight the main points of the proof. First, assume that $\phi$ is a coequalizer of $\psi, \theta: \mathsf{C} \to \mathsf{B}$ in $\mathbf{DLat}$. Consider $\phi[B]$ as the image of $B$ under $\phi$. As $\phi[B]$ is closed under finite meets and joins, it inherits the bounded lattice structure from $\mathsf{A}$. Hence, the inclusion map $i: \phi[B] \to \mathsf{A}$ is a $\mathbf{DLat}$ map. Moreover, the map $\phi': \mathsf{B} \to \phi[B]$ induced by $\phi$ is also a $\mathbf{DLat}$ map. Note that the latter equalizes $\psi$ and $\theta$. Hence, by the universality of the coequalizer, there is a map $j: \mathsf{A} \to \phi[B]$ such that $j\phi=\phi'$:
\[\small\begin{tikzcd}[ampersand replacement=\&]
	{\mathsf{C}} \&\& {\mathsf{B}} \&\& {\mathsf{A}} \\
	\\
	\&\&\&\& {\phi[B]} \\
	\\
	\&\&\&\& {\mathsf{A}}
	\arrow["\psi", shift left, from=1-1, to=1-3]
	\arrow["\theta"', shift right, from=1-1, to=1-3]
	\arrow["\phi", from=1-3, to=1-5]
	\arrow["{\phi'}", from=1-3, to=3-5]
	\arrow["j", from=1-5, to=3-5]
	\arrow["i", from=3-5, to=5-5]
	\arrow["{id_{\mathsf{A}}}", curve={height=-30pt}, from=1-5, to=5-5]
	\arrow["\phi"', from=1-3, to=5-5]
\end{tikzcd}\]
It is clear that $i\phi'=\phi$. Hence, $ij\phi=\phi$. As $id_{\mathsf{A}}\phi=\phi$, by the universality, we have $ij=id_{\mathsf{A}}$ which implies $j(a)=a$, for any $a \in \mathsf{A}$. Hence, for any $a \in A$, as $j(a) \in \phi[B]$, we have $a \in \phi[B]$. Therefore, $\phi[B]=A$ which means that $\phi$ is surjective. For the converse, assume that $\phi: \mathsf{B} \to \mathsf{A}$ is surjective. Consider the poset $\mathsf{C}$ with the underlying set $\{(b, c) \in B^2 \mid \phi(b)=\phi(c)\}$ and the pointwise order. It is easy to see that $\mathsf{C}$ is a bounded distributive lattice. Now, consider the projection functions $p_0, p_1: \mathsf{C} \to \mathsf{B}$. It is clear that both of these functions are $\mathbf{DLat}$ maps coequalized by
$\phi$. To prove the universality, assume that $\psi: \mathsf{B} \to \mathsf{D}$ coequalizes $p_0$ and $p_1$:
\[ \begin{tikzcd}
	{\mathsf{C}} && {\mathsf{B}} && {\mathsf{A}} \\
	\\
	&&&& {\mathsf{D}}
	\arrow["{p_0}", shift left=1, from=1-1, to=1-3]
	\arrow["{p_1}"', shift right=1, from=1-1, to=1-3]
	\arrow["\phi", from=1-3, to=1-5]
	\arrow["\psi"', from=1-3, to=3-5]
	\arrow["j", dashed, from=1-5, to=3-5]
\end{tikzcd}\]
As $\phi$ is surjective, for any $a \in A$, there is a $b \in B$ such that $\phi(b)=a$. Define $j: \mathsf{A} \to \mathsf{D}$ by $j(a)=\psi(b)$, where $\phi(b)=a$. As $\psi$ coequalizes $p_0$ and $p_1$, it is easy to see that $j(a)$ is independent of the choice of $b$. By definition, $j \phi=\psi$. Showing that $j$ is a $\mathbf{DLat}$ morphism and it is unique with the property $j\phi=\psi$ is easy.
\end{proof}

\subsection{$\nabla$-Spaces and generalized Esakia spaces}\label{SubsubsectionofGEsakia}
To make the Kripke representation theorem for distributive $\nabla$-algebras into a full-scale duality, we must refine a Kripke frame by adding a suitable topological structure to specify the upsets coming from the algebraic side. This is the task of the present subsection, where we introduce two new notions of a $\nabla$-space and a generalized Esakia space as the topological versions of a Kripke frame and a normal Kripke frame, respectively.
 
\begin{definition}\label{DefNablaSpace}
A \emph{$\nabla$-space} is a tuple $(X, \leq, R)$ of a Priestley space $(X, \leq)$ and a binary relation $R$ on $X$ such that:
\begin{itemize}
\item[$\bullet$]
$R$ is compatible with the order, i.e., $x'\leq x$, $(x, y) \in R$ and $y \leq y'$ imply $(x', y') \in R$, for any $x, y, x', y' \in X$,
\item[$\bullet$]
$R[x]=\{y \in X \mid (x, y) \in R\}$ is closed, for every $x \in X$,
\item[$\bullet$]
$\lozenge_R(U)=\{x \in X \mid \exists y \in U \; (x, y) \in R\}$ is clopen, for any clopen $U$,
\item[$\bullet$]
$\nabla_R(V)=\{x \in X \mid \exists y \in V \; (y, x) \in R \}$ is a clopen upset, for any clopen upset $V$.
\end{itemize}
Note that any $\nabla$-space is a Kripke frame, if we forget the topology of the space. A $\nabla$-space satisfies a condition in the set $\{N, R, L, Fa, Fu\}$, if it satisfies the condition as a Kripke frame. A $\nabla$-space is called \emph{Heyting}, if $(X, \leq)$ is an Esakia space. 
For $\nabla$-spaces $(X, \leq_X, R_X)$ and $(Y, \leq_Y, R_Y)$, by a \emph{$\nabla$-space map} $f: (X, \leq_X, R_X) \to (Y, \leq_Y, R_Y)$, we mean a Kripke morphism that is also continuous. Note that any $\nabla$-space map is also a Priestley map. A $\nabla$-space map is called \emph{Heyting}, if it is Heyting as a Kripke morphism or equivalently if it is an Esakia map. For any $C \subseteq \{N, H, R, L, Fa, Fu\}$, the class of all $\nabla$-spaces satisfying the conditions in $C$ together with $\nabla$-space maps form a category denoted by $\mathbf{Space}_{\nabla}(C)$. If we restrict the objects to Heyting $\nabla$-spaces and the morphisms to Heyting $\nabla$-space maps, we denote the subcategory by $\mathbf{Space}^{H}_{\nabla}(C)$.
\end{definition}

\begin{example}\label{ExamplesofNablaSpaces}
There are two interesting degenerate families of $\nabla$-spaces corresponding to $R=\varnothing$ and $R=\; \leq$. The first family characterizes Priestley spaces while the second identifies Esakia spaces. To explain, for the former, let $(X, \leq)$ be a Priestley space and set $R=\varnothing$. It is clear that $(X, \leq, R)$ satisfies all the required properties of Definition \ref{DefNablaSpace} as $R[x]=\Diamond_R(U)=\nabla_R(V)=\varnothing$, for any $x \in X$, any clopen $U$ and any clopen upset $V$. Therefore, we can conclude that $\nabla$-spaces generalize Priestley spaces. Moreover, note that if $f:(X, \leq_X) \to (Y, \leq_Y)$ is a Priestley map, then it is also a $\nabla$-space map from $(X, \leq_X, \varnothing)$ to $(Y, \leq_Y, \varnothing)$. Hence, $\mathbf{Pries}$ is a full subcategory of $\mathbf{Space}_{\nabla}$. 
For the second degenerate family, let $(X, \leq)$ be a Priestley space and set $R=\; \leq$. We show that $(X, \leq, R)$ is a $\nabla$-space iff $(X, \leq)$ is an Esakia space. Checking the required properties of Definition \ref{DefNablaSpace}, we can see that all are automatic except one. The compatibility condition is a consequence of transitivity of $\leq$. 
For the second condition, note that $R[x]=\uparrow \! x$ and $\uparrow \! x$ is always closed in any Priestley space, by Lemma \ref{PropPriestly}. For the fourth condition, notice that $\nabla_R(V)=\uparrow \! V$ and as $V$ is an upset, we have $\nabla_R(V)=V$ which makes the last condition automatic, as well. The only non-automatic condition is the third condition. As $\lozenge_R(U)=\downarrow \! U$, the third condition is nothing but the Esakia condition. Therefore, we can conclude that $\nabla$-spaces generalize Esakia spaces, as well. Moreover, note that if $(X, \leq_X) $ and $(Y, \leq_Y)$ are Esakia spaces, then a Priestley map $f:(X, \leq_X) \to (Y, \leq_Y)$ is a $\nabla$-space map from $(X, \leq_X, \leq_X)$ to $(Y, \leq_Y, \leq_Y)$ iff it is an Esakia map. The reason again is that all the properties of a $\nabla$-space map are automatic except one. Preserving the relation $R$ is a result of being order-preserving and the last condition states that if $y \leq_Y f(x)$, there is a $z \in X$ such that $z \leq_X x$ and $y \leq_Y f(z)$ which is clear by setting $z=x$. The only non-trivial condition is the second condition which becomes the Esakia condition. Altogether, we can conclude that $\mathbf{Esakia}$ is also a full subcategory of $\mathbf{Space}_{\nabla}$. 
\end{example}

By a \emph{dynamic object} in a category $\mathcal{C}$, we mean a pair in the form $(A, \pi)$, where $A$ and $\pi: A \to A$ are an object and a morphism in $\mathcal{C}$, respectively. The object $A$ and the morphism $\pi: A \to A$ are called the \emph{space} and the \emph{dynamism} of the dynamic object $(A, \pi)$, respectively. A dynamic object $(A, \pi)$ in $\mathcal{C}$ is called \emph{reversible} if $\pi$ is an isomorphism. By a morphism $f: (A, \pi) \to (B, \tau)$ of dynamic objects in $\mathcal{C}$, we mean a morphism $f: A \to B$ such that $f\pi=\tau f$. 
Dynamic objects in $\mathcal{C}$ with their morphisms form a category denoted by $\mathcal{C}^{\mathbb{N}}$. The full subcategory of reversible dynamic objects in $\mathcal{C}$ is denoted by $\mathcal{C}^{\mathbb{Z}}$. In this section, we are interested in dynamic objects in $\mathbf{Pries}$ and $\mathbf{Esakia}$, also called \emph{dynamic Priestley spaces} and \emph{dynamic Esakia spaces}, respectively.

In the rest of this subsection, we will provide an alternative presentation for the categories of normal (faithful or full) $\nabla$-spaces as some natural and (not necessarily full) subcategories of dynamic Priestley spaces. The characterization also implies that the category $\mathbf{Space}_{\nabla}(N, Fa, Fu)$ is isomorphic to the category of reversible Esakia spaces.
To start, let us rewrite the definition of a normal $\nabla$-space, presented in Definition \ref{DefNablaSpace}, in terms of its normality witness.

\begin{lemma}\label{NormalityForPriestly}
Let $(X, \leq)$ be a Priestley space and $\pi: X \to X$ be a function. Define the relation $R=\{(x, y) \in X^2 \mid x \leq \pi(y)\}$. Then $(X, \leq, R)$ is a normal $\nabla$-space iff $\pi$ is a Priestley map and $\downarrow \! \pi[U]$ is clopen, for any clopen $U$.
\end{lemma}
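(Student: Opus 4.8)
The plan is to unravel both directions by translating between the relation $R = \{(x,y) \mid x \le \pi(y)\}$ and the map $\pi$, using the already-established dictionary between $\nabla$-space conditions and Kripke-frame conditions (Lemma \ref{NormalForConditions}), plus the basic order-topological facts about Priestley spaces (Lemma \ref{PropPriestly}). First I would record the purely order-theoretic observations. The relation $R$ as defined is automatically compatible with $\le$: if $x' \le x$, $x \le \pi(y)$ and $y \le y'$, then since $\pi$ is order-preserving (which we will need as a hypothesis in the "only if" direction, and which we get for free in the "if" direction) we have $x' \le x \le \pi(y) \le \pi(y')$. Also, by definition of $R$, we have $R[x] = \{y \mid x \le \pi(y)\} = \pi^{-1}(\uparrow\! x)$ and $\lozenge_R(U) = \{x \mid \exists y \in U,\ x \le \pi(y)\} = \downarrow\! \pi[U]$, while $\nabla_R(V) = \{x \mid \exists y \in V,\ y \le \pi(x)\} = \{x \mid \pi(x) \in \uparrow\! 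V\} = \pi^{-1}(\uparrow\! V) = \pi^{-1}(V)$ when $V$ is an upset. This last identity is the key simplification: the fourth bullet of Definition \ref{DefNablaSpace} becomes the statement that $\pi^{-1}(V)$ is a clopen upset for every clopen upset $V$, i.e., exactly (one half of) the requirement that $\pi$ be a Priestley map.

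For the "if" direction, suppose $\pi$ is a Priestley map and $\downarrow\! \pi[U]$ is clopen for every clopen $U$. Then $R$ is compatible with $\le$ by the computation above, and by construction $\pi$ is a normality witness for $R$, so $(X,\le,R)$ is a normal Kripke frame. It remains to check the four topological bullets of Definition \ref{DefNablaSpace}. The second bullet asks that $R[x] = \pi^{-1}(\uparrow\! x)$ be closed; since $\uparrow\! x$ is closed in a Priestley space (Lemma \ref{PropPriestly}) and $\pi$ is continuous, this holds. The third bullet asks that $\lozenge_R(U) = \downarrow\!\pi[U]$ be clopen, which is exactly our hypothesis. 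The fourth bullet asks that $\nabla_R(V) = \pi^{-1}(V)$ be a clopen upset for every clopen upset $V$: it is clopen since $\pi$ is continuous, and it is an upset since $\pi$ is order-preserving. The first bullet (compatibility) was already noted. Hence $(X,\le,R)$ is a normal $\nabla$-space.

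For the "only if" direction, suppose $(X,\le,R)$ is a normal $\nabla$-space. By definition of normal $\nabla$-space (via normal Kripke frame, Definition \ref{DefKripke}), there is an order-preserving normality witness $\pi'$ with $(x,y) \in R \iff x \le \pi'(y)$; but our $\pi$ is, by the statement's definition of $R$, also a witness in the sense that $(x,y)\in R \iff x \le \pi(y)$. I would argue $\pi = \pi'$: for each $y$, the sets $\{x \mid x \le \pi(y)\}$ and $\{x \mid x \le \pi'(y)\}$ coincide, i.e., $\downarrow\!\pi(y) = \downarrow\!\pi'(y)$, so $\pi(y) = \pi'(y)$ by antisymmetry of $\le$. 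In particular $\pi$ is order-preserving. Continuity of $\pi$: take a subbasic clopen of $X$; by Lemma \ref{PropPriestly} the clopen upsets together with the clopen downsets form a subbasis, so it suffices to check $\pi^{-1}(V)$ is clopen for $V$ a clopen upset and for $V$ a clopen downset. For a clopen upset $V$, $\pi^{-1}(V) = \nabla_R(V)$ is clopen by the fourth bullet. For a clopen downset $W$, write $W = X \setminus V$ with $V = X\setminus W$ a clopen upset; then $\pi^{-1}(W) = X \setminus \pi^{-1}(V)$ is clopen. So $\pi$ is continuous and order-preserving, hence a Priestley map. Finally, $\downarrow\!\pi[U] = \lozenge_R(U)$ is clopen for every clopen $U$ by the third bullet. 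This completes the equivalence.

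The routine parts are the order-theoretic identities $R[x] = \pi^{-1}(\uparrow\! x)$, $\lozenge_R(U) = \downarrow\!\pi[U]$, $\nabla_R(V) = \pi^{-1}(V)$; the only step requiring a little care is establishing continuity of $\pi$ in the "only if" direction, where one must reduce to the subbasis of clopen upsets and clopen downsets (Lemma \ref{PropPriestly}) since the defining conditions of a $\nabla$-space only directly control $\pi^{-1}$ on clopen upsets. I expect that subbasis reduction to be the main (and only mild) obstacle.
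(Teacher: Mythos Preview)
Your proposal is correct and follows essentially the same approach as the paper: both arguments hinge on the three identities $R[x]=\pi^{-1}(\uparrow\! x)$, $\lozenge_R(U)=\downarrow\!\pi[U]$, and $\nabla_R(V)=\pi^{-1}(V)$, and both handle continuity in the ``only if'' direction by reducing to the subbasis of clopen upsets (via complements for the downsets). The only cosmetic difference is that the paper phrases the continuity reduction as ``$\pi^{-1}$ commutes with union, intersection, and complement, so it suffices to treat clopen upsets,'' whereas you spell out the clopen-downset case explicitly.
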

\begin{proof}
First, note that as $(x, y) \in R$ is equivalent to $x \leq \pi(y)$, we have the following equlities: $R[x]=\pi^{-1}(\uparrow \! x)$, $\lozenge_R(U)=\downarrow \! \pi[U]$ and  $\nabla_R(V)=\pi^{-1}(V)$, for any $x \in X$, any subset $U$ and any upset $V$. Now, assume that $(X, \leq, R)$ is a normal $\nabla$-space. We prove that $\pi$ is a Priestley map and $\downarrow \! \pi[U]$ is clopen, for any clopen $U$. As $(X, \leq, R)$ is normal, there exists an order-preserving function $\sigma: (X, \leq) \to (X, \leq)$ such that $(x, y) \in R$ iff $x \leq \sigma(y)$. Therefore, $x \leq \sigma(y)$ iff $x \leq \pi(y)$ which implies $\sigma=\pi$. Hence, $\pi$ is order-preserving. As for any clopen $U$, the subset $\lozenge_R(U)$ is clopen and we have $\lozenge_R(U)=\downarrow \! \pi[U]$, the only thing to prove is the continuity of $\pi$.  For that purpose, we have to show that $\pi^{-1}(W)$ is open, for any open $W \subseteq X$. As the space is Priestley, it has a sub-basis constituting of clopen upsets and clopen downsets, by Lemma \ref{PropPriestly}. Since $\pi^{-1}$ commutes with union, intersection, and complement, it is enough to show that $\pi^{-1}(V)$ is open, for any clopen upset $V$. But $\pi^{-1}(V)= \nabla_R[V] $ and $\nabla_R[V]$ is a clopen upset. Conversely, assume that $\pi$ is a Priestley map and $\downarrow \! \pi[U]$ is clopen, for any clopen $U$. To show that $(X, \leq, R)$ is a $\nabla$-space, first note that $R$ is compatible with the order as $\pi$ is order-preserving. Secondly, by Lemma \ref{PropPriestly}, the subset $\uparrow \! \! x$ is closed and since $\pi$ is continuous, the set $R[x]=\pi^{-1}(\uparrow \!\! x)$ is also closed. Thirdly, as $\downarrow \!\! \pi[U]=\lozenge_R(U)$, for any clopen $U$, the subset $\lozenge_R(U)$ is clopen. Fourthly, for any clopen upset $V$, we have $\nabla_R(V)=\pi^{-1}(V)$ which is also a clopen upset, by continuity of $\pi$ and the fact that $\pi$ is order-preserving. Note that, for normality, as $R$ is defined by $\{(x, y) \in X^2 \mid x \leq \pi(y) \}$, there is nothing to prove.
\end{proof}

Lemma \ref{NormalityForPriestly} shows that a normal  $\nabla$-space $(X, \leq, R)$ can be equivalently presented by the tuple $(X, \leq, \pi)$, where $(X, \leq)$ is a Priestley  space and $\pi: (X, \leq) \to (X, \leq)$ is a Priestley map such that $\downarrow \! \pi[U]$ is clopen, for any clopen $U$. The first part of the data is a dynamic Priestley space. The second part, however, is reminiscent of the additional condition on Esakia spaces. Therefore, one may argue that a normal $\nabla$-space is essentially a dynamic Priestley space whose dynamism satisfies an \emph{Esakia-style condition}. This new notion deserves to be called a \emph{generalized Esakia space}.

\begin{definition}\label{gEsakia}
A \emph{generalized Esakia space} is a tuple $(X, \leq, \pi)$ where $(X, \leq)$ is a Priestley space and $\pi: (X, \leq) \to (X, \leq)$ is a Priestley map such that $\downarrow \! \pi[U]$ is clopen, for any clopen $U$. It is called \emph{Heyting} if $(X, \leq)$ is an Esakia space. By a \emph{generalized Esakia map} $f: (X, \leq_X, \pi_X) \to (Y, \leq_Y, \pi_Y)$, we mean a Priestley map $f: (X, \leq_X) \to (Y, \leq_Y)$ such that $f\pi_X=\pi_Y f$ and $\pi_Y^{-1}(\uparrow \! f(x))=f[\pi_X^{-1}(\uparrow \! x)]$, for any $x \in X$. It is called \emph{Heyting}, if it is also an Esakia map. Generalized Esakia spaces and generalized Esakia maps form a category, denoted by $\mathbf{gEsakia}$. If we restrict the objects and the morphisms to Heyting objects and Heyting maps, the subcategory is denoted by $\mathbf{gEsakia}^H$.
\end{definition}

\begin{example}\label{ExampleofNormalNablaSpace}
For a Priestley space $(X, \leq)$, it is clear that $(X, \leq, id_X)$ is a generalized Esakia space iff $(X, \leq)$ is an Esakia space. This is another presentation of the second degenerate case in Example \ref{ExamplesofNablaSpaces}, where $R= \; \leq $. More generally, assume that $(X, \leq)$ is a Priestley space and $\pi: (X, \leq) \to (X, \leq)$ is an isomorphism in $\mathbf{Pries}$. Then, the tuple $(X, \leq, \pi)$ is a generalized Esakia space iff $(X, \leq)$ is an Esakia space. The reason simply is that for a homeomorphism $\pi$, the condition ``$\downarrow \! \pi[U]$ is clopen, for any clopen $U$" is equivalent to ``$\downarrow \! U$ is clopen, for any clopen $U$".
\end{example}

So far, we have seen that generalized Esakia spaces and normal $\nabla$-spaces are different presentations of the same mathematical entity. We claim that a similar thing also holds for their corresponding morphisms.
To prove this claim, note that if $(X, \leq_X, R_X)$ and $(Y, \leq_Y, R_Y)$ are two normal $\nabla$-spaces with the normality witnesses $\pi_X$ and $\pi_Y$, respectively, then by Lemma \ref{NormalityForMorph}, a function $f: X \to Y$ is a $\nabla$-space map iff it is a Priestley map,
$f\pi_X=\pi_{Y}f$ and $\pi_Y^{-1}(\uparrow \!\! f(x))=f[\pi_X^{-1}(\uparrow \!\! x)]$, for any $x \in X$. The latter is simply the description of a generalized Esakia map. Putting Lemma \ref{NormalityForPriestly} and the above observation together, we can finally conclude
$\mathbf{Space}_{\nabla}(N) \cong \mathbf{gEsakia}$. Moreover, the isomorphism can be restricted to the Heyting case, i.e., $\mathbf{Spec}^H_{\nabla}(N) \cong \mathbf{gEsakia}^H$. The reason is that a $\nabla$-space $(X, \leq, R)$ is Heyting iff $(X, \leq)$ is Esakia and a $\nabla$-space map is Heyting iff it is an Esakia map, both by definition.
We record these two observations in a corollary.

\begin{corollary}\label{CorollaryforNormalNablaI}
$\mathbf{Space}_{\nabla}(N) \cong \mathbf{gEsakia}$ and $\mathbf{Spec}^H_{\nabla}(N) \cong \mathbf{gEsakia}^H$.
\end{corollary}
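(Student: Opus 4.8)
The plan is to build the isomorphism by hand: in both directions the functor will act as the identity on the underlying sets of points, so that the content reduces entirely to Lemma~\ref{NormalityForPriestly} (on objects) and the morphism part of Lemma~\ref{NormalityForMorph} (on arrows). Concretely, I would first define $F\colon \mathbf{Space}_\nabla(N)\to\mathbf{gEsakia}$ on an object $(X,\leq,R)$ by choosing the normality witness $\pi$ guaranteed by condition $(N)$ and setting $F(X,\leq,R)=(X,\leq,\pi)$, and define $G\colon \mathbf{gEsakia}\to\mathbf{Space}_\nabla(N)$ on an object $(X,\leq,\pi)$ by $G(X,\leq,\pi)=(X,\leq,R_\pi)$ with $R_\pi=\{(x,y)\in X^2\mid x\leq\pi(y)\}$; both $F$ and $G$ leave morphisms (as set maps) untouched.

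For well-definedness on objects I first note that the normality witness of a normal $\nabla$-space is uniquely determined: if $\pi,\sigma$ are order-preserving with $x\leq\pi(y)\iff (x,y)\in R\iff x\leq\sigma(y)$, then $\pi=\sigma$ pointwise, exactly as observed inside the proof of Lemma~\ref{NormalityForPriestly}. Thus $F$ is a genuine assignment, and Lemma~\ref{NormalityForPriestly} tells us $F(X,\leq,R)$ is indeed a generalized Esakia space while $G(X,\leq,\pi)$ is indeed a normal $\nabla$-space (with normality witness $\pi$); the same lemma makes $F$ and $G$ visibly mutually inverse on objects. For morphisms, a $\nabla$-space map between normal $\nabla$-spaces is by definition a continuous Kripke morphism, and since Kripke morphisms are order-preserving this is the same as a Priestley map that is a Kripke morphism; by the morphism equivalence of Lemma~\ref{NormalityForMorph}, for an order-preserving $f$ between the two frames this is equivalent to $f\circ\pi_X=\pi_Y\circ f$ together with $\pi_Y^{-1}(\uparrow\! f(x))=f[\pi_X^{-1}(\uparrow\! x)]$ for all $x\in X$, which is precisely the definition of a generalized Esakia map. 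Hence $f$ is a morphism on one side iff it is a morphism on the other, and since neither functor alters the underlying function, identities and composites are preserved automatically; therefore $FG=\mathrm{id}$ and $GF=\mathrm{id}$, giving $\mathbf{Space}_\nabla(N)\cong\mathbf{gEsakia}$.

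Finally, the Heyting case is obtained by restriction: a normal $\nabla$-space $(X,\leq,R)$ is Heyting iff $(X,\leq)$ is an Esakia space, and a generalized Esakia space $(X,\leq,\pi)$ is Heyting iff $(X,\leq)$ is an Esakia space, so the object bijection restricts; likewise a $\nabla$-space map is Heyting iff it is an Esakia map, and a generalized Esakia map is Heyting iff it is an Esakia map, so the morphism bijection restricts. This yields $\mathbf{Space}^H_\nabla(N)\cong\mathbf{gEsakia}^H$. I do not expect a real obstacle here: all the substantive work is already carried by Lemmas~\ref{NormalityForPriestly} and~\ref{NormalityForMorph}, and the only points requiring a line of care are the uniqueness of the normality witness (needed so that $F$ is well defined) and the unwinding of ``continuous Kripke morphism'' into the two defining equations of a generalized Esakia map.
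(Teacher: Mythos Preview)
Your proposal is correct and follows essentially the same route as the paper: the corollary is recorded there without a formal proof block because the preceding discussion already derives it from Lemma~\ref{NormalityForPriestly} on objects and the morphism part of Lemma~\ref{NormalityForMorph} on arrows, with the Heyting case handled by restricting along the definitions, exactly as you do. Your write-up simply makes explicit the two points the paper leaves implicit (uniqueness of the normality witness so that $F$ is well defined, and the identification of a continuous Kripke morphism with a Priestley map satisfying the two generalized-Esakia equations), but the underlying argument is the same.
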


To continue our dynamical representation of normal $\nabla$-spaces, let us consider the faithfulness and fullness conditions, as well:

\begin{lemma}\label{PriestleyFullFaithful}
Let $(X, \leq)$ be a Priestley space and $\pi: X \to X$ be a Priestley map such that $\downarrow \! \pi[U]$ is clopen, for any clopen $U$. Define the relation $R=\{(x, y) \in X^2 \mid x \leq \pi(y)\}$. Then:
\begin{description}
\item[$(i)$]
$(X, \leq, R)$ is a normal and faithful $\nabla$-space iff $\pi$ is a Priestley map that is also an order-embedding iff $\pi$ is a regular monic in $\mathbf{Pries}$.
\item[$(ii)$]
$(X, \leq, R)$ is a normal and full $\nabla$-space iff $\pi$ is a  surjective Priestley map iff $\pi$ is an epic map in $\mathbf{Pries}$.
\item[$(iii)$]
$(X, \leq, R)$ is a normal, faithful and full $\nabla$-space iff $\pi$ is an isomorphism in $\mathbf{Pries}$.
\end{description}
\end{lemma}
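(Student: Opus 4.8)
The plan is to reduce all three equivalences to the combination of two facts already in hand: the translation of the conditions $(Fa)$ and $(Fu)$ into properties of the normality witness (Lemma \ref{NormalForConditions}) and the characterization of epic and regular monic maps in $\mathbf{Pries}$ in terms of order-embeddings and surjections (Lemma \ref{PriestleyInjSurj}). The starting point is an observation already contained in the proof of Lemma \ref{NormalityForPriestly}: since $R = \{(x,y) \mid x \leq \pi(y)\}$, the tuple $(X,\leq,R)$ is automatically a normal $\nabla$-space whose normality witness is precisely $\pi$ (any order-preserving $\sigma$ with $(x,y)\in R$ iff $x \leq \sigma(y)$ must equal $\pi$). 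In particular $(X,\leq,R)$ is a normal Kripke frame, so Lemma \ref{NormalForConditions} applies verbatim with this $\pi$.

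For $(i)$: by Lemma \ref{NormalForConditions}$(iii)$, faithfulness of $(X,\leq,R)$ is equivalent to $\pi$ being an order-embedding; and by Lemma \ref{PriestleyInjSurj}$(ii)$, since $\pi$ is a Priestley map from $(X,\leq)$ to itself, being an order-embedding is equivalent to being a regular monic in $\mathbf{Pries}$. Chaining these gives $(i)$. Part $(ii)$ is entirely parallel: Lemma \ref{NormalForConditions}$(iv)$ turns fullness into surjectivity of $\pi$, and Lemma \ref{PriestleyInjSurj}$(i)$ turns surjectivity of a Priestley self-map into being epic in $\mathbf{Pries}$.

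For $(iii)$, combine $(i)$ and $(ii)$: the tuple $(X,\leq,R)$ is normal, faithful and full iff $\pi$ is a Priestley map that is simultaneously an order-embedding and surjective. Such a $\pi$ is a bijection (an order-embedding of posets is injective), and its set-theoretic inverse is order-preserving precisely because $\pi$ is an order-embedding, so $\pi$ is an order-isomorphism; finally $\pi$ is a continuous bijection from the compact space $X$ to the Hausdorff space $X$ (both properties hold for Priestley spaces by Lemma \ref{PropPriestly}), hence a homeomorphism, and therefore an isomorphism in $\mathbf{Pries}$. Conversely, an isomorphism in $\mathbf{Pries}$ is a bijective Priestley map whose inverse is again a Priestley map, so it is surjective and, since $\pi(x) \leq \pi(y)$ forces $x = \pi^{-1}\pi(x) \leq \pi^{-1}\pi(y) = y$, an order-embedding; parts $(i)$ and $(ii)$ then yield the three conditions.

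The only step requiring genuine care, rather than bookkeeping, is the topological point in $(iii)$: that an order-isomorphism which is also a continuous self-bijection of a Priestley space is automatically a homeomorphism, so that continuity of $\pi^{-1}$ need not be checked by hand. This is exactly the classical fact that a continuous bijection from a compact space onto a Hausdorff space has continuous inverse, and it is what allows the purely order-theoretic hypotheses coming from $(i)$ and $(ii)$ to upgrade to a statement about isomorphisms in $\mathbf{Pries}$.
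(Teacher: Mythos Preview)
Your proof is correct and follows essentially the same route as the paper: invoke Lemma \ref{NormalityForPriestly} to see that $(X,\leq,R)$ is a normal $\nabla$-space with witness $\pi$, then apply Lemma \ref{NormalForConditions} together with Lemma \ref{PriestleyInjSurj} for $(i)$ and $(ii)$, and derive $(iii)$ from these. The paper's one-line ``$(iii)$ is a consequence of $(i)$ and $(ii)$'' can be read either via your concrete compact--Hausdorff argument or via the general categorical fact that a map which is simultaneously regular monic and epic is an isomorphism; your version just spells the step out.
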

\begin{proof}
For $(i)$ and $(ii)$, by Lemma \ref{NormalityForPriestly}, the tuple $(X, \leq, R)$ is a normal $\nabla$-space. The equivalences are the consequence of Lemma \ref{NormalForConditions} and Lemma \ref{PriestleyInjSurj}. The part $(iii)$ is a consequence of $(i)$ and $(ii)$.
\end{proof}


\begin{definition}\label{gEsakiaII}
A generalized Esakia space $(X, \leq, \pi)$ is called \emph{regular monic (epic)}, if $\pi$ is regular monic (epic) in $\mathbf{Pries}$ or equivalently order-embedding (resp. surjective), by Lemma \ref{PriestleyInjSurj}. For any $I \subseteq \{rm, e\}$, the full subcategories of generalized Esakia spaces satisfying the conditions in $I$ is denoted by $\mathbf{gEsakia}_{I}$. If we restrict the objects and the morphisms to Heyting objects and Heyting maps, the subcategory is denoted by $\mathbf{gEsakia}_I^H$.
\end{definition}

Similar to our above observation, we have the following isomorphism: 
\begin{corollary}\label{CorollaryforNormalNablaII}
We have the following isomorphisms:
\begin{description}
    \item[$\bullet$]
$\mathbf{Space}_{\nabla}(N, Fa) \cong \mathbf{gEsakia}_{rm}$ and $\mathbf{Space}^H_{\nabla}(N, Fa) \cong \mathbf{gEsakia}_{rm}^H$.
    \item[$\bullet$]
$\mathbf{Space}_{\nabla}(N, Fu) \cong \mathbf{gEsakia}_{e}$ and $\mathbf{Space}^H_{\nabla}(N, Fu) \cong \mathbf{gEsakia}_{e}^H$.
    \item[$\bullet$]
$\mathbf{Space}_{\nabla}(N, Fa, Fu) \cong \mathbf{gEsakia}_{rm, e}$ and $\mathbf{Space}^H_{\nabla}(N, Fa, Fu) \cong \mathbf{gEsakia}_{rm, e}^H$.   
\end{description}
\end{corollary}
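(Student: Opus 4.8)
The plan is to obtain Corollary~\ref{CorollaryforNormalNablaII} as an immediate consequence of the work already done, in exactly the way Corollary~\ref{CorollaryforNormalNablaI} was obtained from Lemma~\ref{NormalityForPriestly} and the morphism observation via Lemma~\ref{NormalityForMorph}. The key point is that the isomorphism $\mathbf{Space}_{\nabla}(N) \cong \mathbf{gEsakia}$ from Corollary~\ref{CorollaryforNormalNablaI} is realized by a concrete functor sending a normal $\nabla$-space $(X,\leq,R)$ to the generalized Esakia space $(X,\leq,\pi)$, where $\pi$ is the normality witness (unique by Lemma~\ref{NormalityForPriestly}), and acting as the identity on underlying maps; its inverse sends $(X,\leq,\pi)$ to $(X,\leq,R)$ with $R=\{(x,y)\mid x\leq\pi(y)\}$. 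So the only thing to check for each bullet is that this very functor restricts to a bijection between the relevant subclasses of objects on the two sides, since the subcategories in question are full on both sides (faithful/full $\nabla$-spaces form full subcategories of $\mathbf{Space}_\nabla(N)$ by Definition~\ref{DefNablaSpace}, and $\mathbf{gEsakia}_{rm}$, $\mathbf{gEsakia}_e$, $\mathbf{gEsakia}_{rm,e}$ are full subcategories of $\mathbf{gEsakia}$ by Definition~\ref{gEsakiaII}).

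First I would recall that under the correspondence of Lemma~\ref{NormalityForPriestly}, a normal $\nabla$-space $(X,\leq,R)$ corresponds to the generalized Esakia space $(X,\leq,\pi)$ with $\pi$ its normality witness. Then I would invoke Lemma~\ref{PriestleyFullFaithful}: part $(i)$ says $(X,\leq,R)$ is additionally faithful iff $\pi$ is an order-embedding iff $\pi$ is regular monic in $\mathbf{Pries}$, which is precisely the defining condition of $\mathbf{gEsakia}_{rm}$; part $(ii)$ says it is additionally full iff $\pi$ is surjective iff $\pi$ is epic in $\mathbf{Pries}$, which is the defining condition of $\mathbf{gEsakia}_{e}$; and part $(iii)$ handles the simultaneous case, matching $\mathbf{gEsakia}_{rm,e}$ (equivalently, $\pi$ an isomorphism in $\mathbf{Pries}$). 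Thus the object-level bijection of Corollary~\ref{CorollaryforNormalNablaI} restricts to bijections $\mathcal{V}(N,Fa)$-objects $\leftrightarrow$ $\mathbf{gEsakia}_{rm}$-objects, and similarly for $Fu$ and $Fa,Fu$. Since both categories in each line are full subcategories (of $\mathbf{Space}_\nabla(N)$ and $\mathbf{gEsakia}$ respectively) and the morphisms already coincide under Corollary~\ref{CorollaryforNormalNablaI}, the restricted functor is fully faithful and essentially surjective, hence an isomorphism of categories.

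For the Heyting versions I would add one sentence: as already noted before Corollary~\ref{CorollaryforNormalNablaI}, a $\nabla$-space is Heyting iff its underlying Priestley space is Esakia, and a $\nabla$-space map is Heyting iff it is an Esakia map; the same holds by definition for generalized Esakia spaces and maps. The Heyting-ness of an object depends only on the underlying Priestley space $(X,\leq)$, which is unchanged by the correspondence, and Heyting-ness of a map depends only on the underlying Priestley map, also unchanged. Hence the isomorphisms of the first three bullets restrict to the corresponding superscript-$H$ categories, giving $\mathbf{Space}^H_{\nabla}(N,Fa)\cong\mathbf{gEsakia}^H_{rm}$, $\mathbf{Space}^H_{\nabla}(N,Fu)\cong\mathbf{gEsakia}^H_{e}$, and $\mathbf{Space}^H_{\nabla}(N,Fa,Fu)\cong\mathbf{gEsakia}^H_{rm,e}$.

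I do not expect a genuine obstacle here; the statement is a packaging corollary. The only mild subtlety worth spelling out is the compatibility of definitions: one must make sure that ``faithful'' and ``full'' as conditions on a $\nabla$-space (imported via Definition~\ref{DefNablaSpace} from the Kripke-frame conditions $(Fa)$, $(Fu)$) really do translate, through the normality witness, into ``$\pi$ order-embedding'' and ``$\pi$ surjective'' — but this is exactly the content of Lemma~\ref{NormalForConditions}$(iii)$–$(iv)$, already reused inside the proof of Lemma~\ref{PriestleyFullFaithful}. So the proof reduces to a short citation of Lemma~\ref{PriestleyFullFaithful} together with the restriction argument for full subcategories, mirroring verbatim the paragraph preceding Corollary~\ref{CorollaryforNormalNablaI}.
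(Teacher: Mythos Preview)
Your proposal is correct and follows essentially the same approach as the paper: the corollary is stated there without proof, introduced only by ``Similar to our above observation, we have the following isomorphism,'' meaning it is taken to follow immediately from Lemma~\ref{PriestleyFullFaithful} by restricting the isomorphism of Corollary~\ref{CorollaryforNormalNablaI} to the appropriate full subcategories, exactly as you spell out.
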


Having both faithfulness and fullness conditions on normal $\nabla$-spaces is interesting as it results in reversible dynamic Esakia spaces.

\begin{corollary}\label{CorollaryforNormalNablaIII}
$\mathbf{Space}^H_{\nabla}(N, Fa, Fu) \cong \mathbf{Esakia}^{\mathbb{Z}}$.
\end{corollary}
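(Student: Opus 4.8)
The plan is to reduce the claim to the isomorphism $\mathbf{gEsakia}_{rm, e}^{H} \cong \mathbf{Esakia}^{\mathbb{Z}}$, which, combined with the last isomorphism of Corollary \ref{CorollaryforNormalNablaII}, gives the statement at once. The candidate equivalence is the ``identity'' functor $F \colon \mathbf{gEsakia}_{rm, e}^{H} \to \mathbf{Esakia}^{\mathbb{Z}}$ sending an object $(X, \leq, \pi)$ to the dynamic Esakia space $((X, \leq), \pi)$ and a morphism to itself, together with an ``identity'' functor $G$ in the reverse direction. Since $F$ and $G$ merely relabel the same underlying data, functoriality and the fact that they are mutually inverse are automatic, so the whole content lies in two places: (1) that the structure conditions on objects match under this relabeling, and (2) that the morphism conditions match.

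For (1), take $(X, \leq, \pi) \in \mathbf{gEsakia}_{rm, e}^{H}$. By definition $(X, \leq)$ is an Esakia space, and $\pi$ is regular monic and epic in $\mathbf{Pries}$, hence a surjective order-embedding by Lemma \ref{PriestleyInjSurj}; as Priestley spaces are compact Hausdorff, a continuous bijection is a homeomorphism, so $\pi$ is an isomorphism in $\mathbf{Pries}$. Being an order-isomorphism of an Esakia space, both $\pi$ and $\pi^{-1}$ satisfy the Esakia condition $g[\uparrow \! x] = \; \uparrow \! g(x)$, so $\pi$ is an isomorphism in $\mathbf{Esakia}$ and $((X, \leq), \pi)$ is a reversible dynamic Esakia space. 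Conversely, if $((X, \leq), \pi) \in \mathbf{Esakia}^{\mathbb{Z}}$, then $\pi$ is an Esakia automorphism, in particular a homeomorphism, so $\pi[U]$ is clopen whenever $U$ is; since $(X, \leq)$ is an Esakia space, $\downarrow \! \pi[U]$ is then clopen, so $(X, \leq, \pi)$ is a Heyting generalized Esakia space, and since $\pi$ is an isomorphism in $\mathbf{Esakia}$ (hence in $\mathbf{Pries}$) it is in particular regular monic and epic, so $(X, \leq, \pi) \in \mathbf{gEsakia}_{rm, e}^{H}$. On objects $F$ and $G$ are visibly mutually inverse.

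Step (2) is the only non-formal point, and I expect it to be the main (mild) obstacle: a $\mathbf{gEsakia}_{rm, e}^{H}$-morphism $f \colon (X, \leq_X, \pi_X) \to (Y, \leq_Y, \pi_Y)$ carries the extra condition $\pi_Y^{-1}(\uparrow \! f(x)) = f[\pi_X^{-1}(\uparrow \! x)]$ on top of being an Esakia map with $f\pi_X = \pi_Y f$, while an $\mathbf{Esakia}^{\mathbb{Z}}$-morphism is just an Esakia map with $f\pi_X = \pi_Y f$; so one must check that the extra condition becomes automatic once $\pi_X, \pi_Y$ are isomorphisms. The computation is short: writing $\pi_X^{-1}$ also for the inverse map, order-invertibility of $\pi_X$ gives $\pi_X^{-1}(\uparrow \! x) = \; \uparrow \! \pi_X^{-1}(x)$, hence $f[\pi_X^{-1}(\uparrow \! x)] = \; \uparrow \! f(\pi_X^{-1}(x))$ because $f$ is an Esakia map; similarly $\pi_Y^{-1}(\uparrow \! f(x)) = \; \uparrow \! \pi_Y^{-1}(f(x))$, and from $f\pi_X = \pi_Y f$ one obtains $\pi_Y^{-1} f = f \pi_X^{-1}$, so the two upsets coincide. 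Thus the hom-sets of the two categories agree once the objects are identified; since both functors act as the identity on underlying maps, they trivially preserve identities and composition, so $F$ is an isomorphism of categories with inverse $G$. Combining with Corollary \ref{CorollaryforNormalNablaII} yields $\mathbf{Space}^H_{\nabla}(N, Fa, Fu) \cong \mathbf{gEsakia}_{rm, e}^{H} \cong \mathbf{Esakia}^{\mathbb{Z}}$, as desired.
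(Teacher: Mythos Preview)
Your proof is correct and follows essentially the same approach as the paper: reduce to $\mathbf{gEsakia}^H_{rm,e}\cong\mathbf{Esakia}^{\mathbb{Z}}$ via Corollary~\ref{CorollaryforNormalNablaII}, identify objects by noting that a regular-monic-and-epic Priestley self-map on an Esakia space is exactly an Esakia automorphism, and verify that the extra morphism condition $\pi_Y^{-1}(\uparrow\! f(x))=f[\pi_X^{-1}(\uparrow\! x)]$ is automatic once the $\pi$'s are invertible and $f$ is Esakia. Your computation for this last step passes through $\uparrow\!\pi_X^{-1}(x)$ and $\uparrow\!\pi_Y^{-1}(f(x))$ while the paper writes it as $\pi_Y^{-1}(\uparrow\! f(x))=\pi_Y^{-1}(f[\uparrow\! x])=f[\pi_X^{-1}(\uparrow\! x)]$, but the content is identical.
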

\begin{proof}
By Corollary \ref{CorollaryforNormalNablaII}, part $(iii)$, the category $\mathbf{Space}^H_{\nabla}(N, Fa, Fu)$ is isomorphic to the category $\mathbf{gEsakia}^H_{rm, e}$. Therefore, it is enough to show the isomorphism between $\mathbf{gEsakia}^H_{rm, e}$ and $\mathbf{Esakia}^{\mathbb{Z}}$.
On the level of objects, we first claim that any generalized Esakia space $(X, \leq, \pi)$, where $\pi:(X, \leq) \to (X, \leq)$ is an isomorphism in $\mathbf{Pries}$ is nothing but a reversible dynamic Esakia space. One direction is clear by definition. For the other, let $(X, \leq)$ be an Esakia space and $\pi: (X, \leq) \to (X, \leq)$ be an isomorphism. Then, by Example \ref{ExampleofNormalNablaSpace}, $(X, \leq, \pi)$ is a generalized Esakia space. 
For the equivalence between the morphisms of $\mathbf{gEsakia}^H_{rm, e}$ and $\mathbf{Esakia}^{\mathbb{Z}}$, the only thing to show is that for any  $f: (X, \leq_X, \pi_X) \to (Y, \leq_Y, \pi_Y)$ as a map between reversible dynamic Esakia spaces, if both $\pi_X: (X, \leq_X) \to (X, \leq_X)$ and $\pi_Y: (Y, \leq_Y) \to (Y, \leq_Y)$ are isomorphisms in $\mathbf{Pries}$ or equivalently in $\mathbf{Esakia}$, then $f$ is also a map between generalized Esakia spaces. This means that we have to show $\pi_Y^{-1}(\uparrow \! f(x))=f[\pi_X^{-1}(\uparrow \! x)]$, for any $x \in X$. To prove, first, note that $f$ is a map between dynamic Esakia spaces. Thus, $f\pi_X=\pi_{Y}f$ which implies $\pi_{Y}^{-1}f=f\pi_X^{-1}$, as $\pi_X$ and $\pi_Y$ are isomorphisms over $(X, \leq_X)$ and $(Y, \leq_Y)$, respectively. Then, consider
\[
\pi_Y^{-1}(\uparrow \! f(x))=\pi_Y^{-1}(f[\uparrow \! x])=f[\pi_X^{-1}(\uparrow x)].
\]
The first equality holds as $f$ is an Esakia map and hence $\uparrow \! f(x)=f[\uparrow \!x]$ and the second is a consequence of $\pi_{Y}^{-1}f=f\pi_X^{-1}$.
\end{proof}

\subsection{Priestley-Esakia duality}

In this subsection, we will provide the promised duality between the categories $\mathbf{Alg}_{\nabla}(D, C)$ and $\mathbf{Space}_{\nabla}(C)$, for any $C \subseteq \{N, H, R, L, Fa, Fu\}$. For that purpose, we need to extend the functors $\mathfrak{S}_0$ and $\mathfrak{A}_0$ from bounded distributive lattices and Priestley spaces to distributive $\nabla$-algebras and $\nabla$-spaces, respectively. To avoid confusion, though, we will denote these new functors by the new names $\mathfrak{S}$ and $\mathfrak{A}$.

\begin{lemma}\label{DefA}
Let $\mathscr{X}=(X, \leq, R)$ be a $\nabla$-space. Define $\nabla_{R}(U)=\{x \in X \mid \exists y \in U, (y, x) \in R \}$ and $U \to_{R} V=\{x \in X \mid  R[x] \cap U \subseteq V \}$ over $CU(X, \leq)$. Then the tuple
$
\mathfrak{A}(\mathscr{X})=(CU(X, \leq), \nabla_{R}, \to_{R})
$
is a $\nabla$-algebra. If we also define $\mathfrak{A}(f)=f^{-1}: \mathfrak{A}(\mathscr{Y}) \to \mathfrak{A}(\mathscr{X})$, for any $\nabla$-space map $f: \mathscr{X} \to \mathscr{Y}$, then $\mathfrak{A}$ defines a functor from $[\mathbf{Space}_{\nabla}]^{op}$ to $\mathbf{Alg}_{\nabla}(D)$. Moreover, for any $C \subseteq \{N, H, R, L, Fa, Fu\}$, the functor $\mathfrak{A}$ maps  $[\mathbf{Space}_{\nabla}(C)]^{op}$ to $\mathbf{Alg}_{\nabla}(D, C)$. The same also holds for $[\mathbf{Space}_{\nabla}^H(C)]^{op}$ and $\mathbf{Alg}^H_{\nabla}(D, C)$
\end{lemma}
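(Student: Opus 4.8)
The plan is to verify the claim in four stages: (1) show $\mathfrak{A}(\mathscr{X})$ is a well-defined $\nabla$-algebra; (2) check functoriality of $\mathfrak{A}$; (3) track the conditions in $C$; and (4) handle the Heyting case. For stage (1), I would first confirm that $\nabla_R(U)$ and $U\to_R V$ actually land in $CU(X,\leq)$. That $\nabla_R(U)$ is a clopen upset for clopen upset $U$ is literally the fourth axiom of a $\nabla$-space. For $U\to_R V=\{x\mid R[x]\cap U\subseteq V\}$, I would check it is an upset using compatibility of $R$ with $\leq$ (if $x\leq x'$ then $R[x']\subseteq R[x]$, so the containment is inherited), and that it is clopen by rewriting it in terms of $\lozenge_R$: $U\to_R V = X\setminus\lozenge_R(U\cap V^c)$. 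Here $U\cap V^c$ is clopen (though not an upset), so the third $\nabla$-space axiom gives that $\lozenge_R(U\cap V^c)$ is clopen, hence so is its complement. Once both operations are internal to $CU(X,\leq)$, the adjunction identity $\nabla_R(U)\cap W\subseteq V \iff U\subseteq W\to_R V$ is a routine pointwise unwinding of the definitions — the same computation that underlies the fact that $\mathfrak{U}(\mathcal{K})$ is a $\nabla$-algebra for a Kripke frame $\mathcal{K}$, which is recalled from \cite{OGHI} in Section \ref{Recall}; indeed $\mathfrak{A}(\mathscr{X})$ is just the sub-$\nabla$-algebra of $\mathfrak{U}(W,\leq,R)$ carried by the clopen upsets, so the algebra axiom is inherited once closure is established.

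For stage (2), given a $\nabla$-space map $f:\mathscr{X}\to\mathscr{Y}$, I must check that $f^{-1}:CU(Y,\leq_Y)\to CU(X,\leq_X)$ is a $\nabla$-algebra morphism. That $f^{-1}$ is a bounded lattice map and sends clopen upsets to clopen upsets follows from $f$ being a Priestley map (continuous and order-preserving). Preservation of $\nabla_R$ and $\to_R$ by $f^{-1}$ is exactly the content of the first three bullets in the definition of a Kripke morphism (Definition \ref{DefKripke}): the forward-preservation of $R$ together with the two back-and-forth conditions are precisely what is needed to show $f^{-1}(\nabla_{R_Y}V)=\nabla_{R_X}f^{-1}(V)$ and $f^{-1}(U\to_{R_Y}V)=f^{-1}(U)\to_{R_X}f^{-1}(V)$; this is again the same verification as for $\mathfrak{U}$ on morphisms in Theorem \ref{KripkeEmbedding}(i), now restricted to clopen upsets. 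Contravariant functoriality ($\mathfrak{A}(g\circ f)=\mathfrak{A}(f)\circ\mathfrak{A}(g)$, $\mathfrak{A}(\mathrm{id})=\mathrm{id}$) is immediate from $(g\circ f)^{-1}=f^{-1}\circ g^{-1}$.

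For stage (3), I would invoke Theorem \ref{KripkeEmbedding}(i) as a black box: a $\nabla$-space satisfying a condition in $\{N,R,L,Fa,Fu\}$ is by definition a Kripke frame satisfying that condition, and $\mathfrak{A}(\mathscr{X})$ embeds into $\mathfrak{U}$ of that Kripke frame as a sub-$\nabla$-algebra. Since each of $R,L,Fa,Fu$ is expressed by a universally quantified (in)equality or a $\forall\exists$ condition that $\mathfrak{U}$ of the frame already satisfies, and since the relevant witnesses ($\nabla\Box a = a$ for $Fa$, $\Box\nabla a = a$ for $Fu$, $a\leq\nabla a$ for $R$, $\nabla a\leq a$ for $L$) are inherited by any sub-$\nabla$-algebra closed under the operations, $\mathfrak{A}(\mathscr{X})$ satisfies the same condition; normality is inherited likewise since $\nabla_R$ already commutes with finite meets on all upsets when the frame is normal. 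For the Heyting case (stage 4): if $(X,\leq)$ is an Esakia space then $CU(X,\leq)$ is a Heyting algebra by Esakia duality (Theorem \ref{PriestleyDuality}), so $\mathfrak{A}(\mathscr{X})\in\mathbf{Alg}_\nabla(D,H)$; and a Heyting $\nabla$-space map is an Esakia map, so $f^{-1}$ preserves the Heyting implication by Esakia duality, giving a morphism in $\mathbf{Alg}^H_\nabla(D,C)$.

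The main obstacle I anticipate is stage (1), specifically making the closure of $\to_R$ under the clopen-upset structure airtight: one has to be careful that $U\to_R V$ need not be expressible using only $\lozenge_R$ applied to clopen \emph{upsets}, so the third axiom of a $\nabla$-space (which allows \emph{arbitrary} clopen arguments to $\lozenge_R$) is genuinely needed here, and one should double-check the identity $U\to_R V = X\setminus\lozenge_R(U\cap V^c)$ by a direct pointwise argument ($x\notin U\to_R V$ iff some $y\in R[x]$ has $y\in U$, $y\notin V$). Everything else is a mechanical transcription of the Kripke-frame computations from \cite{OGHI} restricted to the topologically definable upsets.
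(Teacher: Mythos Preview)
Your proposal is correct and follows essentially the same approach as the paper: verify closure of $CU(X,\leq)$ under $\nabla_R$ and $\to_R$ via the identity $(U\to_R V)^c=\lozenge_R(U\cap V^c)$ and the $\nabla$-space axioms, then recognize $\mathfrak{A}(\mathscr{X})$ as a sub-$\nabla$-algebra of $\mathfrak{U}(\mathscr{X})$ and invoke Theorem~\ref{KripkeEmbedding} to inherit the adjunction, the conditions in $C$, and the morphism part, with the Heyting case handled by Esakia duality. The paper's proof is organized in the same way and appeals to the same identities and lemmas.
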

\begin{proof}
First, note that $\nabla_{R}$ and $\to_{R}$ are well-defined operations over the set $CU(X, \leq)$. For $\nabla_{R}$, by definition of a $\nabla$-space, $\nabla_{R}$ maps clopen upsets to clopen upsets and hence there is nothing to prove. For $\to_{R}$, note that if $U$ and $V$ are clopen upsets, $U \cap V^c$ is also clopen. By definition of a $\nabla$-space, $\lozenge_{R}(U \cap V^c)$ is clopen. As, $(U \to_{R} V)^c=\lozenge_{R}(U \cap V^c)$, the set $(U \to_{R} V)^c$ and hence $U \to_{R} V$ are both clopen. The set $U \to_{R} V$ is also upward-closed, because if $x \leq y$ and $x \in U \to_{R} V$, then $R[x] \cap U \subseteq V$. Since $R$ is compatible with the order, we have $R[y] \subseteq R[x]$. Hence, $R[y] \cap U \subseteq V$ which implies $y \in U \to_{R} V$. Now, note that $\mathfrak{A}(\mathscr{X})$ is a subset of $\mathfrak{U}(\mathscr{X})$, restricting from all upsets of $(X, \leq)$ to just clopen upsets, with the same algebraic operations. We just proved the closure under $\nabla_R$ and $\to_R$. For the meets and joins, it is an easy consequence of the fact that clopen subsets are closed under finite union and intersections. By Theorem \ref{KripkeEmbedding}, $\mathfrak{U}(\mathscr{X})$ is a $\nabla$-algebra. Hence, $\mathfrak{A}(\mathscr{X})$  inherits the adjunction property $\nabla(-) \wedge U \dashv U \to (-)$, for any clopen upset $U \subseteq X$ which implies that $(CU(X, \leq), \nabla_{R}, \to_{R})$ is a $\nabla$-algebra itself. For the conditions in $C$, if $(X, \leq, R) \in \mathbf{Space}_{\nabla}(C)$, by Theorem \ref{KripkeEmbedding}, $\mathfrak{U}(\mathscr{X}) \in \mathbf{Alg}_{\nabla}(D, C)$. As all conditions in $C$ are universal conditions, they are inherited from $\mathfrak{U}(\mathscr{X})$ to $\mathfrak{A}(\mathscr{X})$. For the morphisms, if $f: (X, \leq_X, R_X) \to (Y, \leq_Y, R_Y)$ is a $\nabla$-space map, then it is by definition a Kripke morphism and hence $\mathfrak{U}(f)=f^{-1}: \mathfrak{U}(Y, \leq_Y, R_Y) \to \mathfrak{U}(X, \leq_X, R_X)$ is a $\nabla$-algebra morphism, again by Theorem \ref{KripkeEmbedding}. For any clopen upset $U \subseteq Y$, the set $f^{-1}(U) \subseteq X$ is also a clopen upset, by the continuity and the monotonicity of $f$. Therefore, as $\mathfrak{A}(f)=f^{-1} :  \mathfrak{A}(Y, \leq_Y, R_Y) \to \mathfrak{A}(X, \leq_X, R_X)$ is a restriction of $\mathfrak{U}(f)$ from all upsets of $(Y, \leq_Y)$ to just clopen upsets, it must be a $\nabla$-algebra morphism, as well. For the Heyting case, if $(X, \leq, R)$ is a Heyting $\nabla$-space, $(X, \leq)$ is an Esakia space and hence $CU(X, \leq)$ is a Heyting algebra by Esakia duality, Theorem \ref{PriestleyDuality}. Moreover, if $f: (X, \leq_X, R_X) \to (Y, \leq_Y, R_Y)$ is a Heyting $\nabla$-space map, $f^{-1}$ preserves the Heyting implication, by Esakia duality, Theorem \ref{PriestleyDuality}.
\end{proof}

\begin{lemma}\label{DefS}
Let $\mathcal{A}$ be a distributive $\nabla$-algebra. Define $\mathfrak{S}(\mathcal{A})=(\mathcal{F}_p(\mathsf{A}), \subseteq, R)$, where $\mathcal{F}_p(\mathsf{A})$ equipped with the Priestley topology as defined by the basis $\{P \in \mathcal{F}_p(\mathsf{A}) \mid a \in P \; \text{and} \; b \notin P \}$, for any $a, b \in A$ and $(P, Q) \in R$ iff $\nabla[P] \subseteq Q$. Then, $\mathfrak{S}(\mathcal{A})$
is a $\nabla$-space. If we also define $\mathfrak{S}(f)=f^{-1}: \mathfrak{S}(\mathcal{B}) \to \mathfrak{S}(\mathcal{A})$, for any $\nabla$-algebra map $f: \mathcal{A} \to \mathcal{B}$, then, $\mathfrak{S}$ defines a functor from $\mathbf{Alg}_{\nabla}(D)$ to $[\mathbf{Space}_{\nabla}]^{op}$. Moreover, for any $C \subseteq \{N, H, R, L, Fa, Fu\}$, the functor $\mathfrak{A}$ maps $\mathbf{Alg}_{\nabla}(D, C)$ to $[\mathbf{Space}_{\nabla}(C)]^{op}$. The same also holds for $\mathbf{Alg}^H_{\nabla}(D, C)$ and $[\mathbf{Space}_{\nabla}^H(C)]^{op}$.
\end{lemma}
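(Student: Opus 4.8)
The plan is to deduce the statement from Priestley--Esakia duality (Theorem \ref{PriestleyDuality}) together with the Kripke representation of Theorem \ref{KripkeEmbedding}, doing only the genuinely new work, namely the verification of the topological clauses of Definition \ref{DefNablaSpace}. I first observe that $(\mathcal{F}_p(\mathsf{A}), \subseteq)$ with the stated topology is precisely $\mathfrak{S}_0(\mathsf{A})$, hence a Priestley space; when $H \in C$, so that $\mathsf{A}$ is a Heyting algebra, it is moreover an Esakia space by Esakia duality. Recall from Priestley duality that the clopen upsets of $\mathfrak{S}_0(\mathsf{A})$ are exactly the sets $i_{\mathcal{A}}(a) = \{P \mid a \in P\}$, $a \in A$, and that $i_{\mathcal{A}}$ preserves finite meets and joins; combined with Lemma \ref{PropPriestly} this shows every clopen of $\mathfrak{S}_0(\mathsf{A})$ is a finite union of sets $i_{\mathcal{A}}(a) \cap i_{\mathcal{A}}(b)^{c}$. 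Finally, the relation $R$ of $\mathfrak{S}(\mathcal{A})$ is literally the relation $R_{\mathcal{A}}$ of $\mathfrak{P}(\mathcal{A})$, so its compatibility with $\subseteq$ is immediate from Theorem \ref{KripkeEmbedding}, and $\mathfrak{S}(\mathcal{A})$ underlies the Kripke frame $\mathfrak{P}(\mathcal{A})$.

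Next I would verify the three topological clauses. Since the operations $\nabla_{R}$ and $\to_{R}$ of Lemma \ref{DefA} are the restrictions to $CU(X,\leq)$ of the operations $\nabla_{\mathcal{K}}$, $\to_{\mathcal{K}}$ of $\mathfrak{U}(\mathfrak{P}(\mathcal{A}))$, and since $i_{\mathcal{A}}\colon \mathcal{A} \to \mathfrak{U}(\mathfrak{P}(\mathcal{A}))$ is a $\nabla$-algebra embedding (Theorem \ref{KripkeEmbedding}), one gets the identities $\nabla_{R}(i_{\mathcal{A}}(a)) = i_{\mathcal{A}}(\nabla a)$ and $i_{\mathcal{A}}(a) \to_{R} i_{\mathcal{A}}(b) = i_{\mathcal{A}}(a \to b)$. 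The first identity says $\nabla_{R}$ maps clopen upsets to clopen upsets, giving the fourth clause; it also gives $R[P] = \{Q \mid \nabla[P] \subseteq Q\} = \bigcap_{a \in P} i_{\mathcal{A}}(\nabla a)$, an intersection of clopens, hence closed, which is the second clause. For the third clause, $\lozenge_{R}$ commutes with arbitrary unions, so by the decomposition of clopens above it suffices to treat $W = i_{\mathcal{A}}(a) \cap i_{\mathcal{A}}(b)^{c}$; unwinding the definitions gives $\lozenge_{R}(W) = (i_{\mathcal{A}}(a) \to_{R} i_{\mathcal{A}}(b))^{c} = i_{\mathcal{A}}(a \to b)^{c}$, which is clopen. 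Hence $\mathfrak{S}(\mathcal{A})$ is a $\nabla$-space.

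For the functor part, for a $\nabla$-algebra morphism $f\colon \mathcal{A} \to \mathcal{B}$ the map $\mathfrak{S}(f) = f^{-1}$ carries prime filters to prime filters and is order-preserving; it is continuous because it coincides with the Priestley map $\mathfrak{S}_{0}(f)$; and it is a Kripke morphism $\mathfrak{P}(\mathcal{B}) \to \mathfrak{P}(\mathcal{A})$ by Theorem \ref{KripkeEmbedding}. A continuous Kripke morphism is exactly a $\nabla$-space map, so $\mathfrak{S}(f)$ is a morphism in $[\mathbf{Space}_{\nabla}]^{op}$, and the functoriality identities are immediate since $(-)^{-1}$ is contravariant. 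The transfer of conditions is then bookkeeping: for $C \subseteq \{N, R, L, Fa, Fu\}$, Theorem \ref{KripkeEmbedding}(ii) already places $\mathfrak{P}(\mathcal{A})$ in $[\mathbf{K}(C)]^{op}$, and a $\nabla$-space satisfies such a condition precisely when it does as a Kripke frame; if $H \in C$ then $\mathsf{A}$ is Heyting, so $\mathfrak{S}_{0}(\mathsf{A})$ is an Esakia space, making $\mathfrak{S}(\mathcal{A})$ a Heyting $\nabla$-space, and for explicitly Heyting algebras a Heyting morphism $f$ preserves $\supset$, whence $\mathfrak{S}_{0}(f)$ is an Esakia map and $\mathfrak{S}(f)$ a Heyting $\nabla$-space map, both by Esakia duality (Theorem \ref{PriestleyDuality}).

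The only clause requiring real care is the clopenness of $\lozenge_{R}(W)$ for an arbitrary clopen $W$: it relies on writing $W$ as a finite union of ``box differences'' $i_{\mathcal{A}}(a) \cap i_{\mathcal{A}}(b)^{c}$ (Lemma \ref{PropPriestly}), on $\lozenge_{R}$ preserving unions, and on the complementation identity $\lozenge_{R}(i_{\mathcal{A}}(a) \cap i_{\mathcal{A}}(b)^{c}) = (i_{\mathcal{A}}(a) \to_{R} i_{\mathcal{A}}(b))^{c}$ that reduces it to $i_{\mathcal{A}}(a \to b)$. Everything else is a direct application of Theorems \ref{PriestleyDuality} and \ref{KripkeEmbedding}.
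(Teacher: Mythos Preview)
Your proposal is correct and follows essentially the same route as the paper's proof: both identify $\mathfrak{S}(\mathcal{A})$ with $\mathfrak{P}(\mathcal{A})$ as a Kripke frame sitting on the Priestley space $\mathfrak{S}_0(\mathsf{A})$, then use that $i_{\mathcal{A}}$ (the paper writes $\alpha$) preserves $\nabla$ and $\to$ to handle the three topological clauses---in particular, the crucial reduction $\lozenge_R(i_{\mathcal{A}}(a)\cap i_{\mathcal{A}}(b)^c)=i_{\mathcal{A}}(a\to b)^c$ after decomposing an arbitrary clopen into finitely many such pieces---and defer the conditions in $C$ and the morphism part to Theorems \ref{KripkeEmbedding} and \ref{PriestleyDuality}. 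The only cosmetic difference is that the paper obtains the finite decomposition of a clopen by compactness of closed sets in a compact Hausdorff space, whereas you invoke the subbasis description of Lemma \ref{PropPriestly}; both are valid.
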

\begin{proof}
First, as $\mathsf{A}$ is a bounded distributive lattice, by Priestley duality, Theorem \ref{PriestleyDuality}, the pair $(\mathcal{F}_p(\mathsf{A}), \subseteq)$ is a Priestley space. To prove that $\mathfrak{S}(\mathcal{A})$ is a $\nabla$-space, we need to check the conditions in Definition \ref{DefNablaSpace}.
The first condition is trivial, as $R$ is compatible with $\subseteq$. For the rest, we use the lattice isomorphism $\alpha: \mathsf{A} \to CU(\mathcal{F}_p(\mathsf{A}), \subseteq)$, defined by $\alpha(a)=\{P \in \mathcal{F}_p(\mathsf{A}) \mid a \in P\}$ as introduced in Theorem \ref{PriestleyDuality}. Notice that the set $\alpha(a)$ is a clopen upset, for any $a \in \mathsf{A}$. Now, for the second condition, we have to show that $R[P]$ is closed, for any $P \in \mathcal{F}_p(\mathsf{A})$. Note that $R[P]=\bigcap_{x \in P} \alpha(\nabla x)$, as $(P, Q) \in R$ iff $\nabla[P] \subseteq Q$ iff $\nabla x \in Q$, for any $x \in P$. Since each $\alpha(\nabla x)$ is closed in the Priestley topology, the set $R[P]$ must be also closed. For the third condition, if $U$ is a clopen subset, we have to show that $\lozenge_R(U)$ is also clopen. As $U$ is open, there exist some open sets in the basis of Priestley topology in the form $\alpha(a_i) \cap \alpha(b_i)^c$ such that $U=\bigcup_{i \in I} \alpha(a_i) \cap \alpha(b_i)^c $. As $U$ is also closed and the Priestley space is compact and Hausdorff, $U$ is compact and hence we can assume that $I$ is finite. Then, since $\lozenge_R$ commutes with unions, we have $\lozenge_R (U)= \bigcup_{i \in I} \lozenge_R( \alpha(a_i) \cap \alpha(b_i)^c)$. By Theorem \ref{KripkeEmbedding}, $i: \mathcal{A} \to \mathfrak{U}\mathfrak{P}(\mathcal{A})$ preserves the implication. As $\alpha$ is just $i$ restricted on its codomain from all upsets of $\mathfrak{P}(\mathcal{A})$ to all clopen upsets, we have $\alpha(a_i \to b_i)=\alpha(a_i) \to \alpha(b_i)$. Hence, $[\alpha(a_{i} \to b_i)]^c=\lozenge_R(\alpha(a_i) \cap \alpha(b_i)^c)$. As a result, $\lozenge_R(\alpha(a_i) \cap \alpha(b_i)^c)$ is clopen. Therefore, as $I$ is finite, $\lozenge_R(U)$ is a finite union of clopen subsets which is clopen itself. For the fourth condition, assume that $V$ is a clopen upset. Then, by Priestley duality, Theorem \ref{PriestleyDuality}, there must be $a \in A$ such that $U=\alpha(a)$. By Theorem \ref{KripkeEmbedding}, $i: \mathcal{A} \to \mathfrak{U}\mathfrak{P}(\mathcal{A})$ preserves $\nabla$. Again, as $\alpha$ is just $i$ restricted on its codomain, $\alpha$ also respects $\nabla$, i.e., $\nabla_R(V)=\nabla_R (\alpha(a))=\alpha(\nabla a)$. Hence, $\nabla_R(V)$ is also a clopen upset. This completes the proof that $\mathfrak{S}(\mathcal{A})$ is a $\nabla$-space.

For the conditions in the set $\{N, R, L, Fa, Fu\}$, note that the conditions just refer to the Kripke structure and as a Kripke frame, $\mathfrak{P}(\mathcal{A})$ and $\mathfrak{S}(\mathcal{A})$ coincide. Hence, if $\mathcal{A} \in \mathbf{Alg}_{\nabla}(D, C)$, then by Theorem \ref{KripkeEmbedding}, we have $\mathfrak{S}(\mathcal{A}) \in \mathbf{Space}_{\nabla}(C)$. For the Heyting case, if $\mathcal{A}$ is Heyting, then $(\mathcal{F}_p(\mathsf{A}), \subseteq)$ is an Esakia space, by Theorem \ref{PriestleyDuality}.
For morphisms, we have to prove that $\mathfrak{S}(f)$ is a continuous Kripke morphism which is a result of Theorem \ref{KripkeEmbedding} and Theorem \ref{PriestleyDuality}. Moreover, note that if both $\mathcal{A}$ and $\mathcal{B}$ are Heyting and $f: \mathcal{A} \to \mathcal{B}$ preserves the Heyting implication, then $f^{-1}$ is an Esakia map, by Theorem \ref{PriestleyDuality}.
\end{proof}

\begin{theorem}\label{OurPriestleyDuality} (Priestley-Esakia duality for distributive $\nabla$-algebras)
Let $C \subseteq \{N, H, R, L, Fa, Fu\}$. Then, the functors $\mathfrak{S}$ and $\mathfrak{A}$ and the following natural isomorphisms:
\begin{description}
\item[]
$\alpha : \mathcal{A} \to \mathfrak{A}\mathfrak{S}(\mathcal{A})$ defined by $\alpha(a)=\{P \in \mathcal{F}_p(\mathsf{A}) \mid a \in P\}$,
\item[]
$\beta: (X, \leq, R) \to \mathfrak{S}\mathfrak{A}(X, \leq, R)$ defined by $\beta(x)=\{U \in CU(X, \leq) \mid x \in U\} $.
\end{description}
establish an equivalence between the categories $\mathbf{Alg}_{\nabla}(D, C)$ and $\mathbf{Space}_{\nabla}^{op}(C)$. The same also holds for $\mathbf{Alg}_{\nabla}^{H}(D, C)$ and $[\mathbf{Space}^{H}_{\nabla}(C)]^{op}$.
\end{theorem}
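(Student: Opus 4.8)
The plan is to bootstrap entirely from the classical Priestley--Esakia duality (Theorem \ref{PriestleyDuality}) and the Kripke representation (Theorem \ref{KripkeEmbedding}): Lemmas \ref{DefA} and \ref{DefS} already establish that $\mathfrak{A}$ and $\mathfrak{S}$ are well-defined functors between $\mathbf{Alg}_{\nabla}(D)$ and $[\mathbf{Space}_{\nabla}]^{op}$, that they carry the conditions in $C$ to one another, and that they restrict to the Heyting subcategories. Hence the only thing left to verify is that the maps $\alpha$ and $\beta$ are isomorphisms \emph{in the enriched categories} and are natural; all the topology and lattice theory is inherited, and only the behaviour of $\nabla,\to$ (on the algebra side) and of $R$ (on the space side) must be checked by hand.

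For $\alpha\colon\mathcal{A}\to\mathfrak{A}\mathfrak{S}(\mathcal{A})$: the underlying bounded lattice of $\mathfrak{A}\mathfrak{S}(\mathcal{A})$ is exactly $CU(\mathcal{F}_p(\mathsf{A}),\subseteq)$, and $a\mapsto\{P:a\in P\}$ is a bounded-lattice isomorphism by Priestley duality (a Heyting isomorphism in the Heyting case, by Esakia duality). The Kripke frame underlying $\mathfrak{S}(\mathcal{A})$ coincides with $\mathfrak{P}(\mathcal{A})$, and the operations $\nabla_R,\to_R$ on $\mathfrak{A}\mathfrak{S}(\mathcal{A})$ are just the operations $\nabla_{\mathcal{K}},\to_{\mathcal{K}}$ of $\mathfrak{U}\mathfrak{P}(\mathcal{A})$ restricted to clopen upsets, while $\alpha$ is precisely the embedding $i_{\mathcal{A}}$ of Theorem \ref{KripkeEmbedding} corestricted to clopen upsets; therefore $\alpha$ preserves $\nabla$ and $\to$. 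Thus $\alpha$ is a bijective $\nabla$-algebra morphism, and such a map is automatically a $\nabla$-algebra isomorphism (bijectivity forces $\alpha^{-1}$ to preserve $\nabla,\to$). Naturality of $\alpha$ is the naturality of $i_{\mathcal{A}}$ from Theorem \ref{KripkeEmbedding}, since adding the extra operations leaves the naturality squares unchanged.

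For $\beta\colon(X,\leq,R)\to\mathfrak{S}\mathfrak{A}(X,\leq,R)$: by Priestley--Esakia duality the underlying map $x\mapsto\{U\in CU(X,\leq):x\in U\}$ is already a Priestley (resp. Esakia) isomorphism, i.e.\ a homeomorphism that is both order-preserving and order-reflecting; so it remains only to show that $\beta$ and $\beta^{-1}$ transport $R$ to the relation $R'$ of $\mathfrak{S}\mathfrak{A}(X,\leq,R)$, i.e.\ that $(x,y)\in R$ iff $(\beta(x),\beta(y))\in R'$. Recall from \cite{OGHI} that $R'$ can be described by the $\to_R$-clause; translating through $\beta$ this reads: $(\beta(x),\beta(y))\in R'$ iff for all clopen upsets $U,V$ of $X$, $\bigl(R[x]\cap U\subseteq V$ and $y\in U\bigr)$ implies $y\in V$. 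If $(x,y)\in R$, then $y\in R[x]$, so $R[x]\cap U\subseteq V$ and $y\in U$ give $y\in R[x]\cap U\subseteq V$; this is the forward implication. Conversely, if $(x,y)\notin R$, then $y\notin R[x]$, and $R[x]$ is a closed upset (closed by the $\nabla$-space axiom of Definition \ref{DefNablaSpace}, an upset by compatibility of $R$), so by Lemma \ref{PropPriestly} there is a clopen upset $W$ with $R[x]\subseteq W$ and $y\notin W$; taking $U=X$ and $V=W$ we have $R[x]\cap X=R[x]\subseteq W$ and $y\in X$ yet $y\notin W$, so the displayed condition fails and $(\beta(x),\beta(y))\notin R'$. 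Hence $\beta$ preserves and reflects $R$, so it is an isomorphism in $\mathbf{Space}_{\nabla}$ (being a bijective homeomorphism that is a $\leq$- and $R$-isomorphism, both it and its inverse satisfy the back conditions of a Kripke morphism trivially); naturality of $\beta$ is again inherited from Priestley duality.

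Assembling: $\mathfrak{S}$ and $\mathfrak{A}$ are functors between $\mathbf{Alg}_{\nabla}(D)$ and $\mathbf{Space}_{\nabla}^{op}$ (Lemmas \ref{DefA}, \ref{DefS}) and $\alpha,\beta$ are natural isomorphisms by the above, giving the equivalence in general; for a fixed $C$ the same lemmas say the functors restrict to $\mathbf{Alg}_{\nabla}(D,C)$ and $\mathbf{Space}_{\nabla}^{op}(C)$, and $\alpha,\beta$ restrict, so the equivalence restricts, and the Heyting case is identical using the Heyting parts of Lemmas \ref{DefA}, \ref{DefS} together with the Esakia half of Theorem \ref{PriestleyDuality}. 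I expect the only genuine obstacle to be the converse half of the relation-preservation for $\beta$ --- extracting membership in $R$ from the topological separation --- which is exactly where the $\nabla$-space axioms (closedness of $R[x]$ and the clopenness conditions, conveniently packaged through $\to_R$) are needed; everything else is either quoted from the classical dualities or a routine transfer.
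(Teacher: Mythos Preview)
Your proof is correct and follows essentially the same plan as the paper: reduce $\alpha$ and $\beta$ to their classical Priestley/Esakia counterparts via Theorem~\ref{PriestleyDuality}, then verify only the new structure ($\nabla,\to$ for $\alpha$ using $i_{\mathcal{A}}$ from Theorem~\ref{KripkeEmbedding}; preservation and reflection of $R$ for $\beta$). The one place you diverge is in the argument that $\beta$ reflects $R$: the paper works with the $\nabla$-description of $R'$ (i.e.\ $\nabla_R[\beta(x)]\subseteq\beta(y)$) and, for the converse, writes $R[x]=\bigcap_i V_i$ as an intersection of clopen upsets and uses the clopen upsets $W_i=\Box_R V_i$ together with the adjunction to force $y\in V_i$ for every $i$; you instead use the $\to$-description of $R'$ and separate $y$ from the closed upset $R[x]$ by a single clopen upset $W$ via Lemma~\ref{PropPriestly}, taking $U=X$, $V=W$. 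Both arguments hinge on exactly the same $\nabla$-space axioms (closedness of $R[x]$ and closure of clopen upsets under the new operations), but your route is slightly more direct and sidesteps the $\Box_R$ trick.
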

\begin{proof}
If we consider the map $\alpha$ (resp. $\beta$) as a map between the underlying bounded distributive lattices of $\mathcal{A}$ and $\mathfrak{A}\mathfrak{S}(\mathcal{A})$ (resp. Priestley spaces of $(X, \leq, R)$ and $\mathfrak{S}\mathfrak{A}(X, \leq, R)$), it is an isomorphism in the category $\mathbf{DLat}$ (resp. $\mathbf{Pries}$), by Priestley duality, Theorem \ref{PriestleyDuality}. Therefore, we only need to prove that $\alpha$ and its inverse preserve $\nabla$ and $\to$, on the one hand, and $\beta$ and its inverse satisfy the three conditions in the definition of a Kripke morphism, Definition \ref{DefKripke}, on the other. For the former, note that $\alpha$ is just the $\nabla$-algebra map $i_{\mathcal{A}}$ presented in Theorem \ref{KripkeEmbedding} whose codomain is restricted to a subalgebra and hence it preserves $\nabla$ and $\to$. Since $\alpha$ is a bijective algebraic morphism, its inverse is automatically a $\nabla$-algebra morphism. Moreover, if $\mathcal{A}$ is Heyting, with a similar argument we can show that $\alpha$ also preserves the Heyting implication.

For the latter, let $\mathscr{X}=(X, \leq, R)$ be a $\nabla$-space. Recall that the $\nabla$ on $\mathfrak{A}(X, \leq, R)$, denoted by $\nabla_R$, is defined by $\nabla_R(U)=\{x \in X \mid \exists y \in U, (y, x) \in R\}$. Moreover, the $R$ on $\mathfrak{S}\mathfrak{A}(X, \leq, R)$, denoted by $R_{\mathfrak{A}(\mathscr{X})}$, is defined by $(P, Q) \in R_{\mathfrak{A}(\mathscr{X})}$ iff $\nabla_R[P] \subseteq Q$, for any $P, Q \in \mathcal{F}_p(\mathfrak{A}(\mathscr{X}))$. Now, we show that:\\

\noindent \textbf{Claim.} $(x, y) \in R$ iff $(\beta(x), \beta(y)) \in R_{\mathfrak{A}(\mathscr{X})}$, for any $x, y \in X$.\\

\noindent To prove, first note that $(\beta(x), \beta(y)) \in R_{\mathfrak{A}(\mathscr{X})}$ iff
$\nabla_{R}[\beta (x)] \subseteq \beta(y) 
$ iff (for any clopen upset $U$ of $\mathscr{X}$, if $x \in U$ then $y \in \nabla_{R}(U)$). Call the statement in the paranthesis $(*)$. Now, it is enough to prove that $(x, y) \in R$ is equivalent to $(*)$. For the first direction, if $(x, y) \in R$, then if $x \in U$, by the definition of $\nabla_{R}$, we have $y \in \nabla_{R}(U)$. For the converse, consider the closed upset $R[x]$. Since $(X, \leq)$ is a Priestley space, by Lemma \ref{PropPriestly}, the set $R[x]$ is an intersection of a family of clopen upsets $V_i$, i.e., $R[x]=\bigcap_{i \in I} V_i$. For any $V_i$, consider $W_i=\Box_R V_i=\{z \in X \mid R[z] \subseteq V_i\}$. Since $V_i$ is a clopen upset and the clopen upsets is closed under $\Box_R$ by Lemma \ref{DefA}, $W_i$ is also a clopen upset of $X$. Let $i \in I$ be an arbitrary index. 
As $R[x] \subseteq V_i$, we have $x \in W_i$. Therefore, by $(*)$, we have $y \in \nabla_R(W_i)$ which means that there exists $z \in W_i$ such that $(z, y) \in R$. Since $z \in W_i$, by the definition of $W_i$, we have $R[z] \subseteq V_i$. Therefore, as $(z, y) \in R$, we have $y \in V_i$. As $i \in I$ is an arbitrary index, $y \in \bigcap_{i \in I} V_i$. Finally, as $R[x]=\bigcap_{i \in I} V_i$, we reach $y \in R[x]$ which implies $(x, y) \in R$. This completes the proof of the claim.

Now, using the claim, it is easy to see that all the three conditions in Definition of a Kripke morphism, Definition \ref{DefKripke}, are satisfied for $\beta$ and its inverse. We only prove the claim for $\beta$. The converse is similar. For the first condition, using the claim, if $(x, y) \in R$ then $(\beta(x), \beta(y)) \in R_{\mathfrak{A}(\mathscr{X})}$. For the second condition, let $(\beta(x), Q) \in R_{\mathfrak{A}(\mathscr{X})}$, for $Q \in \mathcal{F}_p(\mathfrak{A}(\mathscr{X}))$. Then, by the surjectivity of $\beta$, there is $y \in X$ such that $\beta(y)=Q$. Hence, $(\beta(x), \beta(y)) \in R_{\mathfrak{A}(\mathscr{X})}$. By the claim, we have $(x, y) \in R$. As $\beta(y)=Q$, this completes the second condition. The proof for the third condition is similar. Moreover, one can use a similar argument to show that $\beta$ is a Heyting Kripke morphism, if $(X, \leq, R)$ is Heyting.
\end{proof}

For the future reference, let us apply Theorem \ref{OurPriestleyDuality} to normal (full or faithful) $\nabla$-algebras to get a representation theorem for these $\nabla$-algebras in terms of generalized Esakia spaces.

\begin{corollary}\label{PriestleyRepForNormal}
For any normal distributive $\nabla$-algebra $\mathcal{A}$, there is a generalized Esakia space $\mathscr{X}=(X, \leq, \pi)$ such that $\mathcal{A} \cong (CU(X, \leq_X), \pi^{-1}, \to_{\mathscr{X}})$, where $U \to_{\mathscr{X}} V=\{x \in X \mid \pi^{-1}(\uparrow \! x) \cap U \subseteq V\}$. If $\mathcal{A}$ is faithful (resp. full), $\mathscr{X}$ can be chosen as regular monic (resp. epic).
\end{corollary}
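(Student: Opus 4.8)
The plan is to specialize Theorem \ref{OurPriestleyDuality} to the case $C=\{N\}$ and then translate the resulting normal $\nabla$-space into the language of generalized Esakia spaces, using Corollary \ref{CorollaryforNormalNablaI} together with the explicit identities recorded in the proof of Lemma \ref{NormalityForPriestly}. Concretely, I would start from a normal distributive $\nabla$-algebra $\mathcal{A}$, form $\mathfrak{S}(\mathcal{A})=(\mathcal{F}_p(\mathsf{A}),\subseteq,R)$, and invoke Theorem \ref{OurPriestleyDuality} to obtain the $\nabla$-algebra isomorphism $\alpha:\mathcal{A}\to\mathfrak{A}\mathfrak{S}(\mathcal{A})=(CU(\mathcal{F}_p(\mathsf{A}),\subseteq),\nabla_R,\to_R)$. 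By Lemma \ref{DefS} (applied with $C=\{N\}$), the frame $\mathfrak{S}(\mathcal{A})$ lies in $\mathbf{Space}_{\nabla}(N)$, hence is a normal $\nabla$-space; let $\pi$ be its normality witness and set $X=\mathcal{F}_p(\mathsf{A})$, $\leq\,=\,\subseteq$. By Lemma \ref{NormalityForPriestly}, the tuple $(X,\leq,\pi)$ is a generalized Esakia space and $R=\{(x,y)\in X^2 \mid x\leq\pi(y)\}$.

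The next step is to identify the operations of $\mathfrak{A}\mathfrak{S}(\mathcal{A})$ with those displayed in the statement. The proof of Lemma \ref{NormalityForPriestly} records that $R[x]=\pi^{-1}(\uparrow\! x)$ and $\nabla_R(V)=\pi^{-1}(V)$ for every upset $V$. Consequently, $\nabla_R$ restricted to $CU(X,\leq)$ is precisely $\pi^{-1}$, and for clopen upsets $U,V$ we get
\[
U\to_R V=\{x\in X \mid R[x]\cap U\subseteq V\}=\{x\in X \mid \pi^{-1}(\uparrow\! x)\cap U\subseteq V\}=U\to_{\mathscr{X}} V .
\]
Hence $\mathfrak{A}\mathfrak{S}(\mathcal{A})=(CU(X,\leq),\pi^{-1},\to_{\mathscr{X}})$, and $\alpha$ witnesses $\mathcal{A}\cong(CU(X,\leq),\pi^{-1},\to_{\mathscr{X}})$, as required.

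For the final sentence, I would note that if $\mathcal{A}$ is faithful then, by Lemma \ref{DefS} applied with $C=\{N,Fa\}$, the $\nabla$-space $\mathfrak{S}(\mathcal{A})$ is normal and faithful, so by Lemma \ref{PriestleyFullFaithful}$(i)$ its normality witness $\pi$ is a regular monic in $\mathbf{Pries}$; that is, the generalized Esakia space $\mathscr{X}$ is regular monic. The full case is identical, using $C=\{N,Fu\}$ and Lemma \ref{PriestleyFullFaithful}$(ii)$.

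There is no substantive obstacle here: every ingredient is already available, and the argument is pure bookkeeping. The only point requiring care is the consistent identification of the abstract normality witness $\pi$ supplied by Lemma \ref{NormalityForPriestly}/Corollary \ref{CorollaryforNormalNablaI} with the map whose inverse image implements $\nabla_R$ and whose fibres $\pi^{-1}(\uparrow\! x)$ implement $\to_R$ — which is exactly what the computations in the proof of Lemma \ref{NormalityForPriestly} guarantee.
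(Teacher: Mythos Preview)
Your proposal is correct and follows essentially the same approach as the paper's own proof: apply the duality Theorem \ref{OurPriestleyDuality} to obtain a normal $\nabla$-space, pass to the associated generalized Esakia space via its normality witness $\pi$, use the identities $\nabla_R=\pi^{-1}$ and $R[x]=\pi^{-1}(\uparrow\! x)$ to rewrite the operations, and then invoke Lemma \ref{DefS} together with Lemma \ref{PriestleyFullFaithful} for the faithful/full cases. The only cosmetic difference is that you cite the computations via the proof of Lemma \ref{NormalityForPriestly}, whereas the paper recomputes them directly from the defining equivalence $(x,y)\in R$ iff $x\leq\pi(y)$.
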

\begin{proof}
By Theorem \ref{OurPriestleyDuality}, for any normal distributive $\nabla$-algebra $\mathcal{A}$, there is a normal $\nabla$-space $(X, \leq, R)$ such that $\mathcal{A} \cong \mathfrak{A}(X, \leq, R)$. Set $\mathscr{X}=(X, \leq, \pi)$ as the corresponding generalized Esakia space. Note that $\pi$ is the normality witness of $(X, \leq, R)$. Therefore, $(x, y) \in R$ iff $x \leq \pi(y)$, for any $x, y \in X$. One can easily use this equivalence together with the upward-closedness of $U$ to see that $\nabla_R(U)=\pi^{-1}(U)$ and  $U \to_{R} V=\{x \in X \mid \pi^{-1}(\uparrow \! x) \cap U \subseteq V\}$. Therefore, $\mathfrak{A}(X, \leq, R) \cong (CU(X, \leq_X), \pi^{-1}, \to_{\mathscr{X}})$. For faithfulness and fullness conditions, note that if $\mathcal{A}$ is faithful (resp. full), then $(X, \leq, R)$ can be chosen as faithful (resp. full), by Lemma \ref{DefS}. Then, by Lemma \ref{PriestleyFullFaithful}, the map $\pi: X \to X$ is regular monic in $\mathbf{Pries}$ (resp. epic in $\mathbf{Pries}$) which implies that $\mathscr{X}$ is regular monic (resp. epic).
\end{proof}

In the following, we provide an algebraic characterization for different categories of generalized Esakia spaces. Most interestingly, $\mathbf{Alg}^H_{\nabla}(D, N, Fa, Fu)$ provides an algebraic characterization for the category of reversible dynamic Esakia spaces.

\begin{corollary}\label{CorollaryPriestleyEsakia}
We have the following equivalences:
\begin{description}
\item[$\bullet$]
$\mathbf{Alg}_{\nabla}(D, N) \simeq \mathbf{gEsakia}^{op}$ and $\mathbf{Alg}^H_{\nabla}(D, N) \simeq (\mathbf{gEsakia}^H)^{op}$.
\item[$\bullet$]
$\mathbf{Alg}_{\nabla}(D, N, Fa) \simeq \mathbf{gEsakia}_{rm}^{op}$ and $\mathbf{Alg}^H_{\nabla}(D, N, Fa) \simeq (\mathbf{gEsakia}^H_{rm})^{op}$.
\item[$\bullet$]
$\mathbf{Alg}_{\nabla}(D, N, Fu) \simeq \mathbf{gEsakia}_{e}^{op}$ and $\mathbf{Alg}^H_{\nabla}(D, N, Fu) \simeq (\mathbf{gEsakia}^H_{e})^{op}$.
\item[$\bullet$]
$\mathbf{Alg}^H_{\nabla}(D, N, Fa, Fu) \simeq (\mathbf{Esakia}^{\mathbb{Z}})^{op}$.
\end{description}
\end{corollary}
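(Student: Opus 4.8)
\textbf{Proof proposal for Corollary \ref{CorollaryPriestleyEsakia}.}

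The plan is to obtain each equivalence by composing two results already at our disposal: the Priestley-Esakia duality for distributive $\nabla$-algebras (Theorem \ref{OurPriestleyDuality}) and the chain of isomorphisms between categories of normal $\nabla$-spaces and categories of generalized Esakia spaces recorded in Corollaries \ref{CorollaryforNormalNablaI}, \ref{CorollaryforNormalNablaII}, and \ref{CorollaryforNormalNablaIII}. Concretely, for the first bullet I would start from the instance of Theorem \ref{OurPriestleyDuality} with $C = \{N\}$, which gives $\mathbf{Alg}_{\nabla}(D, N) \simeq [\mathbf{Space}_{\nabla}(N)]^{op}$, and then apply Corollary \ref{CorollaryforNormalNablaI}, which asserts $\mathbf{Space}_{\nabla}(N) \cong \mathbf{gEsakia}$. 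Since an isomorphism of categories is in particular an equivalence, and taking opposite categories preserves (iso)equivalence, composing yields $\mathbf{Alg}_{\nabla}(D, N) \simeq \mathbf{gEsakia}^{op}$. The Heyting variant $\mathbf{Alg}^H_{\nabla}(D, N) \simeq (\mathbf{gEsakia}^H)^{op}$ follows identically, using the Heyting clause of Theorem \ref{OurPriestleyDuality} together with the second isomorphism $\mathbf{Space}^H_{\nabla}(N) \cong \mathbf{gEsakia}^H$ in Corollary \ref{CorollaryforNormalNablaI} (modulo the evident typo $\mathbf{Spec}^H_{\nabla}(N)$ there, which denotes $\mathbf{Space}^H_{\nabla}(N)$).

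For the second and third bullets I would repeat the argument with $C = \{N, Fa\}$ and $C = \{N, Fu\}$ respectively: Theorem \ref{OurPriestleyDuality} gives $\mathbf{Alg}_{\nabla}(D, N, Fa) \simeq [\mathbf{Space}_{\nabla}(N, Fa)]^{op}$ and $\mathbf{Alg}_{\nabla}(D, N, Fu) \simeq [\mathbf{Space}_{\nabla}(N, Fu)]^{op}$, and then Corollary \ref{CorollaryforNormalNablaII} supplies $\mathbf{Space}_{\nabla}(N, Fa) \cong \mathbf{gEsakia}_{rm}$ and $\mathbf{Space}_{\nabla}(N, Fu) \cong \mathbf{gEsakia}_{e}$, together with their Heyting counterparts. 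Composing gives the stated equivalences. For the last bullet, I would take $C = \{N, Fa, Fu\}$ in the Heyting clause of Theorem \ref{OurPriestleyDuality}, obtaining $\mathbf{Alg}^H_{\nabla}(D, N, Fa, Fu) \simeq [\mathbf{Space}^H_{\nabla}(N, Fa, Fu)]^{op}$, and then invoke Corollary \ref{CorollaryforNormalNablaIII}, which identifies $\mathbf{Space}^H_{\nabla}(N, Fa, Fu) \cong \mathbf{Esakia}^{\mathbb{Z}}$; composing yields $\mathbf{Alg}^H_{\nabla}(D, N, Fa, Fu) \simeq (\mathbf{Esakia}^{\mathbb{Z}})^{op}$.

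Since every ingredient is already proved in the excerpt, there is essentially no mathematical obstacle here; the corollary is a bookkeeping assembly. The only point that needs a sentence of care is the formal remark that an isomorphism of categories composes with an equivalence to give an equivalence, and that the contravariant functor $(-)^{op}$ on $\mathbf{CAT}$ sends equivalences to equivalences and isomorphisms to isomorphisms, so that passing to opposites on both sides is harmless. I would state this once at the outset and then present the four bullets as one-line compositions, so that the proof reads as: apply Theorem \ref{OurPriestleyDuality} with the appropriate $C$, then apply the relevant isomorphism from Corollaries \ref{CorollaryforNormalNablaI}--\ref{CorollaryforNormalNablaIII}, in each of the non-Heyting and Heyting cases.
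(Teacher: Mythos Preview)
Your proposal is correct and follows exactly the paper's own approach: the paper's proof is a one-line remark that the corollary is a direct consequence of Theorem \ref{OurPriestleyDuality} together with Corollaries \ref{CorollaryforNormalNablaI}, \ref{CorollaryforNormalNablaII} and \ref{CorollaryforNormalNablaIII}. Your version simply spells out the compositions case by case, which is fine and arguably clearer.
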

\begin{proof}
The corollary is a direct consequence of Theorem \ref{OurPriestleyDuality} and Corollaries \ref{CorollaryforNormalNablaI}, \ref{CorollaryforNormalNablaII} and \ref{CorollaryforNormalNablaIII}.
\end{proof}

\section{Spectral Duality for Distributive $\nabla$-algebras} \label{SectionSpectral}

In this section, we rephrase the Priestley-Esakia duality, Theorem \ref{OurPriestleyDuality}, in the context of spectral spaces to derive a spectral duality theorem for distributive $\nabla$-algebras. We begin by recalling the definition of a spectral space and the well-known isomorphism between the categories of Priestley spaces and spectral spaces.

\subsection{Spectral spaces}

A topological space $X$ is called \emph{sober} if for every closed set $Y \subseteq X$ that is not the union of two smaller closed sets, there is a unique point $x \in X$ such that $Y=Cl(\{x\})$, where by $Cl(Z)$, we mean the closure of $Z$, for any $Z \subseteq X$. A space is called \textit{spectral}, if it is sober and compact and its compact open subsets form a basis. Note that the last condition implies that any finite intersections of compact opens is a compact open itself. A continuous map $f: X \to Y$ between spectral spaces is called \emph{spectral}, if $f^{-1}(U)$ is compact, for any compact open $U$. A subset $Y \subseteq X$ is called spectral, if it is a spectral space with the subspace topology and the inclusion map $i: Y \to X$ is a spectral map. Following \cite{Bezh}, we call a subset $Y \subseteq X$ \textit{doubly spectral}, if both $Y$ and $Y^c$ are spectral subsets of $X$. We denote the category of spectral spaces and spectral maps by $\mathbf{Spec}$. 

It is a well-known fact that the category of Priestley spaces is isomorphic to the category of spectral spaces. To see how,
define the assignment $F : \mathbf{Pries} \to \mathbf{Spec}$ on the objects by $F(X, \leq)=X^s$, where $X^s$ is the set $X$ equipped with the \emph{spectral topology}, i.e., the topology consisting of all the open upsets of $(X, \leq)$. For the morphism $f:(X, \leq_X) \to (Y, \leq_Y)$, define $F(f)=f$. Conversely,
define the assignment $G : \mathbf{Spec} \to  \mathbf{Pries}$ on the objects by $G(X)=(X^p, \leq)$, where $X^p$ is the set $X$ equipped with the \emph{patch topology}, i.e., the topology generated by the basis elements of the form $U \cap V^c$, where $U$ and $V$ are open subsets of $X$ and $\leq$ is the \emph{specialization order} of the original topology of $X$, i.e., $x \leq y$ iff $x \in Cl(\{y\})$, where $Cl$ is the closure operator for the original topology of $X$. For the morphism $f: X \to Y$, define $G(f)=f$.

\begin{theorem} (\cite{Dickmann, Bezh}) \label{SpectralDuality}
The assignments $F$ and $G$ are functors and establish an isomorphism between the categories $\mathbf{Pries}$ and $\mathbf{Spec}$. Therefore,  $\mathbf{Pries} \cong \mathbf{Spec}$.
\end{theorem}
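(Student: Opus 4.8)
The plan is to check that $F$ and $G$ are well defined on objects and morphisms, that they are mutually inverse on the nose, and then observe that functoriality is automatic, since both act as the identity on underlying sets and carry a function to itself.

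First I would show $F(X,\leq)=X^s$ is spectral for every Priestley space $(X,\leq)$. By Lemma \ref{PropPriestly}, open upsets of $(X,\leq)$ are unions of clopen upsets, so the clopen upsets form a basis for $X^s$; moreover the compact opens of $X^s$ are exactly the clopen upsets, since a clopen upset is closed in the compact space $X$, hence compact in the coarser topology $X^s$, while a compact open of $X^s$ is an open upset, hence a finite union of clopen upsets, hence itself a clopen upset. Compactness of $X^s$ is inherited from $X$ (an $X^s$-open cover is an $X$-open cover), and $X^s$ is sober: it is $T_0$ by Priestley separation, and every irreducible $X^s$-closed set is a principal downset $\downarrow\! x = Cl_{X^s}(\{x\})$, with $x$ unique by antisymmetry of $\leq$. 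On morphisms, a Priestley map $f$ is continuous and order-preserving, so $f^{-1}$ carries open upsets to open upsets (continuity of $f\colon X^s\to Y^s$) and clopen upsets to clopen upsets, i.e. compact opens to compact opens ($f$ is spectral).

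Next I would treat $G$. For a spectral space $X$, the patch topology on $X^p$ is compact (a standard fact), and the specialization order $\leq$ is a partial order because $X$ is $T_0$. Each compact open $U$ of $X$ is a $\leq$-upset (if $x\in U$ and $x\leq z$ then every original open containing $x$ contains $z$, so $z\in U$) and is patch-clopen, since $U$ and $U^c$ both lie in the patch subbasis. As the compact opens form a basis, $x\not\leq y$ — i.e. $x\notin Cl(\{y\})$ — yields a compact open $U$ with $x\in U$ and $y\notin U$, and this $U$ is a clopen upset of $(X^p,\leq)$ separating $x$ from $y$; hence $(X^p,\leq)$ is Priestley. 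A spectral map $f$ pulls compact opens back to compact opens, hence pulls the patch subbasis of $Y$ back into that of $X$, so $f\colon X^p\to Y^p$ is continuous; and $f$ preserves the specialization order because it is continuous for the original topologies. Thus $G$ is a functor.

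Finally I would verify $G\circ F=\mathrm{id}_{\mathbf{Pries}}$ and $F\circ G=\mathrm{id}_{\mathbf{Spec}}$. For $G(F(X,\leq))$: the compact opens of $X^s$ are the clopen upsets of $(X,\leq)$, so the patch topology of $X^s$ has the clopen upsets and clopen downsets as a subbasis, which by Lemma \ref{PropPriestly} is precisely the original Priestley topology; and the specialization order of $X^s$ is $\leq$, since $Cl_{X^s}(\{y\})=\downarrow\! y$. For $F(G(X))$: an original open of $X$ is a $\leq$-upset and, being a union of compact opens (which are patch-clopen), is patch-open, hence an open upset of $(X^p,\leq)$; conversely, the complement of a patch-open upset is a patch-compact downset, and a compactness argument separates it from each exterior point by a finite intersection of compact opens, so it is original-closed — thus the open upsets of $(X^p,\leq)$ are exactly the original opens of $X$. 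Functoriality and preservation of identities are then immediate. I expect the last step — confirming that the two operations, passing to open upsets and passing to the patch topology, are genuinely mutually inverse — to be the main obstacle; sobriety of $X^s$, compactness of the patch topology, and the handling of morphisms are either standard or follow quickly from Lemma \ref{PropPriestly}.
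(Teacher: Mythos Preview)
The paper does not supply a proof of this theorem at all: it is quoted as a known result with citations to \cite{Dickmann, Bezh}, and the surrounding text immediately moves on to use the dictionary it provides. So there is no ``paper's proof'' to compare against; your sketch is simply filling in what the paper takes for granted.

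That said, your outline is the standard argument and is essentially correct. A couple of places are thin but fixable. For sobriety of $X^s$, you assert that an irreducible $X^s$-closed set $F$ is of the form $\downarrow\! x$; to justify this you should argue that $F$ (being Priestley-closed) has maximal elements, and that if it had two distinct maximal elements $a,b$ you could use a clopen upset separating $a$ from $b$ together with Lemma~\ref{PropPriestly} to write $F$ as a union of two proper $X^s$-closed subsets. For the compactness of the patch topology you defer to ``a standard fact''; that is acceptable here since the paper itself cites the whole theorem, but be aware it is the one genuinely nontrivial ingredient. Your argument for $F\circ G=\mathrm{id}$ is correct once unpacked: for $x$ in a patch-open upset $U$, each $y\in U^c$ satisfies $y\ngeq x$, so some compact open $V_y$ contains $x$ and misses $y$; the $V_y^c$ form a patch-open cover of the patch-compact set $U^c$, and a finite subcover yields a compact open neighbourhood $V_{y_1}\cap\cdots\cap V_{y_n}$ of $x$ inside $U$.
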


The previous isomorphism provides a useful dictionary between Priestley spaces and spectral spaces. We will recall this dictionary to transfer what we proved in Section \ref{PriEsaDuality} to the spectral setting. 

\begin{definition}
Let $X$ be a topological space. A subset $Y \subseteq X$ is called \emph{saturated}, if it is an intersection of some open subsets and it is called \emph{co-saturated} if it is a union of some closed subsets. Moreover, for any subset $Y \subseteq X$, define $Sat_X(Y)=\bigcap \{ U \in \mathcal{O}(X) \mid U \supseteq Y \}$ and denote $Sat_X(\{x\})$ by $Sat_X(x)$,  for any $x \in X$. It is easy to see that $\uparrow \! Y=Sat_X(Y)$, where the order is the specialization order \cite{ConLat}.
\end{definition}

\begin{theorem} (The Priestley-Spectral Dictionary \cite{Bezh}) \label{Dictionary}
Let $(X, \leq)$ be a Priestley space and $X^s=F(X, \leq)$ be its corresponding spectral space.  Then, a subset of $X$ is:
\begin{itemize}
\item[$\bullet$]
an upset in the Priestley space $(X, \leq)$ iff it is saturated in $X^s$,
\item[$\bullet$]
a downset in the Priestley space $(X, \leq)$ iff it is co-saturated in $X^s$,
\item[$\bullet$]
an open subset in the Priestley space $(X, \leq)$ iff its complement is a spectral subset of $X^s$,
\item[$\bullet$]
a closed subset in the Priestley space $(X, \leq)$ iff it is a spectral subset of $X^s$,
\item[$\bullet$]
a clopen in the Priestley space $(X, \leq)$ iff it is a doubly spectral subset of $X^s$,
\item[$\bullet$]
an open upset in the Priestley space $(X, \leq)$ iff it is open in $X^s$,
\item[$\bullet$]
a closed upset in the Priestley space $(X, \leq)$ iff it is compact and saturated in $X^s$,
\item[$\bullet$]
a clopen upset of the Priestley space $(X, \leq)$ iff it is compact open in $X^s$.
\end{itemize}
Moreover, for any $Y \subseteq X$, we have $Cl(Y)=\downarrow \! Cl_p(Y)$, where $Cl$ and $Cl_p$ are the closure operators for the spectral topology and the Priestley topology, respectively.
\end{theorem}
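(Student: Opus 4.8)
The plan is to verify the listed equivalences one after another, using throughout two facts from the construction of $F$: the open sets of $X^s$ are exactly the open upsets of $(X,\leq)$, and the specialization order of $X^s$ is $\leq$ (equivalently, $G\circ F$ is the identity, or directly: Priestley separation). Besides this I would only need Lemma \ref{PropPriestly} and the isomorphism $\mathbf{Pries}\cong\mathbf{Spec}$ of Theorem \ref{SpectralDuality}. I would begin with the order-theoretic items. Since the specialization order of $X^s$ is $\leq$, the remark following the definition of $Sat$ gives $Sat_{X^s}(Y)=\;\uparrow\! Y$ for every $Y\subseteq X$; hence $Y$ is saturated in $X^s$ iff $Y=\;\uparrow\! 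Y$, i.e. iff $Y$ is an upset, and the downset/co-saturated equivalence follows by complementation. The equivalence ``open upset iff open in $X^s$'' is immediate from the definition of $F$. For ``closed upset iff compact and saturated in $X^s$'': a closed upset is compact in the Priestley topology, hence compact in the coarser spectral topology, and it is saturated by the previous point; conversely, if $Y$ is saturated (hence an upset) and compact in $X^s$, then for $y\notin Y$ every $z\in Y$ satisfies $z\nleq y$, so Priestley separation covers $Y$ by clopen upsets missing $y$, and compactness in $X^s$ extracts a single clopen upset $U\supseteq Y$ with $y\notin U$; thus $Y$ is an intersection of clopen upsets, hence Priestley-closed. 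Finally ``clopen upset iff compact open in $X^s$'' follows by combining the last two equivalences.

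The heart of the argument is the pair of items on spectral subsets; by complementation both reduce to the single claim that $Y$ is closed in the Priestley topology iff $Y$ is a spectral subset of $X^s$ (and then ``clopen iff doubly spectral'' follows at once). For the forward direction, a Priestley-closed $Y$ is itself a Priestley space under $\leq|_Y$, and the key step is an extension lemma: every clopen upset $V$ of $(Y,\leq|_Y)$ is the trace on $Y$ of a clopen upset of $(X,\leq)$. Indeed $V$ and $Y\setminus V$ are closed in $X$, and $\uparrow\! V\cap\downarrow\!(Y\setminus V)=\varnothing$ because $V$ is a $\leq|_Y$-upset, so Lemma \ref{PropPriestly} supplies a clopen upset $U$ of $X$ with $V\subseteq U$ and $U\cap(Y\setminus V)=\varnothing$, whence $U\cap Y=V$. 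Taking unions, the open upsets of $(Y,\leq|_Y)$ are exactly the traces on $Y$ of open upsets of $X$, so $F(Y,\leq|_Y)$ coincides with the subspace of $X^s$ carried by $Y$; in particular $Y$ is a spectral space in the subspace topology, and the inclusion is spectral because the trace of a compact open of $X^s$, i.e. of a clopen upset of $X$, is Priestley-closed and hence compact. For the converse, if $Y$ is a spectral subset then $Z:=Y$ with the $X^s$-subspace topology is a spectral space, so $G(Z)$ is a Priestley space; computing the patch topology of $Z$ from the basis given by traces of clopen upsets of $X$ shows that $G(Z)$ is precisely $Y$ with the induced Priestley topology and the order $\leq|_Y$. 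Being a Priestley space it is compact, and since the Priestley topology on $X$ is Hausdorff, a compact subspace is closed; hence $Y$ is Priestley-closed.

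For the closure formula I would use that the closed subsets of $X^s$ are exactly the Priestley-closed downsets of $(X,\leq)$. Then $\downarrow\! Cl_p(Y)$ is a Priestley-closed downset (Lemma \ref{PropPriestly}) containing $Y$, so $Cl(Y)\subseteq\;\downarrow\! Cl_p(Y)$; conversely $Cl(Y)$ is itself a Priestley-closed downset containing $Y$, so $Cl_p(Y)\subseteq Cl(Y)$ and therefore $\downarrow\! Cl_p(Y)\subseteq\;\downarrow\! Cl(Y)=Cl(Y)$, giving the desired identity.

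I expect the main obstacle to be precisely the equivalence ``Priestley-closed iff spectral subset of $X^s$'': the forward direction depends on the clopen-upset extension lemma for a closed Priestley subspace and on carefully matching $F(Y,\leq|_Y)$ with the $X^s$-subspace topology, and the converse on the analogous identification of the patch topology of a spectral subspace with the induced Priestley topology. Everything else is essentially bookkeeping once upsets have been matched with saturated sets and compact opens with clopen upsets.
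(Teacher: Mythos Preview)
The paper does not prove this theorem; it is stated with a citation to \cite{Bezh} and used as a black box throughout Section~\ref{SectionSpectral}. So there is no ``paper's proof'' to compare against, and your proposal stands as an independent argument.

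Your argument is essentially correct. The order-theoretic items (upset/saturated, closed upset/compact saturated, clopen upset/compact open) and the closure formula are handled cleanly. The one place that deserves an extra sentence is the converse of ``Priestley-closed iff spectral subset''. You claim that the patch topology of $Z$ can be computed ``from the basis given by traces of clopen upsets of $X$'', but for this you need that every compact open of $Z$ is the trace on $Y$ of a compact open of $X^s$. This is true, and the reason is short: any open of $Z$ is $U\cap Y$ with $U$ open in $X^s$, hence a union $\bigcup_i (K_i\cap Y)$ with $K_i$ compact open in $X^s$; if the open is itself compact, finitely many suffice, and their union is a compact open of $X^s$ whose trace is the given set. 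Once this is said, your identification of $G(Z)$ with the induced Priestley subspace goes through, and compactness of $G(Z)$ plus Hausdorffness of $(X,\leq)$ finishes the job. Alternatively, you can shortcut this step by observing that spectral maps are automatically continuous for the patch topologies (since inverse images of compact opens are compact opens), so the inclusion $G(Z)\to (X,\leq)$ is continuous and its image $Y$ is compact, hence closed; this avoids computing $G(Z)$ explicitly.
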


\begin{definition}(\cite{Bezh})
Let $X$ be a spectral space. It is called \emph{Heyting}, if $Cl(Y)$ is a doubly spectral subset of $X$, for any doubly spectral subset $Y$ of $X$. A spectral map $f: X \to Y$ between two Heyting spectral spaces is called Heyting, if $f[Sat_X(x)]=Sat_Y(f(x))$, for any $x \in X$. We denote the category of Heyting spectral spaces and Heyting spectral maps by $\mathbf{HSpec}$. 
\end{definition}

Using the dictionary in Lemma \ref{Dictionary}, it is easy to see that Heyting spectral spaces and Esakia spaces are equivalent under $F-G$ isomorphism:

\begin{theorem} (\cite{Bezh}) \label{HSpectralDuality}
The assignments $F$ and $G$ establish an isomorphism between the categories $\mathbf{Esakia}$ and $\mathbf{HSpec}$. Therefore,  $\mathbf{Esakia} \cong \mathbf{HSpec}$.
\end{theorem}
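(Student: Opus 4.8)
The plan is to piggyback entirely on the isomorphism $F : \mathbf{Pries} \to \mathbf{Spec}$, $G : \mathbf{Spec} \to \mathbf{Pries}$ of Theorem \ref{SpectralDuality} together with the Priestley–Spectral Dictionary of Theorem \ref{Dictionary}. Since $F$ and $G$ are already known to be mutually inverse isomorphisms of categories, it suffices to verify that they restrict correctly: that on objects $F$ carries Esakia spaces to Heyting spectral spaces (and $G$ the reverse), and that on morphisms $F$ carries Esakia maps to Heyting spectral maps (and $G$ the reverse). Once these four facts are in place, functoriality, fullness, faithfulness and the unit/counit identities $GF=\mathrm{id}$, $FG=\mathrm{id}$ are all inherited from Theorem \ref{SpectralDuality} by restriction, and in fact it is enough to check one direction of each equivalence since the other follows by applying the already-inverse functor.

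\emph{Object level.} I would show that $(X,\leq)$ is an Esakia space iff $X^s=F(X,\leq)$ is a Heyting spectral space. Recall that $(X,\leq)$ is Esakia iff $\downarrow\! U$ is clopen for every clopen $U$ of $(X,\leq)$. By Theorem \ref{Dictionary}, the clopens of $(X,\leq)$ are exactly the doubly spectral subsets of $X^s$, so the task is to match ``$\downarrow\! U$ is clopen'' with ``$Cl(U)$ is doubly spectral''. The bridge is the last clause of Theorem \ref{Dictionary}, $Cl(Y)=\downarrow\! Cl_p(Y)$: since a clopen $U$ is in particular closed in the patch topology, $Cl_p(U)=U$, hence $Cl(U)=\downarrow\! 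U$. Thus ``$\downarrow\! U$ is clopen in $(X,\leq)$'' is literally ``$Cl(U)$ is a doubly spectral subset of $X^s$'', and quantifying over all clopen $U$ (equivalently, over all doubly spectral subsets of $X^s$) yields precisely the defining condition of a Heyting spectral space.

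\emph{Morphism level.} Let $f:(X,\leq_X)\to(Y,\leq_Y)$ be a Priestley map between Esakia spaces. By Theorem \ref{SpectralDuality}, $F(f)=f$ is automatically a spectral map, so only the extra closure condition needs comparison. Since the partial order of a Priestley space is the specialization order of its spectral companion, and since $\uparrow\! Z=Sat_X(Z)$ for the specialization order, the Esakia condition $f[\uparrow\! x]=\uparrow\! f(x)$ for all $x\in X$ is verbatim the condition $f[Sat_X(x)]=Sat_Y(f(x))$ defining a Heyting spectral map. Hence $f$ is an Esakia map iff $F(f)$ is a Heyting spectral map; the symmetric statement for $G$ follows the same way, using that $G$ takes spectral maps to Priestley maps.

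Combining the two equivalences, $F$ restricts to a functor $\mathbf{Esakia}\to\mathbf{HSpec}$ and $G$ to a functor $\mathbf{HSpec}\to\mathbf{Esakia}$; since the unrestricted $F,G$ satisfy $GF=\mathrm{id}_{\mathbf{Pries}}$ and $FG=\mathrm{id}_{\mathbf{Spec}}$, the same identities hold on the subcategories, giving $\mathbf{Esakia}\cong\mathbf{HSpec}$. I do not expect a genuine obstacle here: the argument is a transport along an existing isomorphism, and the only non-bookkeeping step is the identification $Cl(U)=\downarrow\! U$ for clopen $U$, which turns the Esakia condition into the Heyting-spectral condition. The main thing to be careful about is invoking each Dictionary entry in the right direction — in particular, that \emph{every} doubly spectral subset of $X^s$ is a clopen of $(X,\leq)$, not merely the converse — so that the quantifier ``for all doubly spectral $Y$'' genuinely matches ``for all clopen $U$''.
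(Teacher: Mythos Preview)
Your proposal is correct and follows exactly the approach the paper indicates: the paper does not spell out a proof for this theorem but simply cites \cite{Bezh} and remarks that ``using the dictionary in Lemma \ref{Dictionary}, it is easy to see that Heyting spectral spaces and Esakia spaces are equivalent under the $F$--$G$ isomorphism.'' Your argument is precisely that dictionary-based verification, carried out in detail.
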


Combining the isomorphisms $\mathbf{Pries} \cong \mathbf{Spec}$ and $\mathbf{Esakia} \cong \mathbf{HSpec}$ with Priestley/Esakia duality theorem, we have:

\begin{theorem}(Spectral duality \cite{Bezh}) \label{RealSpectralDuality}
$\mathbf{DLat} \simeq \mathbf{Spec}^{op}$ and $\mathbf{Heyting} \simeq \mathbf{HSpec}^{op}$.
\end{theorem}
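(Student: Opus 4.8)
The plan is to obtain the spectral duality as a purely formal composition of the two ingredients already in hand: the Priestley--Esakia duality of Theorem \ref{PriestleyDuality} and the topological isomorphisms of Theorems \ref{SpectralDuality} and \ref{HSpectralDuality}. Concretely, I would define the candidate functors as composites. For the first statement, set $\mathfrak{S}' := F \circ \mathfrak{S}_0 : \mathbf{DLat} \to \mathbf{Spec}^{op}$ and $\mathfrak{A}' := \mathfrak{A}_0 \circ G : \mathbf{Spec}^{op} \to \mathbf{DLat}$, where one uses that an isomorphism of categories $F : \mathbf{Pries} \to \mathbf{Spec}$ induces an isomorphism $F : \mathbf{Pries}^{op} \to \mathbf{Spec}^{op}$ (abusing notation), and similarly for $G$. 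Since $F$ and $G$ are mutually inverse isomorphisms (Theorem \ref{SpectralDuality}) and $\mathfrak{S}_0, \mathfrak{A}_0$ are quasi-inverse equivalences with natural isomorphisms $\alpha$ and $\beta$ (Theorem \ref{PriestleyDuality}), the composites are quasi-inverse equivalences as well: the unit/counit for $\mathfrak{S}', \mathfrak{A}'$ are obtained from $\alpha$ and from $F \beta G$ (which makes sense because $GF = \mathrm{id}_{\mathbf{Pries}}$ and $FG = \mathrm{id}_{\mathbf{Spec}}$), both still natural isomorphisms. This gives $\mathbf{DLat} \simeq \mathbf{Spec}^{op}$.

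For the Heyting case, I would argue by restriction. By Theorem \ref{PriestleyDuality}, the functors $\mathfrak{S}_0$ and $\mathfrak{A}_0$ restrict to an equivalence between $\mathbf{Heyting}$ and $\mathbf{Esakia}^{op}$, and by Theorem \ref{HSpectralDuality} the very same $F$ and $G$ restrict to an isomorphism between $\mathbf{Esakia}$ and $\mathbf{HSpec}$. Hence the composites $\mathfrak{S}'$ and $\mathfrak{A}'$ restrict to a pair of functors between $\mathbf{Heyting}$ and $\mathbf{HSpec}^{op}$, and the natural isomorphisms $\alpha$ and $F\beta G$ restrict accordingly, yielding $\mathbf{Heyting} \simeq \mathbf{HSpec}^{op}$. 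The only point that needs a sentence of care is that the restriction of $F$ to $\mathbf{Esakia}$ indeed lands in $\mathbf{HSpec}$ (and $G$ conversely), but this is precisely the content of Theorem \ref{HSpectralDuality}, so nothing new is required.

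There is essentially no genuine obstacle here: the statement is a ``diagram chase'' at the level of categories, and the work has already been done in the cited theorems. If anything, the one place to be slightly watchful is bookkeeping of the opposite categories --- making sure that ``$\mathbf{Pries} \cong \mathbf{Spec}$'' is used in the form ``$\mathbf{Pries}^{op} \cong \mathbf{Spec}^{op}$'' so that it composes correctly with the contravariant Priestley duality to produce a duality (rather than an equivalence) between $\mathbf{DLat}$ and $\mathbf{Spec}$. I would therefore keep the proof to two or three lines, explicitly naming the composite functors and invoking the stability of equivalences under composition with isomorphisms.
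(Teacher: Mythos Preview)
Your proposal is correct and matches the paper's approach exactly: the paper does not give a detailed proof but simply introduces the theorem with the sentence ``Combining the isomorphisms $\mathbf{Pries} \cong \mathbf{Spec}$ and $\mathbf{Esakia} \cong \mathbf{HSpec}$ with Priestley/Esakia duality theorem, we have'', which is precisely the composition-of-equivalences argument you spell out.
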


Finally, similar to what we did for Priestley spaces, let us provide a concrete characterization for the epic and regular monic maps in the category $\mathbf{Spec}$.

\begin{lemma}\label{SurInjSpectral}
Let $X$ and $Y$ be two spectral spaces and $f: X \to Y$ be a spectral map. 
\begin{description}
\item[$(i)$]
$f$ is surjective iff $f$ is an epic map in $\mathbf{Spec}$.
\item[$(ii)$]
$f$ is a topological embedding iff $f$ is a regular monic in $\mathbf{Spec}$.
\end{description}
\end{lemma}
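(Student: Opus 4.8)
The plan is to transfer the characterization already established for Priestley spaces in Lemma \ref{PriestleyInjSurj} across the isomorphism $\mathbf{Pries} \cong \mathbf{Spec}$ of Theorem \ref{SpectralDuality}, using the Priestley-Spectral dictionary of Theorem \ref{Dictionary} to translate ``order-embedding'' into ``topological embedding''. Concretely, given a spectral map $f: X \to Y$, write $G(X) = (X^p, \leq_X)$ and $G(Y) = (Y^p, \leq_Y)$ for the associated Priestley spaces; since $G$ is a functor with $G(f) = f$, the map $f$ is also a Priestley map between these, and $f$ is epic (resp. regular monic) in $\mathbf{Spec}$ iff it is epic (resp. regular monic) in $\mathbf{Pries}$ because $G$ is an isomorphism of categories and hence preserves and reflects all categorical properties. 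So by Lemma \ref{PriestleyInjSurj} it remains only to match up the concrete (non-categorical) descriptions on the two sides.

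For part $(i)$: $f$ is epic in $\mathbf{Spec}$ iff $f$ is epic in $\mathbf{Pries}$ iff (by Lemma \ref{PriestleyInjSurj}$(i)$) $f$ is surjective as a function on the underlying sets. Since the underlying set of $X$ equals the underlying set of $X^p = G(X)$, surjectivity of $f$ is the same statement in both categories, so we conclude $f$ is epic in $\mathbf{Spec}$ iff $f$ is surjective. For part $(ii)$: $f$ is regular monic in $\mathbf{Spec}$ iff $f$ is regular monic in $\mathbf{Pries}$ iff (by Lemma \ref{PriestleyInjSurj}$(ii)$) $f$ is an order-embedding for the specialization orders $\leq_X, \leq_Y$, i.e. $f(x) \leq_Y f(x')$ implies $x \leq_X x'$. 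The remaining task is to show that, for a spectral map $f$, being an order-embedding for the specialization orders is equivalent to being a topological embedding of $X$ into $Y$.

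That last equivalence is the one genuinely topological step and the place where a little care is needed. For the forward direction, one argues that if $f$ is an order-embedding then it is injective (if $f(x) = f(x')$ then $x \leq_X x'$ and $x' \leq_X x$, and $\leq_X$ is a partial order, being the specialization order of a sober — hence $T_0$ — space, so $x = x'$) and that the subspace topology on $f[X]$ coincides with the topology transported from $X$: using the dictionary, a basic compact open of $X$ is a clopen upset of $X^p$, its image under the order-embedding $f$ is, up to intersection with $f[X]$, the restriction of a compact open of $Y$ (here one invokes the clopen-upset separation argument from the proof of Lemma \ref{PriestleyInjSurj}$(ii)$, namely that $\uparrow\!f[U] \cap \downarrow\!f[U^c] = \varnothing$ yields a clopen upset $V$ with $f^{-1}(V) = U$, and then $F(V)$ is compact open in $Y$), so $f$ is an open map onto its image and continuity is automatic. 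For the converse, a topological embedding is in particular injective and a homeomorphism onto its image, hence reflects the specialization order, giving the order-embedding property. The main obstacle is organizing this translation cleanly: one must consistently keep track of which topology (spectral versus patch) each statement lives in, and one should note that a spectral map which is a topological embedding need not have spectral image a priori — but this is exactly the content of $f$ being regular monic in $\mathbf{Spec}$, so no extra work is needed once the categorical equivalence is in hand. I would therefore present the proof as: invoke the isomorphism $G$ to reduce to Lemma \ref{PriestleyInjSurj}, then spend the bulk of the argument on the ``order-embedding iff topological embedding'' equivalence via the dictionary of Theorem \ref{Dictionary}.
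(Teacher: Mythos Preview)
Your overall strategy---transport the problem to $\mathbf{Pries}$ via the isomorphism $G$ of Theorem \ref{SpectralDuality} and invoke Lemma \ref{PriestleyInjSurj}---is exactly what the paper does, and part $(i)$ is handled identically. For part $(ii)$, your reduction is also correct, and the remaining step ``order-embedding for the specialization orders iff topological embedding'' is the right thing to isolate.

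The difference is in how that bridging step is executed. You argue it directly: for the forward direction you show $f$ is injective and open onto its image by pulling a compact open $U$ back to a clopen upset, invoking the separation argument inside Lemma \ref{PriestleyInjSurj}$(ii)$ to produce $V$ with $f^{-1}(V)=U$, and concluding $f[U]=V\cap f[X]$; for the converse you note that a homeomorphism onto the image reflects the specialization order. This is correct, but the paper takes a shorter route: since spectral spaces are $T_0$, Theorem \ref{InjSurjforContinuous} already gives ``$f$ is a topological embedding iff $f^{-1}:\mathcal{O}(Y)\to\mathcal{O}(X)$ is surjective'', and then one only needs ``$f$ order-embedding iff $f^{-1}$ surjective on spectral opens'', which follows quickly from Lemma \ref{PriestleyInjSurj}$(ii)$ (surjectivity on clopen upsets) plus the fact that every open upset is a union of clopen upsets and the dictionary identification of spectral opens with Priestley open upsets. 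Your approach re-derives part of Theorem \ref{InjSurjforContinuous} inline; the paper simply cites it. Both work, but the paper's is more modular and avoids the openness-onto-image computation.
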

\begin{proof}
$(i)$ is an easy consequence of the isomorphism between $\mathbf{Pries}$ and $\mathbf{Spec}$ in Theorem \ref{SpectralDuality} together with Lemma \ref{PriestleyInjSurj}. For $(ii)$,
as spectral spaces are $T_0$, by Theorem \ref{InjSurjforContinuous}, 
$f: X \to Y$ is a topological embedding iff
$f^{-1}: \mathcal{O}(Y) \to \mathcal{O}(X)$ is surjective. 
Let $(X^p, \leq_X)$ and $(Y^p, \leq_Y)$ be the corresponding Priestly spaces of the spectral spaces $X$ and $Y$, respectively. Therefore, $f:X \to Y$ is a reqular monic in $\mathbf{Spec}$ iff the map $f: (X^p, \leq_X) \to (Y^p, \leq_Y)$ is a regular monic in $\mathbf{Pries}$, by Theorem \ref{SpectralDuality}.
Therefore, it is enough to show that $f: (X^p, \leq_X) \to (Y^p, \leq_Y)$ is a regular monic in $\mathbf{Pries}$ iff $f^{-1}: \mathcal{O}(Y) \to \mathcal{O}(X)$ is surjective. For the first direction, assume that the map $f: (X^p, \leq_X) \to (Y^p, \leq_Y)$ is a regular monic in $\mathbf{Pries}$. By Lemma \ref{PriestleyInjSurj}, the function $f^{-1}: CU(Y^p, \leq_Y) \to CU(X^p, \leq_X)$ is surjective. As any open upset of $(X^p,\leq_X)$ is a union of clopen upsets by Lemma \ref{PropPriestly}, we can conclude that the function $f^{-1}: \mathcal{OU}(Y^p, \leq_Y) \to \mathcal{OU}(X^p, \leq_X)$ is surjective, where $\mathcal{OU}(-)$ means the set of all open upsets of the Priestley space.
Now, observe that by Lemma \ref{Dictionary}, the open subsets of a spectral space are actually the open upsets of the corresponding Priestley space. Therefore, $f^{-1}: \mathcal{O}(Y) \to \mathcal{O}(X)$ is surjective. 

For the converse, assume $f^{-1}: \mathcal{O}(Y) \to \mathcal{O}(X)$ is surjective.
We first show that $f: (X^p, \leq_X) \to (Y^p, \leq_Y)$ is an order-embedding. Notice that $\leq_X$ and $\leq_Y$ are the specialization orders of the spectral spaces $X$ and $Y$, respectively. Assume $f(x) \leq_Y f(y)$. By definition of the specialization order, for any open $U \in Y$ of the spectral space $Y$, if $f(x) \in U$ then $f(y) \in U$, or equivalently, $x \in f^{-1}(U)$ implies $y \in f^{-1}(U)$. Since $f^{-1}$ is surjective, we can conclude that $x \in V$ implies $y \in V$, for any open $V \subseteq X$. Hence, $x \leq_X y$. Therefore,  $f: (X^p, \leq_X) \to (Y^p, \leq_Y)$ is an order-embedding. Finally, by Lemma \ref{PriestleyInjSurj}, $f$ is a regular monic in $\mathbf{Pries}$.
\end{proof}

\subsection{$\nabla$-spectral spaces and spectral duality}
In the previous subsection, we recalled the well-known fact that (Heyting) spectral and (Esakia) Priestley spaces are just the two different presentations of one mathematical entity. Here, we use this fact to present the spectral presentation of a $\nabla$-space introduced in Section \ref{PriEsaDuality}.
\begin{definition}\label{DefNablaSpectralspace}
A \emph{$\nabla$-spectral space} is a pair $(X, R)$ of a spectral space $X$ and a binary relation $R$ on $X$ such that:
\begin{itemize}
\item[$\bullet$]
$R[x]=\{y \in X \mid (x, y) \in R\}$ is compact and saturated, for every $x \in X$,
\item[$\bullet$]
$R^{-1}[y]=\{x \in X \mid (x, y) \in R\}$ is co-saturated, for every $y \in X$,
\item[$\bullet$]
$\lozenge_R(Y)=\{x \in X \mid \exists y \in Y, (x, y) \in R\}$ is a doubly spectral subset of $X$, for any doubly spectral subset $Y \subseteq X$,
\item[$\bullet$]
$\nabla_R(V)=\{x \in X \mid \exists y \in V, (y, x) \in R \}$ is a compact open subset, for any compact open subset $V$.
\end{itemize}
A spectral $\nabla$-space $(X, R)$ is called:
\begin{description}
\item[$(N)$]
\emph{normal} if there exists a continuous map $\pi: X \to X$, called the normality witness, such that $(x, y) \in R$ iff $x \in Cl(\{\pi(y)\})$, for any $x, y \in X$,
\item[$(H)$]
\emph{Heyting} if $Cl(Y)$ is a doubly spectral subset of $X$, for any doubly spectral $Y \subseteq X$, 
\item[$(R)$]
\emph{right} if $R$ is reflexive,
\item[$(L)$]
\emph{left} if $(x, y) \in R$ implies $x \in Cl(\{y\})$, for any $x, y \in X$, 
\item[$(Fa)$]
\emph{faithful} if for any $x \in X$, there exists $y \in X$ such that $(y, x) \in R$ and for any $z \in X$ such that $(y, z) \in R$, we have $x \in Cl(\{z\})$,
\item[$(Fu)$]
\emph{full} if for any $x \in X$, there exists $y \in X$ such that $(x, y) \in R$ and for any $z \in X$ such that $(z, y) \in R$, we have $z \in Cl(\{x\})$.
\end{description}
If $(X, R_X)$ and $(Y, R_Y)$ are two $\nabla$-spectral spaces, by a \emph{$\nabla$-spectral map} $f: (X, R_X) \to (Y, R_Y)$, we mean a spectral map $f: X \to Y$ such that:
\begin{itemize}
\item[$\bullet$]
for any $x, z \in X$, if $(x, z) \in R_X$ then $(f(x), f(z)) \in R_Y$,
\item[$\bullet$]
for any $y \in Y$ such that $(f(x), y) \in R_Y $, there exists $z \in X$ such that $(x, z) \in R_X $ and $f(z)=y$,
\item[$\bullet$]
for any $y \in Y$ such that $(y, f(x)) \in R_Y $, there exists $z \in X$ such that $(z, x) \in R_X $ and $y \in Cl_Y(\{f(z)\})$.
\end{itemize}
If $(X, R_X)$ and $(Y, R_Y)$ are two Heyting $\nabla$-spectral spaces, then a $\nabla$-spectral map $f: (X, R_X) \to (Y, R_Y)$ is called Heyting, if it is Heyting as a spectral map.
For any $C \subseteq \{N, H, R, L, Fa, Fu\}$, the class of all $\nabla$-spectral spaces satisfying the conditions in $C$ together with $\nabla$-spectral maps form a category denoted by $\mathbf{Spec}_{\nabla}(C)$. If we restrict the objects to Heyting $\nabla$-spectral spaces and the morphisms to Heyting $\nabla$-spectral maps, we denote the subcategory by $\mathbf{Spec}^{H}_{\nabla}(C)$.
\end{definition}

Define the assignment $\tilde{F}: \mathbf{Spec}_{\nabla} \to \mathbf{Space}_{\nabla}$ on the objects by $\tilde{F}(X, R)=(F(X), R)=(X^p, \leq, R)$ and on the morphism $f: (X, R_X) \to (Y, R_Y)$ by $\tilde{F}(f)=F(f)=f$. Conversely, define $\tilde{G}: \mathbf{Space}_{\nabla} \to \mathbf{Spec}_{\nabla}$ on the objects by $\tilde{G}(X, \leq, R)=(G(X, \leq), R)=(X^s, R)$ and on the morphism $f: (X, \leq_X, R_X) \to (Y, \leq_Y, R_Y)$ by $\tilde{G}(f)=G(f)=f$.

\begin{lemma}(Extended Dictionary)\label{DicExtended}
The assignments $\tilde{F}$ and $\tilde{G}$ are well-defined functors and establish an isomorphism between the categories $\mathbf{Space}_{\nabla}$ and $\mathbf{Spec}_{\nabla}$.
Moreover, for any $C \subseteq \{N, H, R, L, Fa, Fu\}$,  the isomorphism restricts to an isomorphism between $\mathbf{Spec}_{\nabla}(C)$ and $\mathbf{Space}_{\nabla}(C)$ and similarly between $\mathbf{Spec}^H_{\nabla}(C)$ and $\mathbf{Space}^H_{\nabla}(C)$. Therefore, $\mathbf{Spec}_{\nabla}(C)\cong \mathbf{Space}_{\nabla}(C)$ and $\mathbf{Spec}^H_{\nabla}(C) \cong \mathbf{Space}^H_{\nabla}(C)$.
\end{lemma}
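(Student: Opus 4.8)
The plan is to leverage the already-established isomorphism $\mathbf{Pries} \cong \mathbf{Spec}$ from Theorem \ref{SpectralDuality} and the Priestley-Spectral Dictionary of Theorem \ref{Dictionary}, so that essentially nothing topological has to be reproved: all the work reduces to translating each bulleted condition in Definition \ref{DefNablaSpace} into its counterpart in Definition \ref{DefNablaSpectralspace} via the dictionary, with the relation $R$ carried along unchanged. First I would argue well-definedness of $\tilde F$: given a $\nabla$-spectral space $(X,R)$, the underlying space $F(X)=(X^p,\leq)$ is already a Priestley space by Theorem \ref{SpectralDuality}, so I only need to check the four conditions of Definition \ref{DefNablaSpace} for $(X^p,\leq,R)$. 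Compatibility of $R$ with $\leq$ follows from the two clauses ``$R[x]$ saturated'' (i.e.\ an upset, so $(x,y)\in R$ and $y\le y'$ give $(x,y')\in R$) and ``$R^{-1}[y]$ co-saturated'' (i.e.\ a downset, so $x'\le x$ and $(x,y)\in R$ give $(x',y)\in R$), using the dictionary line identifying upsets with saturated sets and downsets with co-saturated sets. For the second Priestley condition, ``$R[x]$ closed'' corresponds to ``$R[x]$ compact and saturated'' — one half of this is the hypothesis, and the other half is automatic since $R[x]$ is saturated hence an upset, and closed upsets are exactly the compact saturated sets by Theorem \ref{Dictionary}. (Here I should double-check: the Priestley condition only demands $R[x]$ \emph{closed}, so I actually need the spectral side to give me exactly that; the dictionary says closed $=$ spectral subset, so strictly I must verify that a compact saturated set is spectral, which again is in Theorem \ref{Dictionary}.) The third condition, ``$\lozenge_R(U)$ clopen for clopen $U$'', matches ``$\lozenge_R(Y)$ doubly spectral for doubly spectral $Y$'' via the clopen $=$ doubly spectral line of the dictionary, since $\lozenge_R$ is the same set-operation on both sides. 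The fourth condition, ``$\nabla_R(V)$ a clopen upset for clopen upset $V$'', matches ``$\nabla_R(V)$ compact open for compact open $V$'' via the clopen-upset $=$ compact-open line. The reverse direction, well-definedness of $\tilde G$, is the mirror image of the same table of equivalences and needs no new argument.

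Next I would handle the morphisms. Since $\tilde F(f)=f$ and $\tilde G(f)=f$ on underlying functions, and $F,G$ already identify Priestley maps with spectral maps (Theorem \ref{SpectralDuality}), the only thing to check is that the three extra clauses in the definition of a $\nabla$-spectral map (Definition \ref{DefNablaSpectralspace}) are literally the same as the three clauses in the definition of a Kripke morphism (Definition \ref{DefKripke}) once we recall that the specialization order on a spectral space is the order $\le$ on the corresponding Priestley space, so ``$y\in Cl_Y(\{f(z)\})$'' is exactly ``$y \le f(z)$'', i.e.\ $f(z)\ge y$. With that dictionary for the closure/specialization relation in hand, the first two clauses coincide verbatim and the third clause ``$y\in Cl_Y(\{f(z)\})$'' becomes ``$f(z)\ge l'$'', matching the third Kripke-morphism clause. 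Continuity of $f$ in the spectral sense is equivalent to continuity in the Priestley sense by Theorem \ref{SpectralDuality}, so a $\nabla$-spectral map is precisely a continuous Kripke morphism, i.e.\ a $\nabla$-space map. Functoriality of $\tilde F$ and $\tilde G$ (preservation of identities and composition) is immediate since they act as the identity on underlying functions, and $\tilde F\circ \tilde G$ and $\tilde G\circ\tilde F$ are the identity functors because $F\circ G$ and $G\circ F$ are (Theorem \ref{SpectralDuality}) and $R$ is untouched; hence $\mathbf{Space}_\nabla \cong \mathbf{Spec}_\nabla$.

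For the refinement to the subclasses $C\subseteq\{N,H,R,L,Fa,Fu\}$, I would go through the conditions one by one. The conditions $R, L, Fa, Fu$ are purely relational and phrased identically on the two sides once ``$x\in Cl(\{y\})$'' is read as ``$x\le y$'' (specialization order), so they transfer trivially. For $(H)$, ``$(X,\le)$ is an Esakia space'' corresponds to ``$X$ is a Heyting spectral space'' by Theorem \ref{HSpectralDuality}, and a Heyting $\nabla$-space map corresponds to a Heyting $\nabla$-spectral map by the same theorem, since the Esakia-map condition $f[\uparrow x]=\uparrow f(x)$ is exactly the Heyting-spectral-map condition $f[Sat_X(x)]=Sat_Y(f(x))$ under $\uparrow Y = Sat_X(Y)$. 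For $(N)$, ``normal as a Kripke frame'' means there is an order-preserving $\pi$ with $(x,y)\in R \iff x\le\pi(y)$; on the spectral side this reads $(x,y)\in R \iff x\in Cl(\{\pi(y)\})$, and order-preservation of $\pi$ for the specialization order is exactly continuity failing — no: order-preservation for the specialization order is \emph{not} the same as continuity, so here I must invoke that in a spectral/Priestley space a function is continuous for the spectral topology iff it is order-preserving and continuous for the patch topology; more precisely, I would cite Lemma \ref{NormalityForPriestly} together with Theorem \ref{SpectralDuality} to see that the normality witness $\pi$, being a Priestley map, corresponds exactly to a spectral map. This matching of the normality witness is the one place where a little care is needed rather than pure bookkeeping, and it is the main (mild) obstacle; everything else is a direct application of the dictionary of Theorem \ref{Dictionary}. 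Finally, restricting all of the above to Heyting objects and Heyting maps gives $\mathbf{Spec}^H_\nabla(C)\cong\mathbf{Space}^H_\nabla(C)$ by the same arguments combined with Theorem \ref{HSpectralDuality}.
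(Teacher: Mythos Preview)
Your proposal is correct and follows essentially the same route as the paper: both arguments reduce everything to the dictionary of Theorem~\ref{Dictionary} and Theorem~\ref{SpectralDuality}, observing that the first two spectral conditions on $R$ (compact saturated $R[x]$, co-saturated $R^{-1}[y]$) repackage exactly the Priestley pair ``compatibility $+$ $R[x]$ closed'', that the remaining conditions and the morphism clauses translate verbatim once $x\in Cl(\{y\})$ is read as $x\le y$, and that the only non-mechanical point is the normality witness, handled via Lemma~\ref{NormalityForPriestly} (Priestley side $\Rightarrow$ $\pi$ is Priestley, hence spectral) and the easy converse (spectral continuity $\Rightarrow$ order-preserving). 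Your momentary hesitation about ``closed $=$ spectral subset'' versus ``closed upset $=$ compact saturated'' is harmless, since you correctly land on the latter, which is exactly the line the paper uses.
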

\begin{proof}
To show that $\tilde{F}$ and $\tilde{G}$ are well-defined, we have to show two things. First, 
$(X, R) \in \mathbf{Spec}_{\nabla}(C)$ iff $(X^p, \leq, R) \in \mathbf{Space}_{\nabla}(C)$, for any spectral space $X$, any binary relation $R \subseteq X \times X$ and any $C \subseteq \{N, H, R, L, Fa, Fu\}$. Second, we have to show that a map $f:(X, R_X) \to (Y, R_Y)$ is a $\nabla$-spectral space map iff $f:(X^p, \leq_X, R_X) \to (Y^p, \leq_Y, R_Y)$ is a $\nabla$-space map. To show these equivalences, we use the dictionary in Lemma \ref{Dictionary}.
For the first, note that $R[x]$ is compact and saturated and $R^{-1}[y]$ is co-saturated as subsets of the spectral space $X$ iff $R[x]$ is a closed upset and $R^{-1}[y]$ is a downset as the subsets of $(X^p, \leq)$. Therefore, the combination of the first two conditions in Definition \ref{DefNablaSpectralspace} of a $\nabla$-spectral space is equivalent to $R$ being compatible with the order and $R[x]$ being closed with respect to $(X^p, \leq)$. By Lemma \ref{Dictionary}, the other two conditions in Definition \ref{DefNablaSpectralspace} are simply the spectral presentation of the other two conditions in Definition \ref{DefNablaSpace}.
For the conditions in the set $\{R, L, Fa, Fu\}$, as they are just the direct translations of the same conditions in Definition \ref{DefNablaSpace}, there is nothing to prove. For $(N)$, let $(X^p, \leq, R)$ be a normal $\nabla$-space with the normality witness $\pi$. Therefore, $(x, y) \in R$ iff $x \leq \pi(y)$, for any $x, y \in X$. By Lemma \ref{NormalityForPriestly}, the map $\pi$ is a Priestley map on $(X^p, \leq)$. Hence, by Theorem \ref{SpectralDuality}, $\pi: X \to X$ is a spectral and hence continuous map with respect to the spectral topology on $X$. As $x \in Cl(\{\pi(y)\})$ iff $x \leq \pi(y)$, for any $x, y \in X$, we can conclude that $(X, R)$ is a normal $\nabla$-spectral space. Conversely, if $(X, R)$ is a normal $\nabla$-spectral space, then its normality witness $\pi$ is a continuous map over $X$ with respect to its spectral topology and hence it preserves its specialization order. Therefore, it is an order-preserving map on $(X^p, \leq)$ which implies the normality of the $\nabla$-space $(X^p, \leq, R)$. For $(H)$, Theorem \ref{HSpectralDuality} shows that a spectral space is Heyting iff $(X^p, \leq)$ is Esakia. For the morphisms, as the conditions in Definition \ref{DefNablaSpectralspace} are just the immediate translation of the conditions in Definition \ref{DefNablaSpace}, there is nothing to prove. For Heyting morphisms, see Theorem \ref{HSpectralDuality}, again.

It is easy to see that $\tilde{F}$ and $\tilde{G}$ are functors. The fact that these functor establish an isomorphism between $\mathbf{Spec}_{\nabla}(C)$ and $\mathbf{Space}_{\nabla}(C)$ and between $\mathbf{Spec}^H_{\nabla}(C)$ and $\mathbf{Space}^H_{\nabla}(C)$ is a direct consequence of Theorem \ref{SpectralDuality}.
\end{proof}

\begin{lemma}\label{ExplicitAlgebraToSpec}
Let $C \subseteq \{N, H, R, L, Fa, Fu\}$. 
\begin{itemize} 
\item[$\bullet$]
Define the assignment $\mathfrak{S}': \mathbf{Alg}_{\nabla}(D, C) \to \mathbf{Spec}_{\nabla}^{op}(C)$
\begin{itemize}
\item
on the objects by $\mathfrak{S}'(\mathcal{A})=(\mathcal{F}_p(\mathsf{A}), R)$, where $\mathcal{F}_p(\mathsf{A})$ equipped with the spectral topology as defined by the basis $\{P \in \mathcal{F}_p(\mathsf{A}) \mid a \in P \}$, for any $a \in A$ and $(P, Q) \in R$ iff $\nabla[P] \subseteq Q$. 
\item
on the morphism $f: \mathcal{A} \to \mathcal{B}$ by $\mathfrak{S}'(f)=f^{-1}: \mathfrak{S}'(\mathcal{B}) \to \mathfrak{S}'(\mathcal{A})$.
\end{itemize}
\item[$\bullet$]
Define the assignment $\mathfrak{A}': \mathbf{Spec}_{\nabla}^{op}(C) \to \mathbf{Alg}_{\nabla}(D, C)$
\begin{itemize}
\item
on the objects by
$
\mathfrak{A}'(X, R)=(CO(X), \subseteq, \nabla_{R}, \to_{R})
$,
where $CO(X)$ is the set of all compact opens of $X$, $\nabla_{R}(U)=\{x \in X \mid \exists y \in U \; (y, x) \in R \}$ and $U \to_{R} V=\{x \in X \mid  R[x] \cap U \subseteq V \}$ over $CO(X)$.
\item
on the morphism $f: \mathscr{X} \to \mathscr{Y}$ by
$\mathfrak{A}'(f)=f^{-1}: \mathfrak{A}'(\mathscr{Y}) \to \mathfrak{A}'(\mathscr{X})$,
\end{itemize}
\end{itemize}
Then, the assignments $\mathfrak{A}'$ and $\mathfrak{S}'$ are the functors $\mathfrak{A}$ and $\mathfrak{S}$ modified by the bridge provided in Lemma \ref{DicExtended}, i.e., $\mathfrak{A}'= \mathfrak{A}\circ \tilde{F}$ and $\mathfrak{S}'=\tilde{G} \circ\mathfrak{S} $.
\end{lemma}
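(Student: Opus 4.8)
The plan is to prove the two equalities of assignments $\mathfrak{S}' = \tilde{G} \circ \mathfrak{S}$ and $\mathfrak{A}' = \mathfrak{A} \circ \tilde{F}$ directly, separately on objects and on morphisms. Once these are established, the assertion that $\mathfrak{S}'$ and $\mathfrak{A}'$ are functors between $\mathbf{Alg}_{\nabla}(D, C)$ and $\mathbf{Spec}_{\nabla}^{op}(C)$ becomes automatic, since the right-hand sides are composites of the functors of Lemmas \ref{DefS} and \ref{DefA} with the isomorphisms of Lemma \ref{DicExtended}. On morphisms there is essentially nothing to verify: both $\mathfrak{S}$ and $\mathfrak{A}$ act by $(-)^{-1}$, while $\tilde{F}$ and $\tilde{G}$ act as the identity on underlying maps, so $\tilde{G}(\mathfrak{S}(f)) = f^{-1} = \mathfrak{S}'(f)$ and $\mathfrak{A}(\tilde{F}(f)) = f^{-1} = \mathfrak{A}'(f)$. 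Hence all the content lies in the agreement on objects, which comes down to two identifications of structure.

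For $\mathfrak{S}' = \tilde{G} \circ \mathfrak{S}$, I would fix a distributive $\nabla$-algebra $\mathcal{A}$ and note that both $\mathfrak{S}'(\mathcal{A})$ and $\tilde{G}(\mathfrak{S}(\mathcal{A}))$ have underlying set $\mathcal{F}_p(\mathsf{A})$ and carry the same relation $R$, given by $(P, Q) \in R$ iff $\nabla[P] \subseteq Q$; so it suffices to match the topologies. By definition of $\tilde{G}$, the space $\tilde{G}(\mathfrak{S}(\mathcal{A}))$ carries the spectral topology of the Priestley space $(\mathcal{F}_p(\mathsf{A}), \subseteq)$, that is, the family of all its open upsets. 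By Lemma \ref{PropPriestly} every open upset of a Priestley space is a union of clopen upsets, and by Priestley duality, Theorem \ref{PriestleyDuality}, the clopen upsets of $(\mathcal{F}_p(\mathsf{A}), \subseteq)$ are exactly the sets $\alpha(a) = \{P \in \mathcal{F}_p(\mathsf{A}) \mid a \in P\}$ for $a \in A$; conversely each $\alpha(a)$ is a clopen upset and hence open in the spectral topology. Therefore $\{\alpha(a) \mid a \in A\}$ is a basis for the spectral topology, which is precisely the topology defining $\mathfrak{S}'(\mathcal{A})$, giving $\mathfrak{S}'(\mathcal{A}) = \tilde{G}(\mathfrak{S}(\mathcal{A}))$.

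For $\mathfrak{A}' = \mathfrak{A} \circ \tilde{F}$, I would fix a $\nabla$-spectral space $(X, R)$, so that $\tilde{F}(X, R) = (X^p, \leq, R)$ with $\leq$ the specialization order, and $\mathfrak{A}(\tilde{F}(X, R)) = (CU(X^p, \leq), \nabla_R, \to_R)$. The operations $\nabla_R$ and $\to_R$ are given by the very same set-theoretic formulas in $\mathfrak{A}$ and in $\mathfrak{A}'$, depending only on $R$ and the ambient set $X$, so only the underlying bounded lattices remain to be matched. By the Priestley--spectral dictionary, Theorem \ref{Dictionary}, a subset of $X$ is a clopen upset of $(X^p, \leq)$ if and only if it is a compact open subset of $X$; since the order is inclusion in both cases, $CU(X^p, \leq)$ and $CO(X)$ are literally the same bounded distributive lattice. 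Hence $\mathfrak{A}(\tilde{F}(X, R)) = (CO(X), \subseteq, \nabla_R, \to_R) = \mathfrak{A}'(X, R)$. The argument is essentially bookkeeping through the dictionary; the one step I would be careful about is verifying that $\{\alpha(a) \mid a \in A\}$ generates the whole spectral topology and not a strictly coarser one, which is exactly the ``open upset $=$ union of clopen upsets'' clause of Lemma \ref{PropPriestly}, so I expect no genuine obstacle.
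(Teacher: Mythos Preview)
Your proposal is correct and follows essentially the same route as the paper: for $\mathfrak{A}'$ you invoke the dictionary entry ``clopen upset $=$ compact open'' from Theorem \ref{Dictionary}, and for $\mathfrak{S}'$ you identify the spectral topology on $\mathcal{F}_p(\mathsf{A})$ with the topology generated by the sets $\alpha(a)$ using Lemma \ref{PropPriestly} (open upsets are unions of clopen upsets) together with the fact that the clopen upsets are exactly the $\alpha(a)$'s from Theorem \ref{PriestleyDuality}. The paper's proof is the same argument, only more tersely stated.
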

\begin{proof}
The proof is a simple application of the dictionary provided in Lemma \ref{Dictionary} and the Priestley-Esakia duality, Theorem \ref{PriestleyDuality}. For $\mathfrak{A}'$, the only point to explain is that the compact opens of a spectral space $X$ are exactly the clopen upsets of the corresponding Priestley space $(X^p, \leq)$. For $\mathfrak{S}'$, we show that if $Y=(\mathcal{F}_p(\mathsf{A}), \subseteq)$ is the Priestley space associated to the bounded distributive algebra $\mathsf{A}$, then the topology of $Y^s$ is equal to the topology generated by the basis $\{P \in \mathcal{F}_p(\mathsf{A}) \mid a \in P \}$. Denote the latter topology by $\tau$. First, note that any subset in the form $\{P \in \mathcal{F}_p(\mathsf{A}) \mid a \in P \}$ is an open upset in the Priestley space $Y$ and hence it is open in $Y^s$, by definition. Secondly, if $U$ is an open in $Y^s$, it is an open upset of $Y$. Then, $U$ is a union of clopen upsets of $Y$, by Lemma \ref{PropPriestly}. Hence, it is enough to show that any clopen upset of $Y$ is in $\tau$. But, this is clear by Theorem \ref{PriEsaDuality}, as the clopen upsets of the Priestley space $Y=(\mathcal{F}_p(\mathsf{A}), \subseteq)$ are in the form $\alpha(a)=\{P \in \mathcal{F}_p(\mathsf{A}) \mid a \in P \}$, for some $a \in A$.
\end{proof}

\begin{theorem}\label{NablaSpectralDuality}(Spectral duality for distributive $\nabla$-algebras) Assume $C \subseteq \{N, H, R, L, Fa, Fu\}$. Then, the functors $\mathfrak{S}'$ and $\mathfrak{A}'$ and the following natural isomorphisms:
\begin{itemize}
\item[]
$\alpha' : \mathcal{A} \to \mathfrak{A}'\mathfrak{S}'(\mathcal{A})$ defined by $\alpha'(a)=\{P \in \mathcal{F}_p(\mathsf{A}) \mid a \in P\}$
\item[]
$\beta': (X, R) \to \mathfrak{S}'\mathfrak{A}'(X, R)$ defined by $\beta'(x)=\{U \in CO(X) \mid x \in U\} $
\end{itemize}
establish an equivalence between the categories $\mathbf{Alg}_{\nabla}(D, C)$ and $\mathbf{Spec}_{\nabla}^{op}(C)$. The same also holds for $\mathbf{Alg}^H_{\nabla}(D, C)$ and $ [\mathbf{Spec}^H_{\nabla}(C)]^{op}$.
\end{theorem}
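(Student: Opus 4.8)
The plan is to deduce the theorem by transporting the Priestley--Esakia duality of Theorem~\ref{OurPriestleyDuality} along the category isomorphism of Lemma~\ref{DicExtended}, so essentially no new work is needed beyond careful bookkeeping. Recall from Lemma~\ref{DicExtended} that $\tilde{F}$ and $\tilde{G}$ are mutually inverse isomorphisms between $\mathbf{Space}_{\nabla}(C)$ and $\mathbf{Spec}_{\nabla}(C)$ (and likewise between their Heyting counterparts), so that $\tilde{F}\tilde{G}=\mathrm{id}_{\mathbf{Space}_{\nabla}(C)}$ and $\tilde{G}\tilde{F}=\mathrm{id}_{\mathbf{Spec}_{\nabla}(C)}$ strictly and on the nose. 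Since composing an equivalence of categories with an isomorphism yields an equivalence, the pair $(\tilde{G}^{op}\circ\mathfrak{S},\ \mathfrak{A}\circ\tilde{F}^{op})$ is an equivalence between $\mathbf{Alg}_{\nabla}(D,C)$ and $\mathbf{Spec}_{\nabla}^{op}(C)$. By Lemma~\ref{ExplicitAlgebraToSpec}, these composites are exactly the functors $\mathfrak{S}'$ and $\mathfrak{A}'$ of the statement, so it remains only to check that the unit and counit of this transported equivalence are the maps $\alpha'$ and $\beta'$ as written.

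For the unit, I would observe that $\mathfrak{A}'\mathfrak{S}'=\mathfrak{A}\circ\tilde{F}^{op}\circ\tilde{G}^{op}\circ\mathfrak{S}=\mathfrak{A}\mathfrak{S}$, using $\tilde{F}\tilde{G}=\mathrm{id}$; under this literal equality the natural isomorphism $\alpha\colon\mathrm{Id}\Rightarrow\mathfrak{A}\mathfrak{S}$ from Theorem~\ref{OurPriestleyDuality} is, object for object, the map $a\mapsto\{P\in\mathcal{F}_p(\mathsf{A})\mid a\in P\}$, which is precisely $\alpha'$. For the counit, $\mathfrak{S}'\mathfrak{A}'(X,R)=\tilde{G}\bigl(\mathfrak{S}\mathfrak{A}(\tilde{F}(X,R))\bigr)$, so $\beta\colon\mathrm{Id}\Rightarrow\mathfrak{S}\mathfrak{A}$ transports to $\tilde{G}\bigl(\beta_{\tilde{F}(X,R)}\bigr)\colon(X,R)\to\mathfrak{S}'\mathfrak{A}'(X,R)$; since $\tilde{G}$ is the identity on underlying sets and functions, this is the assignment $x\mapsto\{U\in CU(\tilde{F}(X))\mid x\in U\}$, and by the Priestley--Spectral dictionary (Lemma~\ref{Dictionary}) the clopen upsets of $\tilde{F}(X)$ are exactly the compact opens $CO(X)$ of $X$, so it coincides with $\beta'$. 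Naturality of $\alpha'$ and $\beta'$ is inherited from that of $\alpha$ and $\beta$ since $\tilde{F},\tilde{G}$ are functorial; the same argument restricts verbatim to the Heyting subcategories $\mathbf{Alg}^H_{\nabla}(D,C)$ and $[\mathbf{Spec}^H_{\nabla}(C)]^{op}$, with $\alpha'$ additionally preserving the Heyting implication as in Theorem~\ref{OurPriestleyDuality}.

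There is no genuinely difficult step; the proof is entirely assembly. The only points requiring a moment's care are, first, keeping the two independent passages to opposite categories straight when composing $\mathfrak{S}$, which already lands in $[\mathbf{Space}_{\nabla}(C)]^{op}$, with the covariant $\tilde{G}$; and second, confirming through Lemma~\ref{Dictionary} (and the underlying isomorphisms of Theorem~\ref{SpectralDuality} and Theorem~\ref{HSpectralDuality}) that ``clopen upset of the patch space'' and ``compact open of the spectral space'' denote the same family of subsets, so that not only $\alpha'$ and $\beta'$ but also the operations $\nabla_{R}$ and $\to_{R}$ on $CO(X)$ literally match their $CU$-counterparts on $\mathfrak{A}(\tilde{F}(X,R))$. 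Both of these are already handled, or at least set up, by Lemma~\ref{ExplicitAlgebraToSpec} and Lemma~\ref{DicExtended}, so the write-up can be correspondingly brief.
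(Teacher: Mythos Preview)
Your proposal is correct and follows exactly the same approach as the paper, which simply invokes Lemma~\ref{DicExtended}, Lemma~\ref{ExplicitAlgebraToSpec}, and Theorem~\ref{OurPriestleyDuality} in one line. Your write-up is more detailed than the paper's, in particular in verifying that the transported unit and counit coincide with the stated $\alpha'$ and $\beta'$, but the underlying argument is identical.
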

\begin{proof}
Using Lemma \ref{DicExtended}, Lemma \ref{ExplicitAlgebraToSpec} and the Priestley-Esakia duality, Theorem \ref{OurPriestleyDuality}, the claim easily follows.
\end{proof}

\subsection{Generalized H-spectral spaces}

In Subsection \ref{SubsubsectionofGEsakia}, we observed that normal $\nabla$-spaces are actually the dynamic Priestley spaces satisfying an Esakia-style condition. A similar claim also holds for $\nabla$-spectral spaces. In fact, they can be described as dynamic spectral spaces satisfying a Heyting-style condition.

\begin{definition}\label{gSpectral}
A \emph{generalized H-spectral space} is a tuple $(X, \pi)$, where $X$ is a spectral space and $\pi: X \to X$ is a spectral map such that $Cl(\pi[Y])$ is doubly spectral subset of $X$, for any doubly spectral $Y \subseteq X$. A generalized H-spectral space is called \emph{regular monic} (resp. \emph{epic}), if $\pi$ is a regular monic (resp. epic) in $\mathbf{Spec}$ or equivalently a topological embedding (resp. surjective), by Lemma \ref{SurInjSpectral}. A generalized H-spectral space is called \emph{Heyting} if $X$ is a Heyting spectral space. By a \emph{generalized H-spectral map} $f: (X, \pi_X) \to (Y, \pi_Y)$, we mean a spectral map $f: X \to Y$ such that $f\pi_X=\pi_Y f$ and $\pi_Y^{-1}(Sat_Y(f(x)))=f[\pi_X^{-1}(Sat_X(x))]$, for any $x \in X$. It is called Heyting, if it is also a Heyting map. Generalized H-spectral spaces and generalized H-spectral maps form a category, denoted by $\mathbf{gHSpec}$. For any $I \subseteq \{rm, e\}$, the full subcategory of generalized H-spectral spaces satisfying the conditions in $I$ is denoted by $\mathbf{gHSpec}_{I}$. If we restrict the objects and the morphisms to Heyting objects and Heyting maps, the subcategories are denoted by $\mathbf{gHSpec}^H$ and $\mathbf{gHSpec}^H_{I}$, respectively. 
\end{definition}

The following lemma shows that generalized H-spectral spaces and generalized Esakia spaces are two presentations of the same mathematical entity.

\begin{lemma}\label{ExtDictionaryII}
Let $X$ be a spectral space and $\pi: X \to X$ be a function. Then, $(X, \pi)$ is a generalized H-spectral space iff $(X^p, \leq, \pi)$ is a generalized Esakia space. The same also holds if we add the adjectives regular monic or epic on both sides.
\end{lemma}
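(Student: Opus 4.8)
The plan is to show the equivalence by carefully translating each defining condition on one side to the corresponding condition on the other, using the Priestley--Spectral dictionary (Lemma \ref{Dictionary}) and the isomorphism $\mathbf{Pries} \cong \mathbf{Spec}$ (Theorem \ref{SpectralDuality}). The starting observation is that a generalized H-spectral space $(X,\pi)$ and a generalized Esakia space $(X^p,\leq,\pi)$ share the same underlying point set, the same function $\pi$, and $\leq$ is exactly the specialization order of the spectral topology; so the content of the lemma is entirely a matter of matching the two extra conditions on $\pi$.

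First I would recall that, by Theorem \ref{SpectralDuality}, $\pi$ is a spectral map $X \to X$ if and only if $\pi$ is a Priestley map $(X^p,\leq) \to (X^p,\leq)$; this handles the ``Priestley map'' clause of Definition \ref{gEsakia} against the ``spectral map'' clause of Definition \ref{gSpectral} at once. Next I would address the Esakia-style condition. By the dictionary (Lemma \ref{Dictionary}), a subset of $X$ is clopen in $(X^p,\leq)$ iff it is doubly spectral in $X$; a subset is a downset in $(X^p,\leq)$ iff it is co-saturated in $X$; and, crucially, the identity $Cl(Y) = \downarrow Cl_p(Y)$ from Theorem \ref{Dictionary} relates the two closure operators. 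For a clopen (equivalently doubly spectral) $Y$, $Cl_p(Y) = Y$ since $Y$ is already closed in the patch/Priestley topology, so $Cl(\pi[Y]) = \downarrow Cl_p(\pi[Y])$. Here one must be slightly careful: $\pi[Y]$ need not be patch-clopen, but since $\pi$ is a Priestley map and $Y$ is patch-compact (clopen in a compact space), $\pi[Y]$ is patch-compact hence patch-closed, so $Cl_p(\pi[Y]) = \pi[Y]$ and therefore $Cl(\pi[Y]) = \downarrow \pi[Y]$. Thus ``$Cl(\pi[Y])$ is doubly spectral for every doubly spectral $Y$'' becomes ``$\downarrow \pi[U]$ is clopen for every clopen $U$'', which is exactly the Esakia-style condition in Definition \ref{gEsakia}. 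Both directions of this translation are symmetric, so the equivalence of objects follows.

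For the regular monic / epic adjectives I would simply invoke that $\pi$ is regular monic (resp.\ epic) in $\mathbf{Spec}$ iff it is regular monic (resp.\ epic) in $\mathbf{Pries}$ under the isomorphism $F$--$G$ of Theorem \ref{SpectralDuality}, which by Lemma \ref{SurInjSpectral} and Lemma \ref{PriestleyInjSurj} amounts to $\pi$ being a topological embedding (resp.\ surjective) on the spectral side and an order-embedding (resp.\ surjective) on the Priestley side; these match the definitions of regular monic / epic generalized H-spectral and generalized Esakia spaces respectively. The main obstacle, and the only place demanding genuine care rather than bookkeeping, is the closure-operator bookkeeping in the previous paragraph: one must justify that $Cl_p(\pi[Y]) = \pi[Y]$ for doubly spectral $Y$ (via patch-compactness and continuity of $\pi$ for the patch topology) before applying $Cl(Z) = \downarrow Cl_p(Z)$, since otherwise the identification of $Cl(\pi[Y])$ with $\downarrow\pi[Y]$ fails. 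Everything else is a direct reading off of Lemma \ref{Dictionary}.
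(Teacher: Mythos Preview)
Your proposal is correct and follows essentially the same approach as the paper: both use Theorem~\ref{SpectralDuality} to match spectral and Priestley maps, translate the Esakia-style condition via the dictionary identity $Cl(Z)=\downarrow Cl_p(Z)$, and reduce the key step to showing $\pi[Y]$ is patch-closed by the compactness argument you give (clopen $\Rightarrow$ compact $\Rightarrow$ compact image $\Rightarrow$ closed in the Hausdorff patch topology). The treatment of the regular monic/epic adjectives via the categorical isomorphism is likewise the same as the paper's.
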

\begin{proof}
By Theorem \ref{SpectralDuality}, it is clear that $\pi: X \to X$ is a spectral map iff $\pi: (X^p, \leq) \to (X^p, \leq)$ is a Priestley map. Moreover, by Lemma \ref{Dictionary},
the doubly spectral subspaces of $X$ are exactly the clopens of $X^p$ and $Cl(Z)=\downarrow Cl_p(Z)$, for any $Z \subseteq X$. Hence, it is enough to show that $\downarrow \! \pi[Y]$ is clopen in $X^p$ iff $Cl(\pi[Y])=\downarrow Cl_p(\pi[Y])$ is clopen in $X^p$, for any clopen subset $Y \subseteq X^p$. 
If we show that $\pi[Y]$ is closed in $X^p$, for any clopen $Y \subseteq X^p$, then the last equivalence is clear.
For that, as $Y$ is closed in $X^p$, it is also compact which implies that $\pi[Y]$ is also compact and hence closed as the subspace of $X^p$.  
For regular monic (resp. epic), by Theorem \ref{SpectralDuality},  $\pi: X \to X$ is a regular monic (resp. epic) in $\mathbf{Spec}$ iff $\pi: (X^p, \leq) \to (X^p, \leq)$ is a regular monic (resp. epic) in $\mathbf{Pries}$.
\end{proof}

Lemma \ref{ExtDictionaryII} provides a representation theorem for normal distributive $\nabla$-algebras.

\begin{corollary}\label{SpectralRepForNormal}
For any normal distributive $\nabla$-algebra $\mathcal{A}$, there is a generalized H-spectral space $\mathscr{X}=(X, \pi)$ such that $\mathcal{A} \cong (CO(X), \pi^{-1}, \to_{\mathscr{X}})$, where $U \to_{\mathscr{X}} V=\{x \in X \mid \pi^{-1}(Sat_X(x)) \cap U \subseteq V\}$. If $\mathcal{A}$ is faithful (resp. full), $\mathscr{X}$ can be chosen as regular monic (resp. epic).
\end{corollary}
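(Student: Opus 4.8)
The plan is to transport the Priestley-style representation of Corollary \ref{PriestleyRepForNormal} across the isomorphism $\mathbf{Pries} \cong \mathbf{Spec}$ of Theorem \ref{SpectralDuality}, using the extended dictionaries of Lemma \ref{DicExtended} and, crucially, Lemma \ref{ExtDictionaryII}. Concretely: start with a normal distributive $\nabla$-algebra $\mathcal{A}$. By Corollary \ref{PriestleyRepForNormal}, there is a generalized Esakia space $(X, \leq, \pi)$ with $\mathcal{A} \cong (CU(X, \leq), \pi^{-1}, \to^{Pr}_{\mathscr{X}})$, where $U \to^{Pr}_{\mathscr{X}} V = \{x \in X \mid \pi^{-1}(\uparrow\! x) \cap U \subseteq V\}$. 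Apply the functor $G$ of Theorem \ref{SpectralDuality} (equivalently, pass to the spectral topology $X^s$) to obtain the spectral space $X^s$, and set $\mathscr{X}^s = (X^s, \pi)$.

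Next I would verify that $\mathscr{X}^s = (X^s, \pi)$ is a generalized H-spectral space: this is exactly the content of Lemma \ref{ExtDictionaryII}, since $(X^p, \leq, \pi) = (X, \leq, \pi)$ is a generalized Esakia space by construction. Then I would identify the algebraic data. By the Priestley–Spectral dictionary, Theorem \ref{Dictionary}, the clopen upsets of $(X, \leq)$ are precisely the compact opens of $X^s$, so $CU(X, \leq) = CO(X^s)$ as lattices, and $\pi^{-1}$ is the same operation on both sides. It remains to match the two implications: I need $\pi^{-1}(\uparrow\! x) = \pi^{-1}(Sat_{X^s}(x))$, which holds because $\uparrow\! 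Y = Sat_{X^s}(Y)$ in the specialization order (noted just before Theorem \ref{Dictionary}), hence $U \to^{Pr}_{\mathscr{X}} V = U \to_{\mathscr{X}^s} V$ where the latter uses $Sat_X$. Therefore $\mathcal{A} \cong (CU(X,\leq), \pi^{-1}, \to^{Pr}_{\mathscr{X}}) = (CO(X^s), \pi^{-1}, \to_{\mathscr{X}^s})$, as desired.

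For the faithful (resp.\ full) refinement: if $\mathcal{A}$ is faithful (resp.\ full), Corollary \ref{PriestleyRepForNormal} lets us choose $(X, \leq, \pi)$ with $\pi$ a regular monic (resp.\ epic) in $\mathbf{Pries}$. By Theorem \ref{SpectralDuality} the isomorphism $\mathbf{Pries} \cong \mathbf{Spec}$ sends regular monics to regular monics and epics to epics (this preservation is also recorded in the statement of Lemma \ref{ExtDictionaryII} and follows from Lemma \ref{SurInjSpectral} together with Lemma \ref{PriestleyInjSurj}), so $\pi: X^s \to X^s$ is a regular monic (resp.\ epic) in $\mathbf{Spec}$, i.e.\ $\mathscr{X}^s$ is a regular monic (resp.\ epic) generalized H-spectral space.

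I do not anticipate a serious obstacle here; the argument is essentially bookkeeping, transporting Corollary \ref{PriestleyRepForNormal} through two already-established dictionaries. The only point requiring a little care is confirming that the Priestley implication formula and the spectral implication formula genuinely coincide under the translation — specifically that the upset $\uparrow\! x$ in the Priestley order is literally the saturation $Sat_{X^s}(x)$ in the spectral topology — but this identification is exactly the remark $\uparrow\! Y = Sat_X(Y)$ made right before Theorem \ref{Dictionary}, so even this step is immediate. Hence the proof reduces to citing Corollary \ref{PriestleyRepForNormal}, Lemma \ref{ExtDictionaryII}, and the dictionary Theorem \ref{Dictionary}.
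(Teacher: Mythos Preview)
Your proposal is correct and follows essentially the same route as the paper: invoke Corollary \ref{PriestleyRepForNormal}, pass to the spectral topology, apply Lemma \ref{ExtDictionaryII} to get a generalized H-spectral space (and to transfer regular monic/epic), and use the dictionary of Theorem \ref{Dictionary} together with $\uparrow\! x = Sat_X(x)$ to identify $CU(X,\leq)=CO(X^s)$ and match the two implication formulas. One cosmetic slip: the functor from $\mathbf{Pries}$ to $\mathbf{Spec}$ is $F$, not $G$, but your parenthetical ``pass to the spectral topology $X^s$'' shows you mean the right thing.
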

\begin{proof}
By Corollary \ref{PriestleyRepForNormal}, for any $\nabla$-algebra $\mathcal{A}$, there is a generalized Esakia space $\mathscr{Y}=(Y, \leq, \pi)$ such that $\mathcal{A} \cong (CU(Y, \leq), \pi^{-1}, \to_{\mathscr{Y}})$, where $U \to_{\mathscr{Y}} V=\{x \in X \mid \pi^{-1}(\uparrow \! x) \cap U \subseteq V\}$. Note that if $\mathcal{A}$ is faithful (resp. full), then $\mathscr{Y}$ is regular monic (resp. epic). Set $X=Y^s$. By Lemma \ref{ExtDictionaryII}, $\mathscr{X}=(X, \pi)$ is a generalized H-spectral space and $\mathscr{X}$ is regular monic (resp. epic) iff $\mathscr{Y}$ is regular monic (resp. epic). Using the dictionary between Priestley spaces and spectral spaces, one can see that $CO(X)=CU(Y, \leq)$ and $\uparrow \! x=Sat_X(x)$. Therefore, $\mathcal{A} \cong (CO(X), \pi^{-1}, \to_{\mathscr{X}})$.
\end{proof}

\begin{lemma}\label{ExtDictionaryIII}
Let $X$ and $Y$ be spectral spaces, $\pi_X: X \to X$ and $\pi_Y: Y \to Y$ be spectral maps and $f: X \to Y$ be a function. Then, $f: (X, \pi_X) \to (Y, \pi_Y)$ is a (Heyting) generalized H-spectral map iff $f: (X^p, \leq_X, \pi_X) \to (Y^p, \leq_Y, \pi_Y)$ is a (Heyting) generalized Esakia map. 
\end{lemma}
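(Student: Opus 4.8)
The statement is a routine translation through the Priestley--spectral dictionary, with no genuine new content; the plan is to reduce each clause in the definition of a generalized H-spectral map (Definition \ref{gSpectral}) to the corresponding clause in the definition of a generalized Esakia map (Definition \ref{gEsakia}). First, since the isomorphism $F\dashv G$ of Theorem \ref{SpectralDuality} is the identity on underlying functions, $f\colon X\to Y$ is a spectral map iff $f\colon (X^p,\leq_X)\to(Y^p,\leq_Y)$ is a Priestley map. By Lemma \ref{ExtDictionaryII}, $(X,\pi_X)$ is a generalized H-spectral space iff $(X^p,\leq_X,\pi_X)$ is a generalized Esakia space, and likewise for $Y$, so the object-level data on both sides correspond and it remains only to match the two extra conditions imposed on $f$.

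The condition $f\pi_X=\pi_Y f$ is a statement about the underlying set-maps alone, hence literally identical on the two sides. For the remaining condition, recall from the definition immediately following Theorem \ref{SpectralDuality} that $Sat_X(x)=\,\uparrow\! x$, where $\uparrow$ is taken with respect to the specialization order of $X$, which is precisely the Priestley order $\leq_X$ on $X^p$; similarly $Sat_Y(f(x))=\,\uparrow\! f(x)$ with respect to $\leq_Y$. Consequently $\pi_Y^{-1}(Sat_Y(f(x)))=\pi_Y^{-1}(\uparrow\! f(x))$ and $f[\pi_X^{-1}(Sat_X(x))]=f[\pi_X^{-1}(\uparrow\! x)]$ as subsets, so the defining identity $\pi_Y^{-1}(Sat_Y(f(x)))=f[\pi_X^{-1}(Sat_X(x))]$ for all $x\in X$ is verbatim the generalized Esakia condition $\pi_Y^{-1}(\uparrow\! f(x))=f[\pi_X^{-1}(\uparrow\! x)]$ for all $x\in X$. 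This establishes the non-Heyting case in both directions simultaneously.

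For the Heyting case one additionally assumes $X$ and $Y$ are Heyting spectral spaces, which by Theorem \ref{HSpectralDuality} is equivalent to $(X^p,\leq_X)$ and $(Y^p,\leq_Y)$ being Esakia spaces, and the extra requirement on $f$ is that it be a Heyting spectral map, i.e. $f[Sat_X(x)]=Sat_Y(f(x))$ for every $x\in X$; using $Sat_X(x)=\,\uparrow\! x$ and $Sat_Y(f(x))=\,\uparrow\! f(x)$ this is exactly the Esakia-map condition $f[\uparrow\! x]=\,\uparrow\! f(x)$ from Theorem \ref{HSpectralDuality}. Hence $f$ is a Heyting generalized H-spectral map iff $f$ is a Heyting generalized Esakia map, completing the proof.

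\textbf{Main obstacle.} There is essentially no difficulty; the entire argument is bookkeeping with Theorems \ref{SpectralDuality} and \ref{HSpectralDuality} and Lemma \ref{ExtDictionaryII}. The only point deserving a moment's attention is to confirm that the specialization order appearing in $Sat$ is identified with the Priestley order $\leq$ on the patch space (which is exactly how the functor $G$ is defined), so that $\pi^{-1}(-)$, $f[-]$ and the relevant unions/intersections are literally the same set-theoretic operations on both sides and the equalities of subsets transfer on the nose rather than merely up to some translation.
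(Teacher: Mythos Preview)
Your proposal is correct and follows essentially the same approach as the paper: reduce the spectral-map condition to the Priestley-map condition via Theorem \ref{SpectralDuality}, and translate the remaining clauses using the identification $Sat_X(x)=\,\uparrow\! x$ (and similarly for $Y$) coming from the specialization order. The paper's proof is just a terser version of what you wrote, omitting the explicit mention of Lemma \ref{ExtDictionaryII} and the separate treatment of the Heyting case.
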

\begin{proof}
By Theorem \ref{SpectralDuality}, $f: X \to Y$ is spectral iff $f: (X^p, \leq_X) \to (Y^p, \leq_Y)$ is a Priestley map. For the other conditions on the two sides, it is enough to consider the fact that $Sat_X(x)=\uparrow \! x$ and $Sat_Y(y)=\uparrow \! y$, for any $x \in X$ and $y \in Y$, where the left hand side is computed with respect to the spectral topology while the right hand side uses the specialization order.
\end{proof}

\begin{corollary}\label{ExtDicIV}
$\mathbf{gHSpec}_I \cong \mathbf{gEsakia}_I$ and $\mathbf{gHSpec}^H_I \cong \mathbf{gEsakia}^H_I$, for any $I \subseteq \{rm, e\}$.
\end{corollary}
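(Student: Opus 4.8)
The plan is to run exactly the argument that produced the extended dictionary of Lemma~\ref{DicExtended}, but now on the ``dynamic'' side, with Lemmas~\ref{ExtDictionaryII} and~\ref{ExtDictionaryIII} doing all the work on objects and morphisms respectively. Using the functors $F\colon\mathbf{Pries}\to\mathbf{Spec}$ and $G\colon\mathbf{Spec}\to\mathbf{Pries}$ from Theorem~\ref{SpectralDuality}, I would define $\hat{F}\colon\mathbf{gEsakia}\to\mathbf{gHSpec}$ on objects by $\hat{F}(X,\leq,\pi)=(F(X,\leq),\pi)=(X^s,\pi)$ and on morphisms by $\hat{F}(f)=f$, and dually $\hat{G}\colon\mathbf{gHSpec}\to\mathbf{gEsakia}$ by $\hat{G}(X,\pi)=(G(X),\pi)=(X^p,\leq,\pi)$ and $\hat{G}(f)=f$. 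Well-definedness on objects is precisely Lemma~\ref{ExtDictionaryII}: $(X,\pi)$ is a generalized H-spectral space iff $(X^p,\leq,\pi)$ is a generalized Esakia space. Well-definedness on morphisms is precisely Lemma~\ref{ExtDictionaryIII}: $f$ is a generalized H-spectral map iff the same underlying map $f$ is a generalized Esakia map.

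Functoriality of $\hat{F}$ and $\hat{G}$ is immediate, since both act as the identity on the underlying maps of sets (just as $F$ and $G$ do) and carry the dynamism $\pi$ along unchanged; hence composition and identities are preserved on the nose. To see that $\hat{F}$ and $\hat{G}$ are mutually inverse rather than merely quasi-inverse, observe that $\hat{G}\hat{F}(X,\leq,\pi)=(GF(X,\leq),\pi)=(X,\leq,\pi)$ and $\hat{F}\hat{G}(X,\pi)=(FG(X),\pi)=(X,\pi)$ by the strict isomorphism $GF=\mathrm{id}_{\mathbf{Pries}}$, $FG=\mathrm{id}_{\mathbf{Spec}}$ of Theorem~\ref{SpectralDuality}, while on morphisms both composites are the identity because they do not touch the underlying maps. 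This already gives $\mathbf{gHSpec}\cong\mathbf{gEsakia}$, i.e.\ the case $I=\varnothing$ without the Heyting decoration.

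It remains to check that the isomorphism restricts along all the decorations, and here there is essentially no obstacle. For $I\subseteq\{rm,e\}$, the clause of Lemma~\ref{ExtDictionaryII} allowing the adjectives ``regular monic'' and ``epic'' on both sides says that $(X,\pi)$ is regular monic (resp.\ epic) in $\mathbf{gHSpec}$ iff $(X^p,\leq,\pi)$ is regular monic (resp.\ epic) in $\mathbf{gEsakia}$, so $\hat{F}$ and $\hat{G}$ cut down to mutually inverse functors between $\mathbf{gHSpec}_I$ and $\mathbf{gEsakia}_I$. For the Heyting decoration, $(X,\pi)\in\mathbf{gHSpec}^H$ iff $X$ is a Heyting spectral space, which by Theorem~\ref{HSpectralDuality} holds iff $(X^p,\leq)$ is an Esakia space, i.e.\ iff $(X^p,\leq,\pi)\in\mathbf{gEsakia}^H$; and a generalized H-spectral map is Heyting iff the corresponding generalized Esakia map is, by the ``(Heyting)'' clause of Lemma~\ref{ExtDictionaryIII}. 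Combining these with the $I$-restriction yields $\mathbf{gHSpec}^H_I\cong\mathbf{gEsakia}^H_I$. The only point deserving a second glance in the whole argument is that one obtains honest isomorphisms of categories rather than mere equivalences, and this is guaranteed precisely because $F$ and $G$ are already strict mutual inverses and $\hat{F},\hat{G}$ leave both the dynamism and all morphisms untouched.
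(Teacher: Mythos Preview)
Your proposal is correct and is exactly the argument the paper intends: the corollary is stated without proof precisely because it follows immediately from Lemmas~\ref{ExtDictionaryII} and~\ref{ExtDictionaryIII} together with the strict isomorphism of Theorem~\ref{SpectralDuality}, and your write-up simply makes this explicit. The only difference is level of detail—you spell out the functors $\hat{F},\hat{G}$ and verify the restrictions to the $I$- and Heyting-decorated subcategories, whereas the paper leaves all of this to the reader.
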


\begin{corollary}\label{CorollarygHSpec}
\begin{description}
\item[$\bullet$]
$\mathbf{Spec}_{\nabla}(N) \cong \mathbf{gHSpec}$ and $\mathbf{Spec}^H_{\nabla}(N) \cong \mathbf{gHSpec}^H$.
\item[$\bullet$]
$\mathbf{Spec}_{\nabla}(N, Fa) \cong \mathbf{gHSpec}_{rm}$ and $\mathbf{Spec}^H_{\nabla}(N, Fa) \cong \mathbf{gHSpec}^H_{rm}$.
\item[$\bullet$]
$\mathbf{Spec}_{\nabla}(N, Fu) \cong \mathbf{gHSpec}_{e}$ and $\mathbf{Spec}^H_{\nabla}(N, Fu) \cong \mathbf{gHSpec}^H_{e}$.
\item[$\bullet$]
$\mathbf{Spec}_{\nabla}(N, Fa, Fu) \cong \mathbf{gHSpec}_{rm, e}$ and $\mathbf{Spec}^H_{\nabla}(N, Fa, Fu) \cong \mathbf{gHSpec}^H_{rm, e}$.
\item[$\bullet$]
$\mathbf{Spec}^H_{\nabla}(N, Fa, Fu) \cong \mathbf{HSpec}^{\mathbb{Z}}$.
\end{description}
\end{corollary}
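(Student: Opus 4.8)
The plan is to derive every line of the corollary purely by composing category isomorphisms already in hand, so that nothing genuinely new is needed beyond some bookkeeping about composites of isomorphisms and about lifting an isomorphism of categories to the categories of (reversible) dynamic objects.

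For the first four bullets I would read off, with $I$ ranging over $\varnothing$, $\{Fa\}$, $\{Fu\}$, $\{Fa,Fu\}$ (matched respectively with $\varnothing$, $rm$, $e$, $\{rm,e\}$ on the right-hand side), the chain
\[
\mathbf{Spec}_{\nabla}(N,I)\;\cong\;\mathbf{Space}_{\nabla}(N,I)\;\cong\;\mathbf{gEsakia}_{I}\;\cong\;\mathbf{gHSpec}_{I}.
\]
The first isomorphism is the restriction of the extended dictionary, Lemma \ref{DicExtended}; the middle one is Corollary \ref{CorollaryforNormalNablaI} when $I=\varnothing$ and Corollary \ref{CorollaryforNormalNablaII} in the faithful/full cases; the third is Corollary \ref{ExtDicIV}. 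Since every cited statement carries its own Heyting clause, running the same chain with the Heyting superscripts throughout yields the Heyting variants. A composite of isomorphisms of categories being again an isomorphism of categories, this settles all eight equivalences of bullets one through four.

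For the last bullet I would begin with $\mathbf{Spec}^H_{\nabla}(N,Fa,Fu)\cong\mathbf{Space}^H_{\nabla}(N,Fa,Fu)$ (Lemma \ref{DicExtended}), then invoke Corollary \ref{CorollaryforNormalNablaIII} to reach $\mathbf{Space}^H_{\nabla}(N,Fa,Fu)\cong\mathbf{Esakia}^{\mathbb{Z}}$, and finally transport the reversible dynamic structure across the category isomorphism $\mathbf{Esakia}\cong\mathbf{HSpec}$ of Theorem \ref{HSpectralDuality}. The only step deserving a sentence is this transport: an isomorphism of categories $\Phi:\mathcal{C}\to\mathcal{D}$ carries a commuting square $f\pi=\tau f$ to a commuting square and an isomorphism to an isomorphism, hence induces an isomorphism $\mathcal{C}^{\mathbb{Z}}\cong\mathcal{D}^{\mathbb{Z}}$; applied to $F:\mathbf{Esakia}\to\mathbf{HSpec}$ it gives $\mathbf{Esakia}^{\mathbb{Z}}\cong\mathbf{HSpec}^{\mathbb{Z}}$. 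Alternatively one can bypass $\mathbf{Esakia}^{\mathbb{Z}}$ and argue directly, as in the proof of Corollary \ref{CorollaryforNormalNablaIII}, that $\mathbf{gHSpec}^H_{rm,e}$ already is the category of reversible dynamic Heyting spectral spaces, since for $\pi$ an isomorphism in $\mathbf{Spec}$ the clause ``$Cl(\pi[Y])$ doubly spectral'' collapses to the Heyting condition on the space and $\pi_Y^{-1}\circ f=f\circ\pi_X^{-1}$ makes the generalized H-spectral morphism identity automatic.

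I do not expect a real obstacle: the whole content is already packaged in Lemma \ref{DicExtended}, Corollaries \ref{CorollaryforNormalNablaI}--\ref{CorollaryforNormalNablaIII} and \ref{ExtDicIV}, and Theorem \ref{HSpectralDuality}, and each of these is an isomorphism rather than a mere equivalence, so no stray $(-)^{op}$ enters and the composites are immediate. The only points that genuinely need a line are the observation that a category isomorphism lifts to $(-)^{\mathbb{Z}}$ (done above) and a quick verification that the Heyting subcategory restrictions are compatible at each composition step --- which they are, since each cited statement already comes with its Heyting clause.
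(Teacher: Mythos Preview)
Your proposal is correct and follows essentially the same route as the paper: for the first four bullets the paper also composes Lemma \ref{DicExtended}, Corollaries \ref{CorollaryforNormalNablaI}--\ref{CorollaryforNormalNablaII}, and Corollary \ref{ExtDicIV}, and for the last bullet it invokes Lemma \ref{DicExtended}, Corollary \ref{CorollaryforNormalNablaIII}, and Theorem \ref{HSpectralDuality}. Your extra sentence explaining why a category isomorphism lifts to $(-)^{\mathbb{Z}}$ is a welcome clarification that the paper leaves implicit.
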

\begin{proof}
For the first four parts, use Corollary \ref{ExtDicIV}, Lemma \ref{DicExtended} and Corollaries \ref{CorollaryforNormalNablaI}, and \ref{CorollaryforNormalNablaII}. For the last part, use Corollary \ref{CorollaryforNormalNablaIII}, Lemma \ref{DicExtended}, and Theorem \ref{HSpectralDuality}.
\end{proof}

\begin{corollary}\label{CorollarySpectralDuality}
We have the following equivalences:
\begin{description}
\item[$\bullet$]
$\mathbf{Alg}_{\nabla}(D, N) \simeq \mathbf{gHSpec}^{op}$ and $\mathbf{Alg}^H_{\nabla}(D, N) \simeq (\mathbf{gHSpec}^H)^{op}$.
\item[$\bullet$]
$\mathbf{Alg}_{\nabla}(D, N, Fa) \simeq \mathbf{gHSpec}_{rm}^{op}$ and $\mathbf{Alg}^H_{\nabla}(D, N, Fa) \simeq (\mathbf{gHSpec}^H_{rm})^{op}$.
\item[$\bullet$]
$\mathbf{Alg}_{\nabla}(D, N, Fu) \simeq \mathbf{gHSpec}_{e}^{op}$ and $\mathbf{Alg}^H_{\nabla}(D, N, Fu) \simeq (\mathbf{gHSpec}^H_{e})^{op}$.
\item[$\bullet$]
$\mathbf{Alg}^H_{\nabla}(D, N, Fa, Fu) \simeq (\mathbf{HSpec}^{\mathbb{Z}})^{op}$.
\end{description}
\end{corollary}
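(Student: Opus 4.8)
The plan is to obtain each stated equivalence by composing the spectral duality of Theorem \ref{NablaSpectralDuality} with the appropriate isomorphism of categories recorded in Corollary \ref{CorollarygHSpec}. Recall that Theorem \ref{NablaSpectralDuality} already provides, for every $C \subseteq \{N, H, R, L, Fa, Fu\}$, an equivalence $\mathbf{Alg}_{\nabla}(D, C) \simeq [\mathbf{Spec}_{\nabla}(C)]^{op}$, together with its explicitly Heyting analogue $\mathbf{Alg}^H_{\nabla}(D, C) \simeq [\mathbf{Spec}^H_{\nabla}(C)]^{op}$. So all that is left is to replace the categories of $\nabla$-spectral spaces on the right-hand side by the corresponding categories of generalized H-spectral spaces.

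For the first item I would instantiate this with $C = \{N\}$, getting $\mathbf{Alg}_{\nabla}(D, N) \simeq [\mathbf{Spec}_{\nabla}(N)]^{op}$, and then apply the first bullet of Corollary \ref{CorollarygHSpec}, namely $\mathbf{Spec}_{\nabla}(N) \cong \mathbf{gHSpec}$. Passing to opposite categories turns this isomorphism into $[\mathbf{Spec}_{\nabla}(N)]^{op} \cong \mathbf{gHSpec}^{op}$, and composing with the equivalence above yields $\mathbf{Alg}_{\nabla}(D, N) \simeq \mathbf{gHSpec}^{op}$. The explicitly Heyting version is identical, using $\mathbf{Spec}^H_{\nabla}(N) \cong \mathbf{gHSpec}^H$ from the same corollary; here one only has to remember that, as set up in Definition \ref{DefNablaSpectralspace} and Lemma \ref{DicExtended}, a Heyting $\nabla$-spectral space is precisely a $\nabla$-spectral space whose underlying spectral space is Heyting, so moving the $H$ between the indexing set and the superscript changes nothing.

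The second and third items are obtained in the same way by enlarging $C$ to $\{N, Fa\}$ and $\{N, Fu\}$ respectively: Theorem \ref{NablaSpectralDuality} gives $\mathbf{Alg}_{\nabla}(D, N, Fa) \simeq [\mathbf{Spec}_{\nabla}(N, Fa)]^{op}$ and $\mathbf{Alg}_{\nabla}(D, N, Fu) \simeq [\mathbf{Spec}_{\nabla}(N, Fu)]^{op}$, while the second and third bullets of Corollary \ref{CorollarygHSpec} identify $\mathbf{Spec}_{\nabla}(N, Fa) \cong \mathbf{gHSpec}_{rm}$ and $\mathbf{Spec}_{\nabla}(N, Fu) \cong \mathbf{gHSpec}_{e}$, with the Heyting analogues handled as before. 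For the last item, I would take $C = \{N, H, Fa, Fu\}$, so that Theorem \ref{NablaSpectralDuality} yields $\mathbf{Alg}^H_{\nabla}(D, N, Fa, Fu) \simeq [\mathbf{Spec}^H_{\nabla}(N, Fa, Fu)]^{op}$, and then invoke the final bullet of Corollary \ref{CorollarygHSpec}, $\mathbf{Spec}^H_{\nabla}(N, Fa, Fu) \cong \mathbf{HSpec}^{\mathbb{Z}}$, to conclude $\mathbf{Alg}^H_{\nabla}(D, N, Fa, Fu) \simeq (\mathbf{HSpec}^{\mathbb{Z}})^{op}$.

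There is essentially no real obstacle in this argument, since every ingredient has already been established; the only point that needs a moment's attention is purely bookkeeping, namely keeping the opposite-category conventions straight so that the composition of a contravariant duality with a covariant isomorphism of categories lands on exactly the side stated in the corollary, and confirming that the Heyting-superscript notation for $\nabla$-spectral spaces is compatible with including $H$ in the index set $C$ — both conventions having been fixed in Definition \ref{DefNablaSpectralspace}, Lemma \ref{DicExtended}, and Theorem \ref{NablaSpectralDuality}.
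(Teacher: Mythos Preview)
Your proposal is correct and follows exactly the same approach as the paper, which simply cites Corollary \ref{CorollarygHSpec} and Theorem \ref{NablaSpectralDuality}; you have just spelled out the composition of the duality with the relevant isomorphisms item by item, which is precisely what is intended.
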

\begin{proof}
Use Corollary \ref{CorollarygHSpec} and Theorem \ref{NablaSpectralDuality}. 
\end{proof}

Notice that Corollary \ref{CorollarySpectralDuality} provides an algebraic presentation for reversible dynamic Heyting spectral spaces.

\section{Ring-theoretic Representations}\label{RingTheoretic}

Spectral spaces can be characterized as the spectrum of commutative unital rings. In this section, we use the spectral representation of normal distributive $\nabla$-algebras provided in Section \ref{SectionSpectral} to represent these $\nabla$-algebras as some sort of dynamic rings. For that purpose, we first recall some basic facts from commutative algebra, including the connection between commutative unital rings and spectral spaces.

\subsection{Some backgrounds from commutative algebra}

Let $R$ be a commutative unital ring. By an \emph{ideal} of $R$, we mean a subset $I$ of $R$ such that $x - y \in I$ and $rx \in I$, for any $x, y \in I$ and any $r \in R$. For any set $A \subseteq R$, by $\langle A \rangle$, we mean the least ideal containing $A$. An ideal $I \subset R$ is called \emph{prime} if $xy \in I$ implies either $x \in I$ or $y \in I$. The \emph{radical of an ideal} $I$ is defined by $Rad(I)=\{x \in R \mid \exists n \in \mathbb{N}, x^n \in I\}$. An ideal is called \emph{radical} iff $Rad(I)=I$.  The ideal $Rad(I)$ is equal to the intersection of all prime ideals containing $I$. For any two ideals $I$ and $J$, define the ideal $[J:I]$ as $\{x \in R \mid \forall y \in I, xy \in J\}$. Note that if $J$ is a radical ideal, then so is $[J: I]$. By $Spec(R)$, we mean the topological space of all the prime ideals of $R$ equipped with the topology $\{ U_r \mid r \in R \}$, where $U_r=\{P \in Spec (R) \mid r \notin P\}$. The following facts are all well-known in commutative algebra \cite{Dickmann}.
\begin{lemma}\label{RingFacts}
\begin{itemize}
\item[$\bullet$]
The space $Spec(R)$ is spectral and for any ring homomorphism $f : R \to S$, the induced map $Spec (f)=f^{-1}: Spec (S) \to Spec (R)$ is a spectral map.
\item[$\bullet$]
(\textit{Hochster's Theorem}) Conversely, any spectral space is homeomorphic to $Spec(R)$ for a commutative unital ring $R$. Even stronger, for any spectral spaces $X$ and $Y$ such that $X \neq Y$ and any spectral map $F: X \to Y$, there exist commutative unital rings $R_X$ and $R_Y$, a ring homomorphism $f: R_Y \to R_X$ and homeomorphisms $\alpha_X: Spec (R_X) \cong X$ and $\alpha_Y: Spec (R_Y) \cong Y$ such that $F \circ \alpha_X=\alpha_Y \circ Spec (f)$:
\[\small\begin{tikzcd}[ampersand replacement=\&]
	{Spec(R_X)} \&\& {Specx(R_Y)} \\
	\\
	X \&\& Y
	\arrow["F"', from=3-1, to=3-3]
	\arrow["{\alpha_X}"', from=1-1, to=3-1]
	\arrow["{\alpha_Y}", from=1-3, to=3-3]
	\arrow["{Spec(f)}", from=1-1, to=1-3]
\end{tikzcd}\]
\item[$\bullet$]
The poset of all radical ideals of $R$, denoted by $\mathcal{RI}(R)$, is a locale and its Heyting implication is $I \supset J=[J:I]$, i.e., $K \cap I \subseteq J$ iff $K \subseteq [J:I]$,
for any $I, J, K \in \mathcal{RI}(R)$.
\item[$\bullet$]
Define $\mathcal{I}: \mathcal{O}(Spec(R)) \to \mathcal{RI}(R)$ by $\mathcal{I}(U)=\{r \in R \mid \forall P \notin U, r \in P\}$ and $\mathcal{U}:  \mathcal{RI}(R) \to \mathcal{O}(Spec(R))$ by $\mathcal{U}(I)=\{P \in Spec (R) \mid \exists x \in I,  x \notin P \}$. Then $\mathcal{I}$ and $\mathcal{U}$ establish a localic isomorphism between  $\mathcal{O}(Spec(R))$ and $\mathcal{RI}(R)$.
\item[$\bullet$]
For any ring homomorphism $f: R \to S$, the map $f_* : \mathcal{RI}(R) \to \mathcal{RI}(S)$ defined by $f_*(I)=Rad(\langle f[I] \rangle)$ is a localic map and $f_* \dashv f^{-1}$, where $f^{-1}: \mathcal{RI}(S) \to \mathcal{RI}(R)$. Moreover, $f_*$ is the $\mathcal{I}-\mathcal{U}$ counterpart of $Spec (f)^{-1}: \mathcal{O}(Spec (R)) \to \mathcal{O}(Spec (S))$, i.e., $f_*= \mathcal{I} \circ Spec(f)^{-1} \circ \mathcal{U}$.
\item[$\bullet$]
If $F: Spec (R) \to Spec(S)$ is a continuous map, the map $F^{-1}: \mathcal{O}(Spec(S)) \to \mathcal{O}(Spec(R))$ is clearly a localic map. The $\mathcal{I}-\mathcal{U}$ correspondence assigns the localic map $\hat{F}: \mathcal{RI}(S) \to \mathcal{RI}(R)$ to $F^{-1}$, defined by $\hat{F}=\mathcal{I} \circ F^{-1} \circ \mathcal{U}$. This assignment is functorial in a contravariant way, i.e., $\widehat{id_{Spec(R)}}=id_{\mathcal{RI}(R)}$ and $\widehat{GH}=\hat{H}\hat{G}$, for any two continuous maps $G: Spec(S) \to Spec(T)$ and $H: Spec(R) \to Spec(S)$.
\end{itemize}
\end{lemma}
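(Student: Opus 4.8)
The plan is to dispatch the six items in order, spelling out the localic bookkeeping that later sections rely on and citing the single genuinely deep input rather than reproving it. For the first item, I would check directly that $Spec(R)$ is spectral: its irreducible closed sets are exactly $V(P)=\{Q\in Spec(R)\mid Q\supseteq P\}$ with $P$ prime, and $V(P)=Cl(\{P\})$ has $P$ as unique generic point, so $Spec(R)$ is sober; it is compact because a cover $Spec(R)=\bigcup_i U_{r_i}$ forces $\langle r_i\mid i\in I\rangle=R$ and hence admits a finite subcover; and the compact opens are precisely the finite unions of basic opens $U_r$, which form a basis closed under finite intersection since $U_r\cap U_s=U_{rs}$. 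That $f^{-1}$ carries primes to primes is elementary, and $(Spec(f))^{-1}(U_r)=U_{f(r)}$ is compact open, so $Spec(f)$ is spectral. The second item is Hochster's theorem together with its functorial refinement; this is the one step I would not reprove, and it is the main obstacle in the sense that no elementary argument is available --- I would simply cite \cite{Dickmann} and Hochster's original work.

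For the third and fourth items I would establish the $\mathcal{I}$--$\mathcal{U}$ correspondence first, since the locale structure of $\mathcal{RI}(R)$ follows from it. Both maps are well defined: $\mathcal{I}(U)$ is an intersection of prime ideals, hence radical, and $\mathcal{U}(I)=\bigcup_{x\in I}U_x$ is open. They are clearly order-preserving, and they are mutually inverse: $\mathcal{I}(\mathcal{U}(I))=I$ is the classical identity $Rad(I)=\bigcap\{P\text{ prime}\mid P\supseteq I\}$ specialized to the radical ideal $I$, while $\mathcal{U}(\mathcal{I}(U))=U$ follows by writing $U$ as a union of basic opens and noting $U_r\subseteq U$ iff $r\in\mathcal{I}(U)$. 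Since $\mathcal{O}(Spec(R))$ is a frame and an order-isomorphism preserves all joins and meets, $\mathcal{RI}(R)$ is a frame (with meets given by intersection and joins by $\bigvee_i I_i=Rad(\sum_i I_i)$) and $\mathcal{I},\mathcal{U}$ are frame isomorphisms, which gives the fourth item and the frame part of the third. For the Heyting implication I would check that $[J:I]$ is radical when $J$ is (if $x^n\in[J:I]$ and $y\in I$, then $(xy)^n=x^ny^n\in J$, so $xy\in J$) and that $K\cap I\subseteq J$ iff $K\subseteq[J:I]$ by a one-line manipulation using that $J$ is radical; since in any frame $a\supset b=\bigvee\{c\mid c\wedge a\leq b\}$, this identifies $I\supset J$ with $[J:I]$.

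For the fifth item, the adjunction drops out of the universal property of $Rad\langle f[I]\rangle$: $f_*(I)\subseteq J\iff\langle f[I]\rangle\subseteq J\iff f[I]\subseteq J\iff I\subseteq f^{-1}(J)$, using that $J$ is radical; a left adjoint between frames automatically preserves all joins. To see that $f^{-1}\colon\mathcal{RI}(S)\to\mathcal{RI}(R)$ is a frame homomorphism and that $f_*$ is its localic counterpart, I would verify the identity $f_*=\mathcal{I}\circ(Spec(f))^{-1}\circ\mathcal{U}$ by unwinding both sides: $(Spec(f))^{-1}(\mathcal{U}(I))=\{Q\in Spec(S)\mid\exists x\in I,\ f(x)\notin Q\}=\mathcal{U}_S(\langle f[I]\rangle)$, and applying $\mathcal{I}$ returns $Rad\langle f[I]\rangle=f_*(I)$; frameness of $f^{-1}$ then transfers from that of $(Spec(f))^{-1}$ through the isomorphisms $\mathcal{I},\mathcal{U}$. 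Finally, the sixth item is purely formal: $F\mapsto F^{-1}$ on open sets is a contravariant functor ($id^{-1}=id$, $(GH)^{-1}=H^{-1}G^{-1}$), and conjugating by the inverse frame isomorphisms $\mathcal{I},\mathcal{U}$ preserves this, so $\widehat{(-)}=\mathcal{I}\circ(-)^{-1}\circ\mathcal{U}$ satisfies $\widehat{id_{Spec(R)}}=id_{\mathcal{RI}(R)}$ and $\widehat{GH}=\hat{H}\hat{G}$. Apart from Hochster's theorem in item two, every step is routine commutative algebra and elementary frame theory.
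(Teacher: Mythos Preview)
Your proposal is correct, and in fact it does considerably more than the paper: the paper does not prove this lemma at all but simply records the six bullets as ``well-known in commutative algebra'' with a citation to \cite{Dickmann}. Your outline is a sound and self-contained verification of each item, correctly isolating Hochster's theorem as the one genuinely non-elementary ingredient and dispatching the rest via routine frame theory and the standard $Rad(I)=\bigcap\{P\text{ prime}\mid P\supseteq I\}$ identity.
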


Using Lemma \ref{RingFacts}, we can provide a ring-theoretic characterization for the ring homomorphisms whose spectrum are topological embedding (resp. surjective). The characterization uses an ``algebraic version" of being  surjective (resp. injective). First, let us motivate these new algebraic versions. Let $f: R \to S$ be a ring homomorphism. It is easy to see that $f$ is injective iff $f[A_1]=f[A_2]$ implies $A_1=A_2$, for any two subsets $A_1, A_2 \subseteq R$ and it is surjective iff for any subset $B \subseteq S$, there is a subset $A \subseteq R$ such that $f[A]=B$. To make these two definitions more algebraic-friendly, we read both sides of the equalities up to the radical of the generated ideals, i.e., the injectivity becomes the condition that if $Rad(\langle f[A_1] \rangle)=Rad(\langle f[A_2] \rangle)$, then $Rad(\langle A_1 \rangle)=Rad(\langle A_2 \rangle)$, for any two subsets $A_1, A_2 \subseteq R$. Similarly, the surjectivity becomes the condition that for any subset $B \subseteq S$, there is a subset $A \subseteq R$ such that $Rad(\langle f[A] \rangle)=Rad(\langle B \rangle)$. It is easy to see that these algebraic-friendly versions of injectivity and surjectivity are nothing but the injectivity and surjectivity of $f_*: \mathcal{RI}(R) \to \mathcal{RI}(S)$, respectively. 

In the following lemma, we provide a more concrete characterization for these two conditions. We also show that they are the algebraic presentation of the homomorphisms whose spectrum are topological embedding or surjective. 
First of all, recall the following well-known lemma about the existence of prime ideals:

\begin{lemma}\label{TheExistenceOfPrimeIdeal}(\cite{Dickmann})
Let $R$ be a commutative unital ring, $S \subseteq R$ be a set closed under multiplication and $I \subseteq R$ be an ideal such that $I \cap S=\varnothing$. Then, there exists a prime ideal $P \supseteq I$ such that $P \cap S=\varnothing$.
\end{lemma}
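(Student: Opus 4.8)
The plan is to run the standard Zorn's-lemma argument that produces a maximal ideal disjoint from a multiplicatively closed set, and then check that such a maximal element is automatically prime. Concretely, I would let $\Sigma$ be the collection of all ideals $J$ of $R$ with $I \subseteq J$ and $J \cap S = \varnothing$, partially ordered by inclusion. This $\Sigma$ is non-empty since $I \in \Sigma$ by hypothesis. Given a chain $\{J_\lambda\}$ in $\Sigma$, its union $\bigcup_\lambda J_\lambda$ is again an ideal (a directed union of ideals is an ideal), it contains $I$, and it is disjoint from $S$ because each $J_\lambda$ is; hence it is an upper bound for the chain inside $\Sigma$. By Zorn's lemma, $\Sigma$ has a maximal element $P$. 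Assuming $S \neq \varnothing$, the condition $P \cap S = \varnothing$ forces $P \neq R$, so $P$ is a proper ideal.

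It then remains to show that $P$ is prime. I would argue by contradiction: suppose $ab \in P$ while $a \notin P$ and $b \notin P$. Then the ideals $P + \langle a \rangle$ and $P + \langle b \rangle$ properly contain $P$, so by maximality of $P$ in $\Sigma$ neither lies in $\Sigma$; since both contain $I$, each must therefore meet $S$. Pick $s_1 \in (P + \langle a \rangle) \cap S$ and $s_2 \in (P + \langle b \rangle) \cap S$, and write $s_1 = p_1 + r_1 a$ and $s_2 = p_2 + r_2 b$ with $p_1, p_2 \in P$ and $r_1, r_2 \in R$. Expanding, $s_1 s_2 = p_1 p_2 + p_1 r_2 b + r_1 a p_2 + r_1 r_2 (ab)$, and every summand lies in $P$: the first three because $P$ is an ideal containing $p_1$ and $p_2$, and the last because $ab \in P$. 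Hence $s_1 s_2 \in P$. But $S$ is closed under multiplication, so $s_1 s_2 \in S$, contradicting $P \cap S = \varnothing$. Thus $P$ is prime, and it is the required prime ideal with $P \supseteq I$ and $P \cap S = \varnothing$.

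There is no genuine obstacle here; the only points needing a moment of care are the verification that a directed union of ideals is again an ideal (so that Zorn's lemma applies) and the bookkeeping in the expansion of $s_1 s_2$ that places it in $P$. If one does not wish to assume $S \neq \varnothing$, one should additionally assume $I \neq R$ to guarantee that the maximal element $P$ is proper; under the intended hypothesis $S \neq \varnothing$ this properness is automatic.
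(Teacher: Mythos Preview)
Your argument is the standard and correct Zorn's-lemma proof of this classical fact. The paper does not supply its own proof of this lemma; it merely cites it as well-known from \cite{Dickmann}, so there is nothing to compare against beyond noting that your approach is exactly the textbook one.
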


\begin{lemma}\label{InjSurjInRings}
Let $f: R \to S$ be a ring homomorphism. Then:
\begin{description}
\item[$(i)$]
$Spec(f):Spec(S) \to Spec(R)$ is surjective iff $f^{-1}: \mathcal{RI}(S) \to \mathcal{RI}(R)$ is surjective iff $f_*: \mathcal{RI}(R) \to \mathcal{RI}(S)$ is injective iff $f(r)\in \langle f[A] \rangle$ implies the existence of $n \geq 0$ such that $r^n \in \langle A \rangle$, for any $A \cup \{r\} \subseteq R$.
\item[$(ii)$]
$Spec(f):Spec(S) \to Spec(R)$ is a topological embedding iff $f_*: \mathcal{RI}(R) \to \mathcal{RI}(S)$ is surjective iff for any $s \in S$, there exist elements $r_1, \ldots, r_k \in R$ and a natural number $m \geq 0$ such that $s \mid f(r_{i})$, for any $1 \leq i \leq k$ and $s^m=\sum_{i=1}^k s_if(r_{i})$, for some $s_i \in S$.
\end{description}
\end{lemma}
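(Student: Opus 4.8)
The plan is to route everything through the localic isomorphism $\mathcal{I}$--$\mathcal{U}$ between $\mathcal{O}(Spec(R))$ and $\mathcal{RI}(R)$ (and likewise for $S$) together with the adjunction $f_*\dashv f^{-1}$ from Lemma~\ref{RingFacts}, so that the topological conditions on $F:=Spec(f)$ become lattice-theoretic conditions on $f_*$ and $f^{-1}$, which are then unwound elementwise. I will freely use the identity $f_*(Rad(\langle A\rangle))=Rad(\langle f[A]\rangle)$ for $A\subseteq R$ (immediate from $f_*=\mathcal{I}\circ F^{-1}\circ\mathcal{U}$, or checked directly since $f$ carries radicals into radicals) and the fact that spectral spaces are $T_0$. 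Note that spectral spaces need not be $T_D$, so for the surjectivity clause of $(i)$ the converse half of Theorem~\ref{InjSurjforContinuous} is unavailable; there I argue via prime ideals using Lemma~\ref{TheExistenceOfPrimeIdeal}.

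For $(i)$ I prove the cycle $F$ surjective $\Rightarrow$ elementwise condition $\Rightarrow$ $F$ surjective, then tie in $f_*$ and $f^{-1}$. If $F$ is surjective and $f(r)\in\langle f[A]\rangle$, then for any prime $P\supseteq A$ pick a prime $Q$ of $S$ with $f^{-1}(Q)=P$; from $f[A]\subseteq Q$ we get $f(r)\in Q$, hence $r\in P$, and letting $P$ range over all primes above $A$ gives $r\in Rad(\langle A\rangle)$. Conversely, given the elementwise condition and a prime $P$ of $R$, the multiplicative set $f[R\setminus P]$ is disjoint from $\langle f[P]\rangle$ (an element $f(r)\in\langle f[P]\rangle$ with $r\notin P$ would force $r^n\in P$), so Lemma~\ref{TheExistenceOfPrimeIdeal} produces a prime $Q\supseteq\langle f[P]\rangle$ missing $f[R\setminus P]$, whence $f^{-1}(Q)=P$. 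The elementwise condition is equivalent to injectivity of $f_*$ by a short direct argument using the displayed identity (taking $A_1=A\cup\{r\}$, $A_2=A$), and since $f_*\dashv f^{-1}$ is an adjunction between complete lattices, $f_*$ injective iff $f^{-1}f_*=\mathrm{id}$ iff $f^{-1}$ surjective; this closes $(i)$.

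For $(ii)$, since $Spec(S)$ is $T_0$, Theorem~\ref{InjSurjforContinuous} gives that $F$ is a topological embedding iff $F^{-1}:\mathcal{O}(Spec(R))\to\mathcal{O}(Spec(S))$ is surjective, which, transported along the localic isomorphisms, is exactly surjectivity of $f_*=\mathcal{I}\circ F^{-1}\circ\mathcal{U}$. As $f_*$ preserves arbitrary joins and every radical ideal $J$ of $S$ is the join in $\mathcal{RI}(S)$ of the principal radical ideals $Rad(\langle s\rangle)$ it contains, $f_*$ is surjective iff $Rad(\langle s\rangle)\in\mathrm{im}(f_*)$ for every $s\in S$. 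For fixed $s$, if some radical ideal $I$ has $f_*(I)=Rad(\langle s\rangle)$ then so does the canonical one $I':=f^{-1}(Rad(\langle s\rangle))$ (since $f[I]\subseteq Rad(\langle s\rangle)$ forces $I\subseteq I'$, while $f[I']\subseteq Rad(\langle s\rangle)$ forces $f_*(I')\subseteq Rad(\langle s\rangle)$); hence $Rad(\langle s\rangle)\in\mathrm{im}(f_*)$ iff $s^m\in\langle f[I']\rangle$ for some $m$, i.e.\ $s^m=\sum_j u_j f(\rho_j)$ with each $\rho_j\in I'$, which means $s\mid f(\rho_j)^{n_j}$ for some $n_j$. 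Conversely, the stated elementwise condition directly yields $Rad(\langle s\rangle)=Rad(\langle f(r_1),\dots,f(r_k)\rangle)=f_*(Rad(\langle r_1,\dots,r_k\rangle))\in\mathrm{im}(f_*)$.

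The one genuinely computational point, which I expect to be the main obstacle, is upgrading ``$s\mid f(\rho_j)^{n_j}$'' to the cleaner ``$s\mid f(r_i)$'' of the statement. The fix is a multinomial-plus-pigeonhole trick: put $r_j:=\rho_j^{n_j}$, so $f(r_j)=f(\rho_j)^{n_j}$ is divisible by $s$, and raise $s^m=\sum_j u_j f(\rho_j)$ to the power $N:=1+\sum_j(n_j-1)$. In the multinomial expansion of $\bigl(\sum_j u_j f(\rho_j)\bigr)^N$ every term has exponent vector $\alpha$ with $|\alpha|=N>\sum_j(n_j-1)$, so some coordinate satisfies $\alpha_j\ge n_j$ and the term is divisible by $f(\rho_j)^{n_j}=f(r_j)$; collecting terms according to such a $j$ rewrites $s^{mN}=\sum_j\tilde u_j f(r_j)$, which is the asserted form with $m$ replaced by $mN$. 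Assembling this with the reductions above finishes both parts; the Heyting/order decorations are irrelevant here, the statement being purely ring-theoretic.
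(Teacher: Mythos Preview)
Your proof is correct and follows essentially the same route as the paper: both pass through the localic isomorphism and the adjunction $f_*\dashv f^{-1}$ to reduce the topological conditions on $Spec(f)$ to injectivity/surjectivity of $f_*$, handle surjectivity of $Spec(f)$ directly via Lemma~\ref{TheExistenceOfPrimeIdeal}, and finish $(ii)$ with the same multinomial/pigeonhole upgrade from $s\mid f(\rho_j)^{n_j}$ to $s\mid f(r_j)$ by setting $r_j=\rho_j^{n_j}$. Your bookkeeping on the exponent (namely $mN$ with $N=1+\sum_j(n_j-1)$) is in fact more careful than the paper's stated $p+\sum_i n_i$, and your reduction of surjectivity of $f_*$ to hitting principal radical ideals via join-preservation is a small streamlining not in the paper, but neither changes the substance of the argument.
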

\begin{proof}
For $(i)$, assume $Spec(f):Spec(S) \to Spec(R)$ is surjective. We want to show that $f^{-1}: \mathcal{RI}(S) \to \mathcal{RI}(R)$ is surjective.
For that, we claim $f^{-1}(f_*(J))=J$, for any $J \in \mathcal{RI}(R)$. As $f[J] \subseteq Rad(\langle f[J] \rangle)=f_*(J)$, it is clear that $J \subseteq f^{-1}(f_*(J))$. For $f^{-1}(f_*(J)) \subseteq J$, as $J=\bigcap_{J \subseteq Q} Q$, where the intersection is on the prime ideals, it is enough to prove $f^{-1}(f_*(J)) \subseteq Q$, for any prime ideal $Q \supseteq J$. Let $Q$ be such a prime ideal. As $Spec(f)$ is surjective, there exists a prime ideal $P$ of $S$ such that $Spec(f)(P)=f^{-1}(P)=Q$. As $J \subseteq Q$, we have $f[J] \subseteq P$ which implies $f_*(J) \subseteq P$, as $P$ is a radical ideal. Hence, $f^{-1}(f_*(J)) \subseteq f^{-1}(P)=Q$.

To prove the second equivalence, as $f^{-1} \dashv f_*$, the surjectivity of $f^{-1}$ is equivalent to the injectivity of $f_*$. Hence, we can use them interchangeably. For the third implication, suppose $f_*: \mathcal{RI}(R) \to \mathcal{RI}(S)$ is injective. Hence, $f^{-1}: \mathcal{RI}(S) \to \mathcal{RI}(R)$ is surjective. Also, assume $f(r)\in \langle f[A] \rangle$, where $A \cup \{r\} \subseteq R$. By surjectivity, there is $I \in \mathcal{RI}(S)$ such that $f^{-1}(I)=Rad(\langle A \rangle)$. Hence, it is enough to prove $r \in f^{-1}(I)$. As $A \subseteq Rad(\langle A \rangle)=f^{-1}(I)$, we have $f[A] \subseteq I$. As $I$ is an ideal, we have $\langle f[A] \rangle \subseteq I$ which implies $f(r) \in I$ and hence $r \in f^{-1}(I)$.

Finally, for the last implication, suppose that $f(r)\in \langle f[A] \rangle$ implies the existence of a natural number $n \geq 0$ such that $r^n \in \langle A \rangle$, for any $A \cup \{r\} \subseteq R$. Let $Q$ be a prime ideal of $R$. We want to provide a prime ideal $P$ of $S$ such that $Spec(f)(P)=f^{-1}(P)=Q$. Set $I=\langle f[Q] \rangle$ and $B=f[Q^c]$. As $Q$ is prime, $Q^c$ is closed under multiplication and so is $B=f[Q^c]$. First, we claim that $I \cap B=\varnothing$. Let $x \in I \cap B$. As $x \in B$, there is $y \notin Q$ such that $x=f(y)$. Moreover, $x \in I$ means that $x \in \langle f[Q] \rangle$. Hence, $f(y) \in \langle f[Q] \rangle$. By the assumption, the latter implies the existence of $n \geq 0$ such that $y^n \in \langle Q \rangle$. As $Q$ is a prime ideal, $y \in Q$ which is a contradiction. Therefore, $I \cap B=\varnothing$. Hence, by Lemma \ref{TheExistenceOfPrimeIdeal}, there is a prime ideal $P$ of $S$ such that $I \subseteq P$ and $P \cap B=\varnothing$. Using the former, we have $f[Q] \subseteq P$ which implies $Q \subseteq f^{-1}(P)$ and by the latter, we conclude $f^{-1}(P) \subseteq Q$, because, the existence of $z \in f^{-1}(P) \cap Q^c$ implies $f(z) \in f[Q^c] \cap P=B \cap P$ which is impossible. Therefore, $f^{-1}(P)=Q$. Hence, $Spec(f)=f^{-1}:Spec(S) \to Spec(R)$ is surjective. 

For $(ii)$, as $Spec(S)$ is spectral and hence $T_0$, by Lemma \ref{InjSurjforContinuous}, the map $Spec(f)$ is a topological embedding iff $Spec(f)^{-1}: \mathcal{O}(Spec(R)) \to \mathcal{O}(Spec(S))$ is surjective. By Lemma \ref{RingFacts}, $f_*= \mathcal{I} \circ Spec(f)^{-1} \circ \mathcal{U}$. As $\mathcal{I}$ and $\mathcal{U}$ are localic isomorphisms, the surjectivity of $Spec(f)^{-1}$ is equivalent to the surjectivity of $f_*: \mathcal{RI}(R) \to \mathcal{RI}(S)$. Hence, we proved the equivalence between the first two parts of $(ii)$. Now, assume $f_*$ is surjective. Then, for any $s \in S$, there exists a radical ideal $I$ of $R$ such that $f_*(I)=Rad(\langle s \rangle)$. Hence, $s \in f_*(I)=Rad(\langle f[I] \rangle)$ which implies the existence of $u_1, \ldots, u_k \in I$ and $p \geq 0$ such that $s^{p}=\sum_i v_if(u_i)$, for some $v_i \in S$. As $u_i \in I$, we have $f(u_i) \in f[I] \subseteq Rad(\langle s \rangle)$ which implies the existence of $n_i \geq 0$ such that $s \mid f(u_{i}^{n_i})$. Then, $s^{p+\sum_i n_i}=\sum_i s_if(u_{i}^{n_i})$, for some $s_i \in S$. Setting $m=p+\sum_i n_i$, and $r_i=u_{i}^{n_i}$ completes the proof.

Conversely, we prove the surjectivity of $f_*$ from the last part of $(ii)$. To prove, it is enough to show $f_*(f^{-1}(I))=I$, for any $I \in \mathcal{RI}(S)$. By Lemma \ref{RingFacts}, we have $f_* \dashv f^{-1}$. Hence, the direction $f_*(f^{-1}(I)) \subseteq I$ is clear. For the other direction, i.e., $I \subseteq f_*(f^{-1}(I))$, assume $s \in I$. By the assumption, there are elements $r_1, \ldots, r_k \in R$ and a natural number $m \geq 0$ such that $s \mid f(r_{i})$ and $s^m=\sum_i s_if(r_{i})$, for some $s_i \in S$. As $s \mid f(r_{i})$, we have $f(r_i) \in I$ which implies $r_i \in f^{-1}(I)$ and hence $f(r_i) \in f[f^{-1}(I)]$. As $s^m=\sum_i s_if(r_{i})$, we have $s^m \in \langle f[f^{-1}(I)] \rangle$ and hence $s \in f_*(f^{-1}(I))$. Therefore, $I=f_*(f^{-1}(I))$ which completes the proof of surjectivity of $f_*$.
\end{proof}

\subsection{A representation theorem}

In the last subsection, we recalled the ring-theoretic representation of spectral spaces. An immediate consequence of this representation is the following representation theorem for Heyting algebras:
\begin{theorem} \label{RingRepresentationForHeyting}(Ring-theoretic representation of Heyting algebras)
For any Heyting algebra $\mathcal{H}$, there exist a commutative unital ring $R$ and an embedding $i: \mathcal{H} \to \mathcal{RI}(R)$ of Heyting algebras.
\end{theorem}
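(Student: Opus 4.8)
The plan is to combine the spectral duality for Heyting algebras with the ring-theoretic (Hochster) representation of spectral spaces. Given a Heyting algebra $\mathcal{H}$, apply Theorem \ref{RealSpectralDuality} to obtain its dual Heyting spectral space $X = \mathfrak{S}_0(\mathcal{H})$ (equivalently, the spectral space associated to the Priestley/Esakia space of $\mathcal{H}$), so that $\mathcal{H} \cong CO(X)$, the Heyting algebra of compact opens of $X$, with the Heyting implication computed spatially. By Hochster's Theorem (Lemma \ref{RingFacts}), there is a commutative unital ring $R$ with a homeomorphism $\alpha : Spec(R) \cong X$. Transporting along $\alpha$, we get $\mathcal{H} \cong CO(Spec(R))$ as Heyting algebras.

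The next step is to relate $CO(Spec(R))$ to $\mathcal{RI}(R)$. By Lemma \ref{RingFacts}, the maps $\mathcal{I}$ and $\mathcal{U}$ give a localic isomorphism $\mathcal{O}(Spec(R)) \cong \mathcal{RI}(R)$. Under this isomorphism, the compact opens $CO(Spec(R))$ correspond to a sublattice of $\mathcal{RI}(R)$ — namely the finitely generated radical ideals, or more precisely the images $\mathcal{I}(U)$ for $U$ compact open. This identification is a bounded lattice embedding $CO(Spec(R)) \hookrightarrow \mathcal{RI}(R)$ (injective because $\mathcal{I}$ is part of an isomorphism, and a lattice morphism because $\mathcal{I}$ preserves finite meets and joins on the relevant sublattice). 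Composing with the isomorphism $\mathcal{H} \cong CO(Spec(R))$ yields an injective bounded lattice map $i : \mathcal{H} \to \mathcal{RI}(R)$.

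It remains to check that $i$ preserves the Heyting implication. Here I would use that both $\mathcal{H}$ (being a Heyting algebra) and $\mathcal{RI}(R)$ (being a locale, hence a complete Heyting algebra by Lemma \ref{RingFacts}) have their own Heyting implications, and the implication in $CO(Spec(R))$ inherited from $\mathcal{O}(Spec(R))$ — i.e. $U \supset V = int((Spec(R) \setminus U) \cup V)$ — agrees with the Esakia-dual description of the implication in $\mathcal{H}$. Since $\mathcal{I}$ is a localic isomorphism it preserves the Heyting implication of $\mathcal{O}(Spec(R))$, which is $I \supset J = [J : I]$ on $\mathcal{RI}(R)$. The one subtlety is that the Heyting implication of a compact open in $\mathcal{O}(Spec(R))$ need not be compact open in general; but in a \emph{Heyting} spectral space (which $Spec(R)$ is, being dual to a Heyting algebra) the implication of two compact opens is again compact open, so $i$ genuinely lands in the sublattice and commutes with $\supset$. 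This last point — ensuring that the Esakia/Heyting condition on the dual space is exactly what makes $\supset$ restrict correctly to compact opens — is the main thing to get right; the rest is assembling the cited dualities.

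\medskip

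\noindent\emph{Remark.} Alternatively, and perhaps more cleanly, one can bypass the sublattice bookkeeping: take $X$ to be the Esakia space of $\mathcal{H}$, realize $F(X) = X^s$ as $Spec(R)$ via Hochster, and then observe directly that $\mathcal{H} \cong CO(Spec(R))$ embeds into $\mathcal{RI}(R)$ via $U \mapsto \mathcal{I}(U)$, with Heyting-implication preservation following from Lemma \ref{RingFacts} together with Theorem \ref{Dictionary}. The hard part, in either route, is the verification that $\supset$ is preserved, which hinges on the Heyting-spectral structure.
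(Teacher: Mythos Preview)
Your proposal is correct and follows essentially the same route as the paper: realize $\mathcal{H}\cong CO(X)$ for a (Heyting) spectral space $X$ via duality, invoke Hochster to write $X\cong Spec(R)$, use the localic isomorphism $\mathcal{O}(Spec(R))\cong \mathcal{RI}(R)$, and embed $CO(X)\hookrightarrow \mathcal{O}(X)$. The paper simply asserts that the inclusion $CO(X)\hookrightarrow \mathcal{O}(X)$ is a Heyting embedding, while you correctly isolate this as the only nontrivial point; your justification (that in a Heyting spectral space the implication of compact opens is again compact open, equivalently that compact opens are join-dense in $\mathcal{O}(X)$ so the two implications coincide) is exactly what fills that gap.
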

\begin{proof}
Let $\mathcal{H}$ be a Heyting algebra. Combining Theorem \ref{HSpectralDuality} and Theorem \ref{PriestleyDuality}, there is a spectral space $X$ such that $\mathcal{H} \cong CO(X)$, as Heyting algebras, where $CO(X)$ is the poset of all compact open subsets of the spectral space $X$. By Hochster's Theorem, Lemma \ref{RingFacts}, there exists a commutative unital ring $R$ such that $X$ is homeomorphic to $Spec(R)$. Therefore, by Lemma \ref{RingFacts}, $\mathcal{RI}(R) \cong \mathcal{O}(Spec(R)) \cong \mathcal{O}(X)$, as locales and hence as Heyting algebras. 
Finally, as the inclusion $i: CO(X) \to \mathcal{O}(X)$ is a Heyting algebra embedding, using the isomorphisms mentioned above, there is a Heyting algebra embedding from $\mathcal{H}$ into $\mathcal{RI}(R)$.
\end{proof}

In the remainder of this subsection, we aim to generalize Theorem \ref{RingRepresentationForHeyting} to encompass all normal distributive $\nabla$-algebras. To motivate such a representation theorem, let us start with an observation. Let $R$ be a commutative unital ring and $\pi: R \to R$ be a ring homomorphism. Then, it is easy to see that the tuple $(\mathcal{RI}(R), \pi_*, \to_{\pi})$ is a normal distributive $\nabla$-algebra, where $I \to_{\pi} J=\pi^{-1}([J:I])$. To show that it is a $\nabla$-algebra, we only use the fact that $[J:I]$ is the Heyting implication of $\mathcal{RI}(R)$ and $\pi_* \dashv \pi^{-1}$. For the normality, by Lemma \ref{RingFacts}, we have $\pi_*=\mathcal{I}Spec(\pi)^{-1}\mathcal{U}$. As the right hand-side preserves all finite meets, the normality is also proved. 

As one may expect, the goal is to show that any normal distributive $\nabla$-algebra can be embedded into a $\nabla$-algebra assigned to a pair $(R, \pi)$ in the above-mentioned way. Unfortunately, this seems to be too ambitious to prove. Instead, we prove a version modified by an arbitrary homeomorphism. To explain, recall that to any continuous map $f : Spec(R) \to Spec(S)$, we assigned a localic map $\hat{f}:\mathcal{RI}(S) \to \mathcal{RI}(R)$ in a functorial way. The functoriality implies that if $f$ is a homeomorphism, the map $\hat{f}$ is a localic isomorphism.

\begin{theorem}\label{SpectralIsAlgebra}
Let $R$ and $S$ be commutative unital rings, $\pi: R \to S$ be a ring homomorphism and $f : Spec(R) \cong Spec(S)$ be a homeomorphism. Then, the tuple $(\mathcal{RI}(R), \nabla, \to)$ is a normal $\nabla$-algebra, where
\[
\nabla I=\hat{f}(\pi_*(I)) \;\;\; \text{and} \;\;\; I \to J= \pi^{-1}(\hat{g} ([J: I]))
\]
and $g$ is the inverse of $f$. Moreover, the normal $\nabla$-algebra $(\mathcal{RI}(R),  \nabla, \to)$ is 
\begin{itemize}
\item[$\bullet$]
faithful iff for any $s \in S$, there exist elements $r_1, \ldots, r_k \in R$ and a natural number $m \geq 0$ such that $s \mid \pi(r_{i})$, for any $1 \leq i \leq k$ and $s^m=\sum_{i=1}^k s_i \pi(r_{i})$, for some $s_i \in S$.
\item[$\bullet$]
full iff $\pi(r)\in \langle \pi[A] \rangle$ implies the existence of $n \geq 0$ such that $r^n \in \langle A \rangle$, for any $A \cup \{r\} \subseteq R$.
\end{itemize}
\end{theorem}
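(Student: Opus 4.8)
The plan is to reduce the whole statement to Lemma~\ref{RingFacts} and Lemma~\ref{InjSurjInRings}, exploiting that the assignment $F \mapsto \hat{F}$ is functorial and contravariant. First I would record the basic facts. Since $f \circ g = id_{Spec(S)}$ and $g \circ f = id_{Spec(R)}$, functoriality gives $\hat{f} \circ \hat{g} = id_{\mathcal{RI}(R)}$ and $\hat{g} \circ \hat{f} = id_{\mathcal{RI}(S)}$, so $\hat{f} : \mathcal{RI}(S) \to \mathcal{RI}(R)$ and $\hat{g} : \mathcal{RI}(R) \to \mathcal{RI}(S)$ are mutually inverse localic isomorphisms; in particular each preserves finite meets, arbitrary joins, and, being a lattice isomorphism between complete Heyting algebras, the Heyting implication. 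I would also recall from Lemma~\ref{RingFacts} that $\pi_* \dashv \pi^{-1}$ between $\mathcal{RI}(R)$ and $\mathcal{RI}(S)$, that $[J:I]$ is the Heyting implication of the locale $\mathcal{RI}(R)$, and that $\pi_* = \mathcal{I} \circ Spec(\pi)^{-1} \circ \mathcal{U}$; these facts also make $\nabla I = \hat{f}(\pi_*(I))$ and $I \to J = \pi^{-1}(\hat{g}([J:I]))$ well-defined elements of $\mathcal{RI}(R)$.

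For the $\nabla$-algebra axiom, since $\mathcal{RI}(R)$ is a locale the order is $\subseteq$ and the meet is $\cap$, so I must show $\nabla K \cap I \subseteq J$ iff $K \subseteq I \to J$, for all radical ideals $I, J, K$ of $R$. I would run the chain of equivalences: using $I = \hat{f}(\hat{g}(I))$ and that $\hat{f}$ preserves meets, $\nabla K \cap I = \hat{f}(\pi_*(K)) \cap \hat{f}(\hat{g}(I)) = \hat{f}\bigl(\pi_*(K) \cap \hat{g}(I)\bigr)$; applying the order-isomorphism $\hat{g}$, the relation $\nabla K \cap I \subseteq J$ is equivalent to $\pi_*(K) \cap \hat{g}(I) \subseteq \hat{g}(J)$; by the Heyting adjunction in $\mathcal{RI}(S)$ this is $\pi_*(K) \subseteq [\hat{g}(J):\hat{g}(I)]$; by $\pi_* \dashv \pi^{-1}$ it is $K \subseteq \pi^{-1}([\hat{g}(J):\hat{g}(I)])$; and finally, since $\hat{g}$ preserves Heyting implications, $[\hat{g}(J):\hat{g}(I)] = \hat{g}([J:I])$, so the condition becomes $K \subseteq \pi^{-1}(\hat{g}([J:I])) = I \to J$, as required. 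Normality is then immediate: $\nabla = \hat{f} \circ \pi_*$, and $\pi_* = \mathcal{I} \circ Spec(\pi)^{-1} \circ \mathcal{U}$ preserves finite meets (the observation already used in the paragraph preceding the theorem) while $\hat{f}$ preserves finite meets, so $\nabla$ does too.

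For the last two equivalences, $\hat{f}$ is a bijection, so $\nabla = \hat{f} \circ \pi_*$ is surjective iff $\pi_*$ is and injective iff $\pi_*$ is. By the definition of a $\nabla$-algebra, faithfulness is the surjectivity of $\nabla$ and fullness is the injectivity of $\nabla$. Hence $(\mathcal{RI}(R), \nabla, \to)$ is faithful iff $\pi_* : \mathcal{RI}(R) \to \mathcal{RI}(S)$ is surjective, which by Lemma~\ref{InjSurjInRings}$(ii)$ applied to $\pi$ is exactly the displayed divisibility condition; and it is full iff $\pi_*$ is injective, which by Lemma~\ref{InjSurjInRings}$(i)$ is exactly the displayed condition on $\langle \pi[A] \rangle$. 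I do not expect a genuine obstacle here: all the substance sits in Lemmas~\ref{RingFacts} and~\ref{InjSurjInRings}. The only point demanding care is the bookkeeping in the adjunction computation — tracking the directions of $\hat{f}$ and $\hat{g}$ and which locale each Heyting implication lives in — together with the elementary fact that a localic, hence lattice, isomorphism transports Heyting implications, which is what licenses replacing $[\hat{g}(J):\hat{g}(I)]$ by $\hat{g}([J:I])$.
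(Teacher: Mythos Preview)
Your proposal is correct and follows essentially the same route as the paper's proof: both use functoriality of $\widehat{(-)}$ to get that $\hat{f}$ and $\hat{g}$ are mutually inverse localic isomorphisms, derive the adjunction from $\pi_* \dashv \pi^{-1}$ together with these isomorphisms, read off normality from $\pi_* = \mathcal{I}\circ Spec(\pi)^{-1}\circ \mathcal{U}$, and then reduce faithfulness/fullness to surjectivity/injectivity of $\pi_*$ via Lemma~\ref{InjSurjInRings}. Your write-up is in fact somewhat more explicit than the paper's in unwinding the chain of equivalences for the adjunction (in particular the step $[\hat g(J):\hat g(I)]=\hat g([J:I])$), but the underlying argument is the same.
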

\begin{proof}
First, we show that $\mathcal{RI}(R)$ is closed under the operations $\nabla$ and $\to$. For $\nabla$, there is nothing to prove. For $\to$, if $I, J \in \mathcal{RI}(R)$, then $[J : I] \in \mathcal{RI}(R)$ and hence $\hat{g}([J:I]) \in \mathcal{RI}(S)$. Finally, since $\pi^{-1}$ maps the radical ideals of $S$ to the radical ideals of $R$, we have $I \to J \in \mathcal{RI}(R)$. For the adjunction condition, as $fg=id_{Spec(S)}$ and $gf=id_{Spec(R)}$ and the operation $\widehat{(-)}$ is functorial, we have $\hat{f} \hat{g}=id_{\mathcal{RI}(R)}$, $\hat{g}\hat{f}=id_{\mathcal{RI}(S)}$. Therefore, for any $I, J, K \in \mathcal{RI}(R)$, we have:
\[
\hat{f}(\pi_*(I)) \cap J \subseteq K \;\;\;\;\; \text{iff} \;\;\;\;\; I \subseteq  \pi^{-1}(\hat{g} [K: J])
\]
because $\pi_* \dashv \pi^{-1}$, $\hat{f} \hat{g}=id_{\mathcal{RI}(R)}$, and $\hat{g}\hat{f}=id_{\mathcal{RI}(S)}$. For normality, note that $\hat{f}\pi_*=\hat{f}\circ\mathcal{I}\circ Spec(\pi)^{-1}\circ \mathcal{U}$. As the right hand-side preserves all finite meets, there is nothing to prove.
For faithfulness (resp. fullness), as $\hat{f}$ and $\hat{g}$ are localic isomorphisms, the $\nabla$-algebra is faithful (resp. full) iff $\pi_*$ is surjective (resp. injective). Using the characterizations in Lemma \ref{InjSurjInRings}, the proof will be complete.
\end{proof}

\begin{definition}
Let $R$ and $S$ be commutative unital rings, $\pi: R \to S$ be a ring homomorphism and $f : Spec(R) \simeq Spec(S)$ be a homeomorphism. Then, the tuple $\mathcal{R}=(R, S, \pi, f)$ is called a \emph{semi-dynamic ring}. A semi-dynamic ring is called \emph{faithful} if for any $s \in S$, there exist elements $r_1, \cdots, r_k \in R$ and a natural number $m \geq 0$ such that $s \mid f(r_{i})$, for any $1 \leq i \leq k$ and $s^m=\sum_{i=1}^k s_if(r_{i})$, for some $s_i \in S$. It is called \emph{full}
if $f(r)\in \langle f[A] \rangle$ implies the existence of $n \geq 0$ such that $r^n \in \langle A \rangle$, for any $A \cup \{r\} \subseteq R$. For any $C \subseteq \{Fa, Fu\}$, we denoted the class of all semi-dynamic rings satisfying the conditions in $C$ by $\mathfrak{R}(C)$.
\end{definition}

By Theorem \ref{SpectralIsAlgebra}, to any semi-dynamic ring $\mathcal{R}$, one can associate 
the normal $\nabla$-algebra $\mathfrak{Al}(\mathcal{R})=(\mathcal{RI}(R),  \nabla, \to)$ and it is faithful or full if $\mathcal{R}$ is faithful or full, respectively.

\begin{theorem}\label{RingRepresentation} (Ring-theoretic Representation) 
Let $C \subseteq \{Fa, Fu\}$. Then, for any $\mathcal{A} \in \mathcal{V}(N, D, C)$, there exists a semi-dynamic ring $\mathcal{R} \in \mathfrak{R}(C)$ and a $\nabla$-algebra embedding $i: \mathcal{A} \to \mathfrak{Al}(\mathcal{R})$.
\end{theorem}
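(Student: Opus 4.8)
The plan is to combine the spectral representation of normal distributive $\nabla$-algebras (Corollary \ref{SpectralRepForNormal}) with the morphism version of Hochster's theorem (Lemma \ref{RingFacts}); the guiding idea is that a semi-dynamic ring carries exactly the ring-theoretic data needed to re-encode a generalized H-spectral space $(X,\varphi)$ in the case where $X$ admits no single ring realizing $\varphi$ as a genuine endomorphism.

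First, given $\mathcal{A}\in\mathcal{V}(N,D,C)$, apply Corollary \ref{SpectralRepForNormal} to obtain a generalized H-spectral space $(X,\varphi)$ and a $\nabla$-algebra isomorphism $\mathcal{A}\cong(CO(X),\varphi^{-1},\to_{\mathscr{X}})$, where $U\to_{\mathscr{X}}V=\{x\in X\mid\varphi^{-1}(Sat_X(x))\cap U\subseteq V\}$; moreover, by the construction underlying that corollary (through Lemma \ref{DefS} and Lemma \ref{PriestleyFullFaithful}), $\varphi$ can be taken to be a topological embedding whenever $Fa\in C$ and surjective whenever $Fu\in C$. It therefore suffices to produce a semi-dynamic ring $\mathcal{R}$ with a $\nabla$-algebra isomorphism $\mathfrak{Al}(\mathcal{R})\cong(\mathcal{O}(X),\varphi^{-1},\to^{*})$, where $\to^{*}$ is the $\nabla$-algebra implication on $\mathcal{O}(X)$ determined by $\varphi^{-1}$: indeed $(CO(X),\varphi^{-1},\to_{\mathscr{X}})$ is a bounded sublattice of $\mathcal{O}(X)$ closed under $\varphi^{-1}$ (as $\varphi$ is spectral) and under $\to_{\mathscr{X}}$ (Theorem \ref{NablaSpectralDuality}), and a direct adjunction check shows $U\to^{*}V=\{x\in X\mid\varphi^{-1}(Sat_X(x))\cap U\subseteq V\}$ for \emph{all} opens $U,V$, so $\to_{\mathscr{X}}$ is literally $\to^{*}$ restricted to $CO(X)$; hence $(CO(X),\varphi^{-1},\to_{\mathscr{X}})$ is a sub-$\nabla$-algebra of $(\mathcal{O}(X),\varphi^{-1},\to^{*})$, and composing with $\mathcal{A}\cong(CO(X),\varphi^{-1},\to_{\mathscr{X}})$ gives the required embedding $i$. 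Here I also use that in any $\nabla$-algebra the operation $\to$ is uniquely recovered as the right adjoint of $\nabla(-)\wedge u$, so every bounded-lattice isomorphism intertwining the $\nabla$'s automatically intertwines the $\to$'s.

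To construct $\mathcal{R}$: since the morphism version of Hochster's theorem realizes a spectral map only between \emph{distinct} spectral spaces, fix a spectral space $Y$ whose underlying set is distinct from that of $X$, together with a homeomorphism $h:X\to Y$ (the degenerate case $X=\varnothing$, i.e.\ $\mathcal{A}$ trivial, is immediate), and apply Lemma \ref{RingFacts} to the spectral map $\varphi\circ h^{-1}:Y\to X$. This yields commutative unital rings $R,S$, homeomorphisms $a:Spec(R)\cong X$ and $b:Spec(S)\cong Y$, and a ring homomorphism $\pi:R\to S$ with $\varphi\circ h^{-1}\circ b=a\circ Spec(\pi)$. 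Setting $f:=b^{-1}\circ h\circ a:Spec(R)\cong Spec(S)$, a short chase gives $Spec(\pi)\circ f=a^{-1}\circ\varphi\circ a$; thus $\mathcal{R}:=(R,S,\pi,f)$ is a semi-dynamic ring and, by Theorem \ref{SpectralIsAlgebra}, $\mathfrak{Al}(\mathcal{R})=(\mathcal{RI}(R),\nabla,\to)$ is a normal $\nabla$-algebra with $\nabla=\hat f\circ\pi_{*}$. Using $\pi_{*}=\mathcal{I}\circ Spec(\pi)^{-1}\circ\mathcal{U}$ and $\hat f=\mathcal{I}\circ f^{-1}\circ\mathcal{U}$ from Lemma \ref{RingFacts} (and $\mathcal{U}\circ\mathcal{I}=\mathrm{id}$), one gets $\nabla=\widehat{Spec(\pi)\circ f}=\widehat{a^{-1}\circ\varphi\circ a}$, which under the composite localic isomorphism $\mathcal{RI}(R)\cong\mathcal{O}(Spec(R))\cong\mathcal{O}(X)$ (second step induced by $a$) transports precisely to $\varphi^{-1}:\mathcal{O}(X)\to\mathcal{O}(X)$. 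By the uniqueness remark above, this localic isomorphism is a $\nabla$-algebra isomorphism onto $(\mathcal{O}(X),\varphi^{-1},\to^{*})$, as needed.

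It remains to match the conditions in $C$. If $Fa\in C$, then $\varphi$ is a topological embedding, so $\varphi^{-1}:\mathcal{O}(X)\to\mathcal{O}(X)$ is surjective by Theorem \ref{InjSurjforContinuous}, whence $\mathfrak{Al}(\mathcal{R})$ is faithful and, by Theorem \ref{SpectralIsAlgebra}, $\mathcal{R}\in\mathfrak{R}(Fa)$. If $Fu\in C$, then $\varphi$ is surjective, so $\varphi^{-1}$ is injective by Theorem \ref{InjSurjforContinuous}, whence $\mathfrak{Al}(\mathcal{R})$ is full and $\mathcal{R}\in\mathfrak{R}(Fu)$. Hence $\mathcal{R}\in\mathfrak{R}(C)$ and $i:\mathcal{A}\to\mathfrak{Al}(\mathcal{R})$ is the desired embedding. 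The main obstacle is the bookkeeping forced by the distinctness hypothesis in Hochster's morphism theorem: the endomorphism $\varphi$ must be routed through a formally distinct isomorphic copy of $X$, and one must then verify that the extracted pair $(\pi,f)$ genuinely re-encodes $\varphi$ (that $Spec(\pi)\circ f$ is conjugate to $\varphi$) and that the topological-embedding/surjectivity of $\varphi$ translates into the divisibility-type conditions defining a faithful/full semi-dynamic ring — which is precisely what Lemma \ref{InjSurjInRings} together with Theorem \ref{SpectralIsAlgebra} delivers.
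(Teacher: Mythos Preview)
Your proof is correct and follows essentially the same architecture as the paper's: spectral representation via Corollary~\ref{SpectralRepForNormal}, passage to a disjoint homeomorphic copy of $X$ so that Hochster's morphism theorem applies, extraction of $(R,S,\pi,f)$, and identification of $\mathfrak{Al}(\mathcal{R})$ with $(\mathcal{O}(X),\varphi^{-1},\to^{*})$ through the $\mathcal{I}$--$\mathcal{U}$ correspondence, into which $(CO(X),\varphi^{-1},\to_{\mathscr X})$ embeds. The paper's steps I--III are exactly your sub-$\nabla$-algebra claim, your computation $\nabla=\hat f\circ\pi_*=\widehat{Spec(\pi)\circ f}$, and your transport along the homeomorphism $a$; your treatment of $Fa$/$Fu$ via Theorem~\ref{SpectralIsAlgebra} is a legitimate variant of the paper's direct appeal to Lemma~\ref{InjSurjInRings}.

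One small overreach: the assertion that $U\to^{*}V=\{x\mid\varphi^{-1}(Sat_X(x))\cap U\subseteq V\}$ holds for \emph{all} opens is not a ``direct adjunction check''---the right-hand side equals $(\downarrow\!\varphi[U\setminus V])^{c}$, which need not be open in the spectral topology when $U,V$ are merely open. The inclusion $U\to^{*}V\subseteq\{x\mid\varphi^{-1}(Sat_X(x))\cap U\subseteq V\}$ is immediate, but the reverse uses that the right-hand side is open, which is exactly what the generalized-Esakia condition guarantees only for $U,V\in CO(X)$. Since you only apply the formula on $CO(X)$ your conclusion is unaffected; the paper proves the same restriction identity by a two-sided inclusion (writing $U\to_{\varphi}V$ as a union of compact opens), which is the cleaner fix.
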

\begin{proof}
Suppose $\mathcal{A}$ is a normal $\nabla$-algebra. By Corollary \ref{SpectralRepForNormal}, there is a generalized H-spectral space $\mathscr{X}=(X, h)$ such that $\mathcal{A} \cong (CO(X), h^{-1}, \to_{\mathscr{X}})$, where $U \to_{\mathscr{X}} V=\{x \in X \mid \pi^{-1}(Sat_X(x)) \cap U \subseteq V\}$. Let $Y$ be a space homeomorphic to $X$ but $X \neq Y$. Call the homeomorphism $g: Y \to X$. This is just a technical condition to make the Hochster's Theorem applicable. Since $h: X \to X$ is spectral and $g$ is a homeomorphism, $hg: Y \to X$ is also spectral. By Hochster's Theorem, Lemma \ref{RingFacts}, there exist rings $R$ and $S$, a ring homomorphism $\pi: R \to S$ and homeomorphisms $\alpha_X: Spec (R) \cong X$ and $\alpha_Y: Spec (S) \cong Y$ such that $(hg) \circ \alpha_Y=\alpha_X \circ Spec (\pi)$:
\[\small \begin{tikzcd}
	{Spec(S)} && Y \\
	\\
	{Spec(R)} && X
	\arrow["hg", from=1-3, to=3-3]
	\arrow["{Spec(\pi)}"', from=1-1, to=3-1]
	\arrow["{\alpha_X}"', from=3-1, to=3-3]
	\arrow["{\alpha_Y}", from=1-1, to=1-3]
\end{tikzcd}\]
Define $f: Spec (R) \to Spec (S)$ as $f=\alpha_Y^{-1} \circ g^{-1} \circ \alpha_X$. This map is clearly a homeomorphism. Therefore, we can consider the semi-dynamic ring $\mathcal{R}=(R, S, \pi, f)$. We claim that $\mathcal{R}$ is the semi-dynamic ring we are looking for. Before proving this claim, 
notice that if $\mathcal{A}$ is faithful (resp. full), then by Corollary \ref{SpectralRepForNormal}, the map $h: X \to X$ must be a topological embedding (resp. surjection). Since $g$ is a homeomorphism, $hg$ is also a topological embedding (resp. surjection) which implies that $Spec(\pi)$ is a topological embedding (resp. surjection). By Lemma \ref{InjSurjInRings}, the latter implies the faithfulness (resp. fullness) of $\mathfrak{Al}(\mathcal{R})$.

To provide a $\nabla$-algebra embedding from $\mathcal{A}$ into $\mathfrak{Al}(\mathcal{R})$, we pursue the following three steps: 

I. As $(X, h)$ is a dynamic topological system, $\mathcal{B}=(\mathcal{O}(X), h^{-1}, \to_h)$ is a normal $\nabla$-algebra, by Example \ref{DTS}. We claim that the inclusion map from $(CO(X), h^{-1}, \to_{\mathscr{X}})$ into $\mathcal{B}$ is a $\nabla$-algebra-embedding. It is clearly a bounded lattice embedding preserving the $\nabla$. We have to show that it also preserves the implication, i.e., $U \to_{\mathscr{X}} V=U \to_{h} V$, for any $U, V \in CO(X)$. Thinking inside $(CO(X), h^{-1}, \to_{\mathscr{X}})$, by the adjunction, we have $h^{-1}(U \to_{\mathscr{X}} V) \cap U \subseteq V$. Then, thinking inside $\mathcal{B}$, by the adjunction, we reach $U \to_{\mathscr{X}} V \subseteq U \to_{h} V$. For the converse, as $X$ is spectral, any open is a union of compact opens. Hence, $U \to_{h} V=\bigcup_{i \in I} W_i$, where $W_i$ is a compact open. Therefore, for any arbitrary $i\in I$, we have $W_i \subseteq U \to_{h} V$ which implies $h^{-1}(W_i) \cap U \subseteq V$. As $W_i \in CO(X)$, we reach $W_i \subseteq U \to_{\mathscr{X}} V$. As $i \in I$ is arbitrary, $U \to_{h} V=\bigcup_{i \in I} W_i \subseteq U \to_{\mathscr{X}} V$. Hence, $U \to_{\mathscr{X}} V=U \to_{h} V$.

II. Consider the map $f^{-1} \circ Spec(\pi)^{-1}: \mathcal{O}(Spec(R)) \to \mathcal{O}(Spec(R))$. As it preserves all unions, it has a right adjoint $\Box': \mathcal{O}(Spec(R)) \to \mathcal{O}(Spec(R))$. Define $U \to' V=\Box'(U \supset V)$, where $\supset$ is the Heyting implication on the locale $\mathcal{O}(Spec(R))$. As $f^{-1} \circ Spec(\pi)^{-1} \dashv \Box'$, the tuple $(\mathcal{O}(Spec(R)), f^{-1} \circ Spec(\pi)^{-1}, \to')$ is a $\nabla$-algebra. Consider the following claim:\\

\noindent \textbf{Claim.} $\mathfrak{Al}(\mathcal{R}) \cong (\mathcal{O}(Spec(R)), f^{-1} \circ Spec(\pi)^{-1}, \to')$.\\

\noindent  To prove it, consider the $\mathcal{I}-\mathcal{U}$ correspondence in Lemma \ref{RingFacts}. The map $\mathcal{I}: \mathcal{O}(Spec(R)) \to \mathcal{RI}(R)$ is a locale isomorphism and hence an isomorphism between the bounded distributive algebras underlying the $\nabla$-algebras. Recall the $\nabla$ on $\mathcal{RI}(R)$ is $\hat{f}(\pi_*(I)): \mathcal{RI}(R) \to \mathcal{RI}(R)$.
By Lemma \ref{RingFacts}, 
$
\mathcal{I} \circ f^{-1} \circ Spec(\pi)^{-1}= \hat{f} \circ \mathcal{I} \circ Spec(\pi)^{-1}=\hat{f} \circ \pi_* \circ \mathcal{I}$.
Therefore, $\mathcal{I}$ preserves the $\nabla$'s of the $\nabla$-algebras. As $\mathcal{I}$ is an order-preserving bijection, by the uniqueness of the right adjoint, $\mathcal{I}$ also preserves the implication of the $\nabla$-algebra. Hence, $\mathcal{I}$ is a $\nabla$-algebra isomorphism from $(\mathcal{O}(Spec(R)), f^{-1} \circ Spec(\pi)^{-1}, \to')$ to $\mathfrak{Al}(\mathcal{R})$. Therefore, $(\mathcal{O}(Spec(R)), f^{-1} \circ Spec(\pi)^{-1}, \to') \cong \mathfrak{Al}(\mathcal{R})$.

III. We show that $(\mathcal{O}(Spec(R)), f^{-1} \circ Spec(\pi)^{-1}, \to') \cong \mathcal{B}$. To provide the isomorphism, consider $\alpha_X^{-1}: \mathcal{O}(X) \to \mathcal{O}(Spec(R))$. As $\alpha_X: Spec(R) \to X$ is a homeomorphism, $\alpha_X^{-1}$ is a local isomorphism and hence an isomorphism between the bounded distributive algebras underlying the $\nabla$-algebras. Again, it is enough to prove that $\alpha_X^{-1}$ preserves the $\nabla$'s of the $\nabla$-algebras. For that purpose, recall that $(hg) \circ \alpha_Y=\alpha_X \circ Spec (\pi)$ and $f=\alpha_Y^{-1} \circ g^{-1} \circ \alpha_X$ and consider the following diagram:
\[\small \begin{tikzcd}
	{\mathcal{O}(X)} && {\mathcal{O}(Spec(R))} \\
	\\
	{\mathcal{O}(X)} && {\mathcal{O}(Spec(S))}
	\arrow["{h^{-1}}"', from=1-1, to=3-1]
	\arrow["{\alpha_X^{-1}}", from=1-1, to=1-3]
	\arrow["{\alpha_X^{-1}}"', from=3-1, to=3-3]
	\arrow["{f^{-1} \circ Spec(\pi)^{-1}}", from=1-3, to=3-3]
\end{tikzcd}\]
It is commutative as
\[
f^{-1} \circ Spec(\pi)^{-1}\alpha_X^{-1}=\alpha_X^{-1} g \alpha_Y Spec(\pi)^{-1}\alpha_X^{-1}=\alpha_X^{-1} g \alpha_Y \alpha_Y^{-1} g^{-1} h^{-1} = \alpha_X^{-1}h^{-1}.
\]
Therefore, $(\mathcal{O}(Spec(R)), f^{-1} \circ Spec(\pi)^{-1}, \to') \cong \mathcal{B}$ which by (II) implies $\mathfrak{Al}(\mathcal{R}) \cong \mathcal{B}$.

Finally, as $\mathcal{A} \cong (CO(X), h^{-1}, \to_{\mathscr{X}})$ and there is a $\nabla$-algebra embedding from $(CO(X), h^{-1}, \to_{\mathscr{X}})$ into  $\mathcal{B}$, we provided a $\nabla$-algebra embedding from $\mathcal{A}$ into $\mathfrak{Al}(\mathcal{R})$.
\end{proof}

\section{Logical Systems for $\nabla$-algebras}\label{Logics}

In this section, we first recall the logical systems introduced in \cite{ImSpace} to provide a syntactical presentation for some of the introduced varieties of $\nabla$-algebras. To complete this list, we also introduce two new rules to reflect the faithfulness and fullness conditions specific to the present paper. Next, we present the algebraic, ring-theoretic, topological, and Kripke semantics for these logical systems, extending the soundness and completeness results of \cite{ImSpace} to cover the new systems as well. Additionally, we utilize the Dedekind-MacNeille completion and the amalgamation property, as mentioned in Section \ref{Recall}, to establish algebraic completeness with respect to \emph{complete} $\nabla$-algebras and the \emph{deductive interpolation property}, respectively.

Let $\mathcal{L}$ be the usual language of propositional logic plus the modality $\nabla$, i.e., $\mathcal{L}=\{\wedge, \vee, \to, \top, \bot, \nabla\}$. For any multiset $\Gamma$, by $\nabla \Gamma$, we mean $\{\nabla \gamma \mid \gamma \in \Gamma\}$. By a \emph{sequent}, we mean an expression in the form $\Gamma \Rightarrow \Delta$, where $\Gamma$ and $\Delta$ are finite multisets of formulas in $\mathcal{L}$ and $|\Delta| \leq 1$. The expression $A \Leftrightarrow B$ abbreviates two sequents $A \Rightarrow B$ and $B \Rightarrow A$. For any set of sequents $\mathcal{S}$, by $V(\mathcal{S})$, we mean the set of the atoms occurring at some formulas in $\mathcal{S}$. \emph{Axioms} and \emph{rules} are defined as usual and the former is considered as a special case of the latter where there is no premises. A \emph{sequent calculus} $G$ is a finite collection of rules and
$\{\Gamma_i \Rightarrow \Delta_i\}_{i \in I} \vdash_G \Gamma \Rightarrow \Delta$ means the existence of a finite sequence $\{S_r\}_{r=0}^n$ of sequents, where $S_n=(\Gamma \Rightarrow \Delta)$ and each $S_r$ is either in $\{\Gamma_i \Rightarrow \Delta_i\}_{i \in I}$ or constructed from the sequents $\{S_l\}_{l < r}$ using an instance of a rule of $G$. When $I=\varnothing$, we usually write $G \vdash \Gamma \Rightarrow \Delta$ for $ \vdash_G \Gamma \Rightarrow \Delta$. 
Define $\mathbf{STL}$\footnote{It abbreviates space-time logic. For the motivation, see \cite{ImSpace}.} as the calculus consisting of all the rules in Figure \ref{fSTL}.
\begin{figure}[H]
\begin{center}
 \begin{tabular}{c c c}
 \AxiomC{}
  \RightLabel{\footnotesize$ Ax$}
 \UnaryInfC{$A \Rightarrow A$}
 \DisplayProof 
 &
\small \AxiomC{}
\small \RightLabel{\footnotesize$L \bot$}
\small \UnaryInfC{$ \bot \Rightarrow $}
 \DisplayProof 
&
\small  \AxiomC{}
\small \RightLabel{\footnotesize$L \top$}
\small \UnaryInfC{$ \Rightarrow \top$}
 \DisplayProof
 \\[3ex]
\end{tabular}
 
\begin{tabular}{cc}
\small \AxiomC{$\Gamma \Rightarrow \Delta$}
\small  \RightLabel{\footnotesize$ Lw$}
\small \UnaryInfC{$\Gamma, A \Rightarrow \Delta$}
 \DisplayProof 
 &
\small \AxiomC{$\Gamma \Rightarrow$}
\small \RightLabel{\footnotesize$Rw$}
\small \UnaryInfC{$\Gamma \Rightarrow A $}
 \DisplayProof 
\\[3ex]
 \end{tabular}
\begin{tabular}{cc}
\small  \AxiomC{$\Gamma, A, A \Rightarrow \Delta$}
\small \RightLabel{\footnotesize$Lc$}
\small \UnaryInfC{$\Gamma, A \Rightarrow \Delta$}
 \DisplayProof 
 &
 \small \AxiomC{$\Gamma \Rightarrow A$}
\small \AxiomC{$\Pi, A \Rightarrow \Delta$}
\small \RightLabel{\footnotesize$cut$} 
 \BinaryInfC{$\Pi, \Gamma \Rightarrow  \Delta$}
 \DisplayProof
 \\[3ex]
 \end{tabular}
 \begin{tabular}{ccc}
\small \AxiomC{$\Gamma, A \Rightarrow \Delta$}
\small \RightLabel{\footnotesize$L \wedge_1$} 
\small \UnaryInfC{$\Gamma, A \wedge B \Rightarrow \Delta$}
 \DisplayProof
&
\small \AxiomC{$\Gamma, B \Rightarrow \Delta$}
\small \RightLabel{\footnotesize$L \wedge_2$} 
\small \UnaryInfC{$\Gamma, A \wedge B \Rightarrow \Delta$}
 \DisplayProof
 &
\small \AxiomC{$\Gamma \Rightarrow A$}
\small \AxiomC{$\Gamma \Rightarrow B$}
\small \RightLabel{\footnotesize$R \wedge$} 
 \BinaryInfC{$\Gamma \Rightarrow A \wedge B$}
 \DisplayProof
  \\[3ex]
\small \AxiomC{$A \Rightarrow \Delta$}
\small \AxiomC{$B \Rightarrow \Delta$}
\small \RightLabel{\footnotesize$L \vee$} 
 \BinaryInfC{$A \vee B \Rightarrow \Delta$}
 \DisplayProof
 &
\small \AxiomC{$\Gamma \Rightarrow A$}
\small \RightLabel{\footnotesize$R \vee_1$} 
\small \UnaryInfC{$\Gamma \Rightarrow A \vee B$}
 \DisplayProof
&
\small  \AxiomC{$\Gamma \Rightarrow B$}
\small \RightLabel{\footnotesize$R \vee_2$} 
\small \UnaryInfC{$\Gamma \Rightarrow A \vee B$}
 \DisplayProof
 \\[3ex]
\end{tabular}
\begin{tabular}{cc}
 
\small \AxiomC{$A \Rightarrow B$}
\small \RightLabel{\footnotesize$\nabla$} 
\small \UnaryInfC{$\nabla A \Rightarrow \nabla B$}
 \DisplayProof
 &
\small \AxiomC{$\Gamma \Rightarrow A$}
\small \AxiomC{$\Gamma, B \Rightarrow \Delta$}
\small \RightLabel{\footnotesize$L \to$} 
\small \BinaryInfC{$\Gamma, \nabla(A \to B) \Rightarrow \Delta$}
 \DisplayProof

\small \AxiomC{$\nabla \Gamma, A \Rightarrow B$}
\small \RightLabel{\footnotesize$R \to$} 
\small \UnaryInfC{$\Gamma \Rightarrow A \to B$}
 \DisplayProof\\[3ex]
\end{tabular}
\caption{The sequent calculus $\mathbf{STL}$}
\label{fSTL}
\end{center}
\end{figure}
For any finite set $C$ of rules, 
by $\mathbf{STL}(C)$, we mean the system $\mathbf{STL}$ extended by the additional rules listed in $C$. An interesting family of rules is provided in Figure \ref{fAdd}.
\begin{figure}[H]
\begin{center}
 \begin{tabular}{c c c}
\small \AxiomC{$\Gamma \Rightarrow A$}
 \small \RightLabel{\footnotesize$ R$}
\small \UnaryInfC{$\Gamma \Rightarrow \nabla A$}
 \DisplayProof 
 &
\small \AxiomC{$\Gamma, A \Rightarrow \Delta$}
\small \RightLabel{\footnotesize$L$}
\small \UnaryInfC{$\Gamma, \nabla A \Rightarrow \Delta$}
 \DisplayProof 
 &
\small \AxiomC{$\Gamma, A \Rightarrow \Delta$}
\small \AxiomC{$\Gamma, B \Rightarrow \Delta$}
\small \RightLabel{\footnotesize$D$} 
\small \BinaryInfC{$\Gamma, A \vee B \Rightarrow \Delta$}
 \DisplayProof
 \\[3ex]
\small \AxiomC{$\Gamma \Rightarrow \Delta$}
\small  \RightLabel{\footnotesize$ N$}
\small \UnaryInfC{$\nabla \Gamma \Rightarrow \nabla \Delta$}
 \DisplayProof 
 &
\small \AxiomC{$\Gamma, A \Rightarrow B$}
\small \RightLabel{\footnotesize$Fa$} 
\small \UnaryInfC{$\Gamma \Rightarrow \nabla (A \to B)$}
 \DisplayProof
 &
\small \AxiomC{$\nabla \Gamma \Rightarrow A$}
\small \AxiomC{$\nabla \Gamma, B \Rightarrow \nabla \Delta$}
\small \RightLabel{\footnotesize$Fu$} 
\small \BinaryInfC{$\Gamma, A \to B \Rightarrow \Delta$}
 \DisplayProof\\[3ex]
\end{tabular}
\caption{The additional rules}
\label{fAdd}
\end{center}
\end{figure}
For any finite set $C$ of rules, define $\mathbf{STL}(C, H)$ as the system $\mathbf{STL}(C)$ extended by the usual rules for intuitionistic implication $\supset$, over the extended language $\mathcal{L}_i=\mathcal{L} \cup \{ \supset\}$:

\begin{flushleft}
 \textbf{Intuitionistic Implication Rules:}
\end{flushleft}
\vspace{7pt}
\begin{center}
 \begin{tabular}{c c}
 \AxiomC{$\Gamma \Rightarrow A$}
 \AxiomC{$\Gamma, B \Rightarrow \Delta$}
 \RightLabel{$L \supset$} 
 \BinaryInfC{$\Gamma, A \supset B \Rightarrow \Delta$}
 \DisplayProof \;\;\;
 &
 \AxiomC{$ \Gamma, A \Rightarrow B$}
 \RightLabel{$R \supset$} 
 \UnaryInfC{$\Gamma \Rightarrow A \supset B$}
 \DisplayProof
 \\[3ex]
\end{tabular}
\end{center}

\begin{remark}\label{NablaDisOverDisjunction}
The modality $\nabla$ commutes with all finite disjunctions (including $\bot$), i.e., $\mathbf{STL} \vdash \nabla \bot \Leftrightarrow \bot$ and $\mathbf{STL} \vdash \nabla (A \vee B) \Leftrightarrow  \nabla A \vee \nabla B$. The proof is just a proof-theoretical presentation of the algebraic fact that any left adjoint preserves all joins. For $\mathbf{STL} \vdash \nabla \bot \Leftrightarrow \bot$, one direction is clear. For the other, consider the following proof-tree.
\begin{center}
  	\begin{tabular}{c}
  	
   		\AxiomC{$\bot \Rightarrow  $}
   		\RightLabel{\footnotesize$(Rw)$}
   		\UnaryInfC{$\bot \Rightarrow \top \to \bot$}
   		\RightLabel{\footnotesize$\nabla$}
   		\UnaryInfC{$\nabla \bot \Rightarrow \nabla (\top \to \bot)$}
   		
   		\AxiomC{$ \Rightarrow \top $}
   		\AxiomC{$\bot \Rightarrow  \bot$}
   		\RightLabel{\footnotesize$L \to$}
   		\BinaryInfC{$\nabla (\top \to \bot) \Rightarrow \bot$}
   		
   		\RightLabel{\footnotesize$cut$}
   		\BinaryInfC{$\nabla \bot \Rightarrow \bot$}
   		\DisplayProof\\[5ex]
	\end{tabular}
\end{center}
For $\mathbf{STL} \vdash \nabla (A \vee B) \Leftrightarrow  \nabla A \vee \nabla B$, again one direction is easy. For the other direction and for the sake of readability, we prove the apparently more general claim that if $\mathbf{STL} \vdash \nabla A \Rightarrow C$ and $\mathbf{STL} \vdash \nabla B \Rightarrow C$, then $\mathbf{STL} \vdash \nabla (A \vee B) \Rightarrow C$:
\begin{center}
  	\begin{tabular}{c}
  	
   		\AxiomC{$\nabla A \Rightarrow C $}
   		\RightLabel{\footnotesize$(Lw)$}
   		\UnaryInfC{$\nabla A, \top \Rightarrow C$}
   		\RightLabel{\footnotesize$(R \to)$}
   		\UnaryInfC{$A \Rightarrow \top \to C$}
   		
   		\AxiomC{$\nabla B \Rightarrow C $}
   		\RightLabel{\footnotesize$(Lw)$}
   		\UnaryInfC{$\nabla B, \top \Rightarrow C$}
   		\RightLabel{\footnotesize$(R \to)$}
   		\UnaryInfC{$B \Rightarrow \top \to C$}
   		
   		\RightLabel{\footnotesize$L \vee$}
   		\BinaryInfC{$A \vee B \Rightarrow \top \to C$}
   		\RightLabel{\footnotesize$\nabla $}
   		\UnaryInfC{$\nabla (A \vee B) \Rightarrow \nabla (\top \to C)$}
   		
   		\AxiomC{$ \Rightarrow \top $}
   		\AxiomC{$C \Rightarrow C$}
   		\RightLabel{\footnotesize$L \to$}
   		\BinaryInfC{$\nabla (\top \to C) \Rightarrow C$}
   		
   		\RightLabel{\footnotesize$cut$}
   		\BinaryInfC{$\nabla (A \vee B) \Rightarrow C$}
   		\DisplayProof\\[7ex]
	\end{tabular}
\end{center}
Then, the claim is provable by setting $C=\nabla A \vee \nabla B$.
\end{remark}

\begin{remark}\label{MakingOneFormula}
Over $\mathbf{STL}(N)$, any sequent $\Gamma \Rightarrow \Delta$ is equivalent to the simpler form $(\, \Rightarrow \bigwedge \Gamma \to \bigvee \Delta)$. One direction is clear. For the other, consider the following proof-tree:

\begin{center}
 \begin{tabular}{c c}
 \AxiomC{$\Rightarrow \bigwedge \Gamma \rightarrow \bigvee \Delta$}
\RightLabel{\footnotesize $N$}
\UnaryInfC{$ \Rightarrow \nabla(\bigwedge \Gamma \rightarrow \bigvee \Delta)$}
\AxiomC{$ $}
\doubleLine
\UnaryInfC{$ \Gamma \Rightarrow \bigwedge \Gamma$}
\AxiomC{$ $}
\doubleLine
\UnaryInfC{$\Gamma, \bigvee \Delta \Rightarrow \Delta$}
\RightLabel{\footnotesize $L\to$}
\BinaryInfC{$\Gamma, \nabla(\bigwedge \Gamma \to \bigvee \Delta) \Rightarrow  \Delta$}
\RightLabel{\footnotesize $cut$}
 \BinaryInfC{$\Gamma \Rightarrow \Delta$}
 \DisplayProof
 \\[3ex]
\end{tabular}
\end{center}
\normalsize where the double lines mean the existence of an omitted yet easy proof tree. 
\end{remark}

In the following lemma, we show that any of the rules in $\{D, N, R, L, Fa, Fu\}$ is equivalent to some axioms over $\mathbf{STL}$. 
\begin{lemma}\label{EquivProofSystems}
Consider the following list of axioms:
\begin{itemize}
\item[$\bullet$] $(D_a)$: $A \wedge (B \vee C) \Leftrightarrow (A \wedge B) \vee (A \wedge C)$.
\item[$\bullet$] $(N_a)$: $\nabla (A \wedge B) \Leftrightarrow \nabla A \wedge \nabla B$ and $\nabla \top \Leftrightarrow \top$.
\item[$\bullet$] $(R_a)$: $A \Rightarrow \nabla A$.
\item[$\bullet$] $(L_a)$: $\nabla A \Rightarrow A$.
\item[$\bullet$] $(Fa_a)$: $A \Leftrightarrow \nabla (\top \to A)$.
\item[$\bullet$] $(Fu_a)$: $A \Leftrightarrow \top \to \nabla A$.
\end{itemize}
and note that they are reminiscent of the algebraic conditions we had before. Then, for any $C \subseteq \{D, N, R, L, Fa, Fu\}$, we have $\mathbf{STL}(C)=\mathbf{STL}(C_a)$, where $C_a=\{\mathcal{R}_a \mid \mathcal{R} \in C\}$.
\end{lemma}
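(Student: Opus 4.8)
The plan is to prove both inclusions $\mathbf{STL}(C) \subseteq \mathbf{STL}(C_a)$ and $\mathbf{STL}(C_a) \subseteq \mathbf{STL}(C)$ rule by rule (equivalently axiom by axiom), using the cut rule freely since it is present in $\mathbf{STL}$. For each $\mathcal{R} \in \{D, N, R, L, Fa, Fu\}$ I would show: (a) the axiom $\mathcal{R}_a$ is derivable in $\mathbf{STL}(\mathcal{R})$, and (b) the rule $\mathcal{R}$ is admissible in (indeed derivable over) $\mathbf{STL}(\mathcal{R}_a)$. Since the systems share all of $\mathbf{STL}$, establishing these six local equivalences suffices; then for a general $C$ one just takes unions. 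The two degenerate cases $R$ and $L$ are essentially immediate: the axiom $(R_a)$ is the rule $(R)$ applied to $A \Rightarrow A$, and conversely given $(R_a)$ the rule $(R)$ follows by a single cut of $\Gamma \Rightarrow A$ against $A \Rightarrow \nabla A$; symmetrically for $L$.

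For the remaining four I would proceed as follows. For $(D)$: the axiom $(D_a)$ follows from the rule $(D)$ together with the standard lattice rules ($L\wedge$, $R\wedge$, $L\vee$, $R\vee$, weakening, contraction); conversely, from $(D_a)$ one derives the rule $(D)$ by taking $\Gamma, A \Rightarrow \Delta$ and $\Gamma, B \Rightarrow \Delta$, deriving $\Gamma, (\Gamma' \wedge A) \vee (\Gamma' \wedge B) \Rightarrow \Delta$ where $\Gamma' = \bigwedge\Gamma$, and cutting against the distributivity axiom to replace the disjunction by $\Gamma' \wedge (A \vee B)$, hence by $\Gamma, A\vee B$. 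For $(N)$: one direction derives $(N_a)$ from $(N)$ by applying $(N)$ to the sequents $A\wedge B \Rightarrow A$, $A \wedge B \Rightarrow B$, $A, B \Rightarrow A \wedge B$, $\Rightarrow \top$ and combining; the other derives the rule $(N)$ from $(N_a)$ by induction on the size of $\Gamma$, using $\nabla(\gamma_1 \wedge \cdots \wedge \gamma_n) \Leftrightarrow \nabla\gamma_1 \wedge \cdots \wedge \nabla\gamma_n$ together with the already-available rule $(\nabla)$ applied to $\bigwedge\Gamma \Rightarrow \bigvee\Delta$ (and Remark \ref{NablaDisOverDisjunction} to push $\nabla$ through the disjunction in $\Delta$, handling the $|\Delta|=0$ case via $\nabla\bot \Leftrightarrow \bot$). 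For $(Fa)$ and $(Fu)$ the work is to connect the rule form with the axiom form: from $(Fa)$, apply it to $A \Rightarrow A$ (in the form $\cdot, A \Rightarrow A$ with empty context) to get $\Rightarrow \nabla(\top \to A)$, and for the reverse inclusion of $(Fa_a)$ use $(L\to)$ with $\Rightarrow \top$ and $A \Rightarrow A$; conversely, given $(Fa_a)$, derive the rule $(Fa)$ from $\Gamma, A \Rightarrow B$ by $(R\to)$-ing to $\Gamma \Rightarrow \top \to B$ — wait, the context is $\nabla\Gamma$ there, so one first applies $(R\to)$ to get $\Gamma \Rightarrow A \to B$, then needs $\Gamma \Rightarrow \nabla(A\to B)$; this is where one must be careful and likely route through the $(Fa_a)$ axiom instance $A\to B \Leftrightarrow \nabla(\top \to (A \to B))$ together with manipulations identifying $\top \to (A\to B)$ with $A \to B$ using $(R\to)$, $(L\to)$, and cut. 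The analogous bookkeeping handles $(Fu)$ via $(Fu_a)$.

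The main obstacle I expect is precisely the $(Fa)$ and $(Fu)$ cases: unlike $D, N, R, L$, these rules change the modal structure of the context in a nonuniform way, so recovering the rule from the single axiom requires a genuine derivation chaining several applications of $(L\to)$, $(R\to)$, $(\nabla)$, $(N)$-type moves (or Remark \ref{MakingOneFormula} to first normalize the sequent to the form $\Rightarrow \bigwedge\Gamma \to \bigvee\Delta$), and cut. Using Remarks \ref{NablaDisOverDisjunction} and \ref{MakingOneFormula} to reduce every sequent to a single-formula-on-the-right normal form will substantially streamline these arguments, so I would invoke them early. Everything else is routine proof-tree construction with the lattice and structural rules, and I would present one representative proof-tree for each nontrivial direction rather than spelling out all twelve in full detail.
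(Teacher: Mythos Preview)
Your overall strategy---show each rule $\mathcal{R}$ is interderivable with the axiom $\mathcal{R}_a$ over $\mathbf{STL}$---is exactly the paper's, and your treatment of $D$, $N$, $R$, $L$ is fine. The gap is in the $(Fa)$ direction (and by extension $(Fu)$, which you wave off as ``analogous bookkeeping'').

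For deriving the rule $(Fa)$ from the axiom $(Fa_a)$, your proposed route does not go through. You write that from $\Gamma, A \Rightarrow B$ ``one first applies $(R\to)$ to get $\Gamma \Rightarrow A \to B$'', but $(R\to)$ requires the premise $\nabla\Gamma, A \Rightarrow B$, not $\Gamma, A \Rightarrow B$, so this step fails outright. Your fallback---use $(Fa_a)$ on $A \to B$ and then identify $\top \to (A \to B)$ with $A \to B$---also fails: the equivalence $\Box(A\to B) \Leftrightarrow A\to B$ is not derivable in $\mathbf{STL}(Fa_a)$ (algebraically it would force $\Box$ to be idempotent on implications, which faithfulness alone does not give). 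Moreover, your plan to invoke Remark~\ref{MakingOneFormula} to normalize sequents is unavailable here, since that remark is stated over $\mathbf{STL}(N)$ and you do not have $(N)$ when proving the $(Fa)$ equivalence in isolation.

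The missing idea is to apply $(Fa_a)$ to the \emph{context}, not to the conclusion: bundle $\Gamma$ into $\bigwedge\Gamma$, use $(Fa_a)$ to rewrite $\bigwedge\Gamma$ as $\nabla(\top \to \bigwedge\Gamma)$, so the context becomes a \emph{single} $\nabla$-formula; now $(R\to)$ applies, then the one-formula $\nabla$-rule, then cut back with $(Fa_a)$. The paper does exactly this. For $(Fu)$ the paper similarly needs a nontrivial intermediate claim ($\nabla\Sigma \Rightarrow \nabla\Lambda \vdash_{\mathbf{STL}(Fu_a)} \Sigma \Rightarrow \Lambda$), again using $(Fu_a)$ on the context formulas; it is not mere bookkeeping. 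A minor point: your derivation of the axiom $(Fa_a)$ from the rule $(Fa)$ should take $\Gamma = \{A\}$ and premise $A, \top \Rightarrow A$, yielding $A \Rightarrow \nabla(\top \to A)$; with empty context you would only get $\Rightarrow \nabla(A \to A)$.
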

\begin{proof}
The equivalence of the rule $(D)$ and the axiom $(D_a)$ is an easy well-known fact.
For the rules $(N)$, $(R)$ and $(L)$, the proof of the equivalence is easy and can be found in \cite{ImSpace}.
For the case $(Fa)$, to prove the axioms, consider the following proof-trees: 
\begin{center}
 \begin{tabular}{c c}
 \AxiomC{$  \Rightarrow \top$}
 \AxiomC{$ A \Rightarrow A$}
 \RightLabel{\footnotesize $L \to$} 
 \BinaryInfC{$ \nabla (\top \rightarrow A) \Rightarrow A$}
 \DisplayProof
& \;\;\;\;\;
 \AxiomC{$ A \Rightarrow A$}
 \RightLabel{\footnotesize$Lw$} 
 \UnaryInfC{$ A, \top \Rightarrow A$}
 \RightLabel{\footnotesize$Fa$} 
 \UnaryInfC{$A \Rightarrow \nabla (\top \rightarrow A)$}
 \DisplayProof
 \\[3ex]
\end{tabular}
\end{center} 
Note that the axiom $\nabla (\top \to A) \Rightarrow A$ is even provable in $\mathbf{STL}$. To prove the rule $(Fa)$ from the axiom $(Fa)_a$, if we have $\Gamma, A \Rightarrow B$, then:
\begin{center}
	\begin{tabular}{c}
  	\AxiomC{$ $}
  	\RightLabel{\footnotesize $(Fa_a)$}
  	\UnaryInfC{$ \bigwedge \Gamma \Rightarrow \nabla ( \top \to \bigwedge \Gamma) $}
  	
  	\AxiomC{$ \Rightarrow \top $}
  	\AxiomC{$\bigwedge \Gamma \Rightarrow \bigwedge \Gamma $}
  	\RightLabel{\footnotesize $L \to$}
   		\BinaryInfC{$\nabla ( \top \to \bigwedge \Gamma) \Rightarrow \bigwedge \Gamma $}
   		
   		\AxiomC{$ \Gamma, A \Rightarrow B $}
   		\doubleLine
        \UnaryInfC{$\bigwedge \Gamma, A \Rightarrow B $}
   		\BinaryInfC{$\nabla ( \top \to \bigwedge \Gamma), A \Rightarrow B$}
   		 \RightLabel{$R\to$}
   		\UnaryInfC{$\top \to \bigwedge \Gamma \Rightarrow A \to B$}
   		\RightLabel{$\nabla$}
   		\UnaryInfC{$\nabla (\top \to \bigwedge \Gamma) \Rightarrow \nabla (A \to B)$}
   		 \RightLabel{$cut$}
   		\BinaryInfC{$\bigwedge \Gamma \Rightarrow \nabla (A \to B)$}
   		\doubleLine
   		\UnaryInfC{$\Gamma \Rightarrow \nabla(A \to B)$}
   		\DisplayProof
	\end{tabular}
\end{center}
\normalsize where the double lines mean the existence of an omitted yet easy proof tree. \\
For the case $(Fu)$, to prove the axioms, consider the following proof-trees: 
\begin{center}
 \begin{tabular}{c c}
\AxiomC{$ \nabla A \Rightarrow \nabla A$}
 \RightLabel{\footnotesize$Lw$} 
 \UnaryInfC{$\nabla A, \top \Rightarrow \nabla A$}
 \RightLabel{\footnotesize$R \to$} 
 \UnaryInfC{$A \Rightarrow \top \to \nabla A $}
 \DisplayProof
 & \;\;\;\;\;
 \AxiomC{$ \Rightarrow \top$}
 \AxiomC{$ \nabla A \Rightarrow \nabla A$}
 \RightLabel{\footnotesize$Fu$} 
 \BinaryInfC{$\top \rightarrow \nabla A \Rightarrow A$}
 \DisplayProof
\end{tabular}
\end{center} 
Note that $A \Rightarrow \top \to \nabla A$ is even provable in $\mathbf{STL}$. 
To prove the rule $(Fu)$ from the axiom $(Fu)_a$, we first prove the following claim:\\

\noindent \textbf{Claim.} $\nabla \Sigma \Rightarrow \nabla \Lambda \vdash_{\mathbf{STL}(Fu_a)} \Sigma \Rightarrow \Lambda$, for any sequent $\Sigma \Rightarrow \Lambda$.\\

\noindent \emph{Proof of the Claim.} Consider the following proof tree and note that by Remark \ref{NablaDisOverDisjunction}, $\nabla$ commutes with all disjunctions, i.e., $\mathbf{STL} \vdash \bigvee \nabla \Lambda \Rightarrow \nabla \bigvee \Lambda$:
\begin{center}
 \begin{tabular}{c}
 \AxiomC{$\nabla \Sigma \Rightarrow \nabla \Lambda$}
 \doubleLine
 \UnaryInfC{$\nabla \Sigma \Rightarrow \bigvee \nabla \Lambda$}
 \RightLabel{\footnotesize$cut$}
  \AxiomC{$\bigvee \nabla \Lambda \Rightarrow \nabla \bigvee \Lambda$} 
 \BinaryInfC{$\nabla \Sigma \Rightarrow \nabla \bigvee \Lambda$}
 \doubleLine
 \RightLabel{\footnotesize $L \to$} 
 \UnaryInfC{$ \{\nabla (\top \to \nabla \sigma)\}_{\sigma \in \Sigma} \Rightarrow \nabla \bigvee \Lambda$}
 \RightLabel{\footnotesize$R \to$} 
 \UnaryInfC{$\{\top \to \nabla \sigma\}_{\sigma \in \Sigma} \Rightarrow \top \to \nabla \bigvee \Lambda$}
 \doubleLine
\RightLabel{\footnotesize $\text{\emph{cut with some instances of $(Fu_a)$}}$}
 \UnaryInfC{$\{ \sigma\}_{\sigma \in \Sigma} \Rightarrow \bigvee \Lambda$}
  \doubleLine
 \UnaryInfC{$\Sigma \Rightarrow \Lambda$}
 \DisplayProof
\end{tabular}
\end{center}
where the double lines mean the existence of an omitted easy proof tree. More precisely, the doubleline with the label $L\to$ means applying the rule $L \to$ many times to change any $\nabla \sigma$ to $\nabla ( \top \to \nabla \sigma)$ and the doubleline with the label ``\emph{cut with some instances of $(Fu_a)$}" means using cut on both sides of the sequent with the instances $\sigma \Rightarrow \top \to \nabla \sigma$ and $\top \to \nabla \bigvee \Lambda \Rightarrow \bigvee \Lambda$. \qed\\

\noindent Using the claim, it is now easy to prove the rule $(Fu)$ from $(Fu_a)$:
 \begin{center}
 \begin{tabular}{c}
 \AxiomC{$\nabla \Gamma \Rightarrow A$}
 \AxiomC{$ \nabla \Gamma, B \Rightarrow \nabla \Delta$}
 \RightLabel{\footnotesize$L \to$} 
 \BinaryInfC{$\nabla \Gamma, \nabla (A \to B) \Rightarrow \nabla \Delta$}
 \doubleLine
 \RightLabel{$\text{\emph{the claim}}$} 
 \UnaryInfC{$\Gamma, A \to B \Rightarrow \Delta$}
 \DisplayProof
 \\[3ex]
\end{tabular}
\end{center} 
\end{proof}
It is clear that $\nabla$-algebras provide a natural algebraic semantics for the proof systems introduced in this section.
\begin{definition}\label{t4-1}\emph{(Algebraic Semantics)}
Let $\mathcal{A}=(\mathsf{A}, \nabla, \to)$ be a $\nabla$-algebra and $V:\mathcal{L} \to A$ be an assignment, mapping formulas to the elements of $A$. The pair $(\mathcal{A}, V)$ is called an \emph{algebraic model} if
$V(\top)=1$, $V(\bot)=0$,
$V(\nabla A)=\nabla V(A)$, and
$V(A \circ B)=V(A) \circ V(B)$, for any $\circ \in \{\wedge, \vee, \to , \supset\}$.
The case for $\supset$ only appears if we work with the language $\mathcal{L}_i$ and $\mathcal{A}$ is Heyting. We say $(\mathcal{A}, V) \vDash \Gamma \Rightarrow \Delta$ when $\bigwedge_{\gamma \in \Gamma} V(\gamma) \leq \bigvee_{\delta \in \Delta} V(\delta)$. Let $\mathfrak{C}$ be a class of $\nabla$-algebras. We write $\{\Gamma_i \Rightarrow \Delta_i\}_{i \in I} \vDash_{\mathfrak{C}} \Gamma \Rightarrow \Delta$, if for any $\mathcal{A} \in \mathfrak{C}$ and any $V$, having $(\mathcal{A}, V) \vDash \Gamma_i \Rightarrow \Delta_i$, for all $i\in I$, implies $(\mathcal{A}, V) \vDash \Gamma \Rightarrow \Delta$. 
\end{definition}
As it is expected, for any $C \subseteq \{N, H, D, R, L, Fa, Fu\}$, the system $ \mathbf{STL}(C)$ is sound and complete with respect to the corresponding variety $\mathcal{V}(C)$.
\begin{theorem}(Soundness-Completeness) \label{AlgebraicSoundComplete} Let $C \subseteq \{N, H, D, R, L, Fa, Fu\}$. Then, $\{\Gamma_i \Rightarrow \Delta_i\}_{i \in I} \vdash_{\mathbf{STL}(C)} \Gamma \Rightarrow \Delta$ iff $\{\Gamma_i \Rightarrow \Delta_i\}_{i \in I} \vDash_{\mathcal{V}(C)} \Gamma \Rightarrow \Delta$.
\end{theorem}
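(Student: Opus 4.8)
The plan is the standard two-part argument: soundness by induction on derivations, and completeness by a Lindenbaum--Tarski construction relative to the hypothesis set.

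For soundness, fix $\mathcal{A}\in\mathcal{V}(C)$ and an assignment $V$ with $(\mathcal{A},V)\vDash\Gamma_i\Rightarrow\Delta_i$ for every $i\in I$, and induct on the length of a derivation of $\Gamma\Rightarrow\Delta$ from $\{\Gamma_i\Rightarrow\Delta_i\}_{i\in I}$ in $\mathbf{STL}(C)$. The structural and propositional rules are validated exactly as in the algebraic semantics of intuitionistic/substructural sequent calculi, using only that $\mathsf{A}$ is a bounded lattice (and that it is distributive, resp.\ a Heyting algebra, when $D$, resp.\ $H$, lies in $C$, so as to justify the contextual rule $D$ and the rules $L\supset$, $R\supset$). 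The rules $L\to$ and $R\to$ are the two halves of the defining adjunction $\nabla c\wedge a\le b\iff c\le a\to b$ (more precisely, $R\to$ needs only monotonicity of $\nabla$ together with one direction of this adjunction), and the rule $\nabla$ is monotonicity of $\nabla$, which holds because $\nabla\dashv\Box$. For the extra rules in $C$ it is cleanest not to argue directly but to invoke Lemma \ref{EquivProofSystems}: $\mathbf{STL}(C)$ proves exactly the sequents that $\mathbf{STL}(C_a)$ does, and each axiom in $C_a$ transparently expresses the defining (in)equality of the corresponding variety (normality of $\nabla$, $a\le\nabla a$, $\nabla a\le a$, $\nabla\Box a=a$, $\Box\nabla a=a$), hence holds in every $\mathcal{A}\in\mathcal{V}(C)$.

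For completeness, suppose $\mathcal{S}:=\{\Gamma_i\Rightarrow\Delta_i\}_{i\in I}\nvdash_{\mathbf{STL}(C)}\Gamma\Rightarrow\Delta$. On the set of formulas of $\mathcal{L}$ (of $\mathcal{L}_i$ when $H\in C$) put $A\preceq_{\mathcal{S}}B$ iff $\mathcal{S}\vdash_{\mathbf{STL}(C)}A\Rightarrow B$, and $A\equiv_{\mathcal{S}}B$ iff $A\preceq_{\mathcal{S}}B\preceq_{\mathcal{S}}A$. Using $Ax$, $Lw$ and $cut$ one checks that $\preceq_{\mathcal{S}}$ is a preorder, and using the left and right rules for each connective that $\equiv_{\mathcal{S}}$ is a congruence; the only nontrivial clauses are those for $\to$ and $\nabla$. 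For $\to$, from $A'\Rightarrow A$ and $B\Rightarrow B'$ one derives $A\to B\Rightarrow A'\to B'$ by first obtaining $A,\nabla(A\to B)\Rightarrow B$ from $L\to$ (with premises $A\Rightarrow A$ and $A,B\Rightarrow B$), then two cuts, then $R\to$; for $\nabla$ one just applies the $\nabla$ rule. Let $\mathcal{A}_{\mathcal{S}}$ be the quotient, with induced operations and $0=[\bot]$, $1=[\top]$. Then $\mathcal{A}_{\mathcal{S}}$ is a bounded lattice (distributive, resp.\ Heyting, when $D$, resp.\ $H$, lies in $C$), and $\nabla[C']\wedge[A]\le[B]\iff[C']\le[A]\to[B]$ follows from $R\to$ in one direction and from the $\nabla$ rule composed with $L\to$ and $cut$ in the other, so $\mathcal{A}_{\mathcal{S}}$ is a (distributive, explicitly Heyting) $\nabla$-algebra. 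That $\mathcal{A}_{\mathcal{S}}$ actually lies in $\mathcal{V}(C)$ is again supplied by Lemma \ref{EquivProofSystems}: for every $\mathcal{R}\in C$ the axiom $\mathcal{R}_a$ is a theorem of $\mathbf{STL}(C)$, hence holds in $\mathcal{A}_{\mathcal{S}}$ under the canonical valuation, which (as every element of $\mathcal{A}_{\mathcal{S}}$ has the form $[A]$) is exactly the corresponding algebraic condition.

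Finally take the canonical assignment $V(p)=[p]$, so $V(A)=[A]$ for all $A$. For each $i$, since $\Gamma_i\Rightarrow\Delta_i\in\mathcal{S}$, routine uses of $L\wedge$, $R\vee$ (and $Rw$, $L\bot$ to handle an empty succedent, reading $\bigvee\varnothing$ as $\bot$ and $\bigwedge\varnothing$ as $\top$) give $\mathcal{S}\vdash\bigwedge\Gamma_i\Rightarrow\bigvee\Delta_i$, i.e.\ $(\mathcal{A}_{\mathcal{S}},V)\vDash\Gamma_i\Rightarrow\Delta_i$; and the same inferences run in reverse show that $\mathcal{S}\vdash\bigwedge\Gamma\Rightarrow\bigvee\Delta$ would force $\mathcal{S}\vdash\Gamma\Rightarrow\Delta$, contradicting the hypothesis, so $(\mathcal{A}_{\mathcal{S}},V)\nvDash\Gamma\Rightarrow\Delta$. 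This yields $\mathcal{S}\nvDash_{\mathcal{V}(C)}\Gamma\Rightarrow\Delta$, the contrapositive of completeness. I expect the only genuinely delicate points to be the congruence verification for $\to$ (because of the $\nabla$ buried inside $L\to$ and $R\to$) and the uniform handling of empty-succedent sequents; everything else is routine Lindenbaum--Tarski bookkeeping, and the correspondence between the additional rules and the algebraic conditions has already been isolated in Lemma \ref{EquivProofSystems}.
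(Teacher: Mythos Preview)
Your proposal is correct and follows essentially the same approach as the paper: the paper defers the base case for $\mathbf{STL}$ and $\mathbf{STL}(H)$ to the routine Lindenbaum--Tarski argument in \cite{ImSpace} (which you have sketched out explicitly), and then, exactly as you do, reduces all the extensions to this base case via Lemma~\ref{EquivProofSystems}, observing that the axioms $C_a$ are precisely the defining identities of $\mathcal{V}(C)$. The only difference is level of detail, not strategy.
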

\begin{proof}
The proof for the system $\mathbf{STL}$ is routine and extensively explained in \cite{ImSpace}. For $\mathbf{STL}(H)$, the proof is similar. For all the other cases, the claim is a trivial consequence of the soundness-completeness theorem for $\mathbf{STL}$ and $\mathbf{STL}(H)$ combined with Lemma \ref{EquivProofSystems}. The reason is that over these two systems, each rule in $\{N, D, R, R, L, Fa, Fu\}$ is equivalent to a set of axioms, which are precisely those defining the corresponding variety of $\nabla$-algebras.
\end{proof}

Using the ring-theoretic representation for Heyting algebras, Theorem \ref{RingRepresentationForHeyting}, and normal distributive $\nabla$-algebras, Theorem \ref{RingRepresentation}, we can use Theorem \ref{AlgebraicSoundComplete} to prove the following ring-theoretic completeness result:

\begin{corollary}
\begin{description}
    \item[$\bullet$]
The logic $\mathsf{IPC}$ is sound and complete with respect to its algebraic interpretation in Heyting algebras in the form $\mathcal{RI}(R)$, where $R$ is a commutative unital ring.
     \item[$\bullet$]
Let $C \subseteq \{Fa, Fu\}$. Then, the logic $\mathbf{STL}(N, D, C)$ is sound and complete with respect to its algebraic interpretation in $\nabla$-algebras in the class $\{\mathfrak{Al}(\mathcal{{R}}) \mid \mathcal{R} \in \mathfrak{R}(C)\}$.    
\end{description}
\end{corollary}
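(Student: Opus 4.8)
The plan is to derive the corollary directly from the algebraic soundness-completeness theorem, Theorem \ref{AlgebraicSoundComplete}, by replacing the full variety of algebras in the semantics with the sub-class of ring-theoretically presented algebras, using the embedding theorems of Section \ref{RingTheoretic}. The key observation is that soundness is automatic in both cases: the algebras $\mathcal{RI}(R)$ are Heyting algebras (Lemma \ref{RingFacts}) and the algebras $\mathfrak{Al}(\mathcal{R})$ are normal distributive $\nabla$-algebras lying in $\mathcal{V}(N,D,C)$ (Theorem \ref{SpectralIsAlgebra}), so any sequent derivable in $\mathsf{IPC}$ (resp. $\mathbf{STL}(N,D,C)$) is valid in all of them by the soundness half of Theorem \ref{AlgebraicSoundComplete}. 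The real content is completeness, i.e. that validity in this restricted class forces derivability.

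For completeness, I would argue contrapositively. Suppose $\{\Gamma_i \Rightarrow \Delta_i\}_{i\in I} \not\vdash \Gamma \Rightarrow \Delta$ in the relevant system. By Theorem \ref{AlgebraicSoundComplete} there is an algebra $\mathcal{A}$ in the corresponding variety ($\mathbf{Heyting}$ for $\mathsf{IPC}$, $\mathcal{V}(N,D,C)$ for $\mathbf{STL}(N,D,C)$) and an assignment $V$ with $(\mathcal{A},V)\vDash \Gamma_i \Rightarrow \Delta_i$ for all $i$ but $(\mathcal{A},V)\not\vDash \Gamma \Rightarrow \Delta$. Now invoke the representation theorems: Theorem \ref{RingRepresentationForHeyting} gives a commutative unital ring $R$ and a Heyting-algebra embedding $\iota:\mathcal{A}\to \mathcal{RI}(R)$ in the first case; Theorem \ref{RingRepresentation} gives a semi-dynamic ring $\mathcal{R}\in\mathfrak{R}(C)$ and a $\nabla$-algebra embedding $\iota:\mathcal{A}\to \mathfrak{Al}(\mathcal{R})$ in the second. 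Composing $\iota$ with $V$ yields an assignment $V' = \iota\circ V$ into the ring-theoretic algebra. Since $\iota$ is an embedding, it preserves and reflects all the relevant structure: it is a bounded-lattice homomorphism preserving $\wedge,\vee,0,1$, it preserves $\nabla$ and $\to$ (and $\supset$ in the Heyting case), and — crucially — being injective and order-preserving with order-preserving partial inverse on its image, it reflects the order, so $\bigwedge V(\gamma)\leq \bigvee V(\delta)$ holds in $\mathcal{A}$ if and only if $\bigwedge V'(\gamma)\leq \bigvee V'(\delta)$ holds in the ring-theoretic algebra. Hence $(\mathfrak{Al}(\mathcal{R}),V')\vDash \Gamma_i\Rightarrow\Delta_i$ for all $i$ while $(\mathfrak{Al}(\mathcal{R}),V')\not\vDash \Gamma\Rightarrow\Delta$, which is exactly a countermodel in the desired class. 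This establishes completeness.

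For the faithfulness/fullness conditions in the second bullet, I would note that when $C$ contains $Fa$ or $Fu$, the algebra $\mathcal{A}$ produced by Theorem \ref{AlgebraicSoundComplete} already lies in $\mathcal{V}(N,D,C)$, and Theorem \ref{RingRepresentation} is stated precisely so that the associated semi-dynamic ring $\mathcal{R}$ lies in $\mathfrak{R}(C)$, with $\mathfrak{Al}(\mathcal{R})$ inheriting faithfulness or fullness. So no extra work is needed beyond citing the theorem with the matching $C$. One should also remark, for the $\mathsf{IPC}$ bullet, that $\mathsf{IPC}$ here is identified with its sequent presentation (the intuitionistic implication rules together with the lattice rules), so that Theorem \ref{AlgebraicSoundComplete} with $C=\{H\}$ — or rather the standard soundness-completeness of $\mathsf{IPC}$ over Heyting algebras, which it subsumes — is what gets fed into the argument; alternatively one takes this classical fact as given and only uses Theorem \ref{RingRepresentationForHeyting}.

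I expect the main obstacle to be purely expository rather than mathematical: one must be careful to state precisely what it means for the embedding $\iota$ to \emph{reflect} validity of sequents, since an embedding of $\nabla$-algebras is by definition an injective morphism, and injective lattice morphisms automatically reflect $\leq$ (because $a\leq b$ iff $a\wedge b = a$, and $\iota$ preserves $\wedge$ and is injective). Once this elementary point is spelled out, the proof is a one-line reduction. A secondary subtlety is ensuring that the assignment $V$ on formulas involving $\supset$ (in the Heyting case) transports correctly; but since $\iota$ is a Heyting-algebra morphism this is immediate. There is no genuine difficulty, since all the heavy lifting — the representation theorems and the algebraic completeness — has already been done.
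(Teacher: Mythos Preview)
Your proposal is correct and follows exactly the approach the paper intends: the paper simply says to combine the algebraic soundness--completeness Theorem \ref{AlgebraicSoundComplete} with the ring-theoretic representation Theorems \ref{RingRepresentationForHeyting} and \ref{RingRepresentation}, and you have spelled out precisely how that combination works (soundness by membership in the variety, completeness by transporting a countermodel along the embedding). Your remarks on order-reflection by injective lattice morphisms and on the handling of $Fa$/$Fu$ via Theorem \ref{RingRepresentation} are accurate and complete the argument.
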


Finally, let us turn to deductive interpolation. Recall that a system $G$ has \emph{deductive interpolation} whenever for any family of sequents $\mathcal{S} \cup \{S\}$, if $\mathcal{S} \vdash_G S$, then there exists a family $\mathcal{T}$ of sequents such that $V(\mathcal{T}) \subseteq V(\mathcal{S}) \cap V(S)$, $\mathcal{T} \vdash_G S$ and $\mathcal{S} \vdash_G T$, for any $T \in \mathcal{T}$. It is known that if the system $G$ is sound and complete with respect to a variety $\mathcal{V}$ and $\mathcal{V}$ has the amalgamation property, then $G$ enjoys the deductive interpolation property. For more information, see \cite{George}.

\begin{theorem}(Deductive Interpolation) 
Let $C \subseteq \{H, R, L, Fa\}$. Then, the sequent calculus $ \mathbf{STL}(N, D, C)$ has  deductive interpolation.
\end{theorem}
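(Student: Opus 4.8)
The plan is to derive the deductive interpolation property of $\mathbf{STL}(N,D,C)$ from two facts already established: the soundness–completeness theorem (Theorem~\ref{AlgebraicSoundComplete}) and the amalgamation property of the relevant varieties (Theorem~\ref{Amalgamation}), glued together by the standard bridge recalled just before the statement, namely that a sequent calculus which is sound and complete with respect to a variety with the amalgamation property has deductive interpolation (see \cite{George}). So the theorem is essentially a corollary of these inputs, and the main ``work'' is only to identify the correct variety in each case.

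First I would fix the target variety. If $H\notin C$, then by Theorem~\ref{AlgebraicSoundComplete} the calculus $\mathbf{STL}(N,D,C)$ is sound and complete with respect to $\mathcal{V}(N,D,C)$; if $H\in C$, writing $C'=C\setminus\{H\}$ and working over the language $\mathcal{L}_i$, it is sound and complete with respect to the variety of explicitly Heyting $\nabla$-algebras $\mathcal{V}_H(N,C')$ (being Heyting entails distributivity, so the adjective $D$ is automatic there). In both cases, since $C\subseteq\{H,R,L,Fa\}$ we have $C\cap\{R,L,Fa\}\subseteq\{R,L,Fa\}$, so Theorem~\ref{Amalgamation} applies and the variety in question — either $\mathcal{V}(C\cap\{R,L,Fa\},D,N)$ or $\mathcal{V}_H(C\cap\{R,L,Fa\},N)$ — has the amalgamation property. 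This is exactly where the hypotheses are used: the mandatory presence of $N$ and $D$ and the exclusion of $Fu$ are precisely what make Theorem~\ref{Amalgamation} applicable.

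Next I would invoke (or spell out) the bridge of \cite{George}. Given $\mathcal{S}\vdash_{\mathbf{STL}(N,D,C)} S$, Remark~\ref{MakingOneFormula} lets us assume every sequent in $\mathcal{S}\cup\{S\}$ has the shape $(\ \Rightarrow\ \varphi)$, so by Theorem~\ref{AlgebraicSoundComplete} derivability becomes the statement that in every algebra of the target variety, any valuation forcing the formulas arising from $\mathcal{S}$ to equal $1$ also forces the formula arising from $S$ to equal $1$. This is the shape to which the standard amalgamation-to-interpolation argument applies: working in the free algebra on $V(\mathcal{S})\cup V(S)$ together with the subalgebra generated by the common variables $V(\mathcal{S})\cap V(S)$, amalgamation yields a family of formulas in the common variables interpolating the consequence; reading these back through completeness produces a family $\mathcal{T}$ of sequents with $V(\mathcal{T})\subseteq V(\mathcal{S})\cap V(S)$, $\mathcal{T}\vdash_{\mathbf{STL}(N,D,C)} S$, and $\mathcal{S}\vdash_{\mathbf{STL}(N,D,C)} T$ for every $T\in\mathcal{T}$, which is deductive interpolation.

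The residual delicate points are bookkeeping rather than conceptual. One must check that the algebraic reformulation of ``$\mathcal{S}\vdash_{\mathbf{STL}(N,D,C)}S$'' as a relative-congruence statement is faithful, which is again where $N$ and $D$ enter: with them present, $\nabla$ commutes with all finite joins (Remark~\ref{NablaDisOverDisjunction}) and every sequent collapses to a single formula on each side (Remark~\ref{MakingOneFormula}), so the system behaves as a well-behaved algebraizable logic over its variety and the hypotheses of the bridge theorem of \cite{George} are met. One must also be careful that, in the Heyting case, the argument is carried out in the expanded signature with $\supset$ so that the matching variety is $\mathcal{V}_H$ rather than $\mathcal{V}(H,\dots)$, and that $i_{\mathcal{A}}$-style embeddings respect $\supset$. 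None of this is a genuine obstacle; the substantive input is Theorem~\ref{Amalgamation}, and everything else is the standard transfer.
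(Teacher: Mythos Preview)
Your proposal is correct and follows exactly the route the paper takes: invoke the bridge recalled just before the statement (soundness--completeness with respect to a variety plus amalgamation of that variety implies deductive interpolation, \cite{George}) and then apply Theorem~\ref{Amalgamation}. The paper's own proof is a single sentence citing Theorem~\ref{Amalgamation}; your version simply spells out the case split on $H\in C$ and the bookkeeping, which is fine but not required.
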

\begin{proof}
The claim is a consequence of Theorem \ref{Amalgamation}.
\end{proof}

\begin{remark}
In the definition of  deductive interpolation for $\mathbf{STL}(N, D, C)$, where $C \subseteq \{H, R, L, Fa\}$, if $\mathcal{S}$ is finite, one can always choose the set $\mathcal{T}$ to be a singleton in the form $\{\Rightarrow I\}$. To see why, first recall that by Remark \ref{MakingOneFormula}, over $\mathbf{STL}(N, D, C)$, any sequent $\Gamma \Rightarrow \Delta$ is equivalent to the sequent $(\, \Rightarrow \bigwedge \Gamma \to \bigvee \Delta)$. Now, to simplify $\mathcal{T}$, w.l.o.g, we can first assume that $\mathcal{T}=\{\Gamma_j \Rightarrow \Delta_j\}_{j \in J}$ is finite and then it is easy to see that the formula $I=\bigwedge_{j \in J}(\bigwedge \Gamma_j \to \bigvee \Delta_j)$ works. 
\end{remark}

\subsection{Kripke and Topological Semantics}

In this subsection, we will recall the topological and Kripke semantics for $\mathbf{STL}$ as introduced in \cite{ImSpace}. We will explore the relationship between these two semantics and the main algebraic semantics discussed earlier. Finally, we will extend the soundness-completeness results from \cite{ImSpace} to include the calculi with the new rules in $\{Fa, Fu\}$.

\subsubsection{Kripke Semantics}
The tuple $(W, \leq, R, V)$ is called a \emph{Kripke model} if $(W, \leq, R)$ is a Kripke frame and $V:At(\mathcal{L}) \to U(W, \leq)$ is a function, where $At(\mathcal{L})$ is the set of all the atoms of $\mathcal{L}$. For any Kripke model $(W, \leq, R, V)$, the forcing relation for the atomic formulas, conjunction and disjunction is defined as usual. For $\nabla$, $\to$ and $\supset$, we have:
\begin{itemize}
\item[$\bullet$]
$w \Vdash \nabla A$ iff there is $u \in W$ such that $(u, w) \in R$ and $u \Vdash A$,
\item[$\bullet$]
$w \Vdash A \to B$ iff for any $u \in W$ such that $(w, u) \in R$, if  $u \Vdash A$ then $u \Vdash B$,
\item[$\bullet$]
$w \Vdash A \supset B$ iff for any $u \in W$ such that $w \leq u $, if  $u \Vdash A$ then $u \Vdash B$.
\end{itemize}
We say $w \Vdash \Gamma \Rightarrow \Delta$ when $w \Vdash \bigwedge \Gamma$ implies $ w \Vdash \bigvee \Delta$. If $w \Vdash \Gamma \Rightarrow \Delta$, for all $w \in W$, we write $(W, \leq, R, V) \vDash \Gamma \Rightarrow \Delta$. Let $\mathfrak{C}$ be a class of Kripke frames. We write $\{\Gamma_i \Rightarrow \Delta_i\}_{i \in I} \vDash_{\mathfrak{C}} \Gamma \Rightarrow \Delta$, if for any $(W, \leq, R) \in \mathfrak{C}$ and any $V$, having $(W, \leq, R, V) \vDash \Gamma_i \Rightarrow \Delta_i$, for all $i \in I$, implies $(W, \leq, R, V) \vDash \Gamma \Rightarrow \Delta$.

Let $\mathcal{K}=(W, \leq, R)$ be a Kripke frame. Recall that by Theorem \ref{KripkeEmbedding} the tuple $\mathfrak{U}(\mathcal{K})=(U(W, \leq), \nabla_{\mathcal{K}}, \to_{\mathcal{K}})$ is a Heyting $\nabla$-algebra, where $\nabla_{\mathcal{K}}(U)=\{x \in X \mid \exists y \in U, (y, x) \in R \}$ and $U \to_{\mathcal{K}} V=\{x \in X \mid \forall y \in U, [(x, y) \in R \Rightarrow y \in V] \}$. Moreover, recall that for any $C \subseteq \{N, R, L, Fa, Fu\}$, if $\mathcal{K} \in \mathbf{K}(C)$ then $\mathfrak{U}(\mathcal{K}) \in \mathcal{V}(C, H)$. 

\begin{remark}\label{KripkeIsAlgebra}
To connect Kripke models to algebraic models, it is easy to show that if $(W, \leq, R, V)$ is a Kripke model, then $(\mathfrak{U}(W, \leq, R), \tilde{V})$ is an algebraic model, where $\tilde{V}$ is the extension of $V$ from the atoms of $\mathcal{L}$ to all formulas in $\mathcal{L}$ defined by $\Tilde{V}(A)=\{w \in W \mid w \Vdash A\}$. Moreover, note that
$(W, \leq, R, V) \vDash \Gamma \Rightarrow \Delta$ iff $(\mathfrak{U}(W, \leq, R), \tilde{V}) \vDash \Gamma \Rightarrow \Delta$, for any sequent $\Gamma \Rightarrow \Delta$. Therefore, Kripke models must be considered as a special family of algebraic models.
\end{remark}
\begin{theorem}\label{KripkeCompleteness}(Kripke Completeness) 
Let $C \subseteq \{N, R, L, Fa, Fu\}$. Then $  \{\Gamma_i \Rightarrow \Delta_i\}_{i \in I} \vdash_{\mathbf{STL}(C, D)} \Gamma \Rightarrow \Delta$ iff $\{\Gamma_i \Rightarrow \Delta_i\}_{i \in I} \vDash_{\mathbf{K}(C)} \Gamma \Rightarrow \Delta$. It also holds if we replace $\mathbf{STL}(C, D)$ by $\mathbf{STL}(C, H)$.
\end{theorem}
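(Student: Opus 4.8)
The plan is to deduce this from the algebraic soundness–completeness theorem already established (Theorem \ref{AlgebraicSoundComplete}), so that nothing proof-theoretic remains: since $\mathbf{STL}(C,D)$ is $\mathbf{STL}$ extended by the rules in $C\cup\{D\}\subseteq\{N,H,D,R,L,Fa,Fu\}$, Theorem \ref{AlgebraicSoundComplete} gives $\mathcal{S}\vdash_{\mathbf{STL}(C,D)} S$ iff $\mathcal{S}\vDash_{\mathcal{V}(C,D)} S$. Hence it suffices to show that algebraic entailment over $\mathcal{V}(C,D)$ coincides with Kripke entailment over $\mathbf{K}(C)$; in the Heyting case one replaces $\mathcal{V}(C,D)$ by the variety of explicitly Heyting $\nabla$-algebras satisfying $C$ and reads $\supset$ by its forcing clause. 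So I would prove the two implications between the two semantic consequence relations.

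For ``$\mathcal{S}\vDash_{\mathcal{V}(C,D)} S$ implies $\mathcal{S}\vDash_{\mathbf{K}(C)} S$'', I would take any Kripke model $(W,\leq,R,V)$ with $(W,\leq,R)\in\mathbf{K}(C)$ validating every premise, and pass to the associated algebraic model $(\mathfrak{U}(W,\leq,R),\tilde V)$ via Remark \ref{KripkeIsAlgebra}. By Theorem \ref{KripkeEmbedding}(i) we have $\mathfrak{U}(W,\leq,R)\in\mathcal{V}(C,H)$, and since Heyting $\nabla$-algebras are in particular distributive, $\mathfrak{U}(W,\leq,R)\in\mathcal{V}(C,D)$; thus it validates $S$, and Remark \ref{KripkeIsAlgebra} carries this back to $(W,\leq,R,V)$.

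For the converse ``$\mathcal{S}\vDash_{\mathbf{K}(C)} S$ implies $\mathcal{S}\vDash_{\mathcal{V}(C,D)} S$'', I would take $\mathcal{A}\in\mathcal{V}(C,D)$ and a valuation $V$ with $(\mathcal{A},V)\vDash\mathcal{S}$, and invoke Theorem \ref{KripkeEmbedding}(ii): $\mathfrak{P}(\mathcal{A})\in\mathbf{K}(C)$ and $i_{\mathcal{A}}\colon\mathcal{A}\to\mathfrak{U}(\mathfrak{P}(\mathcal{A}))$ is a $\nabla$-algebra embedding. Setting the Kripke valuation $V'(p)=i_{\mathcal{A}}(V(p))$ on $\mathfrak{P}(\mathcal{A})$, a routine induction on formulas (using that $i_{\mathcal{A}}$ commutes with every connective) gives $\tilde{V'}=i_{\mathcal{A}}\circ\tilde V$, with $\tilde{V'}$ as in Remark \ref{KripkeIsAlgebra}. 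Since $i_{\mathcal{A}}$ is a bounded-lattice embedding it reflects the order, so for every sequent $T$ one has $(\mathcal{A},V)\vDash T$ iff $(\mathfrak{U}(\mathfrak{P}(\mathcal{A})),\tilde{V'})\vDash T$ iff (Remark \ref{KripkeIsAlgebra}) $(\mathfrak{P}(\mathcal{A}),V')\vDash T$. Applying this to the premises and to $S$, together with $\mathcal{S}\vDash_{\mathbf{K}(C)} S$, yields $(\mathcal{A},V)\vDash S$. The Heyting case runs identically in the language $\mathcal{L}_i$, using the $\mathbf{Alg}^H_\nabla(C)$-parts of Theorems \ref{AlgebraicSoundComplete} and \ref{KripkeEmbedding}, so that $i_{\mathcal{A}}$ additionally preserves $\supset$, and noting that the forcing clause for $\supset$ is exactly the Heyting implication of the lattice $U(W,\leq)$.

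The main obstacle is not combinatorial — no fresh argument about the relation $R$ is needed, since Theorem \ref{KripkeEmbedding} already guarantees that $\mathfrak{U}(\mathcal{K})$ and $\mathfrak{P}(\mathcal{A})$ stay in the classes indexed by $C$ — but rather careful bookkeeping: verifying that the forcing semantics of $\mathfrak{U}(\mathcal{K})$ literally agrees with its algebraic semantics (Remark \ref{KripkeIsAlgebra}), that $\tilde{V'}=i_{\mathcal{A}}\circ\tilde V$ because $i_{\mathcal{A}}$ respects all connectives including $\supset$ in the Heyting case, and that distributivity (resp.\ being Heyting) places $\mathfrak{U}(\mathcal{K})$ and $\mathcal{A}$ in precisely the varieties to which the relevant parts of Theorem \ref{KripkeEmbedding} apply.
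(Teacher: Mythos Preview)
Your proposal is correct and follows essentially the same route as the paper: reduce to the algebraic soundness--completeness theorem (Theorem \ref{AlgebraicSoundComplete}), use Remark \ref{KripkeIsAlgebra} and Theorem \ref{KripkeEmbedding}(i) for soundness, and Theorem \ref{KripkeEmbedding}(ii) together with the order-reflecting embedding $i_{\mathcal{A}}$ for completeness. Your write-up is simply more explicit about the bookkeeping (defining $V'=i_{\mathcal{A}}\circ V$ and noting that an injective $\nabla$-algebra morphism reflects validity of sequents), which the paper leaves implicit.
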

\begin{proof}
By Remark \ref{KripkeIsAlgebra}, the validity in any Kripke model $(\mathcal{K}, V)$ is equivalent to the validity in the algebraic model $(\mathfrak{U}(\mathcal{K}), \tilde{V})$. Moreover, by Theorem \ref{KripkeEmbedding}, the $\nabla$-algebra $\mathfrak{U}(\mathcal{K})$ is Heyting and the passage from $\mathcal{K}$ to $\mathfrak{U}(\mathcal{K})$ preserves any condition in $\{N, R, L, Fa, Fu\}$. Therefore, the soundness part is an easy consequence of the algebraic soundness, Theorem \ref{AlgebraicSoundComplete}. To prove the completeness, by Theorem \ref{KripkeEmbedding}, any distributive $\nabla$-algebra can be embedded in $\mathfrak{U}(\mathcal{K})$, for some Kripke frame $\mathcal{K}$ and if the $\nabla$-algebra satisfies any condition in $\{N, R, L, Fa, Fu\}$, then so does the Kripke frame $\mathcal{K}$. Moreover, recall that the embedding preserves the Heyting implication, if the $\nabla$-algebra is also Heyting. Since $(\mathcal{K}, V) \vDash \Gamma \Rightarrow \Delta$, iff $(\mathfrak{U}(\mathcal{K}), \tilde{V}) \vDash \Gamma \Rightarrow \Delta$, for any sequent $\Gamma \Rightarrow \Delta$, the completeness easily follows from the algebraic completeness, Theorem \ref{AlgebraicSoundComplete}.
\end{proof}

\begin{corollary}(Complete Lattices) Let $C \subseteq \{N, H, D, R, L, Fa, Fu\}$. Then $  \{\Gamma_i \Rightarrow \Delta_i\}_{i \in I} \vdash_{\mathbf{STL}(C)} \Gamma \Rightarrow \Delta$ iff $\{\Gamma_i \Rightarrow \Delta_i\}_{i \in I} \vDash_{\mathcal{CV}(C)} \Gamma \Rightarrow \Delta$, where $\mathcal{CV}(C)$ is the class of all complete $\nabla$-algebras in $\mathcal{V}(C)$.
\end{corollary}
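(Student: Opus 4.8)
The plan is to derive this from the algebraic soundness and completeness theorem, Theorem \ref{AlgebraicSoundComplete}, by upgrading whatever countermodel it produces to a \emph{complete} one without leaving the variety $\mathcal{V}(C)$. Soundness is immediate: since $\mathcal{CV}(C) \subseteq \mathcal{V}(C)$, any instance valid over $\mathcal{V}(C)$ is a fortiori valid over $\mathcal{CV}(C)$, so $\{\Gamma_i \Rightarrow \Delta_i\}_{i\in I} \vdash_{\mathbf{STL}(C)} \Gamma \Rightarrow \Delta$ entails $\{\Gamma_i \Rightarrow \Delta_i\}_{i\in I} \vDash_{\mathcal{CV}(C)} \Gamma \Rightarrow \Delta$ by Theorem \ref{AlgebraicSoundComplete}.

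For the converse I argue contrapositively. Suppose $\{\Gamma_i \Rightarrow \Delta_i\}_{i\in I} \nvdash_{\mathbf{STL}(C)} \Gamma \Rightarrow \Delta$. By Theorem \ref{AlgebraicSoundComplete} there are $\mathcal{A} \in \mathcal{V}(C)$ and an assignment $V$ with $(\mathcal{A}, V) \vDash \Gamma_i \Rightarrow \Delta_i$ for all $i \in I$ but $(\mathcal{A}, V) \nvDash \Gamma \Rightarrow \Delta$. It then suffices to produce a \emph{complete} $\nabla$-algebra $\mathcal{B} \in \mathcal{V}(C)$ and a $\nabla$-algebra embedding $e : \mathcal{A} \hookrightarrow \mathcal{B}$ which, when $H \in C$, also preserves the Heyting implication: indeed $(\mathcal{B}, e \circ V)$ is then an algebraic model (since $e$ preserves all operations and the constants $0,1$), and because $e$ is an order-embedding it both preserves and reflects the finite-meet/finite-join inequalities that define $\vDash$; hence $(\mathcal{B}, e\circ V)$ validates every $\Gamma_i \Rightarrow \Delta_i$ and refutes $\Gamma \Rightarrow \Delta$, giving $\{\Gamma_i \Rightarrow \Delta_i\}_{i\in I} \nvDash_{\mathcal{CV}(C)} \Gamma \Rightarrow \Delta$. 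This transport-of-models step is the routine bookkeeping already implicit in Remark \ref{KripkeIsAlgebra}.

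The choice of $\mathcal{B}$ splits into two cases according to whether distributivity is imposed. If $D \notin C$ and $H \notin C$, then $C \subseteq \{N,R,L,Fa,Fu\} \subseteq \{H,N,R,L,Fa,Fu\}$, so Theorem \ref{DedekindCompletionThm} applies and we may take $\mathcal{B} = \mathcal{N}(\mathcal{A})$: the lattice of normal ideals is complete, $\mathcal{N}(\mathcal{A}) \in \mathcal{V}(C)$, and the canonical map $j : \mathcal{A} \to \mathcal{N}(\mathcal{A})$ is a $\nabla$-algebra embedding. If instead $D \in C$ or $H \in C$, then the lattice reduct of $\mathcal{A}$ is distributive (being Heyting forces distributivity), so Theorem \ref{KripkeEmbedding}(ii) applies and we take $\mathcal{B} = \mathfrak{U}(\mathfrak{P}(\mathcal{A}))$, the algebra of upsets of the Kripke frame $\mathfrak{P}(\mathcal{A})$. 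The upset lattice of any poset is a complete Heyting algebra, so $\mathcal{B}$ is complete and distributive; by Theorem \ref{KripkeEmbedding} it lies in $\mathbf{Alg}_\nabla(C', H)$ for $C' = C \cap \{N,R,L,Fa,Fu\}$, hence in $\mathcal{V}(C)$ (it is distributive, Heyting, and inherits the conditions in $C'$), and the natural map $i_{\mathcal{A}}$ is a $\nabla$-algebra embedding that preserves the Heyting implication whenever $\mathcal{A}$ is Heyting.

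The only genuine subtlety — and the reason one cannot uniformly use $\mathcal{N}(\mathcal{A})$ — is that the Dedekind–MacNeille completion of a distributive lattice need not remain distributive, so Theorem \ref{DedekindCompletionThm} is unavailable when $D \in C$. The Kripke representation bypasses this obstacle precisely because the upset lattice of a poset is automatically a complete distributive Heyting algebra and the conditions in $\{N,R,L,Fa,Fu\}$ are transported along $\mathfrak{P}$ by Theorem \ref{KripkeEmbedding}(ii). Everything else is the straightforward verification sketched above.
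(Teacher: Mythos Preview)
Your proof is correct and follows essentially the same approach as the paper: soundness is trivial, and completeness splits into a Dedekind--MacNeille case and a Kripke-representation case. The only cosmetic difference is the case split: the paper branches on whether $D \in C$ (using Theorem \ref{DedekindCompletionThm} whenever $D \notin C$, which is fine since $H$ is among the conditions that theorem handles), whereas you branch on whether the algebra is distributive (i.e., $D \in C$ or $H \in C$), routing the $H$-but-not-$D$ case through the Kripke representation instead. Both routings work, and your added remark about why Dedekind--MacNeille cannot be used uniformly is exactly the point.
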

\begin{proof}
Soundness is trivial. For completeness, if $D \in C$, use Theorem \ref{KripkeCompleteness}, as $\mathfrak{U}(\mathcal{K})$ is always complete, for any Kripke frame $\mathcal{K}$ and if $D \notin C$, use the Dedekind-McNeille completion, Theorem \ref{DedekindCompletionThm} to embed any $\nabla$-algebra in $\mathcal{V}(C)$ into a complete one in $\mathcal{CV}(C)$ and use the algebraic completeness, Theorem \ref{AlgebraicSoundComplete}.
\end{proof}

\subsubsection{Topological Semantics}
Recall from Example \ref{DTS} that the pair $(X, f)$ of a topological space $X$ and a continuous map $f: X \to X$ is called a dynamic topological system and  if $f$ is a topological embedding (resp. surjection), then the dynamic topological system is called faithful (resp. full). For $C \subseteq \{Fa, Fu\}$, by $\mathbf{T}(C)$, we mean the class of all dynamic topological systems satisfying the conditions in $C$.

Moreover, recall that for any dynamic topological system $(X, f)$, the tuple $\mathcal{T}(X, f)=(\mathcal{O}(X), f^{-1}, \to_f)$ is a normal Heyting $\nabla$-algebra, where $U \to_f V=f_*(U \supset V)$, $f^{-1} \dashv f_*$ and $U \supset V=int(U^c \cup V)$ is the Heyting implication of the locale $\mathcal{O}(X)$ and if the space $X$ is $T_0$ (resp. $T_D$), then the dynamic topological system $(X, f)$ is faithful (resp. full) iff the $\nabla$-algebra $(\mathcal{O}(X), f^{-1}, \to_f)$ is faithful (resp. full). 

\begin{definition}
A tuple $(X, f, V)$ is called a \emph{topological model} if $(X, f)$ is a dynamic topological system and $V:\mathcal{L} \to \mathcal{O}(X)$ is an assignment such that $(\mathcal{T}(X, f), V)$ is an algebraic model.
We say $(X, f, V) \vDash \Gamma \Rightarrow \Delta$ when $\bigcap_{\gamma \in \Gamma} V(\gamma) \subseteq \bigcup_{\delta \in \Delta} V(\delta)$. Let $\mathfrak{C}$ be a class of dynamic topological systems. Then, we write $\{\Gamma_i \Rightarrow \Delta_i\}_{i \in I} \vDash_{\mathfrak{C}} \Gamma \Rightarrow \Delta$, if for any $(X, f) \in \mathfrak{C}$ and any $V$, having $(X, f, V) \vDash \Gamma_i \Rightarrow \Delta_i$, for all $i \in I$, implies $(X, f, V) \vDash \Gamma \Rightarrow \Delta$. 
\end{definition}

It is clear that topological models form a subfamily of algebraic models and hence the topological semantics is a restricted form of the algebraic semantics. A similar connection holds between the topological semantics and the Kripke semantics as any normal Kripke model gives rise to a topological model. In the following, we will explain how. But first, we need a lemma.

\begin{lemma}\label{EmbeddingForUpsetTopology}
Let $(P, \leq_P)$ and $(Q, \leq_Q)$ be two posets and $f: P \to Q$ be an order-preserving function. Then, if $P$ and $Q$ are equipped with their upset topologies, then $f: P \to Q$ is a continuous map. Moreover, it is a topological embedding iff it is an order-embedding. 
\end{lemma}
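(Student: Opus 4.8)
The plan is to unwind the definitions of the upset topologies on $P$ and $Q$ and check continuity directly, then analyze the embedding condition.

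First I would recall that the upset topology on a poset $(P,\leq_P)$ has as its open sets exactly the upsets of $(P,\leq_P)$. For continuity, it suffices to check that the preimage of an open set is open: if $U\subseteq Q$ is an upset, then $f^{-1}(U)$ is an upset of $P$, since $x\in f^{-1}(U)$ and $x\leq_P x'$ give $f(x)\leq_Q f(x')$ by order-preservation, hence $f(x')\in U$, i.e.\ $x'\in f^{-1}(U)$. So $f$ is continuous.

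For the second claim, I would argue both directions. If $f$ is a topological embedding, then in particular it is injective and the topology on $P$ coincides with the subspace topology induced from $Q$. To see $f$ is an order-embedding, suppose $f(x)\leq_Q f(y)$; I want $x\leq_P y$. Consider the smallest open set of $P$ containing $x$, which in the upset topology is $\uparrow\! x=\{z\in P\mid x\leq_P z\}$. Since $f$ is a topological embedding, $\uparrow\! x$ equals $f^{-1}(W)$ for some open $W\subseteq Q$, and one checks that $f^{-1}(\uparrow\! f(x))$ is the smallest open upset of $P$ containing $x$ (using injectivity and that $f^{-1}$ of an upset is an upset), so $\uparrow\! x=f^{-1}(\uparrow\! f(x))$. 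Now $f(x)\leq_Q f(y)$ means $f(y)\in\uparrow\! f(x)$, hence $y\in f^{-1}(\uparrow\! f(x))=\uparrow\! x$, i.e.\ $x\leq_P y$. Conversely, if $f$ is an order-embedding, it is injective, and I would show that every open subset of $f[P]$ in the subspace topology is the image of an open subset of $P$: given an upset $U$ of $P$, the set $f[U]$ equals $(\uparrow_Q f[U])\cap f[P]$ because if $f(x)\in\uparrow_Q f[U]\cap f[P]$ with $f(u)\leq_Q f(x)$ for some $u\in U$, then $u\leq_P x$ by the order-embedding property, so $x\in U$; and $\uparrow_Q f[U]$ is an upset of $Q$, hence open. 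Thus $f$ restricted to its image is an open map onto $f[P]$, and together with continuity and injectivity this makes $f$ a topological embedding.

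The main obstacle, such as it is, lies in the topological-embedding-implies-order-embedding direction: one must be careful that ``topological embedding'' gives back the order, which hinges on the fact that the principal upsets $\uparrow\! x$ are exactly the minimal open neighborhoods in the upset topology, so the specialization preorder of the upset topology recovers $\leq_P$. Once that observation is made, the rest is routine bookkeeping with upsets and preimages.
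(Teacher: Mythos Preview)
Your proof is correct and follows essentially the same route as the paper's. The only packaging difference is that the paper first invokes the general $T_0$ fact (Theorem~\ref{InjSurjforContinuous}) that $f$ is a topological embedding iff $f^{-1}$ is surjective on opens, and then shows ``$f^{-1}$ surjective on upsets $\Leftrightarrow$ $f$ order-embedding'' via the same computation you carry out (using $\uparrow\! x$ on one side and $\uparrow\! f[U]$ on the other); you instead unfold the homeomorphism-onto-image definition directly, but the core steps---in particular the identity $\uparrow\! x = f^{-1}(\uparrow\! f(x))$ in one direction and $f[U] = (\uparrow_Q f[U])\cap f[P]$ in the other---are the same. One small clarification: your sentence ``$f^{-1}(\uparrow\! f(x))$ is the smallest open upset of $P$ containing $x$'' is slightly elliptical; the clean justification is that $\uparrow\! x = f^{-1}(W)$ for some open $W$ (by the embedding property), and since $f(x)\in W$ and $W$ is an upset we get $\uparrow\! f(x)\subseteq W$, hence $f^{-1}(\uparrow\! f(x))\subseteq f^{-1}(W)=\uparrow\! x$, with the reverse inclusion trivial.
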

\begin{proof}
Continuity is easy as the inverse image of an upset is also an upset. For the rest, as any upset topology is $T_0$, by Theorem \ref{InjSurjforContinuous}, the map $f$ is a topological embedding iff $f^{-1}$ is surjective on the opens. Therefore, it is enough to show that $f$ is an order-embedding iff $f^{-1}: U(Q, \leq_Q) \to (P,\leq_P)$ is surjective. For the first direction, suppose that $f$ is an order-embedding. We claim that $f^{-1}(\uparrow \!\!f[U])=U$, for any upset $U$ in $P$. It is clear that this claim proves the surjectivity of $f^{-1}: U(Q, \leq_Q) \to (P,\leq_P)$. To prove the claim, the direction $U \subseteq f^{-1}(\uparrow \!\!f[U])$ is trivial. For the converse, if $x \in f^{-1}(\uparrow \!\!f[U])$, then by definition, there is $y \in U$ such that $f(y) \leq_Q f(x)$. Since $f$ is an order-embedding, $y \leq_P x$ which implies $x \in U$. Therefore,  $f^{-1}(\uparrow \!\!f[U])=U$. For the other direction,
assume that $f^{-1}: U(Q, \leq_Q) \to (P,\leq_P)$ is surjective, $f(x) \leq_Q f(y)$ and $x \nleq_P y$. Then, set $U=\uparrow \!x$ and note that $x \in U$ but $y \notin U$. Since $f^{-1}$ is surjective over the upsets, there exists an upset $V \subseteq Q$ such that $f^{-1}(V)=U$. Since $x \in U$, we have $f(x) \in V$. Since $V$ is an upset and $f(x) \leq_Q f(y)$, we have $f(y) \in V$ which implies the contradictory consequence that $y \in f^{-1}(V)=U$. Hence, $f$ is an order-embedding. 
\end{proof}
Now, we show that any (faithful or full) normal Kripke frame gives rise to a (faithful or full) dynamic topological system with the same associated $\nabla$-algebra. 
Let $\mathcal{K}=(W, \leq, R)$ be a normal Kripke frame with the normality witness $\pi: W \to W$. Then, considering $W$ as a topological space with the upset topology, $(W, \pi)$ is a dynamic topological system, as $\pi$ is order-preserving and hence continuous, by Lemma \ref{EmbeddingForUpsetTopology}. Using Lemma \ref{NormalForConditions} and Lemma \ref{EmbeddingForUpsetTopology}, the tuple $\mathcal{K}=(W, \leq, R)$ is a full (resp. faithful) Kripke frame iff $(W, \pi)$ is a full (resp. faithful) dynamic topological system. Moreover, the associated $\nabla$-algebra to a Kripke frame and its corresponding dynamic topological system are the same, i.e., $\mathfrak{U}(W, \leq, R)=\mathcal{T}(W, \pi)$. To prove, as $\pi$ is the normality witness, we have $(x, y) \in R$ iff $x \leq \pi(y)$. Thus, 
\[
\nabla_{\mathcal{K}}(U)=\{x \in W \mid \exists y \in U, (y, x) \in R\}=\pi^{-1}(U),
\]
for any upset $U$. Therefore, the $\nabla$'s of the $\nabla$-algebras are the same. Hence, using the uniqueness of the adjoints, we can see that $U \to_{\mathcal{K}} V$ and $U \to_{\pi} V$ are also equal, for any two upsets $U$ and $V$. 

The above discussion shows that if $(W, \leq, R, V)$ is a normal Kripke model with the normality witness $\pi$, then $(W, \pi, \tilde{V})$ is a topological model, where $\Tilde{V}(A)=\{w \in W \mid w \Vdash A\}$ and
$(W, \leq, R, V) \vDash \Gamma \Rightarrow \Delta$ iff $(W, \pi, \tilde{V}) \vDash \Gamma \Rightarrow \Delta$, for any sequent $\Gamma \Rightarrow \Delta$. Therefore:

\begin{theorem}(Topological completeness) Let $C \subseteq \{Fa, Fu\}$. Then $  \{\Gamma_i \Rightarrow \Delta_i\}_{i \in I} \vdash_{\mathbf{STL}(C, N, D)} \Gamma \Rightarrow \Delta$ iff $\{\Gamma_i \Rightarrow \Delta_i\}_{i \in I} \vDash_{\mathbf{T}(C)} \Gamma \Rightarrow \Delta$. It also holds if we replace $\mathbf{STL}(C, N, D)$ by $\mathbf{STL}(C, N, H)$.
\end{theorem}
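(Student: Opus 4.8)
The plan is to leverage the already-established algebraic and Kripke completeness results rather than reprove anything from scratch. Soundness is the easy half: given a topological model $(X, f, V)$ with $(X,f) \in \mathbf{T}(C)$, the tuple $\mathcal{T}(X,f) = (\mathcal{O}(X), f^{-1}, \to_f)$ is a normal Heyting $\nabla$-algebra by Example \ref{DTS}, and the discussion preceding Example \ref{DTS} (together with Theorem \ref{InjSurjforContinuous}) shows that if $f$ is a topological embedding then $\mathcal{T}(X,f)$ is faithful and if $f$ is surjective then $\mathcal{T}(X,f)$ is full; hence $\mathcal{T}(X,f) \in \mathcal{V}(C, N, H) \subseteq \mathcal{V}(C, N, D)$. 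Since $(X, f, V)$ is by definition an algebraic model on $\mathcal{T}(X,f)$, and validity of a sequent in the topological model coincides with validity in this algebraic model, soundness follows immediately from the algebraic soundness direction of Theorem \ref{AlgebraicSoundComplete}.

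For completeness, the plan is to funnel an underivable consequence through the Kripke semantics. Suppose $\{\Gamma_i \Rightarrow \Delta_i\}_{i \in I} \nvdash_{\mathbf{STL}(C, N, D)} \Gamma \Rightarrow \Delta$. By Kripke completeness, Theorem \ref{KripkeCompleteness}, applied with the condition set $\{N\} \cup C \subseteq \{N, R, L, Fa, Fu\}$, there is a normal Kripke frame $\mathcal{K} = (W, \leq, R)$ satisfying the conditions in $C$, a valuation $V$, and a world $w_0 \in W$ with $(\mathcal{K}, V) \vDash \Gamma_i \Rightarrow \Delta_i$ for all $i$ but $w_0 \nVdash \Gamma \Rightarrow \Delta$. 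The key move is then to pass from $\mathcal{K}$ to the dynamic topological system $(W, \pi)$, where $\pi$ is the normality witness of $\mathcal{K}$ and $W$ carries its upset topology. By Lemma \ref{EmbeddingForUpsetTopology}, $\pi$ is continuous, so $(W, \pi)$ is a genuine dynamic topological system; and, as spelled out in the discussion just before the theorem statement, $\mathcal{K}$ being faithful (resp. full) is equivalent — via Lemma \ref{NormalForConditions} and Lemma \ref{EmbeddingForUpsetTopology} — to $(W, \pi)$ being faithful (resp. full), so $(W, \pi) \in \mathbf{T}(C)$.

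The last ingredient is the identification $\mathfrak{U}(W, \leq, R) = \mathcal{T}(W, \pi)$ of the two associated $\nabla$-algebras: since $(x,y) \in R \iff x \leq \pi(y)$, one computes $\nabla_{\mathcal{K}}(U) = \pi^{-1}(U)$ for every upset $U$, so the two $\nabla$ operations agree, and then uniqueness of right adjoints forces $\to_{\mathcal{K}}$ and $\to_\pi$ to agree as well. Consequently the Kripke model $(W, \leq, R, V)$ and the topological model $(W, \pi, \tilde{V})$, with $\tilde{V}(A) = \{w \in W \mid w \Vdash A\}$, validate exactly the same sequents. Transporting the countermodel, we obtain $(W, \pi, \tilde{V}) \vDash \Gamma_i \Rightarrow \Delta_i$ for all $i$ while $(W, \pi, \tilde{V}) \nvDash \Gamma \Rightarrow \Delta$, which gives $\{\Gamma_i \Rightarrow \Delta_i\}_{i \in I} \nvDash_{\mathbf{T}(C)} \Gamma \Rightarrow \Delta$, as required. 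The Heyting variant is handled identically: when the language is $\mathcal{L}_i$, the embedding $\mathcal{K} \mapsto \mathfrak{U}(\mathcal{K})$ preserves the Heyting implication (Theorem \ref{KripkeEmbedding}), $\mathcal{O}(X)$ is always a locale hence a Heyting algebra, and the argument goes through verbatim with $\mathbf{STL}(C, N, H)$ in place of $\mathbf{STL}(C, N, D)$.

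The only real subtlety — the step I would be most careful about — is checking that the faithfulness/fullness conditions really do transfer correctly in \emph{both} directions between $\mathcal{K}$ and $(W, \pi)$: one must match the Kripke-frame conditions $(Fa)$, $(Fu)$ with their normality-witness reformulations ($\pi$ an order-embedding, $\pi$ surjective) from Lemma \ref{NormalForConditions}, and then match those with the topological conditions ($\pi$ a topological embedding, $\pi$ surjective) for the upset topology via Lemma \ref{EmbeddingForUpsetTopology}. Since the upset topology is $T_0$ and (for the relevant frames) $T_D$, Theorem \ref{InjSurjforContinuous} closes these equivalences, but this is the one place where the chain of "iff"s must be assembled carefully rather than quoted wholesale.
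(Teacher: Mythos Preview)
Your proposal is correct and follows essentially the same approach as the paper: soundness via the algebraic model $\mathcal{T}(X,f)$ and Theorem \ref{AlgebraicSoundComplete}, completeness by taking a normal Kripke countermodel from Theorem \ref{KripkeCompleteness} and converting it to the dynamic topological system $(W,\pi)$ via the identification $\mathfrak{U}(W,\leq,R)=\mathcal{T}(W,\pi)$ established in the discussion preceding the theorem. One small simplification: your closing caution about $T_D$ is unnecessary here, since for the completeness direction you only need the forward implications (Kripke faithful/full $\Rightarrow$ topological faithful/full), and these follow directly from Lemma \ref{NormalForConditions} and Lemma \ref{EmbeddingForUpsetTopology} without invoking Theorem \ref{InjSurjforContinuous}.
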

\begin{proof}
The validity in a topological model $(X, f, V)$ is the same as the validity in the algebraic model $(\mathcal{T}(X, f), V)$. Moreover,  by Example \ref{DTS}, the $\nabla$-algebra $\mathcal{T}(X, f)$ is Heyting and the passage from $(X, f)$ to $\mathcal{T}(X, f)$ preserves any condition in $\{Fa, Fu\}$. Therefore, the soundness part is an easy consequence of the algebraic soundness, Theorem \ref{AlgebraicSoundComplete}. For completeness, use the Kripke completeness, Theorem \ref{KripkeCompleteness}, and the observation that any normal Kripke model gives rise to an equivalent topological model and this passage preserves the conditions in $\{Fa, Fu\}$.
\end{proof}

\bibliographystyle{plain}
\bibliography{Proof}

\end{document}